\newcommand{\blue}[1]{\textcolor{blue}{#1}}
\numberwithin{equation}{subsection}
\newtheorem{theorem}{Theorem}[section]
\newtheorem{lemma}[theorem]{Lemma}
\newtheorem{proposition}[theorem]{Proposition}
\newtheorem{corollary}[theorem]{Corollary}
\theoremstyle{definition}
\newtheorem{eg}[theorem]{Example}
\newtheorem{definition}[theorem]{Definition}
\theoremstyle{remark}
\newtheorem{remark}[theorem]{Remark}
\newtheorem*{claim}{Claim}
\DeclareMathOperator{\G}{\mathbb{G}}
\DeclareMathOperator{\bF}{\mathbb{F}}
\DeclareMathOperator{\GL}{GL}
\DeclareMathOperator{\Loc}{Loc}
\DeclareMathOperator{\Spec}{Spec}
\DeclareMathOperator{\End}{\mathsf{End}}
\DeclareMathOperator{\Oo}{\mathcal{O}}
\newcommand{\ra}{\rightarrow}
\newcommand{\A}{\mathbb{A}}
\newcommand{\bbC}{\mathbb{C}}
\newcommand{\D}{\mathbb{D}}
\newcommand{\F}{\mathbb{F}}
\newcommand{\Q}{\mathbb{Q}}
\newcommand{\ol}{\overline}
\newcommand{\R}{\mathbb{R}}
\newcommand{\V}{\mathbb{V}}
\newcommand{\Z}{\mathbb{Z}}
\newcommand{\bL}{\mathbb{L}}
\newcommand{\W}{\mathbb{W}}
\newcommand{\bbA}{\mathbf{A}}
\newcommand{\bbL}{\mathbf{L}}
\newcommand{\bbN}{\mathbf{N}}
\newcommand{\bbS}{\mathbf{S}}
\newcommand{\bbZ}{\mathbf{Z}}
\newcommand{\cA}{\mathcal{A}}
\newcommand{\cC}{\mathcal{C}}
\newcommand{\cD}{\mathcal{D}}
\newcommand{\cE}{\mathcal{E}}
\newcommand{\cH}{\mathcal{H}}
\newcommand{\cM}{\mathcal{M}}
\newcommand{\cO}{\mathcal{O}}
\newcommand{\cP}{\mathcal{P}}
\newcommand{\cQ}{\mathcal{Q}}
\newcommand{\cS}{\mathcal{S}}
\newcommand{\cT}{\mathcal{T}}
\newcommand{\cU}{\mathcal{U}}
\newcommand{\cV}{\mathcal{V}}
\newcommand{\cW}{\mathcal{W}}
\newcommand{\cX}{\mathcal{X}}
\newcommand{\cY}{\mathcal{Y}}
\newcommand{\cZ}{\mathcal{Z}}
\newcommand{\Ag}{\mathcal{A}_g}
\newcommand{\ShimK}{S_K(G,X)}
\newcommand{\Shim}{S(G,X)}
\newcommand{\integralShimK}{\mathscr{S}_K(G,X)}
\newcommand{\barintegralShimK}{\bar{\mathscr{S}}_K(G,X)}
\DeclareMathOperator{\MF}{MF}
\DeclareMathOperator{\opp}{opp}
\newcommand{\dR}{_{\mathrm{dR}}}
\newcommand{\et}{{\mathrm{et}}}
\newcommand{\cris}{_{\mathrm{cris}}}
\newcommand{\Fil}{{\mathrm{Fil}}}
\newcommand{\can}{{\mathrm{can}}}
\newcommand{\an}{{\mathrm{an}}}
\newcommand{\frob}{\varphi}
\DeclareMathOperator{\Sp}{Sp}
\DeclareMathOperator{\sss}{ss}
\DeclareMathOperator{\unip}{unip}
\DeclareMathOperator{\Isom}{Isom}
\DeclareMathOperator{\Mod}{Mod}
\DeclareMathOperator{\Rep}{Rep}
\DeclareMathOperator{\Gal}{Gal}
\DeclareMathOperator{\Hom}{Hom}
\DeclareMathOperator{\Aut}{Aut}
\DeclareMathOperator{\Lie}{Lie}
\DeclareMathOperator{\Spf}{Spf}
\DeclareMathOperator{\ad}{ad}
\DeclareMathOperator{\Id}{Id}
\DeclareMathOperator{\Gr}{Gr}
\newcommand{\Fildot}{\Fil^{\bullet}}
\def\codim{\operatorname{codim}}
\def\bfM{\mathbf{M}}
\def\bfN{\mathbf{N}}
\title{Integral canonical models of exceptional Shimura varieties}
\author{Benjamin Bakker, Ananth N. Shankar, Jacob Tsimerman}
\begin{document}
\maketitle

\begin{abstract}
    We prove that Shimura varieties admit integral canonical models for sufficiently large primes. In the case of  abelian-type Shimura varieties, this recovers work of Kisin-Kottwitz  for sufficiently large primes. We also prove the existence of integral canonical models for images of period maps corresponding to geometric families. We deduce several consequences from this, including a version of Tate semisimplicity, CM lifting theorems, and a version of Tate's isogeny theorem for ordinary points.

\end{abstract}

\setcounter{tocdepth}{2}
\tableofcontents

\section{Introduction}

This paper is motivated by the following question, raised by Langlands and made precise by Milne:
\vskip1em
\textit{Given a Shimura variety $\ShimK$ over its reflex field $E$, does $\ShimK$ admit a ``good model'' $\integralShimK$, over completions $\cO_{E_v}$?}
\vskip1em

For the moduli space of principally polarized abelian varieties $\Ag$, for instance, the moduli problem provides a natural model.  As not all Shimura varieties are known to have an associated geometric moduli problem (notably the exceptional Shimura varieties), it is preferable to have a model which is uniquely determined by certain intrinsic properties, possibly only for sufficiently large primes.  At the very least, we would like our model to be smooth over $\cO_{E_v}$, which already may not exist at certain primes.  Indeed, even in the case of the modular curve, the level structure at $p$ needs to be maximal in order for there to exist a smooth model at $p$. 

Smoothness alone however does not a priori guarantee a unique model. Milne defined an ``extension property'' (and this notion was refined by Moonen in \cite{Moonen}) which formally guarantees uniqueness. 
Following Milne and Moonen, an integral model is said to be an \emph{integral canonical model} if it is smooth and if it satisfies their extension property.  The moduli space of principally polarized abelian varieties $\Ag$, for instance, is known to admit integral canonical models over $\Z_p$ whenever the level structure is prime to $p$, i.e. when the level structure is hyperspecial at $p$.  Indeed, smoothness follows from the fact that the deformation theory of polarized abelian varieties with a prime-to-$p$-polarization is unobstructed (proved by Grothendieck), and Chai and Faltings \cite{faltingschai} prove an extension result for abelian schemes over certain bases (see \cite[Section 3]{Moonen} for the precise results they prove) from which the Milne--Moonen extension property for $\Ag$ follows directly.  

Langlands suggested more generally that integral canonical models should exist whenever the compact open $K_p$ at $p$ is hyperspecial.  In the case of PEL Shimura varieties (i.e. Shimura varieties parameterizing polarized abelian varieties with extra endomorphisms), Kottwitz in \cite{Ko} constructs integral models when the level structure at $p$ is hyperspecial. Kisin generalizes Kottwitz' work to the setting of Shimura varieties of abelian type (\cite{Kisinintegral}). In \cite{imai2023prismatic} the authors prove a prismatic characterization of integral models of abelian type. In \cite{PappasRapoport}, Pappas and Rapoport lay out a framework for integral models for all Shimura varieties, and amongst various other things, they show that the integral models constructed by Kisin satisfy their framework. In \cite{Lovering}, Lovering constructs integral canonical models for automorphic data associated to Shimura varieties of abelian type.  The extension property proven in most of these works ultimately relies on the work of Chai and Faltings, which uses the moduli intepretation of $\Ag$, and therefore does not carry over to exceptional Shimura varieties.

\subsection{The extension property}\label{intro:extensionproperty}
We now define our extension property. We note that our extension property is the equivalent to the one formulated by Milne-Moonen, and we include a discussion about this fact below. In the proper case, the extension property is very simple: 
\begin{definition}\label{properextensionproperty}
    Let $F$ be a local field. We say that an integral model $\integralShimK /\cO_F$ of a proper Shimura variety $\ShimK/F$ has the extension property if for every smooth scheme $\cT/\cO_F$, every map over the generic fiber $\phi\in\Hom(\cT_F,\ShimK)$ extends to a map from $\cT$ to $\integralShimK$.
\end{definition}

In the non-proper case, one must take care to ensure that the special fiber doesn't map to the ``boundary'' of $\integralShimK$. For primes $\ell \neq p$, we fix an $\ell$-adic automorphic local system $_{\et}\V_{\ell}$ on $\integralShimK$ corresponding to a faithful representation $V$ of $G^{\ad}$ (see Section \ref{sec: Shimura setup} for precise definitions of $_{\et}\V_{\ell}$).  
\begin{definition}\label{generalextensionproperty}
We say that a model $\integralShimK/\cO_F$ of $\ShimK$ satisfies the extension property if 
for every smooth scheme $\cT/\cO_F$, and every map over the generic fiber $\phi\in\Hom(\cT_F, \ShimK)$ such that $\phi^*\V_{\ell}$ extends to $\cT$ for some $\ell\neq p$, the map $\phi$ extends to a map from $\cT$ to $\integralShimK$.
    
\end{definition}

\subsection{Main results for Shimura varieties}

Our main theorem is to prove the existence of integral canonical models for arbitrary Shimura varieties $S$, for sufficiently large primes. Concretely, we prove the following: 

\begin{theorem}\label{shimura main}
    Let $S$ be a Shimura variety over its reflex field $E$. For almost all finite places $v$, there exists a model $\cS$ over $\cO_v$ which has the extension property, and which admits a log-smooth compactification over $\cO_v$. 

    Moreover, for the places $v$ as above, $\cS$ is the unique smooth model of $S$ over $\cO_v$ which admits a log-smooth compactification over $\cO_v$ and to which the local system $_{\et}\V_{\ell}$ extends. 
\end{theorem}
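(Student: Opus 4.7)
The plan is to construct $\cS$ as the interior of a spread-out toroidal compactification, verify smoothness and log-smoothness via relative $p$-adic Hodge theory applied to $_{\et}\V_\ell$, derive the extension property from the nontrivial local monodromy of $\V_\ell$ around the boundary, and deduce uniqueness formally from the extension property itself.

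First I would fix a smooth projective toroidal compactification $\ShimK\hookrightarrow \bar S$ over $E$ with simple normal crossings boundary $D$. For $v$ of sufficiently large residue characteristic, spread $\bar S$, $D$, and $_{\et}\V_\ell$ out over $\cO_v$, let $\bar{\cS}$ denote the resulting proper normal model with boundary $\partial\bar{\cS}$, and set $\cS := \bar{\cS}\setminus \partial\bar{\cS}$. For all but finitely many $v$ the spread-out local system is a lisse $\ell$-adic sheaf on $\bar{\cS}$ whose restriction to $\cS$ is unramified.

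The main obstacle is showing that $(\bar{\cS},\partial\bar{\cS})$ is log-smooth over $\cO_v$ for almost all $v$. In the abelian-type case this is essentially Kisin's theorem, resting on Grothendieck unobstructedness for abelian schemes together with the Chai--Faltings extension theorem; for exceptional $\ShimK$ neither input is available. Instead I would exploit the extended local system $_{\et}\V_\ell$ itself: via relative $p$-adic Hodge theory (e.g.\ Fontaine--Laffaille or prismatic methods, taking $\ell = v$), associate a filtered $F$-crystal to $\V_\ell$ in a formal neighborhood of each closed point of the special fiber, identify its universal unobstructed deformation space with the formal neighborhood of $\bar{\cS}$ at that point, and thereby transport the smoothness of the characteristic-zero analytic period domain to $\cO_v$. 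Log-smoothness of the boundary then follows from the prescribed unipotent local monodromy of $\V_\ell$ around each component of $D$, matched with the standard model of a log-smooth snc divisor via the same deformation-theoretic identification.

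For the extension property, given $\phi\colon \cT_F \to \ShimK$ with $\cT/\cO_v$ smooth and $\phi^*\V_\ell$ extending to $\cT$, properness of $\bar{\cS}$ combined with the valuative criterion (and resolution of indeterminacy in codimension one using smoothness of $\cT$) produces a morphism $\bar\phi\colon \cT \to \bar{\cS}$ extending $\phi$; since $\V_\ell|_{\bar{\cS}}$ has nontrivial local monodromy around each component of $\partial\bar{\cS}$ while $\phi^*\V_\ell$ extends to all of $\cT$, the image of $\bar\phi$ cannot meet $\partial\bar{\cS}$, so $\bar\phi$ factors through $\cS$. For uniqueness, observe that the argument just given applies verbatim to any smooth model $\cS'/\cO_v$ with log-smooth compactification to which $_{\et}\V_\ell$ extends; applying the extension properties of $\cS$ and $\cS'$ to each other with $\phi = \id_{\ShimK}$ produces morphisms $\cS \to \cS'$ and $\cS' \to \cS$ extending the identity on $\ShimK$, whose composites must equal the identity by $\cO_v$-flatness (schematic density of the generic fiber) and separatedness, giving $\cS \cong \cS'$.
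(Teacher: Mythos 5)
Your proposal has a genuine and central gap in the extension-property step, and a spurious complication in the smoothness step.

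\textbf{The extension property.} You claim that properness of $\bar{\cS}$ combined with the valuative criterion and ``resolution of indeterminacy in codimension one using smoothness of $\cT$'' produces a \emph{morphism} $\bar\phi\colon \cT\to\bar\cS$. This is false. The valuative criterion gives extension over DVRs, hence a morphism $\cT\setminus\cZ\to\bar\cS$ for $\cZ$ of codimension $\geq 2$; it does \emph{not} give a morphism from $\cT$ itself. (If it did, every rational map from a smooth scheme to a proper scheme would be a morphism, which already fails for the birational map from a smooth surface to its blow-up.) Promoting the rational map defined in codimension one to an honest morphism is precisely the hard content of the theorem, and it is where the paper spends most of its effort (\Cref{sect:codim 2} and \Cref{localcanonicalmodels}). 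The paper needs a Fontaine--Laffaille module $_{\cris}\V$ on $\integralShimK$ with \emph{ample Griffiths bundle}, the pointwise criterion for crystallinity of Guo and Hokaj, the Liu--Zhu correspondence to compare de Rham realizations, and then a de Jong--Oort/Faltings--Chai style deformation-theoretic induction on dimension starting from the surface case (\Cref{thm:extncodim2} and \Cref{prop: codim2}). None of that machinery appears in your argument, and the local-monodromy observation you make (the image cannot meet the boundary) only governs the codimension-one step, not the passage to a global morphism.

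\textbf{Smoothness and log-smoothness.} You attempt to prove log-smoothness of $(\bar\cS,\partial\bar\cS)$ via relative $p$-adic Hodge theory by identifying the formal neighborhood with a deformation space of filtered $F$-crystals. This inverts the logic of the paper: the deformation-theoretic description of complete local rings (\Cref{thm: Kisin deformation}) is a consequence derived \emph{after} the integral canonical model has been built, not an input. More to the point, it is unnecessary: the paper simply spreads out a log smooth compactification over the generic fiber to $\cO_E[1/N]$ for $N$ large, which is automatically log-smooth at all $v\nmid N$. As the introduction notes, unlike the abelian case (where Kisin must work hard for smoothness but the extension property comes cheaply from $\Ag$), here smoothness for almost all $v$ is immediate and the extension property is the difficult part --- your proposal has the emphasis exactly backwards.

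\textbf{Uniqueness.} The bidirectional argument you give for uniqueness would work \emph{if} the extension-property argument were correct and applied to an arbitrary smooth model with log-smooth compactification and extended $\V_\ell$; with that caveat it is comparable to the paper's argument (which applies its established extension property once to get an open immersion $\cS\to\integralShimK_{\cO_{E_v}}$ by purity of the branch locus, and then uses infinite local monodromy at the boundary to see it is an isomorphism). But since the extension property itself is not established by your argument, the uniqueness step inherits the gap.
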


Note that the uniqueness claim implies that any global model of $\ShimK$ will specialize to the integral canonical model at almost all places. 

While our theorem applies to all Shimura varieties, the fact that it only applies for almost all finite places means that much stronger results (\cite{Kisinintegral}) are known in the abelian setting. Therefore, the 
case of greatest interest is when $S$ is a Shimura variety of exceptional type. 

\subsubsection{Applications}
We deduce several applications for mod $v$ (and $v$-adic) Shimura varieties. The main theorem implies that Hecke correspondences away from $p$ extend to $\integralShimK/\cO_{E_v}$. We use these Hecke correspondences to prove the following theorem: 

\begin{theorem}\label{Tatessintro}
    Let $x\in \integralShimK(\F_q)$ be any point. Then, the Frobenius endomorphism on $_{\et}\V_{\ell,\bar{x}}$ is semi-simple. 
\end{theorem}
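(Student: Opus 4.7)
The strategy is to combine the existence of the integral canonical model from Theorem \ref{shimura main} with Deligne's purity and semi-simplicity results, together with the extension of prime-to-$p$ Hecke correspondences to $\integralShimK$.

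First, I would establish that $_{\et}\V_\ell$ is pure of weight $w$ on the special fiber $\integralShimK \otimes \F_q$, where $w$ is the weight of $V$ as a representation of $G^{\ad}$. On the generic fiber $\ShimK/E_v$, the local system $_{\et}\V_\ell$ underlies the canonical polarizable variation of Hodge structure on $\ShimK$ attached to $V$, and is hence pure. Using the integral model $\integralShimK$ and its log-smooth compactification from Theorem \ref{shimura main}, I would lift $x$ along a smooth curve $C \to \integralShimK$ whose generic fiber maps non-trivially into $\ShimK$. Smoothness of $\integralShimK/\cO_v$ ensures that $_{\et}\V_\ell$ is unramified at the specialization, so the generic Frobenius weights transfer, via specialization, to the Frobenius weights at $x$, yielding purity on the special fiber.

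Next, once purity is known, I would apply the corresponding semi-simplicity result (Deligne's theorem that the underlying $\pi_1^{\mathrm{arith}}$-representation of a pure lisse sheaf is semi-simple, together with its consequence for Frobenius on stalks) to conclude that the Frobenius at $x$ acts semi-simply on $_{\et}\V_{\ell,\bar x}$. The role of the prime-to-$p$ Hecke correspondences, which by Theorem \ref{shimura main} extend to finite correspondences on $\integralShimK$ under which $_{\et}\V_\ell$ is equivariant, is twofold: they ensure that the lisse extension of $_{\et}\V_\ell$ is genuinely equivariant under all Shimura-variety symmetries (used implicitly in the purity propagation step), and they allow an argument established at a single $\F_q$-point $x$ to propagate to its entire Hecke orbit.

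The main obstacle is the purity-propagation step. In the abelian-type case, $_{\et}\V_\ell$ is a direct summand of the $\ell$-adic cohomology of the universal abelian variety, so purity on the special fiber is immediate from the Weil conjectures. In the exceptional case, no such geometric realization is available, and one must rely on the integral canonical model itself---together with the Hecke-equivariance of $_{\et}\V_\ell$ and the existence of the log-smooth compactification---to bridge the characteristic-zero Hodge-theoretic purity and the characteristic-$p$ Frobenius structure. This is where the full strength of Theorem \ref{shimura main} is used.
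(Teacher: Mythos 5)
Your proposal has a fatal gap precisely at the step where you invoke ``Deligne's theorem that the underlying $\pi_1^{\mathrm{arith}}$-representation of a pure lisse sheaf is semi-simple, together with its consequence for Frobenius on stalks.'' There is no such consequence: Deligne's Weil II result (3.4.1(iii)) gives semi-simplicity of the restriction to the \emph{geometric} fundamental group, and even if one extends this to the arithmetic fundamental group, semi-simplicity of a representation of a profinite group in no way implies that an \emph{individual} element of the group acts semi-simply. The claim that geometric Frobenius at a single closed point acts semi-simply on the stalk of a pure lisse sheaf is not a theorem; it is exactly the folklore conjecture the paper explicitly flags right after stating the theorem (``This is a special case of a folklore conjecture that posits that the Frobenii at closed points are semisimple for any irreducible local system defined over a smooth $\F_q$-variety''). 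Even for $H^i$ of smooth projective varieties over finite fields this is open for general $i$; Tate's theorem handles $H^1$ (i.e., abelian varieties), but by a completely different endomorphism-algebra argument, not by purity.

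The paper's actual argument is quite different and genuinely uses the integral canonical model in an essential non-purity way. It assumes for contradiction that $\varphi = \varphi_{\sss}\varphi_{\unip}$ has $\varphi_{\unip}\neq 1$, uses the Jacobson--Morozov lemma inside the centralizer $H\subset G(\Q_\ell)$ of $\varphi_{\sss}$ to produce $h\in H(\Q_\ell)$ with $h^{-1}\varphi_{\unip}h = \varphi_{\unip}^{\ell^2}$, and then uses the extended $\ell$-power Hecke correspondence $\tau_h$ (whose existence over $\cO_{E_v}$ is \Cref{thm:heckeextends}, a direct consequence of \Cref{shimura main}) to produce a new $\F_q$-rational point $x_1$ whose Frobenius unipotent part is strictly more $\ell$-divisible. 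Iterating yields infinitely many distinct points of $\integralShimK(\F_q)$, contradicting finiteness. If you want a purity/Deligne route, you would have to prove the folklore Frobenius semi-simplicity conjecture in this setting first, which is not available; so the Hecke-correspondence contradiction argument is the correct path, and it is the step you omitted entirely.
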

Work of Esnault-Groechenig shows that $\V_{p}/\ShimK$ is a crystalline local system, which yields a relative Faltings-Fontaine-Laffaille on the $p$-adic completion of $\integralShimK$. Analogous to \cite[Section 1.5]{Kisinintegral}, we give an explicit description of the this Faltings-Fontaine-Laffaille module restricted to complete local rings of $\integralShimK$ at closed points (see Theorem \ref{thm: Kisin deformation} for the precise statement). We then use this in conjunction with the semi-simplicity of $\ell$-adic Frobenii, to prove the following theorem (which generalizes the Serre-Tate canonical lift).

\begin{theorem}\label{CMliftsintro}
The $\mu$-ordinary locus in $\integralShimK_{\F_v}$ is open dense. 
    Let $x\in \integralShimK(\F_q)$ denote a $\mu$-ordinary point. Then, there exists a canonical special point $\tilde{x}\in \integralShimK(W(\F_q))$ that lifts $x$. 
\end{theorem}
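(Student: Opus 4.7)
The plan is to mimic Serre–Tate canonical lift theory, combining the Kisin-style formal-neighborhood description of Theorem \ref{thm: Kisin deformation} with the Frobenius semisimplicity of Theorem \ref{Tatessintro}. For openness and density of the $\mu$-ordinary locus in $\integralShimK_{\F_v}$, the relative Faltings–Fontaine–Laffaille module on the $p$-adic completion of $\integralShimK$, coming from the crystalline structure on $\V_p$, produces a $G$-isocrystal on the special fiber whose Newton stratification has the $\mu$-ordinary stratum as its distinguished open stratum. Openness is Grothendieck's specialization theorem for Newton polygons. For non-emptiness I would reduce a suitable CM special point $y \in \ShimK$, chosen so that the Frobenius on its reduction lies in a $G$-conjugate of a torus whose Newton cocharacter is the $\mu$-ordinary one; such a $y$ extends to $\integralShimK$ by the extension property. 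Density on each irreducible component then follows from openness together with the fact that, $\mu$ being minuscule, the $\mu$-ordinary polygon is the lowest admissible one.

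For the canonical lift at a $\mu$-ordinary point $x \in \integralShimK(\F_q)$, Theorem \ref{thm: Kisin deformation} identifies the formal neighborhood $\widehat{\integralShimK}_x$ with a formal subscheme of a flag variety for $G_{W(\F_q)}$ controlled by $\mu$, and presents the universal FFL module as the associated universal filtered $G$-torsor. By the compatibility between $\V_\ell$ and $\V_p$ and Theorem \ref{Tatessintro}, the crystalline Frobenius $\varphi_x$ on the FFL fiber at $x$ is semisimple and hence admits a canonical slope decomposition. The $\mu$-ordinary hypothesis says that the Newton cocharacter of $\varphi_x$ is $G$-conjugate to $\mu$, and semisimplicity refines this to a canonical cocharacter $\mu_{\can}: \G_m \to G_{W(\F_q)}$ lifting the Hodge cocharacter at $x$. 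I would then define $\widetilde{x}$ to be the unique point of $\widehat{\integralShimK}_x(W(\F_q))$ at which the universal Hodge filtration is the one induced by $\mu_{\can}$; existence and uniqueness of this point are immediate from the flag-variety presentation of the formal neighborhood.

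The hard part is verifying that $\widetilde{x}$ is a special point, i.e., that its Mumford–Tate group is a torus. By construction the crystalline Frobenius at $\widetilde{x}$ commutes with $\mu_{\can}$, and the $\ell$-adic Frobenius at $x$, which is the image under the Galois representation attached to $\widetilde{x}$ of a geometric Frobenius, is semisimple by Theorem \ref{Tatessintro}. Transferring between the $p$-adic and $\ell$-adic realizations via the crystalline comparison, the Mumford–Tate group of $\widetilde{x}$ is forced to lie inside the centralizer of $\mu_{\can}$, a Levi subgroup $L \subset G$; combining this with the Zariski density of the group generated by the semisimple Frobenius inside a maximal torus of $L$ and the fact that the Mumford–Tate group must contain $\mu_{\can}$, one concludes that the Mumford–Tate group itself is a torus. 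Canonicity of $\widetilde{x}$ is inherited from the uniqueness of the slope decomposition of $\varphi_x$.
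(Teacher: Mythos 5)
Your overall blueprint (normal form for Frobenius at $\mu$-ordinary points, a distinguished filtration, then a specialness argument) shares its starting point with the paper, but it diverges in ways that introduce two genuine gaps.

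First, your construction of the canonical lift leans on crystalline Frobenius semisimplicity, obtained by ``transferring'' $\ell$-adic semisimplicity (Theorem \ref{Tatessintro}) to the crystalline side ``by compatibility.'' This transfer is not automatic---there is no comparison isomorphism identifying the crystalline $q$-Frobenius with the $\ell$-adic geometric Frobenius as operators, and in fact the paper proves crystalline semisimplicity (Theorem \ref{crysss}, Corollary \ref{cor: crystalline frob ss}) \emph{as a consequence} of the canonical lift being a CM point, not as an input. What the paper actually uses to pin down the filtration is Wortman's result (Proposition \ref{prop:frobmuord}) that at a $\mu$-ordinary point $b_x$ can be $\sigma$-conjugated to $\sigma(\mu_x)(p)$; this normal form is specific to $\mu$-ordinary points and does not require any semisimplicity assertion about $\varphi_x$. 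The uniqueness of the $W$-point realizing the resulting filtration then comes from Proposition \ref{filts}, which is a delicate computation with flat sections and $\widehat{U}^{\opp}$---not simply a ``flag-variety presentation.''

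Second, and more seriously, your argument that $\widetilde{x}$ is special does not go through as stated. The claim that the Mumford--Tate group of $\widetilde{x}$, a $\Q$-algebraic group, must lie in the centralizer of a cocharacter $\mu_{\can}$ of $G_{W(\F_q)}$ is unjustified: commuting with a $p$-adic cocharacter, or having semisimple $\ell$-adic Frobenius, does not by itself constrain the $\Q$-rational MT group to be a torus. The paper avoids this by an entirely different mechanism: it first shows (Corollary \ref{cor:everypointCM}, via Theorem \ref{Tatessintro} and an iteration of Hecke correspondences) that every $\ol{\F}_p$-point is ``special'' in the Hecke-theoretic sense of Definition \ref{def:special points}; it then shows (Corollary \ref{prop: functoriality of canonical lift}) that the canonical lift is functorial under prime-to-$p$ Hecke correspondences; and finally it applies the characterization of special points in Proposition \ref{prop:charecterization of special points}, which converts the mod-$p$ Hecke-stability into a genuine characteristic-zero specialness statement by an analysis of Cartan decompositions in the building. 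This Hecke functoriality step is the crucial bridge between ``Frobenius semisimple mod $p$'' and ``CM in characteristic $0$'' and is missing from your argument. Finally, your non-emptiness claim is asserted but not established: the paper needs the existence of Weyl special points $(T,h)$ with $T$ quasi-split at $p$ (via Chai--Oort) plus Propositions \ref{prop nonemptiness ofmuord: first redution} and \ref{prop nonemptinessofmuord second reduction} to realize the $\mu$-ordinary Newton point.
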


The above theorem then immediately has the following consequence. 

\begin{theorem}\label{crysss}
    Let $x\in \integralShimK(\F_q)$ denote a $\mu$-ordinary point, and let $_{\cris}\V_x$ denote the $F$-crystal at $x$ (associated to the Fontaine-Laffaille module defined by $_{\et}\V_p/\ShimK$). Then, the crystalline $q$-Frobenius endomorphism on $_{\cris}\V_x$ is semisimple. 
\end{theorem}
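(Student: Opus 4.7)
The plan is to deduce Theorem~\ref{crysss} directly from the canonical CM lifting provided by Theorem~\ref{CMliftsintro}, reducing the claim to the classical fact that crystalline Frobenius acts semisimply on CM crystals.

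First, I would apply Theorem~\ref{CMliftsintro} to produce the canonical special lift $\tilde{x} \in \integralShimK(W(\F_q))$ of $x$. By construction of special points on Shimura varieties, the Mumford--Tate group $\bfM$ of $\tilde{x}$ is a torus, and the $p$-adic Galois representation $\rho := {}_{\et}\V_{p,\tilde{x}}$ associated to $\tilde{x}$ factors through $\bfM(\Q_p)$.

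Second, I would identify the $F$-crystal ${}_{\mathrm{cris}}\V_x$ with $D_{\mathrm{cris}}(\rho)$ as a filtered $\varphi$-module. This uses the explicit description of the relative Fontaine--Laffaille module on the $p$-adic completion of $\integralShimK$ provided by Theorem~\ref{thm: Kisin deformation}, together with Faltings' crystalline comparison pulled back along the $W(\F_q)$-point $\tilde{x}$. Under this identification, the crystalline $q$-Frobenius on ${}_{\mathrm{cris}}\V_x$ corresponds to the action of $\varphi^n$ on $D_{\mathrm{cris}}(\rho)$, where $q = p^n$.

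Third, since $\bfM$ is a torus, after base change to a finite extension $F/\Q_p$ splitting $\bfM$ the representation decomposes as a direct sum of crystalline characters $\rho \otimes F = \bigoplus_i \chi_i$. Applying $D_{\mathrm{cris}}$ gives $D_{\mathrm{cris}}(\rho) \otimes F = \bigoplus_i D_{\mathrm{cris}}(\chi_i)$, where each summand is a rank-one filtered $\varphi$-module on which $\varphi$ acts by a scalar. Therefore $\varphi$ is diagonalizable on $D_{\mathrm{cris}}(\rho) \otimes F$, and by Galois descent $\varphi$, hence $\varphi^n$, is semisimple on ${}_{\mathrm{cris}}\V_x$.

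The main obstacle is the identification in the second step: one must verify that the $F$-crystal attached to $x$ via the relative Fontaine--Laffaille module of Theorem~\ref{thm: Kisin deformation} genuinely coincides with the crystalline realization of $\rho$ at the canonical CM lift $\tilde{x}$, so that the structure of the Mumford--Tate torus can be transported to control the Frobenius on ${}_{\mathrm{cris}}\V_x$. This should follow from functoriality of the Faltings comparison, but requires care in tracking how the CM structure at $\tilde{x}$ propagates to the Fontaine--Laffaille side.
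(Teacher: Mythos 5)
Your proposal follows the paper's own route: the paper states this result as an immediate corollary of the CM lifting theorem (Corollary 7.11, directly after Theorem \ref{CMlifts}, with no separate proof), and the intended argument is exactly the one you give. Two small remarks on precision. First, the identification in your second step is not really an obstacle and needs no subtlety about "tracking CM structure": the paragraph preceding Proposition \ref{filts} already records that the $F$-crystal underlying $_{\cris}\V_{\tilde{x}}$ for \emph{any} $W$-valued lift $\tilde{x}$ is canonically $_{\cris}\V_x$, and the Fontaine--Laffaille correspondence then identifies $_{\cris}\V_x[1/p]$ with $D_{\cris}(\rho)$ as a $\varphi$-module (the $F$-crystal itself carries no filtration, so calling it a "filtered $\varphi$-module" is a slight misnomer). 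Second, in step three $D_{\cris}(\chi_i)$ is rank one over the \'etale $\Q_p$-algebra $F\otimes_{\Q_p}\Q_q$ and $\varphi$ is only $\sigma$-semilinear, so it is the linear operator $\varphi^n$ that acts by an element of $(F\otimes_{\Q_p}\Q_q)^{\times}$ on each summand; decomposing $F\otimes_{\Q_p}\Q_q$ into a product of fields shows $\varphi^n$ is diagonalizable, hence semisimple. With these adjustments the argument is sound and matches the paper.
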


Finally, we also have an analogue of Tate's isogeny theorem for ordinary points. 
\begin{theorem}\label{Tateisog}
        Let $x$ and $y$ be ordinary mod $p$ points whose $\ell$-adic Galois representations are isomorphic. Then a rational Hodge structures underlying the canonical lifts $\tilde{x}$ and $\tilde{y}$ are isomorphic. 
\end{theorem}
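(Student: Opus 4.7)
The strategy is to exploit Theorem~\ref{CMliftsintro}: the canonical lifts $\tilde x$ and $\tilde y$ are special points, each attached to a homomorphism $h\colon \Ss \to G_\R$ factoring through a $\Q$-torus $T \subset G$ with Hodge cocharacter $\mu$. The rational Hodge structure on $V$ at such a point is induced from $h$ via the representation $V$ of $G$, so the task reduces to showing that, under the hypothesis on $\ell$-adic Galois representations, the CM triples $(T_x,h_x,\mu_x)$ and $(T_y,h_y,\mu_y)$ become conjugate inside $GL(V)(\Q)$.

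The first step is to lift the given isomorphism of Galois representations from the special fiber to the canonical lifts. Since $_{\et}\V_\ell$ extends to $\integralShimK$ by Theorem~\ref{shimura main}, smooth base change identifies $_{\et}\V_{\ell,\bar x}$ with the $\ell$-adic Galois representation attached to $\tilde x$ viewed as a $K_0$-point of $\ShimK$, where $K_0 = \mathrm{Frac}\,W(\F_q)$; this representation is unramified with geometric Frobenius equal to the Frobenius endomorphism on $_{\et}\V_{\ell,\bar x}$, and analogously for $y,\tilde y$. The hypothesis $\rho_x \cong \rho_y$ then gives an isomorphism of the $\ell$-adic Galois representations attached to $\tilde x$ and $\tilde y$. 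Next, I would apply the reciprocity law for canonical models of Shimura varieties at special points (Deligne--Milne): the Galois representation at $\tilde x$ is obtained by pushing forward along $V|_{T_x}$ the Hecke character on $T_x(\A_f)$ determined by $\mu_x$, and analogously at $\tilde y$. Matching these Hecke characters pins down $T_x\otimes\Q_\ell$ and $T_y\otimes\Q_\ell$ as the Zariski closures of the images of Frobenius, and forces $\mu_x$ and $\mu_y$ to be conjugate in $GL(V)(\Q_\ell)$. Finally, one must descend this $\Q_\ell$-conjugation to a $\Q$-conjugation, which is exactly an isomorphism of rational Hodge structures on $V$; this step relies on $T_x,T_y,\mu_x,\mu_y$ being defined over number fields and on the Hecke character determining the CM datum up to finite ambiguity.

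The main obstacle will be the rationality descent in the last step: moving from an isomorphism of $\Q_\ell$-Galois representations to a $\Q$-isomorphism of Hodge structures. For abelian-type Shimura varieties this is essentially Tate/Faltings combined with Deligne's theorem on absolute Hodge cycles for CM motives; for exceptional Shimura varieties one needs the analogous statement, which should follow from class field theory combined with the Shimura--Taniyama formula describing Frobenii at CM points in terms of the Hodge cocharacter, but requires care in extracting $\Q$-rational matching from $\Q_\ell$-rational matching. A subsidiary issue is to ensure that the Zariski closure of Frobenius at a CM point is actually the full torus $T_x$ rather than a proper subgroup, i.e.\ the Mumford--Tate conjecture at special points, which is available through class field theory.
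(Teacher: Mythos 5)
Your high-level plan is right — reduce to matching CM data at the canonical lifts — but the two steps you flag as obstacles are precisely the content of the theorem, and the paper resolves them by a different, more concrete route than the one you sketch.

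The paper does not go through the reciprocity law / Hecke-character picture at all. Instead it uses Milne's result to surject a zero-dimensional Shimura datum $(T,\mu)$ of Hodge type onto $(T_x,\mu_x)$, and works with the CM abelian variety $A$ over a point $z$ mapping to $\tilde x$. The Frobenius of $A$ mod $v'$ lifts to an endomorphism $\beta\in T(\Q)$, and its image $\alpha\in T_x(\Q)$ is a $\Q$-rational element whose Betti, \'etale, de Rham, and crystalline realizations are all compatible (the compatibility is exactly what you get from $\beta$ being an honest endomorphism of an abelian variety — this is where the Hodge-type detour pays off). The analogous element $\gamma\in T_y(\Q)$ is produced on the other side. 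The hypothesis that the $\ell$-adic Frobenii are conjugate says $\alpha$ and $\gamma$ have the same characteristic polynomial on $V$, so there is $X\in GL(V)(\Q)$ with $X^{-1}\alpha X=\gamma$. Since $\alpha$ generates $T_x$ and $\gamma$ generates $T_y$ Zariski-densely, $X^{-1}T_xX=T_y$; the Hodge cocharacters then match because the CM type is determined by $\alpha$ (resp.\ $\gamma$). In other words, the ``descent from $\Q_\ell$ to $\Q$'' that you correctly identify as the main obstacle never arises: the conjugating element is constructed directly over $\Q$ because $\alpha,\gamma$ are $\Q$-rational and conjugacy of semisimple elements of $GL(V)$ is decided by characteristic polynomials over any field.

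The second gap in your write-up is more serious: you say the Zariski density of Frobenius in $T_x$ is ``available through class field theory'' and file it under the Mumford--Tate conjecture at special points. That is not what is needed and not how the paper proves it. What is required is that the single element $\alpha$ (not the full Galois image) generate $T_x$, and this is exactly where the \emph{ordinarity} hypothesis enters. The paper shows that, on the ordinary $F$-crystal $_{\cris}\V_x$, the $n$th power of crystalline Frobenius has eigenvalues of $p$-adic valuation exactly $ni$ on the weight-$i$ graded piece of the Hodge filtration; a self-contained claim about split tori then forces the Hodge cocharacter $\mu$ to factor through the Zariski closure $T_\alpha$, whence $T_\alpha=T_x$ since $T_x$ is generated by the Galois conjugates of $\mu$. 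Without ordinarity this step fails (and indeed the theorem is only stated for ordinary points). Your proposal omits the ordinarity hypothesis entirely from the argument, which is a genuine gap rather than a stylistic difference.
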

This result should be thought of as an exceptional analogue (in the ordinary case) of Tate's isogeny theorem, which states that two abelian varieties over finite fields are isogenous if the $\ell$-adic Galois representations associated to the abelian varieties are isomorphic. 

\subsection{Comparison with the extension property of Milne--Moonen}

The usual (Milne-Moonen) phrasing of the extension property is to let the level $K$ be of the form $K=K^pK_p$ where $K_p\subset G(\Q_p)$ and $K^p\subset G(\bbA^p_f)$, and consider the tower of varieties $\integralShimK$ where $K_p$ is fixed and $K^p$ varies. One then asks for a tower of integral models with finite etale transition maps, and that a map from the generic fiber of any scheme $\cT$ to this tower extends. This however entails defining a category of \textit{admissible test schemes} to be able to map to this entire tower, which is something like ``pro-smooth schemes over $\bbZ_p$'', and this requires some care to do precisely. Geometrically these are more complicated (pro-) objects, which is why we choose to work with our definition.

These two extension properties are very close to each other. Indeed, the Milne-Moonen extension property implies ours as follows: Suppose $\cT$ is a smooth scheme over $\cO_v$, and $\phi:\cT_{F_v}\ra \ShimK$ such that all of the $\phi^*_{\et}\V_\ell$ extend. Then we may define a tower $\cT_{K^p}$ with finite transition maps using the local systems $\phi^*_{\et}\V_\ell$, and a corresponding map from $\varprojlim_{K^p}Y_{K^p,F_v}\ra \varprojlim_{K^p}\integralShimK_{F_v}$. The Milne-Moonen extension property implies that this map extends and hence so does our original one.

Conversely, we first point out that our construction in fact gives models for the entire tower $\varprojlim_{K_p} \integralShimK_{F_v}$. Suppose one has an admissible test $\cT$ over $\cO_v$ and a map from $\cT_{F_v}$ to $\varprojlim_{K_p} \integralShimK_{F_v}$. By definition, $\cT$ is a pro-etale cover of a smooth scheme over some unramified extension $R$ of $\cO_v$, and hence is an inverse limit of smooth schemes over $R$. Each of the schemes in this inverse limit must therefore map to some finite level Shimura variety in the tower, in a way that limits to the original map. Moreover, the pullbacks of all the automorphic local systems $_{\et}\V_\ell$ will extend, and thus our extension property will apply at each finite level, and hence to all of $\cT$. 

\subsection{Integral canonical models for images of geometric period maps}
Our methods in fact provide integral canonical models at sufficiently large primes for many varieties which are interpretable as moduli spaces of Hodge structures.  We describe the case of variations coming from geometry, but first give some general background from Hodge theory (see \Cref{hodge sect} for more details).

Suppose an irreducible normal complex variety $P$ (which we can without loss of generality assume smooth) is equipped with a polarizable integral variation of Hodge structures $(\W_\Z,F^\bullet)$.  The associated period map $P^\an\to [\mathbf{G}(\Z)\backslash D]$ will factor as $P^\an\to Y^\an\to[\mathbf{G}(\Z)\backslash D]$ where $Y$ is a generically inertia-free Deligne--Mumford stack with quasiprojective coarse moduli space, $P\to Y$, has connected generic geometric fiber, and $Y^\an\to[\mathbf{G}(\Z)\backslash D]$ is finite \cite{bbt}.  This map is obtained by taking a resolution $\tilde P\to P$ and then a partial compactification $\tilde P\subset P'$ for which the period map extends to a proper period map and forming the Stein factorization.  In particular, the variation $(\W_\Z,F^\bullet)$ is pulled back from $(\V_\Z,F^\bullet)$ on $Y$.  In general $\tilde P$ may be a stack as well, but the inertia of $Y$ acts faithfully on $\V_\Z$, so $Y$ is a global quotient of a normal quasiprojective variety by the action of a finite group, and after passing to a finite \'etale cover of $P$ (for instance by adjoining level structure) we may assume $Y$ (and therefore $P$) is a quasiprojective variety. 
 We call such a $Y$ the Stein factorization of the period map.  
 
Now let $P$ be an irreducible normal variety defined over a number field $E$, $f:Z\to P$ a smooth projective $E$-morphism, and for any fixed $k$ let $_{et}\W_\ell:=R^kf_*\Z_\ell$ be the arithmetic local system---or more generally a subquotient obtained from maps between such local systems induced by $P$-morphisms of families\footnote{We lay out the precise conditions for a variation of Hodge structures to ``come from geometry'' in the sense we need it in \Cref{localcanonicalmodels}.}.  We let $Y$ be the Stein factorization of the period map of the variation with underlying geometric local system $\V_\Z=R^kf^\an_*\Z$ for some embedding $E\subset \bbC$.  Then $Y$ is defined over $E$, does not depend on the embedding, and the local system $_{et}\W_\ell$ descends to $Y$.

For example, for any moduli space $P$ admitting an infinitesimal Torelli theorem (including moduli spaces of Calabi--Yau varieties and most moduli spaces of complete intersections in $\mathbb{P}^n$, to name a couple), $Y$ is a canonical partial compactification (namely, the natural ``Hodge--theoretic'' compactification) of $P$.
\begin{theorem}\label{thm ICM hodge}
      Let $E$ be a number field.  Let $Y/E$ be the Stein factorization of the period map of a geometric variation of Hodge structures which we assume to be inertia-free (and therefore a quasiprojective variety).  Let $_{et}\V_\ell$ be the descent of the natural $\ell$-adic arithmetic local system to $Y$.
    \begin{enumerate}
    \item\label{thm ICM hodge p1} There is a finite set of primes $\Sigma_Y$ of $\cO_E$ such that, for all finite places $v$ of $\cO_{E,\Sigma_Y}$, there is a model $\cY_v$ over $\cO_v$ such that:
    \begin{enumerate}
    \item\label{thm ICM hodge p1a} Each $_{et}\V_\ell$ extends to $\cY_v$ for $\ell\neq p$, where $p$ is the integral prime below $v$.
    \item\label{thm ICM hodge p1b} For any irreducible smooth $\cX/\cO_v$ with generic point $\eta_\cX$, any morphism $g_0:\eta_\cX\to Y_{E_v}$ extends to $g:\cX\to \cY_v$ if and only if $g_0^*(_{et}\V_\ell)$ extends to $\cX$ for some $\ell\neq p$.
    \end{enumerate}
        \item\label{thm ICM hodge p2} The models in part \ref{thm ICM hodge p1} are functorial in the following sense.  Let $Y,Y'/E$ be the Stein factorizations of two possibly different period maps arising as above, and for a finite place $v$ of $\cO_{E,\Sigma_Y\cup \Sigma_{Y'}}$ let $\cY_v,\cY'_v$ be models as in part \ref{thm ICM hodge p1}.  Then any morphism $g_0:\eta_\cY\to \cY'_v$ extends to $g:\cY_v\to \cY'_v$ if and only if $g_0^*(_{et}\V_\ell)$ extends to $\cY_{v}$ for some $\ell\neq p$.  In particular, the local model $\cY_v$ of $Y$ is canonically determined.
        \item\label{thm ICM hodge p3} The models in part \ref{thm ICM hodge p1} are minimal in the following sense.  For any $\cO_v$-model $\cZ_v$ of $Y$ that admits a uniform log smooth resolution (see \Cref{defn uniform res}) and to which $_{et}\V_\ell$ extends for some $\ell\neq p$, the identity map over the generic fiber extends to an $\cO_v$-morphism $\cZ_v\to\cY_v$.  
\item\label{thm ICM hodge p4} Finally, there exists a global model $\cY$ over $\cO_{E,\Sigma_Y}$ of $Y$ which restricts to the model $\cY_v$ of part \ref{thm ICM hodge p1} for all places $v$ of $\cO_{E,\Sigma_Y}$.

    \end{enumerate}
\end{theorem}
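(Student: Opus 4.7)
The plan is to adapt and apply the machinery developed for Theorem \ref{shimura main}, now in the setting of abstract geometric period maps. First I would spread out the quasiprojective variety $Y$ to a model $\cY^0$ over $\cO_{E,\Sigma_0}$ for some finite set of primes $\Sigma_0$, together with a log-smooth compactification $\bar{\cY}^0 \supset \cY^0$ (enlarging $\Sigma_0$ if necessary, using standard spreading-out arguments together with the existence of log resolutions in characteristic zero). After possibly enlarging $\Sigma_0$, each arithmetic local system $_{et}\V_\ell$ on $Y$ spreads out to $\cY^0[1/\ell]$. Set $\Sigma_Y$ to be this enlarged set.

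For a place $v \notin \Sigma_Y$ with residue characteristic $p$, I would define $\cY_v \subseteq \bar{\cY}^0_{\cO_v}$ to be the largest open subscheme of the special fiber's complement within $\bar{\cY}^0_{\cO_v}$ on which every $_{et}\V_\ell$ with $\ell \neq p$ extends. Part \ref{thm ICM hodge p1a} then holds by construction; and the forward direction of part \ref{thm ICM hodge p1b} is immediate, since if $g: \cX \to \cY_v$ extends $g_0$, then $g^*{}_{et}\V_\ell$ provides the desired extension of $g_0^*{}_{et}\V_\ell$. The reverse direction, which is the main technical input, proceeds as follows. Given $g_0 : \eta_\cX \to Y_{E_v}$ with $g_0^*{}_{et}\V_\ell$ extending to $\cX$, first apply the valuative criterion to the proper compactification $\bar{\cY}^0_{\cO_v}$ at all codimension-one points of $\cX$, then use normality and smoothness of $\cX$ together with purity-type arguments (as in the Shimura variety case) to extend $g_0$ to a morphism $\cX \to \bar{\cY}^0_{\cO_v}$. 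Finally, argue that the image avoids the complement of $\cY_v$: if $g$ mapped a codimension-one locus of $\cX$ into the boundary, the local log-smoothness of $\bar{\cY}^0_{\cO_v}$ along that boundary divisor would force nontrivial monodromy of $g_0^*{}_{et}\V_\ell$ around a divisor in $\cX$, contradicting the extension hypothesis.

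Parts \ref{thm ICM hodge p2}, \ref{thm ICM hodge p3}, \ref{thm ICM hodge p4} then follow formally from the extension property of part \ref{thm ICM hodge p1b}. For functoriality (part \ref{thm ICM hodge p2}), apply the extension property of $\cY'_v$ with test scheme $\cX = \cY_v$: the hypothesis that $g_0^*({}_{et}\V_\ell')$ extends to $\cY_v$ is precisely the condition needed. For minimality (part \ref{thm ICM hodge p3}), apply the extension property of $\cY_v$ with test scheme $\cZ_v$, where the uniform log smooth resolution assumption ensures the required hypothesis that $_{et}\V_\ell$ pulls back to an extending local system. For the global model (part \ref{thm ICM hodge p4}), observe that the construction of each $\cY_v$ is obtained by localizing a single choice of global model $\cY^0$ and removing a finite locus in the special fiber; since these removals take place only along the special fibers at isolated primes, they glue to produce a global $\cY$ over $\cO_{E,\Sigma_Y}$.

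The principal obstacle is the reverse direction of \ref{thm ICM hodge p1b}, which is where the deep content of the theorem lies. While the valuative-criterion step reduces matters to codimension-one points, the argument that the extension actually lands in $\cY_v$ rather than the log boundary requires the full $p$-adic Hodge-theoretic and crystalline machinery developed for Theorem \ref{shimura main}, together with the log-smoothness of the compactification and a careful monodromy analysis around boundary divisors. Once this is in place, everything else is a formal consequence of the extension property.
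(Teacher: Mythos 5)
Your proposal contains two substantive gaps that make it diverge from — and fall short of — the paper's actual argument.

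First, the starting assumption that $Y$ admits a log-smooth compactification is not available: $Y$ is the Stein factorization of a period map, and as the paper emphasizes repeatedly (see the discussion immediately after the theorem statement and \Cref{eg mg ag}), such $Y$ is typically singular, already over $\bbC$. One therefore cannot define $\cY_v$ as an open subscheme of a log-smooth compactification $\bar\cY^0_{\cO_v}$. The paper instead works with a \emph{uniform stratified resolution} of a normal model of $Y$ (\Cref{strat res}), and the criteria for an integral canonical model in the normal case (\Cref{defn ICM}, \Cref{local ICM normal}, \Cref{local ICM normal different}) are formulated precisely to accommodate this. Your proposal never touches the singular structure of $Y$ and so cannot produce the required model.

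Second, and more seriously, the step ``then use normality and smoothness of $\cX$ together with purity-type arguments to extend $g_0$ to a morphism $\cX\to\bar\cY^0_{\cO_v}$'' does not work. A morphism defined on $\cX\setminus\cZ$ with $\codim\cZ\geq 2$ does \emph{not} automatically extend to a proper target — the standard indeterminacy example $(x,y)\mapsto[x:y]$ on $\bbA^2\setminus\{0\}\to\bbP^1$ shows this. Purity of the branch locus applies to finite \'etale covers, not to arbitrary proper morphisms. This codimension-$\geq 2$ extension is exactly where the paper's hard work lives: \Cref{prop: codim2} handles $\dim\cX>2$ by deformation theory and the $\dim\cX=2$ case by \Cref{thm:extncodim2}, which uses crystallinity of the $p$-adic local system, the resulting Fontaine--Laffaille module, and the ampleness of the Griffiths bundle (\Cref{thm:bbt}) to show that exceptional curves of a resolution of indeterminacy get contracted. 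Your closing paragraph acknowledges that ``the full $p$-adic Hodge-theoretic and crystalline machinery'' is needed, but locates it in the wrong place — you attribute it to boundary-avoidance, which is the comparatively easy codimension-one step controlled by nonextendability of the local system (\Cref{extend codim 1}, \Cref{faithful => natural}), rather than to the genuine indeterminacy-resolution problem in codimension $\geq 2$.

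Finally, the claim that parts \ref{thm ICM hodge p2} and \ref{thm ICM hodge p3} ``follow formally from the extension property of part \ref{thm ICM hodge p1b}'' overlooks that \ref{thm ICM hodge p1b} is stated only for \emph{smooth} test schemes $\cX$, whereas $\cY_v$ and $\cZ_v$ are in general singular. Passing to singular sources requires the uniform-resolution and rigidity-lemma arguments of \Cref{extend maps} and \Cref{uniqueness among models}; this is not purely formal.
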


In general, the image of a period map has no reason to be smooth (already over $\bbC$), so it is important to allow for singularities.  Even starting with a smooth moduli space $S$ parametrizing varieties with an infinitesimal Torelli theorem so that the period map $S^\an\to[ \mathbf{G}(\Z)\backslash D]$ is immersive, it will often not be proper; the ``Hodge-theoretic'' compactification is essentially the closure of the image of $S^\an\to [\mathbf{G}(\Z)\backslash D]$ (up to a finite map) and will therefore usually be singular anyway.  This is already true for the image of the Torelli map for the moduli space of curves (see \Cref{eg mg ag}).

\subsection{An outline of the proof of the Extension Property}
The proof of the extension property in \Cref{shimura main} is motivated by classical Hodge theory and the arguments of \cite{dejongoort,moretbailly,faltingschai}.  Briefly, let $F/\Q_p$ be a finite unramified extension ring of integers $\cO$, $\cX/\cO$ a smooth scheme, and $f_F:X\to Y$ a morphism over the generic point to a model of a Stein factorization of a period map $\cY$.  There are three steps to proving the extension of $f_F$ to a morphism $f:\cX\to\cY$:
\begin{enumerate}
    \item $\cY$ will support an $\ell$-adic local system $_{et}\V_\ell$ with large local monodromy at the boundary. 
 It then follows (assuming the existence of a reasonable resolution and log smooth compactification) that the map extends in codimension one as soon as $f_F^*(_{et}\V_\ell)$ extends to $\cX$.  This in particular establishes the claim if $\dim \cX=1$.
    \item   If $\dim\cX>2$, then assuming the result for $\dim\cX=2$, the conclusion follows by induction.  An extension is unique if it exists, and by assumption exists on a general hyperplane section $\cH$.  Deformation theory can then be used to locally lift the extension to the completion of $\cX$ along $\cH$, which is enough to deduce the existence of the extension to $\cX$.  This is the same argument used by \cite{dejongoort,moretbailly,faltingschai}.
    \item The main step is then to handle the case $\dim\cX=2$, so when $\cX/\cO$ is a family of smooth curves, and here we must use that $\cY$ supports\footnote{This only makes sense after pulling back to a resolution which is smooth over $\cO$} a crystalline local system whose associated Fontaine--Laffaille module has ample Griffiths bundle.  On the one hand, by a key result of Hokaj \cite{Hokaj} and Guo--Yang \cite{guo}, the Fontaine--Laffaille module extends to $\cX/\cO$; on the other hand, the map to $\cY$ can be extended on a blow-up $\cX'$ at points in the special fiber.  The pull-backs of the two Fontaine--Laffaille modules must agree, and the ampleness of the Griffiths bundle implies every exceptional curve is contracted in $\cY$.

\end{enumerate}

\subsection{Comparison with past work}
In the Abelian case which Kisin handles, proving the existence of smooth models is difficult, whereas the extension property follows directly from the extension property for $\Ag$. Unlike \cite{Kisinintegral}, we don't directly address the question of for which places $v$ the Shimura variety $\ShimK$ admits smooth models over $\cO_v$. Instead, we ``spread out'' the global model $\ShimK$ over $\cO_E[1/N]$ for some large $N$, which in particular guarantees smoothness at primes $v\nmid N$ (amongst other good properties). However, the lack of an interpretation of our Shimura varieties as a moduli space means we need an entirely different argument for establishing the extension property.

Regarding the applications to the mod $p$ and $p$-adic geometry of Shimura varieties, Kisin (\cite{Kisinmodppoints}) proves that every $\overline{\bF}_p$-point of abelian Shimura varieties admit CM lifts up to isogeny. Note that the up-to-isogeny requirement is crucial as work of Oort \cite{oortJAG} shows (for a more general statement, see also \cite{KLSS}). Noot in \cite{Noot} proves the existence of CM lifts (on the nose) for points of Hodge-type Shimura varieties that intersect the ordinary locus of $\Ag$. Noot's theorem was generalized to $\mu$-ordinary points by \cite{MoonenSerreTate} in the PEL-type case to the  and \cite{ShankarZhou} in the Hodge type case.

\subsection{Acknowledgements}
B. B. benefited from many conversations with Yohan Brunebarbe throughout, and first learned of the work of Ogus--Vologodsky from Raju Krishnamoorthy. We are also grateful to Michael Groechenig, Kiran Kedlaya, Mark Kisin, Arthur Ogus, Abhishek Oswal, and Vadim Vologodsky for helpful conversations.  We are particularly indebted to Alex Youcis for pointing out the result of Hokaj \cite{Hokaj}, which enabled us to correct an error in the original version.  B. B. was partially supported by NSF DMS-2131688. A. S. was partially supported by the NSF grant DMS-2100436 and DMS-2338942, and would like thank MSRI for their hospitality during the spring of 2023. 

\subsection{Outline}
In \Cref{sec: boundary and monodromy} we formulate the definition of an integral canonical model in terms of the extension property, and show that extension in codimension 1 can be controlled in many cases (including those of interest) by a local system.  We also discuss the descent and extension of arithmetic local systems to integral models of period images, and the relation of the extension property to purity results in classical Hodge theory.  In  \Cref{sect:codim 2} we prove the extension in codimension 2.  In \Cref{localcanonicalmodels} we formulate precise conditions under which the presence of a Fontaine--Laffaille module on a local model implies the extension property, and in \Cref{sec: canonical models} we apply it to the case of Shimura varieties and period images to prove the existence of integral canonical models as in \Cref{shimura main} and \Cref{thm ICM hodge}. 
 In \Cref{sec: Tate semisimplicity} we prove Hecke correspondences extend and use this to prove \Cref{Tatessintro}.   In Sections \Ref{sec: complete local rings for large primes} and \ref{sec: CM stuff}, we analyze the complete local rings of the integral canonical models of Shimura varieties at closed points and prove Theorems \ref{CMliftsintro}, \ref{crysss}, and \ref{Tateisog}.

\subsection{Notation}Typically $\cO$ will be the ring of integers in either a number field $E$ or an unramified finite extension $F/\Q_p$.  We use script letters $\cS,\cX,\cY$ to denote either finite type $\Spec \cO$-schemes or finite type formal $\Spf\cO$-scheme (in the latter case).  We denote by unadorned Roman letters $S,X,Y$ the generic fibers over $E$ or $F$ (in the non-formal case) and $\cS_0,\cX_0,\cY_0$ will denote the special fibers (when $\cO$ is local).  For a finite place $v$ of $\cO_E$ we denote by $\cO_v=\cO_{E_v}$ the completion.

\def\piet{\pi_1^{\et}}

\section{The extension property and natural boundary}\label{sec: boundary and monodromy}
In this section we single out a class of integral models for which extension on the level of fundamental groups is the only obstruction to extending morphisms defined over the generic point.  In particular, such models will be uniquely determined when they exist.  The key property these models will have is the extension of morphisms from smooth schemes, just as in the work of Milne and Moonen.  In the Shimura case (or whenever the model is smooth) this property obviously uniquely determines the model; in the general case it will be important to allow for singularities, as even for smooth moduli spaces, the extension property will often force a partial compactification which is not smooth---for instance, see \Cref{eg mg ag}.  We also discuss how the extendability on the level of fundamental groups can be detected by the extendability of local systems.

 
Throughout this section, we let $\cO$ be a Dedekind domain with fraction field $F$ which is either a finite extension of $\Q_p$ or a global number field.  In the local situation, we denote by $k$ the residue field.

\subsection{The extension property}

\begin{definition}\label{smooth extension prop}\hspace{.5in}
    \begin{enumerate}
    \item Let $\cY,\cY'/\cO$ be finite type flat $\cO$-schemes.  An $F$-morphism $f_F:Y\to Y'$ of the generic fibers is $\cO$-admissible if the maps on fundamental groups fits into a diagram
        \[\begin{tikzcd}
            \piet(Y,\bar y)\ar[d]\ar[r]&\piet(Y',\bar y')\ar[d]\\
            \piet(\cY,\bar y)\ar[r,dashed]&\piet(\cY',\bar y')
        \end{tikzcd}\]
        for compatibly chosen basepoints.
 
        \item    We say a finite type flat $\cO$-scheme $\cY/\cO$ satisfies the smooth extension property over $\cO$ if for any smooth $\cO$-scheme $\cX/\cO$, any $\cO$-admissible $F$-morphism $f_F:X\to Y$ on the generic fiber extends to an $\cO$-morphism $f:\cX\to\cY$.
    \end{enumerate}
 
\end{definition}

\subsection{Natural boundary and extension in codimension 1}
In the presence of a sufficiently well-behaved log smooth compactification of a resolution (one with ``natural boundary'' as defined below), admissible morphisms with smooth source will extend over codimension 1 points, and this will be the first step of the proof of the extension property in the setting of section \ref{localcanonicalmodels}.

The following definition is made with a view towards the general case where $\cY$ is not smooth, but for Shimura varieties we will have $\cS=\cY$. 

\begin{definition}\label{defn nat bound}\hspace{.5in} 
\begin{enumerate} 
    \item Let $\cY/\cO$ be a normal finite type flat $\cO$-scheme.  A resolution of $\cY/\cO$ is a smooth $\cO$-scheme $\cS/\cO$ with a proper dominant birational map $\pi:\cS\to\cY$ such that $\cO_\cY\xrightarrow{\cong}\pi_*\cO_\cS$ via the natural map.
    \item We say $(\bar\cS,\cD)/\cO$ is log smooth if it is \'etale locally isomorphic over $\cO$ to $(\mathbb{A}^n_\cO,\cD)$, where $\cD$ is a union of coordinate hyperplanes.
    \item Let $\cS/\cO$ be a smooth $\cO$-scheme.  By a log smooth compactification we mean a log smooth proper $\cO$-scheme $(\bar \cS,\cD)/\cO$ with $\bar\cS\setminus \cD=\cS$ (as an $\cO$-scheme).

\end{enumerate}
\end{definition}
In the cases that will be of interest to us, the boundary of $\bar\cS$ will be detected by the fundamental group in the sense made precise by the following definition.  For a log smooth scheme $(\bar S,D)$ over $\bbC$ with $S=\bar S\setminus D$ and whose boundary $D$ has irreducible components $\{D_i\}_{i \in I}$, and for any point $x\in D$ together with a small arc $\hat x:[0,1)\to \bar S$ mapping $0$ to $x$ and $(0,1)$ to $S$, the local fundamental group $\pi_1^\mathrm{loc}(\bar S,D,\hat x):=\lim_{x\in U\subset\bar S}\pi_1(U\setminus D,\hat x)$ is naturally identified with $\bigoplus_{D_i\ni x}\Z$, where the sum is over irreducible component $D_i$ of $D$ containing $x$ and the basepoint is taken to be any point on $\hat x$ sufficiently close to $x$.  
\begin{definition}
Let $(\bar S,D)$ be a proper log smooth complex variety with $S=\bar S\setminus D$ and $\pi:S\to Y$ a resolution of a normal complex variety $Y$.  We say the boundary of $\bar S$ is $\pi$-natural if for every morphism $f:(\bar C,D_C)\to (\bar S,D)$ from a log smooth proper curve $(\bar C,D_C)$ with $D_C=(f^*D)^\mathrm{red}$ and for any $c\in D_C$, we have that the composition 
\[\pi_1^\mathrm{loc}(C,D_C,\hat c)\to \pi_1^\mathrm{loc}(\bar S,D,f(c))\to\pi_1(S,f(\hat c))\to \pi_1(Y,\pi\circ f(\hat c))\to\pi_1^\et(Y,\pi\circ f(\hat c))\]
is injective.  Equivalently, we require that for any $f,c$ as above and any $n$, there is a Galois finite \'etale cover $Y'\to Y$ whose base-change to $C$ ramifies to order $\geq n$ at $c$. 

Let $\cY/\cO$ be a normal finite type flat $\cO$-scheme, $\pi:\cS\to\cY$ a resolution, and $\bar \cS/\cO$ a log smooth compactification of $\cS$.  We say that the boundary of $\bar\cS$ is $\pi$-natural if this is true for the base-change of $\pi$ and $\bar \cS$ to $\bbC$ for some (hence any, by the above remark) embedding of $K$ into $\bbC$.  If $\pi$ is the identity we just say $\bar\cS$ has natural boundary.
    
\end{definition}

\begin{lemma}\label{extend codim 1}Let $F/\Q_p$ be an unramified extension  with ring of integers $\cO$.  Let $\cY/\cO$ be a normal finite type flat $\cO$-scheme, $\pi:\cS\to\cY$ a resolution, and $\bar \cS/\cO$ a log smooth compactification of $\cS$ with $\pi$-natural boundary.  Let $\cX/\cO$ be a normal finite type flat $\cO$-scheme and $f_F:X\to Y$ an $\cO$-admissible morphism over the generic point.  Then there is a finite type flat $\cX'/\cO$, a proper dominant generically finite morphism $\cX'\to\cX$, and a morphism $\cX'\to\cS$ such that the following diagram on the generic fibers commutes:
\[\begin{tikzcd}
    X'\ar[r]\ar[d]&S\ar[d,"\pi_F"]\\
    X\ar[r,"f_F"]&Y.
\end{tikzcd}\]
 Moreover, if the image of $f_F$ meets the locus of $Y$ where $\pi$ is an isomorphism, then $\cX'\to\cX$ can be taken to be an isomorphism over a dense open set.
\end{lemma}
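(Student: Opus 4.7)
The plan is to construct $\cX'$ as a proper generically finite modification of $\cX$ on which the induced map to $\bar\cS$ factors through $\cS$; I will first reduce to extending over codimension-one points of the special fiber via a graph-closure construction, then use the interplay of $\cO$-admissibility and $\pi$-naturality to preclude the image of such points from landing in the boundary $\cD$.

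First I lift $f_F$ through the resolution. Since $\pi:\cS\to\cY$ is proper birational, taking the irreducible component of $X\times_Y S$ that dominates $X$ and normalizing yields a proper dominant generically finite morphism $X'\to X$ together with a lift $X'\to S$; if the image of $f_F$ meets the iso locus of $\pi$, then $X'\to X$ is birational, which gives the moreover clause. I then set $\cZ$ to be the scheme-theoretic closure of the graph of $X'\to X\times S$ inside $\cX\times_\cO\bar\cS$, with projections $p_1:\cZ\to\cX$ (proper, dominant, generically finite) and $p_2:\cZ\to\bar\cS$ (everywhere defined by properness of $\bar\cS$); I replace $\cZ$ by its normalization without relabeling.

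The main step is to show that $p_2(\zeta)\in\cS$ for every codimension-one point $\zeta\in\cZ$. The generic-fiber case is immediate; for $\zeta$ in the special fiber, let $R=\cO_{\cZ,\zeta}$ (a DVR by normality) and suppose for contradiction that $p_2$ sends the closed point to $c\in\cD$. Log smoothness of $(\bar\cS,\cD)$ provides étale local coordinates near $c$ in which $\cD$ is cut out by $x_1\cdots x_r=0$, and some $u_j=p_2^*x_j\in R$ must have positive valuation $v_j$. Choosing $n$ coprime to the residue characteristic and to $v_j$, the cyclic cover locally adjoining $\sqrt[n]{x_j}$ pulls back to a tamely ramified cover of $\Spec R$ of order $n$ at $\zeta$. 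By the $\pi$-natural boundary condition---applied to a log smooth complex curve $(\bar C,D_C)\to(\bar\cS_\bbC,\cD_\bbC)$ obtained by algebraizing a curve in $\cX$ transverse to $\overline{\{\zeta\}}$ and pushing forward to $\bar\cS$---this local cyclic cover extends to a finite étale Galois cover $\tilde Y\to Y$ (over a finite unramified extension of $F$, absorbed into $F$). Granting that $\tilde Y$ can be arranged to extend to a finite étale cover $\tilde\cY\to\cY$, the $\cO$-admissibility of $f_F$ (inherited by the morphism on function fields induced through $p_1$) then forces the pullback of $\tilde\cY$ to extend to a finite étale cover of $\cZ$, in particular unramified at $\zeta$---contradicting the construction, which gave ramification of order $n$ at $\zeta$.

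The codimension-one claim implies $p_2^{-1}(\cD)\subset\cZ$ has codimension at least two; to globalize, I blow up $\cZ$ along the reduced closure of this set to get $\tilde\cZ\to\cZ$, in which the preimage of $\cD$ becomes a divisor. Rerunning the codimension-one analysis on $\tilde\cZ$ (using that admissibility is preserved by the modification) forces this divisor to be empty, so $\tilde\cZ\to\bar\cS$ factors through $\cS$; setting $\cX':=\tilde\cZ$ then yields all the claims of the lemma, with the moreover clause following because the blow-up center lies in the complement of the open locus where $p_1$ was already birational. The main technical obstacle I anticipate is the matching step asserting that $\tilde Y$ can be chosen to extend to $\cY$: naturality produces a cover of the generic fiber, whereas admissibility is formulated in terms of $\cY$. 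One resolves this by invoking Zariski--Nagata purity to identify the kernel of $\pi_1^\et(Y)\twoheadrightarrow\pi_1^\et(\cY)$ with the subgroup generated by tame inertia at codimension-one points of $\cY_0$, and verifying that the local monodromy at $c\in\cD$ supplied by $\pi$-naturality maps nontrivially to $\pi_1^\et(\cY)$, so that $\tilde Y$ may be replaced by a quotient factoring through $\pi_1^\et(\cY)$ with the same nontrivial local monodromy at $c$.
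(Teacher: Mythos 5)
Your construction of $\cX'$ (normalize the dominant component $X'$ of $X\times_Y S$, close the graph in $\cX\times_\cO\bar\cS$) is a minor variant of the paper's, which takes the closure $\cW$ of $X\times_Y S$ in $\cX\times_\cO\bar\cS$ and cuts $\cW$ down to a generically finite $\cX'$ by a complete intersection of ample hypersurfaces, setting $\cX'=\cW$ in the moreover case. Both yield a proper, dominant, generically finite $\cO$-scheme over $\cX$ mapping to $\bar\cS$. The substantive difference is in how you rule out the boundary. The paper observes that if $\cX'$ meets $\cX\times_\cO\cD$ then one can find a finite extension $\cO'/\cO$ and an $\cO'$-point of $\cX'$ whose closed point lands in $\cD$, and then invokes the argument of \cite[Thm~4.4]{PST}, which is a specialization/arc argument pitting natural boundary against admissibility along that arc. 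You instead run a codimension-one analysis on $\cZ$ and then bootstrap via a blow-up along the (codim $\geq 2$) preimage of $\cD$.

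The codimension-one step contains a genuine gap, one that you partially flag yourself. First, the application of $\pi$-naturality to the point $\zeta$ is not licit as written: $\pi$-naturality is a statement about maps $(\bar C, D_C)\to(\bar\cS_\bbC,\cD_\bbC)$ from smooth complex curves, whereas $\zeta$ is a codimension-one point of the special fiber of $\cZ$. There is no well-defined operation of ``algebraizing a curve in $\cX$ transverse to $\overline{\{\zeta\}}$'' that produces a \emph{complex} curve hitting $\cD_\bbC$; a transversal to $\overline{\{\zeta\}}$ is a mixed-characteristic arc. Second, and more fundamentally, $\pi$-naturality supplies \'etale covers of $Y$ (equivalently of $Y_\bbC$ or $Y_{\bar F}$) with large local monodromy at $\cD$, while admissibility controls covers of the integral model $\cY$ (it gives you the dashed arrow $\pi_1^\et(\cX)\to\pi_1^\et(\cY)$). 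Your proposed fix---that Zariski--Nagata purity identifies $\ker(\pi_1^\et(Y)\twoheadrightarrow\pi_1^\et(\cY))$ with inertia at codimension-one points of $\cY_0$, so the local monodromy at $c\in\cD$ ``maps nontrivially to $\pi_1^\et(\cY)$''---does not establish this: $\cY$ is only normal, and even granting a purity statement, there is no \emph{a priori} reason the monodromy subgroup at the boundary divisor $\cD$ (at infinity) is disjoint from the subgroup generated by inertia along the special fiber $\cY_0$. Without that disjointness you cannot conclude the relevant cover of $Y$ extends to a cover of $\cY$, so you cannot compare the ramified cover of $\Spec R$ obtained from $p_2$ with an \'etale cover of $\cZ$ obtained from $p_1$ and admissibility, and the contradiction does not follow. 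Reducing to an $\cO'$-point, as the paper does, is what makes the comparison with the complex naturality condition tractable (the arc plays the role of a one-parameter degeneration, matching the curve $(\bar C,D_C)$ in the definition); this is the content of \cite[Thm~4.4]{PST}, and it is the missing ingredient in your approach.
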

\begin{proof}Let $\cW\subset \cX\times_\cO\bar\cS$ be the closure of the fiber product $X\times_Y S\subset \cX\times_\cO\bar\cS$.  We then have that the projection $\cW\to\cX$ is proper dominant with a morphism $\cW\to \bar\cS$, and taking an appropriate complete intersection of ample hypersurfaces, we may find $\cX'\subset\cW$ which is proper dominant generically finite over $\cX$.  If $X$ meets the locus where $\pi$ is an isomorphism, then we take $\cX'=\cW$ and the morphism $\cX'\to \cX$ is a generic isomorphism.  Either way, if the boundary of $\bar \cS$ intersects $\cX'$ nontrivially, then there is a finite extension $\cO'$ of $\cO$ and an $\cO'$-point of $\cX'/\cO$ whose closed point maps to the boundary.  Because the boundary is natural, by the argument of \cite[Thm 4.4]{PST} this contradicts the admissibility assumption.

\end{proof}
\begin{corollary}\label{extend codim 1 nat}
    In the setting of the lemma, assuming the image of $f_F$ meets the locus of $Y$ where $\pi$ is an isomorphism, there is a codimension $\geq 2$ subset $\cZ\subset\cX$ and an extension of $f_F$ to a morphism $f:\cX\setminus\cZ\to\cY$.
\end{corollary}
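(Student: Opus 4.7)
\emph{Plan.} The idea is to descend the morphism produced by \Cref{extend codim 1} from a suitable modification $\cX'$ of $\cX$ down to $\cX$ itself, losing only a codimension $\geq 2$ subset. Since by hypothesis $f_F(X)$ meets the locus in $Y$ where $\pi$ is an isomorphism, the ``moreover'' clause of \Cref{extend codim 1} applies, and I may take $g\colon \cX'\to\cX$ to be a proper birational morphism that is an isomorphism over a dense open subset, together with a morphism $h\colon\cX'\to\cS$ whose composition with $\pi_F$ agrees generically with $f_F\circ g_F$. Composing with $\pi$ yields a morphism $\pi\circ h\colon \cX'\to\cY$ extending $f_F\circ g_F$ on the generic fiber.

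It then suffices to know that $g$ is an isomorphism away from a codimension $\geq 2$ closed subset $\cZ\subset\cX$. This is the standard consequence of Zariski's main theorem for proper birational morphisms into a normal target: since $\cX$ is normal, $g_*\cO_{\cX'}$ is a coherent $\cO_\cX$-subalgebra of the constant sheaf of rational functions, hence equals $\cO_\cX$; thus $g$ coincides with its own Stein factorization and in particular has geometrically connected fibers. At any codimension $1$ point $\eta\in\cX$, the local ring $\cO_{\cX,\eta}$ is a DVR, and upper semicontinuity of fiber dimension (applied to the proper map with a generic fiber consisting of a single point) forces $g^{-1}(\eta)$ to be zero-dimensional, hence finite; quasi-finite plus proper plus $g_*\cO_{\cX'}=\cO_\cX$ then gives an isomorphism in a neighborhood. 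Openness of the isomorphism locus and quasi-compactness cut out the desired $\cZ$ of codimension $\geq 2$.

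Finally, let $j\colon \cX\setminus\cZ\to\cX'$ denote the inverse of the restriction of $g$ over $\cX\setminus\cZ$, and define
\[
f\;:=\;\pi\circ h\circ j\colon\;\cX\setminus\cZ\longrightarrow \cY.
\]
By construction $f$ agrees with $f_F$ on the generic fiber, completing the argument.

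The substantive content is entirely packaged in \Cref{extend codim 1} (where the hypothesis of $\pi$-natural boundary is used to prevent escape to the boundary); the present corollary is a formal consequence. The only subtlety is the Zariski's-main-theorem step showing that a proper birational morphism to a normal scheme is an isomorphism in codimension $1$, which as outlined above is routine given normality of $\cX$.
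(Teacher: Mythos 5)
Your proposal takes a genuinely different route from the paper, and unfortunately one of its steps is misjustified.  The paper's proof is a one-liner by the valuative criterion of properness: since $\cX$ is normal, $\cO_{\cX,\eta}$ is a DVR for each codimension $1$ point $\eta$, so the morphism $\Spec\cO_{\cX,\eta}\to\cX$ lifts along the proper map $\cX'\to\cX$, and composing with $\cX'\to\cS\to\cY$ extends $f_F$ over $\eta$.  No claim that $g\colon\cX'\to\cX$ is an isomorphism near $\eta$ is ever needed.  You instead aim to show that $g$ is an isomorphism outside a codimension $\geq 2$ subset and then invert it, which is a heavier route.

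The step that fails is the assertion that ``upper semicontinuity of fiber dimension \dots\ forces $g^{-1}(\eta)$ to be zero-dimensional.''  Upper semicontinuity says exactly the opposite: fiber dimension can only jump \emph{up} under specialization, so knowing the generic fiber is a point places no a priori bound on the dimension of the fiber over a codimension $1$ point $\eta$.  What actually makes $g^{-1}(\eta)$ finite is Zariski's main theorem (equivalently, the dimension formula in the universally catenary setting at hand): for $\xi'$ a generic point of a component of $g^{-1}(\eta)$, one has $\dim\cO_{\cX',\xi'}+\operatorname{trdeg}_{k(\eta)}k(\xi')=\dim\cO_{\cX,\eta}=1$ since $g$ is birational, forcing $\operatorname{trdeg}_{k(\eta)}k(\xi')=0$.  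With that correction the rest of your ``$g_*\cO_{\cX'}=\cO_\cX$ plus finite implies iso'' argument goes through, but as written the proof is not sound, and it remains more circuitous than the paper's direct use of the valuative criterion.
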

\begin{proof}
    $\cX'\to\cX$ is proper so every codimension 1 point $\eta$ lifts to $\cX'$ and maps to $\cS$, so $f_F$ extends over $\eta$.
\end{proof}

\subsection{Natural boundary and local systems}
For $\cS/\cO$ smooth and $\bar \cS$ a log smooth compactification, both the naturality of the boundary and the admissibility condition on $\cO$-morphisms to $\cS$ can be checked using an $\ell$-adic local system $\V_\ell$ on $\cS$.  The key property such a local system must have is ``nonextendability'' over the generic fiber, meaning it does not extend to any nontrivial partial compactification---see \cite[\S3]{brunebarbe}, where it is referred to as ``maximality''.

\begin{definition}
        Let $Y/\bbC$ be a normal variety with an $\ell$-adic local system $\mathbb{V}_\ell$.  We say $\mathbb{V}_\ell$ is nonextendable if for any resolution $\pi:S\to Y$, finite \'etale $p:S'\to S$, and partial log smooth compactification $ S'\subset \bar S'$, if the local system $(p\circ\pi)^*\mathbb{V}_\ell$ extends to $\bar S'$, then $S'=\bar S'$.
        
        For $Y/F$ a normal variety with an $\ell$-adic local system $\mathbb{V}_\ell$, we say $\mathbb{V}_\ell$ is geometrically nonextenable if the base-change to $Y_\bbC$ is, for some (hence any) complex embedding $F\subset\bbC$.  We say an $\ell$-adic local system on an integral model $\cY$ of $Y$ is geometrically nonextendable if the restriction to the generic fiber is.
\end{definition}

Nonextendability restricts well:

\begin{lemma}[{\cite[\S3]{brunebarbe}}]\label{faithful local monodromy}
    Let $Y/\bbC$ be a normal variety and $\V_\ell$ an $\ell$-adic local system on $Y$.  Then the following are equivalent:
    \begin{enumerate}
        \item $\V_\ell$ is nonextendable.
        \item For any curve $C/\bbC$ and any proper morphism $f:C\to Y$, $f^*\V_\ell$ is nonextendable.
        \item For any normal variety $X/\bbC$ and any proper morphism $f:X\to Y$, $f^*\V_\ell$ is nonextendable.
    \end{enumerate}
\end{lemma}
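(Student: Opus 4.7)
The plan is to prove the chain $(3)\Rightarrow(2)\Rightarrow(1)\Rightarrow(3)$. The implication $(3)\Rightarrow(2)$ is immediate since any curve is a one-dimensional normal variety.

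For $(2)\Rightarrow(1)$ I argue by contrapositive via a Bertini slicing. Given a witness $(\pi:S\to Y,\, p:S'\to S,\, S'\subsetneq\bar S')$ to extendability of $\V_\ell$, pick a sufficiently general smooth projective curve $\bar C\hookrightarrow\bar S'$ (for instance a complete intersection of ample hypersurfaces in a projective compactification of $\bar S'$) meeting the boundary $\bar S'\setminus S'$ transversely at a single smooth boundary point. Since $\bar C$ is closed in $\bar S'$ and $S'\subset \bar S'$ is open, $C:=\bar C\cap S'$ is closed in $S'$, so the composition $f:=(\pi\circ p)|_C:C\to Y$ is proper as the restriction of a proper morphism to a closed subscheme. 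Then $\bar C$ is a log smooth compactification of $C$ to which $f^*\V_\ell$ extends, contradicting (2).

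For $(1)\Rightarrow(3)$ I again argue by contrapositive. Given a proper $f:X\to Y$ with $X$ normal and a witness $(\pi_X,p_X,\bar S_X')$ to extendability of $f^*\V_\ell$, the same Bertini slicing produces a non-proper smooth curve $C$ closed in $S_X'$, a log smooth compactification $\bar C\supset C$ adding a single point $c$, and a proper morphism $g:=f\circ\pi_X\circ p_X|_C:C\to Y$ such that $g^*\V_\ell$ extends across $c$. Fix any resolution $\pi_Y:S_Y\to Y$ and any log smooth compactification $\bar S_Y\supset S_Y$. Since $C$ is smooth, $g$ lifts through the proper birational map $\pi_Y$ to a morphism $\tilde g:C\to S_Y$, and since $\bar C$ is a smooth curve and $\bar S_Y$ is proper, $\tilde g$ extends uniquely to a morphism $\bar g:\bar C\to\bar S_Y$. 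The key geometric claim is that $\bar g(c)\in\bar S_Y\setminus S_Y$: if instead $\bar g(c)\in S_Y$, a small punctured analytic neighborhood of $c$ in $\bar C$---a non-relatively-compact end of $C$---would map via $g$ into a compact neighborhood of $\pi_Y(\bar g(c))$ in $Y$, contradicting properness of $g$. Granting this, let $D_Y\subset\bar S_Y$ be a boundary divisor through $\bar g(c)$, let $\alpha$ be the local monodromy of $\pi_Y^*\V_\ell$ around $D_Y$, and let $e$ be the multiplicity of $\bar g^*D_Y$ at $c$. The triviality of the pulled-back monodromy around $c$ forces $\alpha^e=1$, so $\alpha$ has finite order, and an appropriate finite \'etale cover of $S_Y$ (corresponding to a finite-index subgroup of $\pi_1(S_Y)$ killing $\alpha$) exhibits an extension of $\V_\ell$ across a preimage of $D_Y$, contradicting (1).

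The main obstacle I expect is the geometric claim just described---that properness of $g$ forces $\bar g$ to send the boundary of $\bar C$ into the boundary of $\bar S_Y$, rather than contracting it into the interior $S_Y$. Without this, triviality of local monodromy around $c$ would only translate into a statement about local monodromy of $\V_\ell$ at an interior point of a resolution of $Y$, which carries no content for the extendability of $\V_\ell$. Properness of $g$ is the one nontrivial use of the properness hypothesis in (2) and (3), and it precisely enforces this boundary-to-boundary behavior.
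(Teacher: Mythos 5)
Your chain $(3)\Rightarrow(2)\Rightarrow(1)$ is fine (the ``single'' intersection point in the Bertini slice is unnecessary but harmless, and the properness of $f=(\pi\circ p)|_C$ is correctly derived from $C$ being closed in $S'$). The paper cites \cite{brunebarbe} for this lemma and does not reproduce a proof, so I will just evaluate correctness.

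The serious problem is in $(1)\Rightarrow(3)$, at the step ``The triviality of the pulled-back monodromy around $c$ forces $\alpha^e=1$.'' This is only valid when $\bar g(c)$ lies on a \emph{single} component of the boundary divisor $D_Y\subset\bar S_Y$. If $\bar g(c)$ lies on the intersection of several boundary components $D_1,\dots,D_k$, the local fundamental group of $(\bar S_Y,D_Y)$ at $\bar g(c)$ is $\bigoplus_i\Z$, the map $\pi_1^{\mathrm{loc}}(\bar C,c)\to\bigoplus_i\Z$ sends $1\mapsto(e_1,\dots,e_k)$, and what is forced is only the relation $\prod_i\alpha_i^{e_i}=1$ among the (commuting) boundary monodromies. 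That gives no bound on the order of any individual $\alpha_i$. A concrete instance where the conclusion you draw is false: take $Y=\mathbb{G}_m\times\mathbb{G}_m$ with $\V_\ell$ of rank $2$ having boundary monodromies $\alpha$ and $\alpha^{-1}$ for some infinite-order unipotent $\alpha$, and let $C$ be the diagonal copy of $\mathbb{G}_m$; then $g^*\V_\ell$ has trivial local monodromy at both ends of $\bar C=\bbP^1$, yet neither boundary monodromy of $\V_\ell$ on $Y$ has finite order. (Here $\V_\ell$ is in fact extendable, via the blow-up of $\A^2$ at the origin --- but not by the mechanism your proof gives.) So the argument must be supplemented by a log blow-up of $\bar S_Y$ along boundary strata so that the strict transform of $\bar g(\bar C)$ meets the boundary at a smooth point of a single component $E$; the monodromy around $E$ is a specific word in the $\alpha_i$ whose order is then constrained by $\prod_i\alpha_i^{e_i}=1$, and only after this reduction does the finite \'etale cover trick produce a witness.

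There is also a smaller gap at ``Since $C$ is smooth, $g$ lifts through the proper birational map $\pi_Y$.'' If $g(C)$ is contained in the fundamental locus of $\pi_Y^{-1}$ (a codimension $\geq 2$ set, so this can happen once $\dim Y\geq3$), the generic point of $C$ need not lift to $S_Y$; one must either replace $C$ by the normalization of a suitable component of $C\times_Y S_Y$ (a finite cover, which is harmless here since nonextendability is preserved under finite covers and pullbacks of extensions), or choose $\pi_Y$ to be an embedded resolution adapted to $g(C)$. The argument should say which.
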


The next lemma interprets the naturality of the boundary and admissibility in terms of the local system as promised. 

\begin{lemma}\label{faithful => natural} Suppose $\cY/\cO$ is a normal finite type flat $\cO$-scheme, $\pi:\cS\to\cY$ a resolution, $\bar \cS/\cO$ a log smooth compactification of $\cS$, and $\mathbb{V}_\ell$ a geometrically nonextendable $\ell$-adic local system on $\cY$.  Then
\begin{enumerate}
    \item\label{local system 1} $\bar \cS$ has $\pi$-natural boundary.
    \item\label{local system 2} For $\cX/\cO$ regular, a morphism $f_F:X\to Y$ over the generic point which meets the locus of $Y$ where $\pi$ is an isomorphism is $\cO$-admissible if and only if $f_F^*\mathbb{V}_\ell$ extends to $\cX$.
\end{enumerate}
    
\end{lemma}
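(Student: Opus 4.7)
For Part~\ref{local system 1}, I would first apply Lemma~\ref{faithful local monodromy} to the proper resolution $\pi:S\to Y$ to conclude that $\pi^*\V_\ell$ is nonextendable on $S$. Given a log smooth proper complex curve $(\bar C, D_C)$ with a morphism $f:(\bar C, D_C)\to(\bar S, D)$ satisfying $D_C=(f^*D)^\mathrm{red}$, the restriction $f|_C:C\to S$ is proper, as it is the base change of the proper morphism $f$ along the open immersion $S\hookrightarrow\bar S$. A second application of Lemma~\ref{faithful local monodromy} then shows that $(f|_C)^*\pi^*\V_\ell$ is nonextendable on $C$, and similarly on all of its finite \'etale covers. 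This forces the local monodromy around each $c\in D_C$ in the monodromy representation attached to $\V_\ell$ to have infinite order, hence the same in $\piet(Y)$. Since $\pi_1^\mathrm{loc}(C,D_C,\hat c)\cong\mathbb{Z}$ is torsion-free, infinite image is equivalent to injectivity.

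For Part~\ref{local system 2}, direction $(\Rightarrow)$ is immediate: admissibility provides a map $\piet(\cX)\to\piet(\cY)$, and composing with the representation attached to $\V_\ell$ produces a local system on $\cX$ restricting to $f_F^*\V_\ell$. For $(\Leftarrow)$, I would adapt the $\cW$-construction from the proof of Lemma~\ref{extend codim 1}: let $\cW\subset\cX\times_\cO\bar\cS$ be the closure of $X\times_Y S$. By the hypothesis that $f_F$ meets the isomorphism locus of $\pi$, the first projection $\cW\to\cX$ is proper, dominant, and a generic isomorphism. The crucial claim is that the second projection $\cW\to\bar\cS$ factors through $\cS$. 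If not, there would exist a DVR $\cO'$ finite over $\cO$ and a morphism $\Spec\cO'\to\cW$ whose closed point maps into $\cD$; projecting to each factor yields $\sigma:\Spec\cO'\to\cX$ and $\tau:\Spec\cO'\to\bar\cS$ with common generic fiber $\Spec F'\to X\times_Y S$. Pulling back the extended $f_F^*\V_\ell$ along $\sigma$ shows that the inertia $I_{F'}$ acts trivially in the monodromy representation. On the other hand, since $\tau$ hits $\cD$ in the special fiber, Part~\ref{local system 1} (transferred to the arithmetic setting via specialization of tame $\pi_1^\et$, valid for $\ell\neq p$) forces $I_{F'}$ to have infinite image in $\piet(Y)$ and hence in the same representation, a contradiction. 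Therefore $\cW\to\cS$, and composing with $\pi$ produces an extension of $f_F$ over the generic-isomorphism locus of $\cW\to\cX$. Running this at each codimension-$1$ special-fiber point of $\cX$ yields an extension on an open $\cU\subset\cX$ with complement of codimension $\geq 2$, and Zariski--Nagata purity for the regular scheme $\cX$ gives $\piet(\cU)\cong\piet(\cX)$, so the factorization required for admissibility follows.

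The main obstacle is the contradiction step in direction $(\Leftarrow)$, where the naturality of the boundary from Part~\ref{local system 1}---formulated over $\bbC$---must be transferred into the arithmetic setting to detect the action of the arithmetic inertia $I_{F'}$ associated to the DVR $\tau$. For $\ell\neq p$ this is standard via specialization of tame \'etale fundamental groups, but the matching of arithmetic inertia with complex-analytic local monodromy at the mixed-characteristic boundary requires careful bookkeeping.
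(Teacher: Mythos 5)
Your proof follows essentially the same path as the paper's. Part~\ref{local system 1} is identical: apply \Cref{faithful local monodromy} to the proper composite $C\to S_\bbC\to Y_\bbC$ and read off infinite local monodromy at each boundary point, which gives injectivity of $\pi_1^\mathrm{loc}(C,D_C,\hat c)\to\piet(Y)$ since $\Z$ is torsion-free. For Part~\ref{local system 2}, the paper's proof is a one-liner: it invokes ``the same argument as in \Cref{extend codim 1}'' (which produces the $\cW$-construction and, at the point where an $\cO'$-point of $\cW$ hits the boundary $\cD$, cites \cite[Thm~4.4]{PST} for the contradiction), then closes by Zariski--Nagata purity. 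You reconstruct that argument in situ rather than quoting it: you spell out the $\cW$-construction, the dichotomy at the boundary, and the inertia-vs-monodromy contradiction. The concern you flag at the end --- that matching the arithmetic inertia $I_{F'}$ with the complex-analytic local monodromy at the mixed-characteristic boundary requires ``careful bookkeeping'' --- is precisely the content of \cite[Thm~4.4]{PST}, which the paper delegates to rather than re-proves. So your proposal is correct and substantively the same; the only difference is that you unpack the citation to PST where the paper does not. One minor imprecision: the claim ``$\cW\to\bar\cS$ factors through $\cS$'' is what you ultimately want, but the route there is that the nonexistence of boundary-hitting $\cO'$-points forces the image of $\cW$ to avoid $\cD$; this is fine as written since $\cW$ is $\cO$-flat and proper over $\cX$, so if $\cW$ met $\cD$ it would do so along a component that hits the special fiber and hence would admit an $\cO'$-point by the valuative criterion.
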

\begin{proof}
    For \ref{local system 1}, by \Cref{faithful local monodromy}, for any proper morphism $f:C\to S_\bbC$ from a curve, the local monodromy of $f^*\mathbb{V}_\ell$ around any boundary point is infinite.

    For \ref{local system 2}, the forward implication is obvious.  For the reverse implication, the same argument as in \Cref{extend codim 1} shows that $f_F$ extends to $f:\cX\setminus\cZ\to\cY$ for a codimension $\geq 2$ subset $\cZ$, but by purity of the branch locus $\piet(\cX\setminus\cZ,\bar x)\to\piet(\cX,\bar x)$ is an isomorphism.
\end{proof}
\begin{corollary}\label{local system extend}
    For $\cY,\V_\ell$ as in the lemma, $\cX/\cO$ a normal finite type flat $\cO$-scheme, and $f_F:X\to Y$ a morphism over the generic point such that $f_F^*\V_\ell$ extends to $\cX$. 
 Then there is a codimension $\geq 2$ subset $\cZ\subset\cX$ and an extension of $f_F$ to a morphism $f:\cX\setminus\cZ\to\cY$.
\end{corollary}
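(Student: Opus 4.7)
I would adapt the proof of \Cref{extend codim 1} and \Cref{extend codim 1 nat}, substituting the assumption that $f_F^*\V_\ell$ extends to $\cX$ in place of the $\cO$-admissibility used there. The setup is identical: let $\cW\subset\cX\times_\cO\bar\cS$ be the closure of $X\times_Y S$ and choose a closed subscheme $\cX'\subset\cW$ that is proper, dominant, and generically finite over $\cX$ (taking $\cX'=\cW$ in the case that the image of $f_F$ meets the isomorphism locus of $\pi$). The proof then splits into two parts: a boundary-avoidance step and a descent step from $\cX'$ back to $\cX$ at codimension one points.

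For the boundary-avoidance step I claim $\cX'\to\bar\cS$ factors through $\cS$. If not, then exactly as in the proof of \Cref{extend codim 1} there exist a finite extension $\cO'/\cO$ and a morphism $\Spec\cO'\to\cX'$ whose closed point lands in the boundary $\cD:=\bar\cS\setminus\cS$ and whose generic point lies in $\cX'\setminus\cD$. The pullback of $\V_\ell$ (viewed on $\cS$ via $\pi^*$) along the composition $\Spec\mathrm{Frac}(\cO')\to S\to Y$ has infinite local monodromy around the closed point of $\Spec\cO'$, thanks to the $\pi$-naturality of the boundary of $\bar\cS$ provided by part (1) of \Cref{faithful => natural} combined with the geometric non-extendability of $\V_\ell$, so this pullback does not extend to $\Spec\cO'$. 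On the other hand, the assumption that $f_F^*\V_\ell$ extends to $\cX$ forces its further pullback to $\cX'$ and then to $\Spec\cO'$ to extend, a contradiction. Composing with $\pi:\cS\to\cY$ now yields a morphism $g:\cX'\to\cY$. This is the main obstacle, and it is essentially the same argument used in the reverse direction of part (2) of \Cref{faithful => natural}.

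For the descent step, I use that $\cX$ is normal to restrict to its regular locus (whose complement has codimension $\geq 2$) and argue pointwise at codimension one points $\eta\in\cX$ with DVR $R=\cO_{\cX,\eta}$. The generic fiber of $\cX'\to\cX$ is finite over $\Spec K(R)$; pick a closed point with residue field $L$, let $R''$ be a localization of the integral closure of $R$ in $L$ at a prime above the uniformizer of $R$, and use properness of $\cX'\to\cX$ (valuative criterion) to extend $\Spec L\to\cX'$ to $\Spec R''\to\cX'$. Composing with $g$ gives a morphism $\Spec R''\to\cY$ whose generic fiber factors through $\Spec K(R)\to Y$. Choosing an affine open $\Spec A\subset\cY$ containing the image of the closed point of $\Spec R''$, the ring map $A\to R''$ has image contained in $K(R)\cap R''$; since the only DVR of $K(R)$ strictly containing $R$ is $K(R)$ itself, which is incompatible with $R\subsetneq R''$, this intersection equals $R$. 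Hence $A\to R''$ factors through $R$, giving the extension $\Spec R\to\Spec A\subset\cY$ at $\eta$ and completing the proof.
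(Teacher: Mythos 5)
Your proof is correct and follows the same overall strategy the paper uses: first reduce to the construction of \Cref{extend codim 1} (forming $\cW\subset\cX\times_\cO\bar\cS$ and a proper dominant generically finite $\cX'\to\cX$ mapping to $\bar\cS$), then rule out the boundary via naturality/PST with the admissibility hypothesis swapped for the hypothesis that $f_F^*\V_\ell$ extends, and finally descend the morphism $\cX'\to\cS\to\cY$ to codimension-one points of $\cX$. Where you genuinely depart from the paper is in the last step. The paper's \Cref{extend codim 1 nat} handles this descent only under the extra hypothesis that $f_F$ meets the locus where $\pi$ is an isomorphism, so that $\cX'\to\cX$ can be taken to be a generic isomorphism and one can simply lift a codimension-one point and push forward. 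The stated \Cref{local system extend} does not carry that hypothesis, and your valuation-theoretic argument---extending $\Spec L\to\cX'$ to $\Spec R''\to\cX'$ by properness, composing into an affine $\Spec A\subset\cY$, and observing that the image of $A$ lands in $K(R)\cap R''=R$ because $R''$ dominates $R$---supplies the descent without it. This is cleaner and slightly more general than what a literal invocation of \Cref{extend codim 1 nat} would give, and it closes the small gap between the hypotheses of \Cref{extend codim 1 nat} and the statement of \Cref{local system extend}. Two minor remarks: the justification that the pullback of $\V_\ell$ to $\Spec\cO'$ is ramified rests on the $p$-adic argument of \cite[Thm 4.4]{PST} rather than the complex-analytic meaning of ``infinite local monodromy,'' so it is worth citing that explicitly rather than only through \Cref{extend codim 1}; and the phrase ``incompatible with $R\subsetneq R''$'' should really read ``incompatible with $R''$ dominating $R$'' (which is exactly what your choice of $R''$ as a localization above $\mathfrak m_R$ ensures), since mere containment would not by itself rule out $K(R)\subset R''$.
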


\subsection{Integral canonical models of smooth varieties}
For simplicity, we first define integral canonical models in the smooth case (to be generalized in the next section).
 \begin{definition}\label{defn smooth ICM}
     Let $\cY/\cO$ a quasiprojective smooth $\cO$-scheme.  We say $\cY/\cO$ is an integral canonical model if for every closed point $v$ of $\Spec\cO$, the base-change $\cY_v/\cO_v$ satisfies the extension property over $\cO_v$.
 \end{definition}
 The following is immediate:
 \begin{lemma}\label{smooth extend=>general}
    Let $\cS,\cS'/\cO$ be two smooth integral canonical models and $f_{F}:S\to S'$ a morphism over the generic point.
    \begin{enumerate}
        \item $f_F$ extends to a morphism $f:\cS\to\cS'$ if and only if there is a commutative diagram
            \[\begin{tikzcd}
            \piet(S,\bar s)\ar[d]\ar[r]&\piet(S',\bar s')\ar[d]\\
            \piet(\cS,\bar s)\ar[r,dashed]&\piet(\cS',\bar s')
        \end{tikzcd}\]
    for compatibly chosen basepoints.
    \item Suppose further that $\cS'$ admits a log smooth compactification and a geometrically nonextendable $\ell$-adic local system $\V_\ell$.  Then $f_F$ extends to a morphism $f:\cS\to\cS'$ if and only if $f_F^*(\V_\ell)$ extends to $\cS$.
    \end{enumerate}

 \end{lemma}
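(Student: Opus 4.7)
The forward direction of both parts is immediate from functoriality: a morphism $f : \cS \to \cS'$ induces a compatible diagram on $\piet$, and the pullback $f^*\V_\ell$ restricts to $f_F^*\V_\ell$ on $S$ and thus provides an extension of $f_F^*\V_\ell$ to $\cS$. The substance of the lemma lies in the two converses.

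For the converse in (1), the plan is to exploit the fact that the extension problem is local on $\Spec \cO$. Base-changing to the completion $\cO_v$ at each closed point $v$, the scheme $\cS_v := \cS \times_\cO \cO_v$ is smooth over $\cO_v$, and $\cS'_v$ satisfies the extension property of \Cref{smooth extension prop} by \Cref{defn smooth ICM}. The $\piet$-diagram base-changes, so $f_{F_v} : S_v \to S'_v$ remains $\cO_v$-admissible, and therefore extends to a morphism $f_v : \cS_v \to \cS'_v$ over $\cO_v$. Since $\cS'_v$ is separated these local extensions are unique, so the maximal open subscheme of $\cS$ on which the rational map $f_F$ extends contains the generic fiber $S$ together with every special fiber of $\cS \to \Spec\cO$, hence equals $\cS$ and supplies the desired $f$.

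For the converse in (2), I would reduce to (1) by establishing admissibility through \Cref{faithful => natural}(2). Both the log smooth compactification of $\cS'$ and the geometric nonextendability of $\V_\ell$ survive base-change to $\cO_v$ for every closed point $v$. Taking $\cY := \cS'_v$ and $\pi := \mathrm{id}$ in that lemma, so that the condition that the image of $f_F$ meet the isomorphism locus of $\pi$ is vacuous, the hypothesis that $f_F^*\V_\ell$ extends to $\cS$ yields an extension of $f_{F_v}^*\V_\ell$ to $\cS_v$, which is equivalent to $\cO_v$-admissibility of $f_{F_v}$. Part (1) then supplies the global extension.

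The only real bookkeeping is the gluing step in (1), which is handled cleanly by uniqueness of extensions to separated targets; the mild caveat in (2) is that \Cref{faithful => natural} is formulated in the unramified $p$-adic setting, so at any possibly-ramified place of a general number ring $\cO$ one would re-run the codimension-one extension argument of \Cref{extend codim 1} directly using the given log smooth compactification, which requires no essentially new idea.
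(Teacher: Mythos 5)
The paper offers no proof of this lemma, prefacing it with ``The following is immediate,'' so a blind reconstruction is genuinely useful. Your approach matches what the paper evidently has in mind: for the converse of (1), invoke the per-place extension property from \Cref{defn smooth ICM} at each completion $\cO_v$ and glue; for (2), reduce to (1) by converting the local-system extension hypothesis into admissibility via \Cref{faithful => natural}. This is exactly the pattern the paper itself uses when it proves the more general \Cref{extend maps}, whose proof reads ``we have an extension of the composition $S\to Y\to Y'$ to $\cS_v\to\cY'_v$ for every finite place, and therefore an extension $\cS\to\cY'$.''

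Two steps you gloss over deserve a flag, though the paper glosses over them equally. First, the claim that ``the $\piet$-diagram base-changes,'' i.e.\ that $\cO$-admissibility implies $\cO_v$-admissibility, is not a tautology: $\piet(\cS'_v)$ sees finite \'etale covers not pulled back from $\cS'$, so the dashed arrow over $\cO$ does not formally induce one over $\cO_v$. Making this precise requires a purity argument comparing the inertia generating $\ker(\piet(S_v)\to\piet(\cS_v))$ with that generating $\ker(\piet(S)\to\piet(\cS))$. (In the applications one always verifies admissibility via part (2), where the restriction of an extended local system from $\cS$ to $\cS_v$ is immediate, so this subtlety is benign in practice.) Second, passing from an extension over each completion $\cO_v$ to a global $f:\cS\to\cS'$ is an fpqc-descent-plus-spreading-out argument, using uniqueness of extensions into a separated target; your phrasing ``hence equals $\cS$'' compresses this. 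Finally, a small logical wrinkle in (2): you establish $\cO_v$-admissibility and then say ``Part (1) then supplies the global extension,'' but Part (1) as stated takes $\cO$-admissibility as input. What you are actually doing is re-running the gluing step of (1) after obtaining the local extensions from $\cO_v$-admissibility; either phrase it that way, or note that \Cref{faithful => natural}(2) can be applied once over $\cO$ to get $\cO$-admissibility directly (modulo your own caveat about ramified places, which is well taken). None of this changes the verdict: the route is the paper's route.
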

\subsection{Integral canonical models of normal varieties}
We begin with a simple example to demonstrate that, even for a smooth variety, a model which satisfies the smooth extension property might necessarily involve a partial compactification with singularities.

 \begin{eg}\label{eg mg ag}
    Consider the moduli stacks $\mathcal{A}_g$ and $\mathcal{M}_g$ of principally polarized $g$-dimensional abelian varieties and genus $g$ curves, say over $\mathbb{Z}_p$ for $p\gg0$, and let $\cM_g\to\cA_g$ be the Torelli map.  After taking sufficient level structure, both of these stacks are schemes, and the Torelli map is generically immersive.  $\mathcal{A}_g$ satisfies the smooth extension property, so the closure of the image of $\cM_g$ in $\cA_g$ will as well.  This closure is quite singular however:  the map $\cM_g\to\cA_g$ extends to the partial compactification $\tilde \cM_g\to\cA_g$ parametrizing compact-type nodal curves (that is, nodal curves whose dual graph is a tree), but the boundary gets contracted with positive-dimensional (non-uniruled) fibers, as the Jacobian does not record the attaching nodes.
\end{eg}

In the normal case, to deduce the extension of arbitrary morphisms from the case of morphisms with smooth source, we will require the existence of an integral resolution with the property that the subvarieties contracted in a special fiber are limits of subvarieties contracted in the generic fiber.
\begin{definition}\label{defn uniform res}
\def\Hilb{\operatorname{Hilb}}

Let $\cY/\cO$ be a normal finite type flat $\cO$-scheme.  
\begin{enumerate}
    \item A resolution of $\cY/\cO$ is a smooth $\cO$-scheme $\cS/\cO$ with a proper dominant map $\pi:\cS\to\cY$ such that $\cO_\cY\xrightarrow{\cong}\pi_*\cO_\cS$ via the natural map.
    \item A resolution $\pi:\cS\to\cY$ is uniform if there is a flattening stratification of $\pi$ by locally closed subsets $\cY^j$ of $\cY$, each of which is flat over $\Spec \cO$.
\end{enumerate}

\end{definition}

 \begin{lemma}\label{generic uniform}
     Let $Y/F$ be a normal variety, $\pi_F:S\to Y$ a resolution, and $\pi:\cS\to\cY$ a model of $\pi_F$ defined over $\cO$.  Then $\pi$ is a uniform resolution over a (nonempty) open set of $\cO$.
 \end{lemma}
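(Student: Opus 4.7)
The plan is to construct a flattening stratification on the generic fiber, spread out its combinatorial data to the integral model, and then use generic flatness over $\Spec\cO$ finitely many times to find a good open.

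First I would reduce to the case where $\pi:\cS\to\cY$ is proper: since $\pi_F:S\to Y$ is proper (resolutions are proper by \Cref{defn uniform res}) and properness is an open condition on the base of a finite type morphism, there is an open $U_0\subset\Spec\cO$ over which $\pi_{U_0}$ is proper. Working over $U_0$, I would apply the (relative) flattening stratification theorem (e.g.\ as in EGA IV or \cite[Tag 052A]{stacks}, using that one can realize $\pi$ locally as a projective morphism after possibly shrinking) to produce a finite collection of locally closed subschemes $\cY^j\subset \cY_{U_0}$ covering $\cY_{U_0}$ such that $\pi^{-1}(\cY^j)\to\cY^j$ is flat for each $j$. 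By uniqueness/universality of this stratification, its restriction to the generic fiber must coincide with the flattening stratification of $\pi_F:S\to Y$, whose strata $Y^j=\cY^j\cap Y$ are dense in $\cY^j$ (since $\cY$ is flat over $\cO$, and $\cY^j$ is the schematic closure of $Y^j$ in $\cY$ intersected with an open).

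Next I would ensure each stratum is flat over $\Spec\cO$. Since $\cY^j$ is of finite type over $\cO$ with nonempty generic fiber $Y^j$, by generic flatness (EGA IV, 6.9.1) there is an open $U^j\subset U_0$ over which $\cY^j\to\Spec\cO$ is flat. Setting $U:=\bigcap_j U^j$ (a finite intersection of dense opens, hence dense open in $U_0$), all the $\cY^j_U$ are flat over $U$, and the restriction of $\pi$ to each stratum remains flat. It remains to verify that the $\cY^j_U$ still cover $\cY_U$; this is automatic since the flattening stratification produced in the previous step already covers $\cY_{U_0}$ by locally closed strata.

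The potentially subtle point — which I would want to check carefully — is that the flattening stratification for $\pi$ really does restrict to the flattening stratification for $\pi_F$ on the generic fiber, so that no "new" strata appear generically that would need to be recovered from the ones on $Y$. This is a consequence of the universal property of the flattening stratification: a morphism $T\to \cY$ makes the pullback of $\pi$ flat over $T$ iff $T$ factors through the disjoint union of the strata, and this property is inherited by base change to the generic point. Once this compatibility is in hand, the proof is simply a matter of assembling the finitely many opens produced by generic flatness.
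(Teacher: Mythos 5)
Your approach is essentially the one the paper takes: take a flattening stratification of $\pi$ and shrink $\Spec\cO$ so that each stratum becomes $\cO$-flat, using generic flatness and the fact that the strata not dominating $\Spec\cO$ have image contained in finitely many primes. The extra scaffolding (reduction to properness, the remark that the stratification restricts correctly to the generic fiber) is correct but not needed for the conclusion.

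There is one real omission. Being a resolution in the sense of \Cref{defn uniform res} also requires that $\cO_\cY\xrightarrow{\ \cong\ }\pi_*\cO_\cS$ via the natural map, and that $\cS$ be smooth over $\cO$; your argument never checks that these conditions, known a priori only over the generic point $\Spec F$, persist after shrinking. Both follow by routine spreading out --- in particular the paper's proof opens precisely with the observation that since $\cO_\cY\to\pi_*\cO_\cS$ is an isomorphism on the generic fiber, it is so over some $\Spec\cO[1/N]$ --- but as stated your proof only establishes the flattening-stratification clause of ``uniform resolution'' and not the ``resolution'' part. One smaller imprecision: your parenthetical explanation that each $\cY^j$ is the schematic closure of $Y^j=\cY^j\cap Y$ is only true for the strata dominating $\Spec\cO$; the non-dominating ones have $Y^j=\emptyset$ and must simply be removed by inverting finitely many primes (your generic-flatness step does ultimately dispose of them, but not for the reason given in the parenthetical).
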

\begin{proof}
    Since $\cO_\cY\to\pi_*\cO_\cS$ is an isomorphism over the generic point, it is over an open set of $\Spec\cO_F[1/N]$.  Taking a flattening stratification $\cY^j$, the strata which do not dominate map to finitely many primes.
\end{proof}

 \begin{definition}\label{defn ICM}
     Let $\cY/\cO$ a normal quasiprojective flat $\cO$-scheme.  We say $\cY/\cO$ is an integral canonical model if:
     \begin{enumerate}
         \item $\cY$ admits a uniform resolution $\pi:\cS\to\cY$.
         \item For every closed point $v$ of $\Spec\cO$, the base-change $\cY_v/\cO_v$ satisfies the extension property over $\cO_v$.
     \end{enumerate}
 \end{definition}

Note that if $\cY/\cO$ is smooth, the condition on the existence of a uniform resolution is automatic, so this definition generalizes \Cref{defn smooth ICM}.  The next proposition generalizes \Cref{smooth extend=>general} to the normal case.

\begin{proposition}\label{extend maps}
    Let $\cO$ be a Dedekind domain with fraction field $F$.  Let $\cY/\cO$ be a normal quasiprojective flat $\cO$-scheme admitting a uniform resolution, $\cY'/\cO$ an integral canonical model, and $f_{F}:Y\to Y'$ a morphism over the generic point.
    \begin{enumerate}
        \item $f_F$ extends to a morphism $f:\cY\to\cY'$ if and only if there is a commutative diagram
            \[\begin{tikzcd}
            \piet(Y,\bar y)\ar[d]\ar[r]&\piet(Y',\bar y')\ar[d]\\
            \piet(\cY,\bar y)\ar[r,dashed]&\piet(\cY',\bar y')
        \end{tikzcd}\]
    for compatibly chosen basepoints.
    \item Suppose further that $\cY'$ admits a uniform resolution $\pi':\cS'\to\cY'$, a log smooth compactification of $\cS'$, and that $\cY'$ has a geometrically nonextendable $\ell$-adic local system $\V_\ell$.  Then $f_F$ extends to a morphism $f:\cY\to\cY'$ if and only if $f_F^*(\V_\ell)$ extends to $\cY$.
    \end{enumerate}

\end{proposition}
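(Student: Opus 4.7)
Both reverse directions are immediate from functoriality: any extension $f:\cY\to\cY'$ gives the $\piet$-diagram of (1), and pulls $\V_\ell$ back to an extension of $f_F^*\V_\ell$, establishing (2). For the forward directions, my plan is to pass to the uniform resolution $\pi:\cS\to\cY$, extend to $\cS$ using the smooth extension property of $\cY'$, and then descend the extension through $\pi$. The composite $g_F:=f_F\circ\pi_F:S\to Y'$ is $\cO$-admissible in each case: in (1), by functoriality of $\piet$ applied to the commutative square for $\pi$ (so that $\piet(S)\to\piet(\cY')$ factors as $\piet(S)\to\piet(\cS)\to\piet(\cY)\to\piet(\cY')$); in (2), the extension of $f_F^*\V_\ell$ to $\cY$ pulls back along $\pi$ to an extension of $g_F^*\V_\ell$ to the regular scheme $\cS$, whence \Cref{faithful => natural}, part (\ref{local system 2})---applied with $\cY'$ (together with its resolution $\pi'$, log smooth compactification, and geometrically nonextendable local system $\V_\ell$) in the role of the target---promotes this to $\cO$-admissibility of $g_F$ (after a mild reduction, e.g.\ by replacing $\cY'$ with the closure of the image, to ensure $g_F$ meets the locus where $\pi'$ is an isomorphism). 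Since $\cS$ is smooth and $\cY'$ is an integral canonical model, the smooth extension property of $\cY'$ then produces an extension $g:\cS\to\cY'$ of $g_F$.

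It remains to descend $g$ through $\pi$ to a morphism $f:\cY\to\cY'$ with $g=f\circ\pi$. Since $\pi$ is proper with $\pi_*\cO_\cS=\cO_\cY$ and $\cY'$ is separated, a standard rigidity argument reduces this to the assertion that $g$ is constant on every set-theoretic fiber $\pi^{-1}(y)$; $f$ is then defined pointwise, and its structure sheaf map is recovered from the adjunction counit $\pi_*\pi^{-1}\cO_{\cY'}\to\cO_\cY$ obtained via $\pi_*\cO_\cS=\cO_\cY$. Over the generic fiber, fiberwise constancy is automatic since $g_F=f_F\circ\pi_F$. For $y$ in the special fiber, let $\cY^j$ be the stratum of the uniform resolution containing $y$, so that $\pi^j:=\pi|_{\pi^{-1}(\cY^j)}:\pi^{-1}(\cY^j)\to\cY^j$ is flat and $\cY^j$ is flat over $\cO$. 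The preimage
\[
\cZ_j := (g,g)^{-1}(\Delta_{\cY'})\subset \pi^{-1}(\cY^j)\times_{\cY^j}\pi^{-1}(\cY^j)
\]
of the closed diagonal $\Delta_{\cY'}\subset\cY'\times_\cO\cY'$ is closed (separatedness of $\cY'$) and contains the generic fiber (by the generic-fiber case). But $\pi^{-1}(\cY^j)\times_{\cY^j}\pi^{-1}(\cY^j)$ is flat over $\cO$, being the base change of the flat $\pi^j$ along itself composed with the flat $\cY^j\to\Spec\cO$, so its generic fiber is dense. Hence $\cZ_j$ fills the entire fiber product, giving constancy of $g$ on $\pi^{-1}(y)$.

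The heart of the argument is the descent step, and this is where the uniform resolution hypothesis earns its keep. Without it, the positive-dimensional fibers of $\pi$ over points in the special fiber of $\cY$ bear no evident relationship to generic-fiber fibers, and the rigidity-via-diagonal argument collapses. It is exactly the $\cO$-flatness of each stratum that guarantees density of the generic fiber in the diagonal-preimage scheme, so that fiberwise constancy of $g$ over the generic fiber propagates---by closedness of the diagonal---to every fiber of $\pi$ in the special fiber.
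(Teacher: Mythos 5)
Your proof is correct and shares the paper's overall skeleton (pass to the uniform resolution, extend from the smooth source via the extension property, descend), but you replace the paper's key factorization lemma (\Cref{factors}) with a genuinely different argument. The paper proves that the extension $g:\cS\to\cY'$ contracts all fibers of $\pi:\cS\to\cY$ by pulling back a very ample line bundle from $\cY'$ and running a semicontinuity argument over each $\cO$-flat stratum: on the generic fiber $g^*L'$ is trivial on fibers, so by semicontinuity $g^*L'^\vee$ has a nonzero section on special fibers too, and combined with global generation of $g^*L'|_F$ and geometric connectedness of $F$ this forces triviality. You instead take $(g,g)^{-1}(\Delta_{\cY'})$ inside $\cS^j\times_{\cY^j}\cS^j$, observe it is closed and contains the generic fiber (where $g$ factors through $\pi$ by construction), and invoke the $\cO$-flatness of each stratum in the flattening stratification to conclude that the generic fiber is topologically dense, whence $(g,g)^{-1}(\Delta_{\cY'})$ is everything. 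Both then finish with the rigidity lemma. Your route is more elementary --- it avoids the ample bundle and semicontinuity entirely, only uses separatedness of $\cY'$ rather than quasiprojectivity, and makes the role of the $\cO$-flatness of the strata transparent, which is a nice conceptual gain.

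Two small remarks. First, be explicit that the ``standard rigidity argument'' you invoke is precisely \Cref{lem:rigiditylemma}: once $g$ is set-theoretically constant on $\pi^{-1}(y)$ for every $y$, that lemma produces a scheme-theoretic factorization near each $y$, and separatedness of $\cY'$ glues these (they all agree with $f_F\circ\pi_F$ over the dense generic fiber). Your parenthetical about recovering the structure map from the adjunction counit is then unnecessary and somewhat misleading --- the rigidity lemma already produces the morphism. Second, for part (2) both you and the paper lean on \Cref{faithful => natural}(\ref{local system 2}), whose statement assumes the image of $g_F$ meets the locus where $\pi'$ is an isomorphism. The paper passes over this silently; you flag it and suggest replacing $\cY'$ by the closure of the image, but this replacement destroys the hypotheses on $\cY'$ (it need no longer be an ICM with a uniform resolution, log smooth compactification, and nonextendable local system), so the workaround as stated does not work. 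In the applications in the paper (\Cref{uniqueness among models}, \Cref{thm ICM hodge}) the relevant morphisms are generically the identity, so the hypothesis is automatic there; a clean statement of part (2) should either carry the dominance/meets-the-isomorphism-locus hypothesis explicitly or argue via the stratified resolution of \Cref{strat res}, applying the relevant stratum's resolution to whichever stratum contains the image.
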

For the proof we need the following:

 \begin{lemma}[{Rigidity Lemma, \cite[Lemma 1.15]{debarre}}]\label{lem:rigiditylemma}  Let $X,Y,Z$ be integral finite type schemes, $f:X\to Y$ a proper morphism with $\cO_Y\xrightarrow{\cong}f_*\cO_X$ via the natural map, and $g:X\to Z$ a morphism which is constant on $f^{-1}(y)$ for some point $y$ of $Y$.  Then for an open neighborhood $y\in U\subset Y$, $g|_{f^{-1}(U)}:f^{-1}(U)\to Z$ factors through $f|_{f^{-1}(U)}:f^{-1}(U)\to Y$. 
    
\end{lemma}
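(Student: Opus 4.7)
The plan is to reduce to the case where $Z$ is affine by shrinking around the image of the distinguished fiber, and then to use the identification $\cO_Y \xrightarrow{\cong} f_*\cO_X$ together with the adjunction between morphisms to affine schemes and ring homomorphisms to produce the factorization directly.

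First I would let $z \in Z$ denote the common image $g(f^{-1}(y))$, which is well defined since $g$ is constant on this fiber, and pick an affine open neighborhood $W = \Spec A \subset Z$ containing $z$. The preimage $g^{-1}(W)$ is then an open subset of $X$ containing the whole fiber $f^{-1}(y)$. Since $f$ is proper, it is a closed map, so $f(X \setminus g^{-1}(W))$ is closed in $Y$ and its complement $U := Y \setminus f(X \setminus g^{-1}(W))$ is an open neighborhood of $y$ with the key property $f^{-1}(U) \subset g^{-1}(W)$. After replacing $Y$ by $U$ and $X$ by $f^{-1}(U)$, I may assume $g$ factors through $W = \Spec A$.

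Now I would apply the hypothesis $\cO_Y \xrightarrow{\cong} f_*\cO_X$ globally on this $U$: taking sections gives $\Gamma(U, \cO_Y) \xrightarrow{\cong} \Gamma(U, f_*\cO_X) = \Gamma(f^{-1}(U), \cO_X)$. Because $W$ is affine, the morphism $g|_{f^{-1}(U)} : f^{-1}(U) \to W$ corresponds uniquely under the affine adjunction to a ring homomorphism $A \to \Gamma(f^{-1}(U), \cO_X)$, which via the isomorphism above is the same as a ring homomorphism $A \to \Gamma(U, \cO_Y)$, hence to a morphism $\tilde g : U \to W \hookrightarrow Z$. Naturality of this adjunction in the space guarantees $\tilde g \circ f|_{f^{-1}(U)} = g|_{f^{-1}(U)}$, which is the desired factorization.

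The only real step that needs care is the shrinking: one needs properness of $f$ (not just closedness of the particular fiber) to guarantee that set-theoretically constant on $f^{-1}(y)$ upgrades to set-theoretically landing in an affine chart of $Z$ over a whole neighborhood of $y$. Once this is in place, the hypothesis $\cO_Y \cong f_*\cO_X$ does all the remaining work, since in the affine target case "morphisms into $\Spec A$" depends only on the ring $\Gamma(-,\cO_X)$, which by assumption is pulled back from $Y$. Integrality of $X, Y, Z$ is not essential for the argument and is used only to normalize the setup.
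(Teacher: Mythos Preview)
Your argument is correct and complete. The reduction to an affine target via properness, followed by the affine adjunction $\Hom(S,\Spec A)=\Hom(A,\Gamma(S,\cO_S))$ applied with $S=f^{-1}(U)$ and then $S=U$, together with the isomorphism $\Gamma(U,\cO_Y)\cong\Gamma(f^{-1}(U),\cO_X)$ coming from $\cO_Y\cong f_*\cO_X$, gives the factorization immediately.

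This is a genuinely different route from the paper's proof. The paper (following Debarre) considers the graph $h=f\times g:X\to Y\times Z$, lets $W$ be its image, and shows the projection $p:W\to Y$ is an isomorphism near $y$: from $\cO_Y\subset p_*\cO_W\subset f_*\cO_X=\cO_Y$ one gets that $p$ has trivial Stein factorization, and since $p^{-1}(y)$ is a single point one concludes by semicontinuity of fiber dimension that $p$ is an isomorphism over a neighborhood of $y$. Your approach is more elementary in that it avoids Stein factorization and the semicontinuity argument; it also makes transparent that the only place $g$ being constant on $f^{-1}(y)$ is used is to land in an affine chart. The graph approach, on the other hand, has the minor advantage of producing the inverse $U\to W\subset Y\times Z$ as a geometric object and makes it visible that the factorization is through the scheme-theoretic image of $h$.
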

\begin{proof}
    The proof is exactly the same as in \cite{debarre}.  Consider the (proper) morphism $h=f\times g:X\to Y\times Z$, and let $W$ be the image with projection map $p:W\to Y$.  Since $\cO_Y\subset p_*\cO_W\subset f_*\cO_X=\cO_Y$, the Stein factorization of $p$ is trivial.  Since $h^{-1}((p^{-1}(y)))=f^{-1}(y)$ maps to a point in $Z$, $p^{-1}(y)$ is a point, hence the fibers of $p$ are 0-dimensional in a neighborhood of $y$, and therefore $p:W\to Y$ is an isomorphism in a neighborhood of $y$. 
\end{proof}
\begin{corollary}\label{factors}
    Let $\cY,\cY'/\cO$ be normal finite type flat $\cO$-schemes, and $\pi:\cS\to\cY$ a uniform resolution.  Assume $\cY'/\cO$ is quasiprojective. Then any morphism $f:\cS\to\cY'$ which factors through $\pi_F:S\to Y$ over the generic point factors through $\pi$.
\end{corollary}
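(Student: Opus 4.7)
The plan is to apply the Rigidity Lemma (\Cref{lem:rigiditylemma}) to $\pi:\cS\to\cY$ at every point $y\in\cY$ and then glue the resulting local factorizations. The hypotheses of the lemma hold because $\pi$ is proper and $\cO_\cY\xrightarrow{\cong}\pi_*\cO_\cS$ by the definition of a resolution, and on integrality grounds one may work componentwise, noting that $\cS$ must be irreducible whenever $\cY$ is (two dominant components would force $\pi_*\cO_\cS$ to have generic rank $>1$). The heart of the matter is therefore to verify that $f$ is set-theoretically constant on each fiber $\pi^{-1}(y)$.

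To do this, fix $y\in\cY$ in some stratum $\cY^j$ of the uniform flattening stratification, write $\cS^j:=\pi^{-1}(\cY^j)$, and consider the induced $\pi_j:\cS^j\to\cY^j$. By the flattening property $\pi_j$ is flat, and by uniformity $\cY^j$ is flat over $\cO$; hence both $\cS^j$ and the double fiber product $\cS^j\times_{\cY^j}\cS^j$ are $\cO$-flat. In particular, the generic fiber of the latter over $\cO$ is Zariski dense in it.

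Now let
\[V:=(f\times f)^{-1}(\Delta_{\cY'/\cO})\subseteq \cS^j\times_{\cY^j}\cS^j,\]
a closed subscheme since the diagonal $\Delta_{\cY'/\cO}$ is a closed immersion by separatedness of the quasiprojective $\cO$-scheme $\cY'$. The hypothesis that $f_F$ factors through $\pi_F$ on the generic fiber says precisely that $V$ contains the generic fiber $(\cS^j\times_{\cY^j}\cS^j)_F$; by density, $V$ must contain the whole ambient scheme set-theoretically. This is exactly the assertion that any two points of $\cS^j$ lying in a common fiber of $\pi_j$ share the same image under $f$, so $f$ is constant on $\pi^{-1}(y)$.

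Given fiber-constancy at every $y\in\cY$, the Rigidity Lemma yields open neighborhoods $U_y\ni y$ and local factorizations $g_y:U_y\to\cY'$ with $g_y\circ\pi=f$ on $\pi^{-1}(U_y)$. These agree on overlaps (since $\pi$ is scheme-theoretically surjective and $\cY'$ is separated, morphisms out of $\cY$ are determined by their composition with $\pi$) and glue to the desired morphism $g:\cY\to\cY'$. The main obstacle is the fiber-constancy step, which hinges on the $\cO$-flatness of $\cS^j\times_{\cY^j}\cS^j$: this is exactly what uniformity of the resolution is designed to deliver, since without the $\cO$-flatness of the strata $\cY^j$ the closure of the generic fiber could miss components in the special fiber where constancy might genuinely fail.
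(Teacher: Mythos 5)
Your proof is correct and arrives at the same structural endpoint (apply the Rigidity Lemma after establishing set-theoretic constancy of $f$ on fibers of $\pi$), but the key step is handled by a genuinely different argument than the paper's. The paper picks a very ample line bundle $L'$ on $\cY'$ and runs a semicontinuity-of-$h^0$ argument on a flat family: $(f^*L')^\vee$ restricted to each special fiber of $\cS^j\to\cY^j$ has a nonzero section by semicontinuity, $(f^*L')$ is globally generated, and the product of suitably chosen sections is a nowhere-vanishing section of $\cO_F$ (using geometric connectedness of $F$), forcing $f^*L'|_F$ to be trivial and hence $f|_F$ constant. You instead look at the coincidence locus $V=(f\times f)^{-1}(\Delta_{\cY'/\cO})\subset\cS^j\times_{\cY^j}\cS^j$, observe that it is closed (separatedness of $\cY'$), that it contains the generic fiber (the hypothesis that $f_F$ factors through $\pi_F$), and that uniformity forces $\cS^j\times_{\cY^j}\cS^j$ to be $\cO$-flat so that its generic fiber is dense; hence $V$ is everything.

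Both arguments correctly pinpoint $\cO$-flatness of the strata $\cY^j$ as the crucial contribution of uniformity: without it, the fibers over non-dominating strata would be invisible from the generic point. Your route is a bit more elementary --- it needs only that the diagonal of $\cY'$ be closed (separatedness) rather than that $\cY'$ carry an ample line bundle, and it avoids semicontinuity of cohomology, geometric connectedness of fibers, and any use of sections of line bundles. It also gives fiber-constancy directly at every $y\in\cY$ rather than relying on the Rigidity Lemma's spreading-out from a single point, at the small cost of a gluing step at the end (which you justify correctly via $\cO_\cY\xrightarrow{\cong}\pi_*\cO_\cS$ and separatedness). The paper's argument buys nothing extra here, so your version is a legitimate and arguably cleaner alternative.
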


\begin{proof}Let $L'$ be a very ample bundle on $\cY'$, let $\cY^j\subset \cY$ be an $\cO$-flat stratum in a flattening stratification of $\cS\to\cY$, let $\cS^j\to\cY^j$ be the base-change to $\cY^j$, and consider the composition $\cS^j\to\cS\to\cY'$.  For any fiber $F$ of $\cS^j\to\cY^j$, $L|_{F}$ is globally generated, hence has a section which is generically nonzero on every geometric component.  On the other hand, $L^\vee$ is trivial on every geometric fiber of $\cS^j\to\cY^j$ over the generic point, hence by semicontinuity $L^\vee|_{F}$ has at least one nonzero section.  The product is a section of $\cO_{F}$ which is not identically zero, so since the fibers of $\cS^j\to\cY^j$ are geometrically connected it must be nowhere zero.  Thus, $L|_{F}$ is trivial, and so $f$ contracts every fiber of $\cS\to\cY$.  By the \Cref{lem:rigiditylemma}, we conclude that $f$ factors through $\cS\to\cY$.
\end{proof}
\begin{proof}[Proof of \Cref{extend maps}]

The forward implication of part 1 is trivial; for the reverse implication, by the smooth extension property we have an extension of the composition $S\to Y\to Y'$ to $\cS_v\to\cY'_v$ for every finite place, and therefore an extension $\cS\to\cY'$.  Then by \Cref{factors}, the map $\cS\to\cY'$ factors through the desired extension $f:\cY\to\cY'$.  Part 2 follows from part 1 by \Cref{faithful => natural}. 
    
\end{proof}
\begin{corollary}\label{uniqueness among models}
    Assume $Y$ admits a geometrically nonextendable local system $\V_\ell$.  Then:
    \begin{enumerate}
    \item There is at most one integral canonical model $\cY/\cO$ to which $\V_\ell$ extends and which admits a uniform resolution with a log smooth compactification.  
    \item Assume such an integral canonical model $\cY$ exists.  Then for any normal finite type flat model $\cZ/\cO$ to which $\V_\ell$ extends and which admits a uniform resolution, the identity map over the generic fiber extends to a morphism $\cZ\to\cY$.
    \end{enumerate}
\end{corollary}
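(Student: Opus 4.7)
The plan is to reduce both parts directly to Proposition \ref{extend maps}(2), which states that under the stated hypotheses on source and target, a morphism over the generic fiber extends if and only if the pullback of $\V_\ell$ extends.

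For part (2), I would apply Proposition \ref{extend maps}(2) with source $\cZ$ and target $\cY$, taking $f_F$ to be the identity of $Y$. The source $\cZ$ is a normal finite type flat $\cO$-scheme admitting a uniform resolution by hypothesis, and the target $\cY$ is an integral canonical model that admits a uniform resolution with a log smooth compactification and to which the geometrically nonextendable $\V_\ell$ extends. Since $f_F^*\V_\ell = \V_\ell|_Y$ extends to $\cZ$ by assumption, Proposition \ref{extend maps}(2) furnishes the desired extension $\cZ\to\cY$ of the identity.

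For part (1), suppose $\cY, \cY'/\cO$ are both integral canonical models with the listed properties. Applying part (2) in each direction—using that each is an integral canonical model with a uniform resolution and log smooth compactification, and that $\V_\ell$ extends to both—we obtain morphisms $g:\cY\to\cY'$ and $g':\cY'\to\cY$, each extending the identity on $Y$. The compositions $g'\circ g$ and $g\circ g'$ extend the identity morphism on $Y$. Since $\cY$ and $\cY'$ are quasiprojective (hence separated) and $Y$ is scheme-theoretically dense in each, a morphism out of $\cY$ (resp.\ $\cY'$) is determined by its restriction to $Y$; thus $g'\circ g = \id_\cY$ and $g\circ g' = \id_{\cY'}$, so $g$ is an isomorphism identifying the two models.

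I expect the main work to lie entirely in checking the hypotheses of Proposition \ref{extend maps}(2) in each application—there is no additional obstacle, and no new geometric input is required beyond the uniqueness of extensions of morphisms from a dense open into a separated target. The statement is essentially a formal consequence of the extension criterion, once one has the nonextendability of $\V_\ell$ available to characterize which generic morphisms extend.
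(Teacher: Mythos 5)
Your proposal is correct and matches the natural (and, since the paper leaves this corollary without an explicit proof, evidently intended) argument: apply Proposition \ref{extend maps}(2) directly for part (2), and for part (1) run it in both directions and use that a morphism from a reduced scheme to a separated scheme is determined by its restriction to a dense open. One could note that you quietly upgrade from the hypothesis ``normal finite type flat'' on $\cZ$ to the ``quasiprojective'' required in the literal statement of Proposition \ref{extend maps}, but the proof of that proposition never uses quasiprojectivity of the source (only of the target, via Corollary \ref{factors}), so this is harmless.
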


\subsection{Descending and extending arithmetic local systems}Let $\cP/\cO$ be smooth with geometrically irreducible generic fiber, and $\cY/\cO$ normal finite type flat with geometrically irreducible generic fiber.  Let $f:\cP\to\cY$ be a proper morphism of $\cO$-schemes with $\cO_\cY\xrightarrow{\cong}f_*\cO_\cP$.  In this section we describe how arithmetic local systems can be extended to integral models and descended along $f$.

We start with the following:
\begin{lemma}\label{pietlem}
    Let $f:X\to Y$ be a proper morphism of schemes with $Y$ integral and assume $f$ has geometrically connected fibers and geometrically reduced generic fiber.  Then over a dense open $U\subset Y$ we have the following:
    \begin{enumerate}
        \item The base-change $X_U\to U$ induces an exact sequence
        \[\piet(X_{\bar \eta},\bar x)\to\piet(X_U,\bar x)\to\piet(U,\bar u)\to1\] for compatibly chosen basepoints, where $\eta$ is the generic point of $Y$.
        \item For every geometric point $\bar u$ of $U$, the specialization map $sp:\piet(X_{\bar\eta},\bar x)\to\piet(X_{\bar u},\bar x)$
        is surjective.
    \end{enumerate}
\end{lemma}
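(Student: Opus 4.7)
The plan is to reduce both parts to the standard homotopy exact sequence results of SGA 1, Exp. X, after shrinking $Y$ to a dense open $U$ where the fibers of $f$ are uniformly well-behaved. Concretely, I would replace $Y$ by a dense open $U$ on which (a) $U$ is regular (hence geometrically unibranch), (b) every geometric fiber of $f$ is geometrically connected (automatic from the hypothesis) and geometrically reduced, and (c) the formation of $f_*\cO_X$ commutes with arbitrary base change, so in particular $f_*\cO_{X_U}=\cO_U$. Each of these is an open condition: regularity of $Y$ holds on a dense open; geometric reducedness of fibers is a constructible condition on the target of a proper morphism and holds at $\eta$ by assumption; and generic flatness combined with cohomology and base change yields (c) after a further shrinking.

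With these hypotheses on $f_U\colon X_U\to U$, part (1) is precisely the homotopy exact sequence of SGA 1, Exp. X (equivalently Stacks Project, Tag 0BUP): for a proper morphism with $f_*\cO_{X_U}=\cO_U$ and geometrically connected fibers, over a normal connected base, the sequence
\[\piet(X_{\bar\eta},\bar x)\to\piet(X_U,\bar x)\to\piet(U,\bar u)\to 1\]
is exact. Note that assumptions (b) and (c) are exactly what is needed to apply this theorem, and (a) ensures the geometric unibranch hypothesis.

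For part (2), fix a geometric point $\bar u\to U$ and let $R=\cO_{U,\bar u}^{\mathrm{sh}}$ be the strict henselization. Passing to the cofiltered limit over étale neighborhoods of $\bar u$, using that $\piet$ commutes with cofiltered limits of qcqs schemes with affine transition maps, and noting that the conditions (a)--(c) are each preserved under base change to such neighborhoods, the exact sequence of part (1) specializes to
\[\piet(X_{\bar\eta},\bar x)\to\piet(X_R,\bar x)\to\piet(\Spec R)\to 1.\]
Since $\Spec R$ is strictly local, $\piet(\Spec R)=1$, so $\piet(X_{\bar\eta},\bar x)\twoheadrightarrow\piet(X_R,\bar x)$. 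Finally, by proper base change (SGA 1, Exp. X, or Stacks Project Tag 0A48) the closed immersion $X_{\bar u}\hookrightarrow X_R$ induces an isomorphism $\piet(X_{\bar u},\bar x)\xrightarrow{\cong}\piet(X_R,\bar x)$, and composing gives the desired surjectivity of the specialization map.

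The only mildly delicate step is the limit argument in part (2), ensuring that the exact sequence genuinely descends to $\Spec R$; but this is entirely routine once (a)--(c) have been arranged, since all the relevant hypotheses are stable under the filtered colimit defining $R$. Everything else is a direct invocation of classical results.
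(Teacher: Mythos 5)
Your proof is correct and takes essentially the same route as the paper: shrink $Y$ to a dense open on which a homotopy exact sequence for $\piet$ applies (giving part (1) with $\bar s = \bar\eta$), then deduce surjectivity of specialization by base-changing to the strict henselization, where $\piet(\Spec R)$ is trivial, and invoking proper base change to identify $\piet(X_R)$ with $\piet(X_{\bar u})$. The only variation is the version of the homotopy exact sequence you cite: you arrange $f_*\cO_{X_U}=\cO_U$ universally over a regular base (hence your condition (a)), whereas the paper shrinks to make $f$ flat with geometrically connected and reduced fibers and cites Stacks Tag 0C0J, which requires no normality of the base and renders your condition (c) automatic by cohomology and base change. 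Both arrangements are available here since $Y$ is integral, so this is a cosmetic difference; likewise your limit step in part (2) is valid but not strictly needed, as one may apply the homotopy exact sequence directly to $X_R\to\Spec R$ once the hypotheses are seen to pass to the strict henselization.
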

\begin{proof}
    According to \cite[\href{https://stacks.math.columbia.edu/tag/0579}{Tag 0579}]{stacks-project}, since the generic fiber is geometrically reduced we may take $U$ to be the interior of the set of points with geometrically reduced fibers.  We may further assume $X_U\to U$ is flat by generic flatness.  Then the first claim follows from the homotopy exact sequence \cite[\href{https://stacks.math.columbia.edu/tag/0C0J}{Tag 0C0J}]{stacks-project}
    and the second from base-change to the henselization of the local ring.
\end{proof}
The following is standard but we include it for completeness.  For a morphism of schemes $X\to Y$ which induces a surjection $\piet(X,\bar x)\to\piet(Y,\bar y)$ for compatibly chosen basepoints, we denote by $I_{X/Y}\subset\piet(X,\bar x)$ the kernel (somewhat abusively, since we suppress the basepoints from the notation).
\begin{lemma}The map $f_F:P\to Y$ induces the following commutative diagram with exact rows and columns for compatibly chosen basepoints.
\[\begin{tikzcd}
&1\ar[r]\ar[d]&1\ar[d]&&\\
&I_{P_{\bar F}/Y_{\bar F}}\ar[r,equals]\ar[d]&I_{P/Y}\ar[d]&&\\
    1\ar[r]&\piet( P_{\bar F},\bar p)\ar[d]\ar[r]&\piet(P,\bar p)\ar[r]\ar[d]&\piet(F,\bar F)\ar[r]\ar[d,equals]&1\\
    1\ar[r]&\piet( Y_{\bar F},\bar y)\ar[d]\ar[r]&\piet(Y,\bar y)\ar[d]\ar[r]&\piet(F,\bar F)\ar[r]&1\\
    &1&1&&
\end{tikzcd}\]
    
\end{lemma}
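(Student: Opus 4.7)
The plan is to verify the three pieces of data encoded in the diagram: (a) exactness of the two horizontal homotopy sequences (the third and fourth rows); (b) surjectivity of the two vertical maps $\piet(P_{\bar F},\bar p)\to\piet(Y_{\bar F},\bar y)$ and $\piet(P,\bar p)\to\piet(Y,\bar y)$, which is needed for $I_{P_{\bar F}/Y_{\bar F}}$ and $I_{P/Y}$ to be honest kernels and for the exactness of the first two columns; and (c) the canonical identification $I_{P_{\bar F}/Y_{\bar F}} = I_{P/Y}$ on the second row.

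For (a), since $P/F$ and $Y/F$ are geometrically irreducible and (because $F$ is perfect in our situation, while $P$ is smooth and $Y$ is normal) also geometrically reduced, the standard homotopy exact sequence for a geometrically connected, geometrically reduced variety over a field (SGA 1, Expos\'e IX) produces the short exact sequence $1 \to \piet(P_{\bar F},\bar p) \to \piet(P,\bar p) \to \Gal(\bar F/F) \to 1$ and its analogue for $Y$. The canonical identification $\piet(\Spec F)=\Gal(\bar F/F)$ provides the identity map in the rightmost column.

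For (b), the Stein condition $\cO_\cY\xrightarrow{\cong}f_*\cO_\cP$ is preserved under flat base change; given any connected finite \'etale cover $Y'\to Y$, the fiber product $P':=P\times_Y Y'$ therefore inherits the Stein condition over the connected base $Y'$. Consequently $H^0(P',\cO_{P'})=H^0(Y',\cO_{Y'})$ has no nontrivial idempotents (as $Y'$ is connected), so $P'$ is connected. Hence every connected finite \'etale cover of $Y$ pulls back to a connected cover of $P$, giving surjectivity of $\piet(P)\to\piet(Y)$; the same argument after base-change to $\bar F$, which preserves the Stein condition, delivers the geometric version.

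For (c), a short diagram chase in the commutative square formed by the two horizontal sequences, whose rightmost vertical map is the identity on $\Gal(\bar F/F)$, identifies the kernels: any $\gamma\in I_{P/Y}\subset \piet(P)$ has trivial image in $\Gal(\bar F/F)$, so lifts uniquely to $\piet(P_{\bar F})$, and the lift lies in $I_{P_{\bar F}/Y_{\bar F}}$ by injectivity of $\piet(Y_{\bar F})\hookrightarrow\piet(Y)$; the induced map on kernels is injective because $\piet(P_{\bar F})\hookrightarrow\piet(P)$ is. The only point requiring any real care is the vertical surjectivity in (b), which rests on propagating the Stein condition under base change; (a) and (c) are essentially formal consequences of the SGA 1 exact sequence.
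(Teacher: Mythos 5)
Your proposal is correct and takes essentially the same approach as the paper: the rows come from the SGA~1 homotopy exact sequence, the vertical surjectivities follow from the standard Stein-type criterion (you deduce it by propagating the hypothesis $\cO_\cY\xrightarrow{\cong}f_*\cO_\cP$ under flat base change, while the paper cites normality and geometric connectedness of the generic fiber — two phrasings of the same fact), and the identification of the two $I$'s is the formal diagram chase.
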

\begin{proof}
    The exactness of the rows is standard and the surjectivity of the vertical maps to the bottom row is a consequence of normality and the generic fiber being geometrically connected.
\end{proof}
\begin{corollary}\label{descend to field}
    Let $\V_\ell$ be an $\Z_\ell$-local system on $P$ for which the geometric local system $\bar \V_\ell$ on $ P_{\bar F}$ descends to $ Y_{\bar F}$.  Then $\V_\ell$ descends to $Y$.
\end{corollary}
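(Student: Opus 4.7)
The plan is to translate both the hypothesis and the conclusion into conditions on the monodromy representation and then read off the result from the commutative diagram of the preceding lemma. Recall that a $\Z_\ell$-local system on a connected scheme corresponds to a continuous representation of the \'etale fundamental group, so $\V_\ell$ is given by a continuous $\rho\colon \piet(P,\bar p)\to \GL_n(\Z_\ell)$, and its descent to $Y$ along a morphism inducing a surjection $\piet(P,\bar p)\twoheadrightarrow \piet(Y,\bar y)$ is equivalent to the vanishing of $\rho$ on $I_{P/Y}$. The relevant surjectivity is exactly one of the consequences of the previous lemma (it comes from $Y$ being normal with geometrically connected generic fiber), so the claim is that $\rho$ is trivial on $I_{P/Y}$.

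The first step is to make the analogous translation on the geometric side: the hypothesis that $\bar\V_\ell$ descends from $P_{\bar F}$ to $Y_{\bar F}$ means, via the surjection $\piet(P_{\bar F},\bar p)\twoheadrightarrow \piet(Y_{\bar F},\bar y)$, precisely that $\rho|_{\piet(P_{\bar F},\bar p)}$ is trivial on the subgroup $I_{P_{\bar F}/Y_{\bar F}}\subset \piet(P_{\bar F},\bar p)$. The second step is to invoke the top row of the diagram from the lemma, which asserts the identification $I_{P_{\bar F}/Y_{\bar F}}=I_{P/Y}$ as subgroups of $\piet(P,\bar p)$ (this equality is really the content of the diagram, coming from chasing kernels around the two short exact sequences indexed by $\piet(F,\bar F)$). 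Combining these two steps, $\rho$ vanishes on $I_{P/Y}$, hence factors continuously through $\piet(Y,\bar y)$ and defines a $\Z_\ell$-local system on $Y$ whose pullback to $P$ is $\V_\ell$.

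There is essentially no hard step here once the lemma is in hand; all of the geometric input (properness, geometric connectedness of fibers, normality of $Y$, geometric reducedness of the generic fiber) has already been absorbed into the exactness and surjectivity statements of the preceding lemma. If there is any subtlety worth flagging in writing, it is to be explicit that \emph{descent} of a lisse $\Z_\ell$-sheaf is tested on continuous representations, so that the Galois-theoretic criterion ``trivial on the kernel'' is legitimately equivalent to the existence of a descended sheaf. Beyond that, the proof is a one-line diagram chase.
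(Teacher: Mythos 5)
Your proof is correct and is exactly the diagram chase the paper leaves implicit: translate descent into the vanishing of the monodromy representation on the kernel subgroup, use the identification $I_{P_{\bar F}/Y_{\bar F}}=I_{P/Y}$ from the lemma's diagram, and conclude. This is the intended argument.
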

\begin{lemma}
    After inverting finitely many primes of $\cO$, any $\ell$-adic local system $\V_\ell$ on $\cP$ which geometrically descends to $Y_{\bar F}$ descends to $\cY$.  Moreover, the same is true for any further localization of $\cO$.
\end{lemma}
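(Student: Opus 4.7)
The plan is to show that $\V_\ell$ descends along $f\colon\cP\to\cY$, i.e.\ that the monodromy representation $\rho\colon\piet(\cP)\to\GL_n(\Z_\ell)$ factors through the surjection $f_*\colon\piet(\cP)\twoheadrightarrow\piet(\cY)$ (which is surjective because $f$ is proper with $f_*\cO_\cP=\cO_\cY$, forcing geometrically connected fibers). The argument combines (i) descent on the generic fiber from \Cref{descend to field}, (ii) the homotopy exact sequence of the preceding lemma applied to the integral morphism $f$ itself (giving descent on a dense open of $\cY$), and (iii) an extension across a codimension-$\geq 2$ boundary after inverting further finitely many primes.

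By \Cref{descend to field}, $\V_\ell|_P$ descends to a local system $\W'_\ell$ on $Y$, so $\rho\circ[\piet(P)\to\piet(\cP)]$ annihilates the image of $\piet(P_{\bar\eta_Y})\to\piet(P)$. After inverting finitely many primes of $\cO$ to ensure the generic fiber of $\cP\to\cY$ is geometrically reduced (automatic in characteristic $0$), the preceding lemma applied to $f$ yields a dense open $V_\cO\subset\cY$ and an exact sequence
\[\piet(P_{\bar\eta_Y})\to\piet(f^{-1}(V_\cO))\to\piet(V_\cO)\to 1,\]
using $\cP_{\bar\eta_\cY}=P_{\bar\eta_Y}$. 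The two factorizations of $\piet(P_{\bar\eta_Y})\to\piet(\cP)$---through $\piet(P)$ and through $\piet(f^{-1}(V_\cO))$---have the same image, so $\rho$ annihilates the image of $\piet(P_{\bar\eta_Y})$ in $\piet(f^{-1}(V_\cO))$. Exactness then yields a descent $\W_\ell$ of $\V_\ell|_{f^{-1}(V_\cO)}$ to $V_\cO$.

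To extend $\W_\ell$ to all of $\cY$, note that irreducible components of $\cY\setminus V_\cO$ are either horizontal (dominating $\Spec\cO$) or vertical (contained in a single $\cY_v$); the finitely many vertical ones are eliminated by inverting their residue characteristics. Setting $V'_\cO:=V_\cO\cup Y$, the partial descents $\W_\ell$ and $\W'_\ell$ agree on $V_\cO\cap Y$ by uniqueness of descent through $f$ and glue to a local system $\W''_\ell$ on $V'_\cO$. For all but finitely many $v$, $V_\cO$ meets every irreducible component of $\cY_v$, so after inverting the remaining bad primes $\cY\setminus V'_\cO$ has codimension $\geq 2$ in $\cY$. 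The extension of $\W''_\ell$ across this codim-$\geq 2$ complement is then obtained by lifting to $\cP$: for each finite quotient, the descended cover of $V'_\cO$ normalizes to a finite cover $\cY_\alpha\to\cY$ whose pullback $f^*\cY_\alpha$ is birationally the corresponding finite \'etale cover $\cP_\alpha\to\cP$ (both are normal finite covers of $\cP$ agreeing over the dense open $f^{-1}(V'_\cO)$), and the descent properties of the proper surjection $f$ with $f_*\cO_\cP=\cO_\cY$ force $\cY_\alpha\to\cY$ to be finite \'etale. The ``moreover'' clause is immediate, since inverting additional primes preserves the constructed descent.

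The main obstacle is the last codim-$\geq 2$ extension step. Because $\cY$ is only assumed normal (not regular), Zariski--Nagata purity does not apply to $\cY$ directly; the argument instead leverages the global lift $\V_\ell$ on the regular scheme $\cP$ together with the properness of $f$ and the equality $f_*\cO_\cP=\cO_\cY$ (effectively, an h-descent-type argument) to produce the desired extension.
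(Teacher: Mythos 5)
Your argument is correct through the first two steps — descent over $Y$ via \Cref{descend to field}, then descent over a dense open $V_\cO\subset\cY$ via the homotopy exact sequence of \Cref{pietlem}(1) — and it agrees with the paper in spirit there.  The gap is in the final step, where you assert that the normalization $\cY_\alpha\to\cY$ of $\cY$ in a finite quotient of the partial descent is ``forced to be finite \'etale'' because its normalized pullback to $\cP$ is the finite \'etale cover $\cP_\alpha$.  That implication is false.  Consider a $\cY$ normal over $\cO$ with a closed point $y_0$ in a special fiber over which the fiber $E$ of the resolution $\cP\to\cY$ is, say, a curve of positive genus.  Take $\cP_\alpha\to\cP$ a finite \'etale cover restricting to a nontrivial cover of $E$, and suppose (as can happen) that it descends over $\cY\setminus\{y_0\}$.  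Then $\cY_\alpha$ is ramified at $y_0$, yet $(\cY_\alpha\times_\cY\cP)^\nu\cong\cP_\alpha$ is \'etale over $\cP$, since the two are normal finite covers of $\cP$ agreeing over the dense open $f^{-1}(\cY\setminus\{y_0\})$.  The point is that \'etaleness of a cover of $\cP$ does not control inertia at a contracted point of $\cY$: the residue field at a point of $E$ is much larger than at $y_0$, so the inertia at $y_0$ can be strictly larger.  Your inversion of primes only removes the residue characteristics of the \emph{codimension-one} vertical components of $\cY\setminus V_\cO$, so a codimension $\geq 2$ bad locus like $\{y_0\}$ survives, and no mechanism in your argument rules it out.

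What is actually needed at the bad locus is triviality of $\cP_\alpha$ on the geometric fibers of $f$ there (this is exactly the hypothesis of the Stacks Project Tag 0A48 descent criterion for finite \'etale covers along proper morphisms with $f_*\cO_\cP=\cO_\cY$).  The paper gets this by proving that, after inverting finitely many primes, for every geometric point $\bar y_0$ of $\cY$ there is a geometric point $\bar y$ of $Y$ specializing to it with $\piet(\cP_{\bar y})\twoheadrightarrow\piet(\cP_{\bar y_0})$ surjective — so triviality of the cover on $\cP_{\bar y}$ (known by descent over $Y$) propagates to $\cP_{\bar y_0}$.  This requires iterating \Cref{pietlem}(2) over a stratification $\{\cY^i\}$ of $\cY$ and then inverting enough primes to make \emph{every} stratum flat over $\cO$, which in particular removes exactly the primes where ``new'' fundamental group (such as the genus of $E$) appears in special fibers.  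Your argument invokes part (1) of \Cref{pietlem} but has no substitute for part (2), and the stratified flatness condition is what tells you when to stop inverting primes.  To repair the proof you would need to add precisely this ingredient, at which point you recover the paper's argument rather than an alternative to it.
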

\begin{proof} We first claim that after inverting finitely many primes, the following is true:  for every geometric point $\bar y_0$ of $\cY$ there is a geometric point $\bar y$ of $Y$ specializing to it such that the specialization map $\piet(\cP_{\bar y},\bar y)\to\piet(\cP_{\bar y_0},\bar y_0)$ is surjective.  Using \Cref{pietlem}, there is a stratification $\cY^i$ of $\cY$ such that this is true for $(\cP_{\cY^i})^{\mathrm{red}}\to\cY^i$, provided $\cY^i$ is flat over $\cO$.  After shrinking $\cO$, we may assume this is the case for all $\cY^i$, and therefore the claim is proven.

By \Cref{descend to field}, we may assume $\V_\ell$ descends to $Y$.  To finish it suffices to show any finite \'etale cover $\cP'\to\cP$ which descends to $Y$ over $P$ descends to $\cY$.  The descent clearly patches, so we may assume $\cY=\Spec R$ for a local ring $R$.  Base-changing to the henselization $\bar R$, the cover $\cP'_R\to\cP_R$ descends to $R$ by \cite[\href{https://stacks.math.columbia.edu/tag/0A48}{Tag 0A48}]{stacks-project}, since it is trivial on every geometric fiber of $\cP_R\to\Spec R$ by the above surjection of specialization maps.  The descent to $R$ comes equipped with descent data which gives a finite \'etale cover of $\Spec R$ which pulls back to $\cP'$.

\end{proof}
\begin{corollary}\label{descend to integral}
        After inverting finitely primes of $\cO$, the following is true.  For any $\ell$ and any $n$, any $\Z_\ell$-local system $\V_\ell$ on $\cP[1/n]$ which geometrically descends to $Y_{\bar F}$ descends to $\cY[1/n]$.  
\end{corollary}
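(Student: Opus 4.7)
The plan is to deduce this as a direct application of the preceding lemma, with only minor bookkeeping. First, apply the lemma to the morphism $f:\cP\to\cY$ over $\cO$ to obtain a finite set $S$ of primes of $\cO$ such that, after inverting $S$, every $\Z_\ell$-local system on $\cP$ which geometrically descends to $Y_{\bar F}$ descends to $\cY$. I would replace $\cO$ by $\cO[S^{-1}]$ from the outset; this is the finite set of primes named in the conclusion.

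Next, fix an integer $n$ and consider the base change of the morphism $f:\cP\to\cY$ along $\cO\to\cO[1/n]$, giving $f[1/n]:\cP[1/n]\to\cY[1/n]$ over $\cO[1/n]$. The hypotheses needed for the lemma are stable under this flat base change: $\cP[1/n]$ remains smooth with geometrically irreducible generic fiber, $\cY[1/n]$ remains normal and flat with geometrically irreducible generic fiber, the morphism $f[1/n]$ remains proper, and the formation of $f_*\cO_{\cP}$ is compatible with flat base change so $\cO_{\cY[1/n]}\xrightarrow{\cong}f[1/n]_*\cO_{\cP[1/n]}$. Moreover, the generic fiber is unchanged, so the hypothesis that $\V_\ell$ descends geometrically to $Y_{\bar F}$ has the same meaning.

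Now invoke the ``moreover'' clause of the preceding lemma applied to the further localization $\cO[1/n]$ of $\cO$: the same finite set $S$ of primes of $\cO$ works, and so any $\Z_\ell$-local system on $\cP[1/n]$ which geometrically descends to $Y_{\bar F}$ descends to $\cY[1/n]$. This is exactly the claim, uniformly in $\ell$ and $n$. The only content beyond the lemma is the observation that, because the set of primes inverted does not depend on the further localization, we may allow $n$ to range over all positive integers without enlarging $S$; there is no substantive obstacle to handle.
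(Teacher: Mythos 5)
Your proof is correct and matches what the paper leaves implicit (the corollary is stated without proof as a direct consequence of the preceding lemma). You correctly observe that the finite set of primes produced by the lemma can be chosen independently of $\ell$ and $n$, and that the ``moreover'' clause handles the further localization by $\cO[1/n]$; the verification that the lemma's hypotheses persist under the flat base change $\cO\to\cO[1/n]$ is routine but worth recording, as you do.
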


\subsection{Relation to Hodge theory over $\bbC$}\label{hodge sect}
We end this section by summarizing some classical extension results in Hodge theory using the above language.  This is both motivation for the using extension properties to determine integral models and is needed to prove the characteristic 0 part of the extension in \Cref{thm ICM hodge}.
\begin{lemma}\label{griffiths extension}
    Let $P$ be a smooth complex algebraic variety equipped with a polarizable integral variation of Hodge structures $(W_\Z,F^\bullet W)$, and let $\Phi:P^\an\to[\mathbf{G}(\Z)\backslash D]$ be the period map.
    \begin{enumerate}
        \item For any partial log smooth compactification $(P',D)$, $(W_\Z,F^\bullet)$ extends to $P'$ if and only if the monodromy of $W_\Z$ around each component of $D$ is trivial.
        \item $\phi$ is proper if and only if and only if $V_\Z$ is nonextendable\footnote{We use the same definition for a $\Z$-local system}.
        \item There is a finite \'etale cover $\pi:\tilde P\to P$ and a partial log smooth compactification $\tilde P'$ of $\tilde P$ such that $\pi^*W_\Z$ extends to $\tilde P'$ where it is nonextendable.
        \item If the local monodromy of $W_\Z$ is unipotent, then there is a partial log smooth compactification $P'$ of $P$ such that $W_\Z$ extends to $P'$ where it is nonextendable.
    \end{enumerate}
\end{lemma}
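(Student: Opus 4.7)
The plan is to reduce everything to part (1) combined with classical results of Borel, Griffiths, and Schmid. For part (1), I would invoke Deligne's canonical extension: if the monodromy around each component of $D$ is trivial, then the local system $W_\Z$ extends uniquely as a local system across $D$, and by Schmid's nilpotent orbit theorem the Hodge filtration $F^\bullet W$ extends as a filtration by holomorphic subbundles (in the unipotent case the canonical extension is the Deligne extension, and triviality of the monodromy means the residues of the Gauss--Manin connection vanish). Conversely, if $(W_\Z,F^\bullet)$ extends as an integral variation of Hodge structures to $P'$, the local system must extend, and by Ehresmann the monodromy around each component is trivial.

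For part (2), the equivalence follows from the local structure of the period map at infinity. If $\Phi$ is not proper, then after passing to a finite \'etale cover $\pi:\tilde P\to P$ we may find a punctured disc $\Delta^*\subset \tilde P$ along which $\Phi$ extends continuously to a point of $[\mathbf{G}(\Z)\backslash D]$; by the local monodromy theorem and Schmid's $SL_2$-orbit theorem this happens exactly when the local monodromy around $\Delta^*$ is finite, which after a further finite cover trivializes the monodromy, so by part (1) the variation extends to a nontrivial partial log smooth compactification, contradicting nonextendability. Conversely, if $W_\Z$ extends after a finite \'etale cover and partial log smooth compactification, the period map extends continuously across the added boundary by nilpotent orbit, and so $\Phi$ cannot be proper.

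For part (4), start with any log smooth compactification $(\bar P,\bar D)$ of $P$ (by Hironaka), write $\bar D=\bigcup_i D_i$, and let $P'\subset \bar P$ be the open subset obtained by adjoining to $P$ those components $D_i$ around which the local monodromy of $W_\Z$ is trivial (this is a well-defined condition on components since monodromies around $D_i$ form a conjugacy class in $\pi_1(P)$). Part (1) then gives an extension of $(W_\Z,F^\bullet)$ to $P'$. For nonextendability, note that at any point of $\bar P\setminus P'$ some local monodromy operator is unipotent (by hypothesis) and nontrivial, hence of infinite order, and therefore stays nontrivial after any finite \'etale cover; so no further partial compactification to which the pullback of $W_\Z$ extends can enlarge $P'$.

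Finally for part (3), I would invoke Borel's theorem that the monodromy representation $\pi_1(P)\to \mathbf{G}(\Z)$ of a polarizable integral VHS has a finite index subgroup landing in the torsion-free part, and in particular the local monodromies around all boundary divisors of any log smooth compactification can be made unipotent after a finite \'etale cover $\pi:\tilde P\to P$; then apply part (4) to $\pi^*(W_\Z,F^\bullet)$. The main subtle point in the whole argument is the use of Schmid's nilpotent orbit theorem, which is what allows the \emph{Hodge filtration} (and not just the underlying local system) to extend across a trivial-monodromy boundary; the rest of the argument is a bookkeeping exercise around choosing the right partial compactification inside a chosen log smooth compactification.
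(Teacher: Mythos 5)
The paper dispenses with this lemma in one line, citing Griffiths \cite[Theorem 9.5]{griffithsiii} for part (1) and Brunebarbe \cite[\S3]{brunebarbe} for parts (2)--(4). You instead reconstruct direct arguments, which is fine in spirit, but there is a real gap in your treatment of part (3), and a couple of misattributions elsewhere.

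For part (3), you claim that Selberg's lemma (the existence of a finite-index torsion-free subgroup $\Gamma_0$ of the monodromy group, which you incorrectly credit to Borel; Borel's monodromy theorem is the statement that the local monodromies are \emph{quasi-unipotent}) ``in particular'' forces the local monodromies to become unipotent after the corresponding finite \'etale cover. This does not follow. If $T_i = S_i U_i$ is the commuting Jordan decomposition of a local monodromy with $S_i$ of finite order $m_i > 1$ and $U_i \neq 1$, then $T_i$ has infinite order, so its membership in $\Gamma_0$ is not excluded by torsion-freeness; and if the smallest $k$ with $T_i^k \in \Gamma_0$ fails to be divisible by $m_i$, then $T_i^k$ is not unipotent. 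Torsion-freeness only says no nontrivial \emph{finite-order} element of the monodromy group lies in $\Gamma_0$, and quasi-unipotent elements with $U_i \neq 1$ never have finite order, so the hypothesis is vacuous for exactly the elements you need to control. To make this step correct one needs a more careful choice of finite-index subgroup (for instance a congruence subgroup $\Gamma(N)$ with $N \geq 3$ and $N$ coprime to all the $m_i$, together with Minkowski's lemma to pin down the order of the semisimple parts mod $N$), or one should simply cite the standard reduction-to-unipotent-monodromy statement from the literature as the paper does via Brunebarbe.

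For part (1), the reverse implication is elementary (a local system on a disc has trivial monodromy), and the invocation of Ehresmann is misplaced; Ehresmann's theorem concerns proper submersions and plays no role here. The forward direction via Schmid's nilpotent orbit theorem plus Deligne's canonical extension is a legitimate alternative to citing Griffiths' removable singularity theorem directly, and is fine, though you should also note that the limit Hodge filtration is a genuine (polarized) Hodge filtration on the boundary, which is where the content of Griffiths' theorem really lies. For part (2) the sketch is essentially the classical characterization of properness of period maps in terms of infinite local monodromy; the argument producing a punctured disc with bounded period-map image from non-properness is somewhat informal as written, but the idea is correct. Part (4) is fine once part (1) is in hand, with the caveat that nonextendability in the paper's sense quantifies over all finite \'etale covers and all partial log smooth compactifications of those covers, so one should use the curve-restriction criterion (Lemma \ref{faithful local monodromy}) to reduce the nonextendability check to the fact that a nontrivial unipotent element has infinite order; your phrasing ``at any point of $\bar P\setminus P'$ some local monodromy operator \ldots stays nontrivial after any finite \'etale cover'' gestures at this but is not a complete justification on its own.
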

\begin{proof}
    (1) is a result of Griffiths \cite[Theorem 9.5]{griffithsiii}.  For (2), (3), and (4), see \cite[\S3]{brunebarbe}.
\end{proof}
For simplicity, we will assume $W_\Z$ has torsion-free monodromy.  In this case, it follows from the lemma that for any period map $\Phi: P^\an\to [\mathbf{G}(\Z)\backslash D]$, there is a partial log smooth compactification $P'$ of $P$, a proper extension $\Phi':P'^\an\to [\mathbf{G}(\Z)\backslash D]$ which factors through a finite inertia-free cover $\Gamma\backslash D$, and a proper algebraic map $h':P'\to Y$ to a normal variety $Y$ with $\cO_Y\xrightarrow{\cong } h'_*\cO_{P'}$ such that $\Phi'$ factors as $P'^\an\xrightarrow{h'^\an} Y^\an\xrightarrow{\psi^\an}[\mathbf{G}\backslash D]$.  We call $Y$ the Stein factorization of the period map $\Phi$.  Observe that the local system $W_\Z$ as well as the associated filtered flat vector bundle $(W,\nabla,F^\bullet W)$ are pulled back from $V_\Z$ and $(V,\nabla,F^\bullet V)$ on $Y$.
\begin{corollary}\label{char 0 extension}
    Let $Y$ be the Stein factorization of a period map $(W_\Z,F^\bullet W)$ on $P$ with torsion-free monodromy.  Let $X$ be a smooth complex algebraic variety and $U\subset X$ a dense Zariski open set.  Then a morphism $g_U:U\to Y$ extends to $g:X\to Y$ if and only if $g_U^*\W_\Z$ extends to $X$.
\end{corollary}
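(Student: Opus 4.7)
The forward implication is immediate: pulling back $\V_\Z$ along any extension $g:X\to Y$ extends $g_U^*\V_\Z$. For the reverse implication, my plan is to propagate the VHS to a log resolution of $X$ using Griffiths extension, build an analytic extension $\tilde X\to Y$ by lifting the resulting period map along the finite map $\psi:Y\to[\Gamma\backslash D]$, and then descend from $\tilde X$ to $X$ via the Rigidity Lemma.

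In detail, I would first choose a log resolution $\pi:\tilde X\to X$ which is an isomorphism over $U$, with $D=\pi^{-1}(X\setminus U)$ a simple normal crossings divisor. Since $g_U^*\V_\Z$ extends to $X$, it pulls back to an extension on $\tilde X$, so the monodromy around each component of $D$ is trivial, and \Cref{griffiths extension}(1) produces an extension of the VHS to $\tilde X$. This gives a period map $\tilde\Phi:\tilde X^{\an}\to[\Gamma\backslash D]^{\an}$ extending $\psi\circ g_U\circ\pi|_{\tilde U}$. The key input for lifting $\tilde\Phi$ to $Y$ is that $\psi^\an$ is finite: the analytic fibre product $W=\tilde X^\an\times_{[\Gamma\backslash D]^\an}Y^\an$ is then finite over $\tilde X^\an$, and the closure $W_0\subset W$ of the graph of $g_U\circ\pi|_{\tilde U}$ is finite and birational over $\tilde X^\an$, hence an isomorphism by Zariski's main theorem, yielding an analytic $\tilde g^\an:\tilde X^\an\to Y^\an$. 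Algebraicity would then follow by taking a projective closure $Y\subset\bar Y$: the Zariski closure $\Gamma$ of the graph of $g_U\circ\pi|_{\tilde U}$ in $\tilde X\times\bar Y$ is proper over $\tilde X$, its analytification agrees with the analytic closure of that graph and thus with the graph of $\tilde g^\an$ (and so lies in $\tilde X\times Y$), and Zariski's main theorem applied to $\Gamma\to\tilde X$ (proper birational with smooth target) gives an algebraic $\tilde g:\tilde X\to Y$.

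To descend $\tilde g$ through $\pi$, by the Rigidity Lemma (\Cref{lem:rigiditylemma}) it suffices to check that each proper connected fibre $F=\pi^{-1}(x)$ is contracted by $\tilde g$. The local system on $\tilde X$, being pulled back from $X$, restricts trivially to $F$, so $\tilde\Phi|_F$ lifts to a horizontal holomorphic map from the compact K\"ahler $F^\an$ into the period domain $D$; by Griffiths' classical result that horizontal holomorphic maps from compact K\"ahler manifolds to period domains are constant, $\tilde\Phi|_F$ is constant. Hence $\psi\circ\tilde g|_F$ is constant, and since $\psi$ is finite and $F$ is connected $\tilde g|_F$ itself is constant, producing the desired extension $g:X\to Y$. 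The main obstacle is this last constancy ingredient from classical Hodge theory; the remaining steps are routine applications of Griffiths extension, Zariski's main theorem, and the comparison of Zariski and analytic closures of constructible sets.
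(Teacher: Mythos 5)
Your proof is correct in substance and follows the natural route: extend the variation across a log resolution via Griffiths' theorem, lift the resulting period map through the finite Stein factorization by a fiber-product/Zariski-main-theorem argument, establish algebraicity by comparing Zariski and analytic closures in a projective compactification, and descend along the resolution by contracting fibers via Griffiths rigidity and the Rigidity Lemma. The paper leaves the corollary unproved as a consequence of \Cref{griffiths extension} and the definition of the Stein factorization, so there is no proof to compare against directly, but yours supplies exactly the details one would expect.

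One small imprecision: you invoke Griffiths' constancy theorem for ``compact K\"ahler $F^\an$'' where $F=\pi^{-1}(x)$, but $F$ is only a connected projective scheme and may be singular and non-reduced. The fix is immediate --- replace $F$ by a resolution of each irreducible component of $F_{\mathrm{red}}$, which is smooth projective hence K\"ahler, apply Griffiths' constancy there, and use connectedness of $F$ (which follows from Zariski's connectedness theorem since $X$ is normal and $\pi$ is proper birational) to conclude $\tilde\Phi|_F$ is globally constant. Also, for the Rigidity Lemma descent to be global rather than local, you should note that the local factorizations it produces agree on overlaps because they agree over the dense open $U$ where $\pi$ is an isomorphism; this is the standard globalization, and the paper itself uses the Rigidity Lemma in this way (e.g.\ in \Cref{factors}). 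With these small touch-ups, the argument is complete.
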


\begin{definition}\label{defn:griffiths}
    For any scheme $Y$ and a vector bundle $(V,F^\bullet V)$ on $Y$ with a locally split decreasing filtration, we define the \textit{Griffiths bundle} of $(V,F^{\bullet}V)$ to be
    $\otimes_{i\in\Z}\det F^i V$.
\end{definition}

The main result of \cite{bbt} is the following:
\begin{theorem}[\cite{bbt}]\label{thm:bbt}  If $Y$ is the Stein factorization of a period map as above, then the Griffiths bundle on $Y$ is ample.
    
\end{theorem}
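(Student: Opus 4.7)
The plan is to combine the curvature positivity of the Hodge metric on Griffiths bundles with the geometry of Stein factorizations, promoted to algebraic ampleness via the o-minimal structure of period maps. First, on the period domain $D$ the Hodge filtration pieces $F^i$ carry natural $\mathbf{G}(\R)$-invariant Hermitian metrics, and the induced metric $h$ on the Griffiths line bundle $L = \bigotimes_i \det F^i$ has curvature form $\Theta(L,h)$ that is semi-positive on horizontal tangent vectors of $D$, with strict positivity on any horizontal vector whose image in $\bigoplus_i \Hom(F^i/F^{i+1}, F^{i-1}/F^i)$ is nonzero (this is Griffiths' infinitesimal curvature computation combined with horizontality). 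Pulling back via the (horizontal) period map gives a Hermitian metric on $L$ over $Y$ with the same positivity properties.

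Next, the construction of the Stein factorization forces $\psi^{\an}: Y^{\an} \to [\mathbf{G}(\Z)\backslash D]$ to have $0$-dimensional generic fibers. Indeed, $h': P'\to Y$ has connected fibers with $\cO_Y \xrightarrow{\cong} h'_*\cO_{P'}$, so any positive-dimensional component of a fiber of $\psi^{\an}$ could be contracted through $h'$, contradicting the universality of the factorization. Hence $\psi^{\an}$ is immersive on a Zariski-open dense $U \subset Y^{\mathrm{sm}}$; combined with horizontality this implies the pulled-back curvature $(\psi^{\an})^*\Theta(L,h)$ is strictly positive on $U$ and semi-positive everywhere. In particular $L$ is analytically big.

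The final step is to upgrade analytic positivity to algebraic ampleness. Using the definability of period maps in $\R_{\mathrm{an},\exp}$, the variety $Y$ embeds into a definable algebraic compactification $\bar Y$, and by Schmid's nilpotent orbit theorem the Hodge metric on $L$ extends to a singular metric on a $\Q$-line bundle $\bar L$ on $\bar Y$ with logarithmic growth at the boundary. The associated curvature current is closed and positive. I would then invoke a Nakai--Moishezon-type criterion on $\bar Y$: for any irreducible subvariety $Z \subset Y$ of dimension $d$, the restriction of the period map to $Z^{\an,\mathrm{sm}}$ inherits a sub-variation of Hodge structure whose own Stein factorization witnesses that the restricted period map is again horizontal and generically immersive; hence $\Theta(L,h)^d$ is strictly positive on a dense open of $Z$, and the intersection number $(\bar L^d \cdot Z)$ is correspondingly positive. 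Arguing similarly for top-dimensional intersections gives ampleness of $\bar L$, and hence of $L$.

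The principal obstacle is controlling the degeneration at the boundary $\bar Y \setminus Y$: Schmid's nilpotent orbit theorem must be carefully invoked to guarantee that the Hodge metric extends with only logarithmic singularities, so that analytic curvature integrals agree with algebraic intersection numbers on $\bar Y$. Equally delicate is verifying the Nakai--Moishezon inequality on every irreducible subvariety, since this requires the inductive observation that every $Z\subset Y$ is itself the Stein factorization of its inherited period map---an assertion which relies crucially on the tame o-minimal framework developed in \cite{bbt} to construct definable compactifications compatible with sub-variations of Hodge structure.
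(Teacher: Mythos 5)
Your sketch captures the two pillars of the argument in \cite{bbt}: the Griffiths curvature positivity of the Hodge metric on the Griffiths line bundle, and the definable/o-minimal control of the degeneration at the boundary via the nilpotent orbit theorem. However, the Nakai--Moishezon step, as written, has a genuine gap. You verify $(\bar L^{\dim Z}\cdot Z)>0$ only for irreducible $Z\subset Y$, whereas the criterion on a proper $\bar Y$ demands this positivity for \emph{every} positive-dimensional irreducible closed subvariety of $\bar Y$, in particular for every $Z$ contained in the boundary $\bar Y\setminus Y$. On boundary strata the Hodge metric degenerates; the nilpotent orbit theorem controls its growth but does not yield positivity of the restricted curvature there, so the required boundary intersection numbers are simply not addressed. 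Even for interior $Z$, the identification of $\int_{Z\cap Y}\Theta(L,h)^{\dim Z}$ with the algebraic number $(\bar L^{\dim Z}\cdot Z)$ in the presence of a singular metric is a separate nontrivial estimate (one must show the singular locus carries no mass for the top wedge power), which the proposal leaves implicit.

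Two smaller points. The assertion that every $Z\subset Y$ is itself the Stein factorization of its inherited period map is neither correct in general nor needed: finiteness of $\psi^\an|_Z$, inherited from the finiteness of $\psi^\an$, already gives the generic immersivity you use on the interior. And the actual argument in \cite{bbt} takes a different route from a pointwise Nakai--Moishezon count on a chosen algebraic compactification: they prove a definable-analytic algebraization theorem (o-minimal GAGA) together with a definable analogue of the Grauert/Kodaira ampleness criterion, and deduce ampleness of the Griffiths bundle by verifying the hypotheses of that criterion for the definably positive Hodge metric, with the nilpotent orbit estimates used to confirm definability of the metric rather than to compute boundary intersection numbers directly.
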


\begin{lemma}
    Suppose $P$ is defined over a number field $F$.
    \begin{enumerate}
        \item If the filtered flat vector bundle $(W,\nabla,F^\bullet W)$ underling $(W_\Z,F^\bullet W)$ is defined over $F$, so is the Stein factorization of the (compactified) period map $h':P'\to Y$.  Moreover, the descent $(V,\nabla,F^\bullet V)$ is also defined over $F$.
        \item If $W_{\Z_\ell}$ extends to an arithmetic local system $_{et}\W_\ell$ on $P$, then $V_{\Z_\ell}$ extends to an arithmetic local system $_{et}\V_\ell$ on $Y$.
    \end{enumerate}
\end{lemma}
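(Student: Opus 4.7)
The plan is to realize $Y$ algebraically as $\operatorname{Proj}$ of the section ring of the Griffiths bundle on a partial log smooth compactification, which depends only on the $F$-rational algebraic data, and to then obtain $Y$ over $F$ by Galois descent from a finite extension $F'/F$.

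For part (1), by \Cref{griffiths extension}(4) (applicable because the torsion-free monodromy assumption in force before the lemma forces local monodromy to be unipotent), there is a partial log smooth compactification $P'$ of $P$ to which $W_\Z$ extends as a nonextendable local system. After enlarging $F$ to a finite extension $F'$, I may assume $P'$ is defined over $F'$. Since boundary monodromy vanishes, the filtered flat vector bundle extends canonically across $P'\setminus P$ without logarithmic poles and is $F'$-rational, so the Griffiths bundle $L$ on $P'$ is $F'$-rational. Now \Cref{thm:bbt} applied after base change to $\bbC$, combined with $\cO_{Y_\bbC}\cong h'_*\cO_{P'_\bbC}$ and the projection formula, yields $Y_\bbC = \operatorname{Proj}\bigoplus_n H^0(P'_\bbC, L_\bbC^{\otimes n})$. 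Flat base change of cohomology identifies this with the base change of the $F'$-variety $\operatorname{Proj}\bigoplus_n H^0(P', L^{\otimes n})$, giving an $F'$-model of $Y$; the descended filtered flat vector bundle $(V,\nabla,F^\bullet V)$ on this model is similarly $F'$-rational. Because the construction is canonically determined by the $F$-rational data $(P,(W,\nabla,F^\bullet W))$, Galois descent from $F'$ to $F$ produces $Y$ and $(V,\nabla,F^\bullet V)$ over $F$.

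For part (2), with $Y$ now defined over $F$, I apply \Cref{descend to field} to the proper morphism $h':P'\to Y$. I first extend $_{\et}\W_\ell$ across $P'\setminus P$: the Betti local system extends (trivial boundary monodromy), and since the monodromy of the $\ell$-adic realization around each boundary component coincides with the profinite completion of the Betti monodromy on the geometric fundamental group, the $\ell$-adic local system extends as well (with no further arithmetic obstruction, since the Galois action permutes the boundary components and respects local monodromy). To invoke \Cref{descend to field} it then suffices to verify descent of the geometric local system over $\bar F$; by Artin comparison of \'etale fundamental groups this reduces to descent over $\bbC$, which exists as the $\ell$-adic realization $V_{\Z_\ell}$ of the descended VHS. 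This yields $_{\et}\V_\ell$ on $Y$ over $F$, as desired.

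I expect the main technical difficulty to be the Galois descent in part (1): one must make the canonical isomorphisms of the $\sigma$-conjugate $F'$-models explicit. In practice this should be essentially formal given the manifest functoriality of every step in the $F$-rational input $(P,(W,\nabla,F^\bullet W))$.
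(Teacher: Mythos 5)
Your proof of part (1) has a genuine gap at the step $Y_\bbC = \operatorname{Proj}\bigoplus_n H^0(P'_\bbC, L_\bbC^{\otimes n})$.  In the setting of the lemma, $P'$ and $Y$ are only quasiprojective, not projective (the period-map target $[\mathbf G(\Z)\backslash D]$ is not compact), and for a non-proper $Y$ the $\operatorname{Proj}$ of the section ring of an ample bundle does not recover $Y$: already for $Y=\Pp^2\setminus\{pt\}$ with $L_Y=\cO(1)|_Y$, which is ample on $Y$, the section ring is $\Cb[x_0,x_1,x_2]$ and the $\operatorname{Proj}$ is $\Pp^2\neq Y$.  The projection formula is not the issue -- $H^0(P',L^{\otimes n})=H^0(Y,L_Y^{\otimes n})$ is fine -- but the passage from the section ring to $Y$ fails, and there is no obvious $F$-rational way to carve out the open piece $Y$ inside the resulting projective scheme.

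The robust route is Galois descent via the Rigidity Lemma rather than a reconstruction.  As you observe, the canonical extension of $(W,\nabla,F^\bullet W)$ to $P'$ and hence the Griffiths bundle $L$ on $P'$ are defined over $F$ (trivial boundary monodromy is an $F$-rational condition -- it is the vanishing of the residue of $\nabla$ -- so $P'$ is too).  Choose any model of $h':P'\to Y$ with $\cO_Y\xrightarrow{\cong}h'_*\cO_{P'}$ over a finitely generated $K/F$, and let $\sigma$ be an $F$-automorphism.  Then $h'$ and $\sigma(h')$ are proper with connected fibers, and $L=h'^*L_Y=\sigma(h')^*\sigma(L_Y)$ with $L_Y$, $\sigma(L_Y)$ ample.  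On any fiber $F_y$ of $h'$ the restriction of $L$ is trivial, so $\sigma(h')(F_y)$ is a connected proper subvariety of $\sigma(Y)$ on which the ample bundle $\sigma(L_Y)$ is trivial, hence a point.  By \Cref{lem:rigiditylemma}, $\sigma(h')$ factors through $h'$, and symmetrically; the induced isomorphisms $Y\cong\sigma(Y)$ are unique, satisfy the cocycle condition, and give the required descent datum for $(Y,h',(V,\nabla,F^\bullet V))$.  This also makes the "formal" final descent step you flag explicit.  Your argument for part (2) is then fine: the extension of $_{et}\W_\ell$ to $P'$ via the fundamental-group exact sequence is correct, and invoking \Cref{descend to field} for $h':P'\to Y$ together with Artin comparison works once $Y$ is known over $F$.

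One small imprecision, inherited from the text: torsion-free monodromy does not by itself force unipotent local monodromy (e.g.\ the quasi-unipotent matrix with eigenvalue $-1$ and a nontrivial Jordan block generates an infinite cyclic, hence torsion-free, group but is not unipotent).  The intended hypothesis is that the monodromy is neat (or directly that local monodromy is unipotent), which is what makes \Cref{griffiths extension}(4) applicable; worth saying so.
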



\section{Extension in codimension 2}\label{sect:codim 2}

In this section we prove our main extension result in codimension 2.

\begin{theorem}\label{thm:extncodim2}Let $F/\Q_p$ be a finite unramified extension with ring of integers $\cO$.  Let $\cS/\cO$ be a smooth $\cO$-scheme equipped with a Fontaine-Laffaille module $\bfM\in\mathrm{MF}_{[0,w]}(\hat \cS)$ on the $p$-adic completion $\hat\cS$ with underlying filtered flat bundle $(M,\nabla,F^\bullet M)$ where $0\leq w\leq p-2$.  Let $\cX/\cO$ be a smooth finite type $\cO$-scheme with $\dim\cX=2$, $x\in \cX$ a point in the special fiber, and $f:\cX\setminus x\to \cS$ a morphism.   Suppose we have a commutative diagram

    \[
   \begin{tikzcd}
       &\cX'\ar[d,"\pi"]\ar[r,"g"]&\cS\\
       \cX\setminus x\ar[ur]\ar[r]&\cX&
   \end{tikzcd}
   \]
 where
 $\pi:\cX'\to\cX$ is an iterated blow-up at closed points lying above $x$, and $g:\cX'\to \cS$ is a morphism extending $f$ via the natural lift $\cX\setminus x\to\cX'$.  Then the Griffiths bundle of $\hat g^*(M,F^\bullet M)$ is trivial on $\pi^{-1}(x)$. 

 In particular, if the Griffiths bundle of $(M,F^\bullet M)$ is ample on $\hat \cS$, then the morphism $f$ extends over $x$. 
\end{theorem}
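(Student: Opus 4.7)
The plan follows the outline in Section 1.6. The crucial non-formal input is the extension theorem of Hokaj and Guo--Yang: since $x$ is a codimension 2 point on the smooth $\cO$-scheme $\cX$, and $f^*\bfM$ is a Fontaine--Laffaille module in the range $[0,w]$ with $w\leq p-2$ on the $p$-adic completion of $\cX\setminus x$, it extends to a Fontaine--Laffaille module $\bfN\in\MF_{[0,w]}(\hat\cX)$ whose restriction to the complement of $\hat x$ recovers $f^*\bfM$. The bound $w\leq p-2$ is precisely what is needed in order to invoke this extension theorem.

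After pulling back to $\hat{\cX'}$, both $\hat\pi^*\bfN$ and $\hat g^*\bfM$ are Fontaine--Laffaille modules on $\hat{\cX'}$ that are canonically identified with $g^*\bfM$ on the dense open $\hat{\cX'}\setminus\pi^{-1}(x)$ via the relation $g=f\circ\pi$ valid there. Using the uniqueness properties of Fontaine--Laffaille modules in the range $[0,p-2]$ (so that the restriction functor across the exceptional divisor is fully faithful), we deduce an isomorphism $\hat\pi^*\bfN\cong\hat g^*\bfM$ of FL modules on all of $\hat{\cX'}$. In particular, the Griffiths bundle of $\hat g^*(M,F^\bullet M)$ agrees with the $\hat\pi$-pullback of the Griffiths bundle associated with $\bfN$, and is therefore pulled back from $\hat\cX$. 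Since $\pi^{-1}(x)$ is a connected tree of $\mathbb{P}^1$'s over the residue field of $x$, lying entirely in a single fiber of $\pi$, any line bundle pulled back from $\cX$ has degree zero on each such $\mathbb{P}^1$. Hence the Griffiths bundle of $\hat g^*(M,F^\bullet M)$ restricts trivially to $\pi^{-1}(x)$, which is the first assertion.

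For the ``in particular'' part, assume the Griffiths bundle is ample on $\hat\cS$. Its pullback under $g$ then has degree zero on each irreducible component of $\pi^{-1}(x)$ by the above, but the pullback of an ample line bundle has degree zero on a proper curve only if the curve is contracted. Hence $g$ contracts each irreducible component of $\pi^{-1}(x)$, and by connectedness $g$ contracts the entire fiber $\pi^{-1}(x)$ to a single point. The Rigidity Lemma \Cref{lem:rigiditylemma}, applied to $\pi:\cX'\to\cX$ (which satisfies $\cO_\cX\xrightarrow{\cong}\pi_*\cO_{\cX'}$ as an iterated blow-up of a smooth surface at closed points), then shows that $g$ factors through $\pi$ in a neighborhood of $x$; gluing this factorization with $f$ over $\cX\setminus x$ produces the desired extension $\cX\to\cS$.

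The main obstacle is the first step: extending a Fontaine--Laffaille module across a codimension 2 point on a smooth $\cO$-base. Everything downstream is essentially formal, combining the uniqueness of Fontaine--Laffaille extensions, the triviality of pulled-back line bundles on blow-up fibers, and the rigidity lemma. The ampleness of the Griffiths bundle plays its role only in the final step, converting triviality of the pullback on the exceptional curves into their contraction, which is what is needed to apply rigidity.
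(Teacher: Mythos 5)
Your proposal has the correct overall shape (produce a Fontaine--Laffaille module $\bfN$ on $\hat\cX$ agreeing with $f^*\bfM$, identify its pullback along $\pi$ with the pullback of $\bfM$ along $g$, conclude the Griffiths bundle of $\hat g^*(M,F^\bullet M)$ is pulled back from $\hat\cX$, and then apply rigidity), but both of the steps you flag as ``non-formal'' or ``essentially formal'' contain genuine gaps.

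First, there is no ``extension theorem for Fontaine--Laffaille modules across a codimension-2 point'' in Hokaj or Guo--Yang, and the mechanism you describe does not use the extension $g$ at all, which is fatal. Lemma \ref{lem:pointwisecrystaline} is a \emph{crystallinity criterion}: a $\Z_p$-local system on all of $\hat\cX^{rig}$ which is Brinon-crystalline at every classical point is crystalline. It is not an extension statement. And the local system you would need to extend, $(f^{rig})^*\D(\bfM)^*$, lives on $(\hat\cX\setminus x)^{rig} = \hat\cX^{rig}\setminus ]x[\,$, which is the complement of the \emph{full-dimensional open} residue disk $]x[$ inside the curve $\hat\cX^{rig}$ (not a codimension-$\geq 2$ analytic subset), so there is no purity reason for it to extend. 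The extension across $]x[$ is exactly what $g$ provides: since $\hat\pi$ is an admissible formal blow-up, $\hat\pi^{rig}:\hat{\cX'}^{rig}\to\hat\cX^{rig}$ is an isomorphism, so $(g^{rig})^*\D(\bfM)^*$ is a local system on all of $\hat\cX^{rig}$ restricting to $(f^{rig})^*\D(\bfM)^*$. One then needs to check Brinon-crystallinity at the new classical points in $]x[$ --- again via $g$, using \Cref{lem:rambasechangeFL} and \Cref{lem:FLpullback} to see the fiber at any $\cO'$-point of $\cX'$ is crystalline --- before \Cref{lem:pointwisecrystaline} yields $\bfN\in\MF_{[0,w]}(\hat\cX)$.

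Second, your identification $\hat\pi^*\bfN\cong\hat g^*\bfM$ ``as FL modules on $\hat{\cX'}$'' by full faithfulness of restriction does not typecheck: $\cX'/\cO$ is not smooth (the exceptional divisors are components of the special fiber, and $\cX'\to\Spec\cO$ is not smooth along them), so $\MF_{[0,w]}(\hat{\cX'})$ is not defined in the framework of \Cref{def:MFunramified}. What the paper actually compares is the underlying \emph{filtered flat bundles} $\pi^*(N,\nabla,F^\bullet N)$ and $g^*(M,\nabla,F^\bullet M)$, which do make sense on $\hat{\cX'}$. This comparison is not immediate from ``uniqueness of FL modules''; it is proved using the Liu--Zhu functor $LZ$ (\Cref{lem:FLtoLZ}) to embed both into the same object built from the common local system $g^{-1}\D(\bfM)^*\cong\pi^{-1}\D(\bfN)^*$, and then one checks that the two images coincide at every $\ol{\Q}_p$-point of $\cX'$ using the naturality of \Cref{lem:FLpullback}. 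Once the filtered bundle identification is in hand, your final two paragraphs (degree zero on exceptional curves, ampleness forces contraction, then \Cref{lem:rigiditylemma}) agree with the paper and are correct.
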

We recall the notion of Fontaine--Laffaille modules and the category $\mathrm{MF}_{[0,w]}(\hat \cS)$ in the next section.

\subsection{Fontaine-Laffaille modules}

\subsubsection{Reminder on crystals and flat bundles}
Let $F/\Q_p$ be an unramified extension and let $\cO$ be the ring of integers of $F$.  We remind the reader that for a smooth formal scheme $\cX/\Spf\cO$ there is a natural equivalence of categories between crystals in vector bundles on the special fiber $\cX_0$ and flat vector bundles on $\cX$ (see for example \cite[\href{https://stacks.math.columbia.edu/tag/07JH}{Tag 07JH}]{stacks-project}).  In particular, there is a well-defined ``Frobenius pull-back'' of a flat vector bundle $(V,\nabla)$ on $\cX$ which we denote $F^*(V,\nabla)$.  Concretely, for a lift of Frobenius $\phi:\cX\to\cX$, we form $(\phi^*V,\phi^*\nabla)$, and for any other lift of Frobenius $\phi':\cX\to\cX$ there is a canonical isomorphism of flat vector bundles $(\phi'^*V,\phi'^*\nabla )\to(\phi^*V,\phi^*\nabla)$ given by the formula:
\begin{equation}\label{taylor1}1\otimes v\mapsto \sum_I \frac{(\phi'(t)-\phi(t))^I}{I!}\otimes \nabla(\partial)^Iv.\end{equation}
Here we take $t_1,\dots,t_n$ to be local co-ordinates for $\cX$ with a dual basis of derivations $\partial_i$ and for a multi-index $I=(i_1,\dots,i_n)$ we set $\nabla(\partial)^I=\prod_j \nabla(\partial_i)^{i_j}$.

The above isomorphisms satisfy an appropriate cocycle condition, hence local pullbacks (where a lift of Frobenius exists) canonically glue.  Moreover, for any morphism $f:\cY\to\cX$ of smooth formal schemes there is a natural identification $f^*F^*(V,\nabla)\cong F^*f^*(V,\nabla)$ by factoring $f$ as the graph followed by the projection $\cX\to\cX\times\cY\to\cY$ and using the existence of campatible lifts of Frobenius for the graph and the projection separately.

\subsubsection{Unramified case}
Let $F/\Q_p$ be an unramified extension and let $\cO$ be the ring of integers of $F$. Let $\cX$ denote a smooth formal scheme over $\cO$, and $\cX^{rig}$ its rigid-analytic fiber.  In \cite{faltings} Faltings defines a category of Fontaine-Laffaille modules as follows:

\begin{definition}\label{def:MFunramified}
Assume first that $\cX$ admits a lift of Frobenius $\phi:\cX\ra \cX$. Let $0\leq w\leq p-2$. Then an object of $\MF_{[0,w]}(\cX)$ consists of a vector bundle $M$ equipped with a decreasing filtration $F^{\bullet}M$ with $F^0M=M, F^{a+1}M=0$, a Griffiths-transverse flat connection $\nabla_M$, and a morphism $\phi_M:\phi^*M \ra M$ satisfying:

\begin{enumerate}
    \item $\phi_M$ is horizontal for $\nabla_M$
    \item The restriction of $\phi_M$ to $\phi_M^*F^qM$ is divisible by $p^q$, for $q\in[0,w]$. 
    \item $M=\sum_{i=0}^w p^{-i}\phi_M(\phi_M^*F^i M)$.
\end{enumerate}
Morphisms in $\MF_{[0,w]}(\cX)$ are morphisms of the underlying modules which are compatible the additional structures.

Faltings shows that this category is independent of the choice of $\phi$ up to a natural isomorphism, using the canonical identifications of \cref{taylor1}. Indeed, for a different choice of Frobenius lift $\phi':\cX\ra \cX$, we obtain a new lift $\phi_M'$ to $M$ by setting: 
\begin{equation}\label{eq:frobeniuschange}
\phi'_M(1\otimes m):=\sum_{I} \phi(1\otimes \nabla_M(\partial^I)(m))\frac{(\phi'(t)-\phi(t))^I}{I!}.
\end{equation}

Finally, for a general $\cX$, one simply covers by affines where a lift of Frobenius exists as above and glues in the natural way. Since Frobenius lifts lift uniquely under etale maps, it follows that $\MF_{[0,w]}(\cX)$ satisfies \'etale descent.
    
\end{definition}

Whenever we write $\MF_{[0,w]}$ we assume that $0\leq w\leq p-2$. The following lemma must be well-known to the experts but we cannot find it in the literature in this form:

\begin{lemma}\label{lem:unrbasechangeFL}

Let $g:\cX\ra \cY$ be a map of smooth formal $\cO$-schemes. There is a natural base-change morphism $\MF(g):\MF_{[0,w]}(\cY)\ra \MF_{[0,w]}(\cX)$, compatible with compositions, so that $\MF(g_1\circ g_2)=\MF(g_1)\circ\MF(g_2).$

\end{lemma}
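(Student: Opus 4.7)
The plan is as follows. Pulling back the underlying filtered flat bundle is the easy part: as reviewed just before \Cref{def:MFunramified}, a flat vector bundle on a smooth formal $\cO$-scheme corresponds to a crystal on its special fiber, and crystals pull back canonically along any morphism, yielding $(g^*M,g^*\nabla_M)$ on $\cX$. The Griffiths-transverse filtration $F^\bullet M$ pulls back tensorially to $g^*F^\bullet M$, and this preserves local splittings. The only subtle issue is to produce a Frobenius structure on $g^*M$.

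To do this I would work locally. Cover $\cX$ by affine opens $\cU$ such that $g(\cU)\subset \cV$ for some affine open $\cV\subset\cY$, and choose lifts of Frobenius $\phi_\cU$ on $\cU$ and $\phi_\cV$ on $\cV$. The composites $g\circ\phi_\cU$ and $\phi_\cV\circ g$ are two lifts of the morphism $g_0:\cU_0\to\cV_0$ on special fibers, so the Taylor series formula \eqref{taylor1} provides a canonical isomorphism
\[
\alpha:(\phi_\cV\circ g)^*M\xrightarrow{\sim}(g\circ\phi_\cU)^*M=\phi_\cU^*g^*M
\]
of flat bundles on $\cU$. I then define
\[
\phi_{g^*M}:\phi_\cU^*g^*M\xrightarrow{\alpha^{-1}} g^*\phi_\cV^*M\xrightarrow{g^*\phi_M} g^*M.
\]

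Three verifications remain. First, that $\phi_{g^*M}$ is independent of the choices of $\phi_\cU$ and $\phi_\cV$, so that the local constructions glue to a global object on $\cX$: this follows by the same cocycle argument Faltings uses to show $\MF_{[0,w]}(\cX)$ is intrinsically defined, since changing $\phi_\cV$ alters $\phi_M$ by exactly the rule \eqref{eq:frobeniuschange}, which is compensated by the corresponding change in $\alpha$, and symmetrically for $\phi_\cU$. Second, that the three axioms of \Cref{def:MFunramified} hold for $\phi_{g^*M}$: horizontality and the generating property pass through $g^*$ since $g^*$ is exact and intertwines with $\alpha$. Third, functoriality is local and reduces to the compatibility of the Taylor identifications \eqref{taylor1} with composition of morphisms and of Frobenius lifts.

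The main obstacle is the filtration divisibility in the second step, since at first glance the Taylor correction $\alpha$ could spoil the divisibility of $\phi_{g^*M}$ on $\phi_\cU^*g^*F^iM$ by $p^i$. The key $p$-adic valuation count is as follows: Griffiths transversality sends $F^iM$ into $F^{i-|I|}M$ under $\nabla_M(\partial)^I$; the factor $(\phi_\cV(t)-\phi_\cU(t))^I$ appearing in \eqref{taylor1} is divisible by $p^{|I|}$; the multi-factorial $I!$ has $p$-adic valuation zero because each entry is at most $w\le p-2<p$; and $g^*\phi_M$ on $g^*F^{i-|I|}M$ is divisible by $p^{i-|I|}$. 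The contributions combine to give total divisibility $p^{|I|}\cdot p^{i-|I|}=p^i$, which is exactly what is needed. This parallels Faltings' original verification that \eqref{eq:frobeniuschange} preserves the axioms, and yields both well-definedness and functoriality of $\MF(g)$.
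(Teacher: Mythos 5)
Your overall strategy is a legitimate explicit unwinding of what the paper does much more compactly: the paper observes that $(M,\nabla_M,\phi_M)$ is \emph{equivalent to an $F$-crystal} on the special fiber, which pulls back along any morphism, and then pulls back the filtration separately. You instead write out the crystal pullback by hand via local Frobenius lifts and the Taylor formula \eqref{taylor1}, which is fine in principle and matches the formalism introduced in \Cref{def:MFunramified}.

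However, the valuation count in your last paragraph has a genuine error. You assert that the multi-factorial $I!$ has $p$-adic valuation zero ``because each entry is at most $w\le p-2<p$,'' but this confuses the depth $w$ of the filtration with the range of the multi-index $I$. The Taylor sum in \eqref{taylor1} runs over \emph{all} multi-indices $I$, not just those with entries bounded by $w$; the series is an infinite, $p$-adically convergent sum. For $|I|\geq p$ one has $v_p(I!)>0$, so the naive bookkeeping $p^{|I|}\cdot p^{i-|I|}=p^i$ breaks down. The conclusion is still true, but for a subtler reason: writing $v_p(I!)=\tfrac{|I|-\sum_j s_p(i_j)}{p-1}$, one checks that $|I|-v_p(I!)\geq \tfrac{|I|(p-2)+1}{p-1}$, which for $|I|\geq p$ already exceeds $p-2\geq w\geq i$; combined with $v_p(I!)=0$ for $|I|\leq p-1$ and Griffiths transversality for $|I|\leq i$, one recovers total divisibility by $p^i$ term by term. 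As written, your argument for the divisibility axiom is not correct.

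The paper's route avoids this entirely: by factoring $g$ as the graph $\cX\to\cX\times\cY$ followed by the projection $\cX\times\cY\to\cY$ (as discussed in the reminder on crystals earlier in the section), one can choose Frobenius lifts on source and target that are literally compatible, so the isomorphism $\alpha$ is the identity, the Taylor correction never appears, and conditions (2) and (3) of \Cref{def:MFunramified} for $g^*\bfM$ follow immediately from those for $\bfM$. Well-definedness and functoriality then come for free from the crystal formalism. If you want to keep your explicit Taylor-series approach, you should replace the incorrect ``$v_p(I!)=0$'' claim with the case analysis above, or better, use compatible Frobenius lifts to verify the axioms and invoke the Taylor identification only for the independence-of-choices statement, where no valuation estimate is needed.
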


\begin{proof}
    The statement is clearly Zariski-local on both $\cX$ and $\cY$, which we may therefore assume to be affine. We may simply pull back the vector bundle, filtration, and connection. The only tricky part lies in the semi-linear Frobenius morphism $\phi$.
    As explained in \cite{faltings}, the data of $(M,\nabla_M,\phi_M)$ is equivalent to an $F$-crystal on $\cX_0$. Since $F$-crystals pull back naturally, the claim is proven.

\end{proof}

\subsubsection{Ramified case}

Let $F'/\Q_p$ be a finite extension, which might be ramified. Let $F\subset F'$ be the maximal unramified subfield, and let $\cO',\cO$ be the rings of integers of $F',F$. Let $\pi\in \cO$ denote a uniformizer, with minimal polynomial $f(t)\in \cO[t]$. Let $R=\cO[[t]][\frac{f^n}{n!}]$, and let $R_{\cO'}$ be the PD-completion of $R$. Then $R_{\cO'}$ naturally acquires a decreasing filtration with $F^0R_{\cO'}=R_{\cO'}$ and $F^qR_{\cO'}$ being generated by the divided powers $\frac{f^n}{n!}, n\geq q$. 

In \cite[Def 2]{faltingsramified} Faltings defines a category $\MF_{[0,w]}(\cO')$ for $a\leq p-2$ whose definition we recall here:

\begin{definition}\label{def:MFramified}
    An object $\MF_{[0,w]}(\cO')$ is a $R_{\cO'}$-module $M$ with a decreasing filtration $F^\bullet M$, a Griffiths-transverse flat connection $\nabla_M$ which nilpotent modulo $p$, and a (semi-linear) Frobenius $\phi_M$ satisfying the following:
    \begin{enumerate}
        \item $M$ is filtered-free as an $R_{\cO'}$-module (meaning a direct sum of twists of $R_{\cO'}$), with a basis $m_i$ in degrees $q_i$ in $[0,w]$.
        \item $\phi_M$ is horizontal for $\nabla_M$
        \item The restriction of $\phi$ to $F^qM$ is divisible by $p^q$, for $q\in[0,w]$. 
        \item The elements $\phi(m_i)/p^{q_i}$ form another basis for $M$.
    \end{enumerate}
    Morphisms in $\MF_{[0,w]}(\cO')$ are morphisms of the underlying modules which are compatible the additional structures.
\end{definition}

Once again, as Faltings observes, the data of $(M,\nabla_M,\phi)$ gives an $F$-crystal on $\cO'\otimes\F_p$.

\begin{lemma}\label{lem:rambasechangeFL}
Let $\cS$ be a smooth formal scheme over $\Spf \cO$, and let $\iota:\Spf \cO'\ra \cS$ be a morphism of formal $\cO$-schemes. There is a
natural base-change morphism $\MF(\iota):\MF_{[0,w]}(\cS)\ra \MF_{[0,w]}(\cO')$.   
\end{lemma}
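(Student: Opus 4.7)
The strategy mirrors that of \Cref{lem:unrbasechangeFL}---pull back the data along a lift of $\iota$ and then check the axioms---except that we must now thicken the source of $\iota$ so as to land in the category of $R_{\cO'}$-modules rather than $\cO'$-modules.  Working Zariski-locally on $\cS$, I would choose local coordinates $x_1,\ldots,x_n$, a lift of Frobenius $\phi_\cS$, and dual derivations $\partial_i$.  Since $\cS$ is smooth over $\cO$ and the surjection $R_{\cO'}\twoheadrightarrow\cO'$ (sending $t\mapsto\pi$) has PD-kernel, the map $\iota$ admits a lift $\tilde\iota:\Spf R_{\cO'}\to\cS$, determined by a choice of preimages $\tilde a_i\in R_{\cO'}$ of $\iota^*(x_i)\in\cO'$.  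Equipping $R_{\cO'}$ with the standard Frobenius $\phi_{R_{\cO'}}$ acting as $t\mapsto t^p$ and as Frobenius on $\cO$, I would define $\MF(\iota)(M)$ by pulling back $(M,\nabla_M,F^\bullet M)$ along $\tilde\iota$ as a filtered $R_{\cO'}$-module with flat connection, and transfer $\phi_M$ using the Taylor formula \eqref{eq:frobeniuschange} if $\tilde\iota$ does not intertwine the Frobenii; convergence of this series in $R_{\cO'}$ is guaranteed by the divided powers.

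I would then verify the four conditions in \Cref{def:MFramified}.  Filtered freeness descends from local freeness of $M$ together with the fact that each $F^iM$ is a sub-bundle.  Griffiths transversality, horizontality of $\phi$, and $p$-divisibility of $\phi_M|_{F^q}$ are all preserved by any pullback of the formal data.  Nilpotence of the connection modulo $p$ follows from the corresponding property on $\cS_0$, while the basis condition on $\phi(m_i)/p^{q_i}$ is the direct pullback of the ``strictness'' condition (3) in \Cref{def:MFunramified}.

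The main obstacle---and the heart of the argument---is independence from the choices of $\tilde\iota$, of local coordinates, and of the Frobenius lift $\phi_\cS$, so that the local construction glues globally on $\cS$.  This is where the PD-structure on $R_{\cO'}$ is essential: given two lifts $\tilde\iota,\tilde\iota'$ of $\iota$, the differences $\tilde\iota'^*(x_i)-\tilde\iota^*(x_i)$ lie in the PD-kernel of $R_{\cO'}\to\cO'$, so the Taylor expansion \eqref{taylor1} converges and produces a canonical isomorphism of filtered flat $R_{\cO'}$-modules $\tilde\iota^*M\xrightarrow{\sim}\tilde\iota'^*M$ intertwining the two transferred Frobenii.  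A parallel argument handles two Frobenius lifts on $\cS$, and these canonical isomorphisms satisfy the cocycle condition needed to glue Zariski-locally.  Conceptually, the construction realizes $\MF(\iota)$ as the crystalline pullback along $\iota_0:\Spec\cO'/\pi\to\cS_0$, viewing $R_{\cO'}$ as a PD-thickening of $\cO'/\pi$, from which naturality (and compatibility with pre-composition by maps between smooth formal $\cO$-schemes as in \Cref{lem:unrbasechangeFL}) follows formally.
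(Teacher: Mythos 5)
Your proposal follows the same overall strategy as the paper's proof: pull the data back along a lift $\tilde\iota:\Spf R_{\cO'}\to\cS$ furnished by smoothness, then use Taylor expansion to check independence of the lift. There is, however, one place where you assert the crucial step instead of proving it. You write that the Taylor expansion ``produces a canonical isomorphism of \emph{filtered} flat $R_{\cO'}$-modules,'' but the only justification offered is that the differences $\tilde\iota'^*(x_i)-\tilde\iota^*(x_i)$ lie in the PD-kernel, which gives convergence of the series. Convergence alone does not make the comparison isomorphism filtration-preserving: that requires the sharper observation that $\frac{(\iota_1(t)-\iota_2(t))^I}{I!}\in F^{|I|}R_{\cO'}$ (the divided powers of PD-ideal elements raise the filtration degree in $R_{\cO'}$ precisely by $|I|$), combined with Griffiths transversality of $\nabla_M$, so that the degree loss caused by $\nabla_M(\partial)^I$ on the $M$-factor is exactly compensated on the $R_{\cO'}$-factor. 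This is the one non-formal step in the lemma, and it is exactly what the paper spells out.

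A secondary, stylistic difference: you define the flat bundle, connection, and Frobenius via the lift $\tilde\iota$ and then must check independence of all of this data, transferring $\phi_M$ by the Taylor formula for non-intertwined Frobenii. The paper instead defines the flat bundle and Frobenius directly by pulling back the underlying $F$-crystal along $\iota_0:\Spec(\cO'/\pi)\to\cS_0$, which is canonical and requires no choice of lift or Frobenius; the lift $\iota_1:A\to R_{\cO'}$ is then used \emph{only} to produce the filtration. This cleanly isolates the filtration as the single place where well-definedness needs an argument, whereas your approach diffuses the well-definedness check over the full package. Your ``crystalline pullback'' remark at the end points in this direction, but note that crystalline functoriality does not produce the filtration for free — the filtration is exactly the extra datum beyond the crystal, which is why the lift and the Taylor estimate are needed at all.
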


\begin{proof}
Let $\bfM\in\MF_{[0,w]}(\cS)$. We may define the flat bundle $(M_1,\nabla_{M_1})$ underlying $\MF(\iota)(\bfM)$ and the Frobenius $\phi_{M_1}$ by simply pulling back the $F$-crystal underlying $M$ to $\Spec \cO'$. It remains to define the filtration. Without loss of generality, we may assume $\cS$ is affine, so $\iota:A\ra \cO'$ for $A$ the p-adic completion of a smooth $\cO$-algebra. By smoothness, we may pick a lift $\iota_1:A\ra R_{\cO'}$, and note $M_1\cong R_{\cO'}\otimes_A M$. We now get a natural filtration on $M_1$ by taking the product filtration.  We must show that this is independent of the choice of $\iota_1$, so let $\iota_2$ be a different lift. Then the crystal structure gives an isomorphism of flat bundles $M_1\ra M_2$ given as follows. As before, let $t_1,\dots,t_n$ be local co-ordinates for $X$ with a dual basis of derivations $\partial_i$.
For a multi-index $I=(i_1,\dots,i_n)$ set $\nabla_M(\partial)^I=\prod_j \nabla_M(\partial_i)^{i_j}$.

Then our isomorphism $M_1\ra M_2$ is given by:
$$1\otimes_{\iota_1} m\mapsto \sum_{I} \frac{(\iota_1(t)-\iota_2(t))^I}{I!}\otimes_{\iota_2}\nabla_M(\partial^I) (m) .$$

Since $\frac{(\iota_1(t)-\iota_2(t))^I}{I!} \in F^{|I|}R_{\cO'}$ and the connection is Griffiths transverse, it follows that this isomorphism respects filtrations. This completes the proof

\end{proof}

We point out that if $C_{\bfM}$ denotes the crystal corresponding to $\bfM\in \MF_{[0,w]}(\cO')$, then $C_{\bfM}(\Spf\cO')\cong \cO'\otimes_{R_{\cO'}} M$.


\def\Frac{\operatorname{Frac}}
\subsubsection{Local systems}
Following Faltings, we say that a \emph{small} $\cO$-algebra is one that admits a finite, etale map from $\cO\langle T_1^{\pm},\dots,T_n^{\pm}\rangle$. 
Let $\cS$ be a smooth formal $\cO$-scheme. Using the above notion of Fontaine--Laffaille module, in \cite[Thm 5]{faltingsramified} there is defined a fully faithful functor 
 $\mathbb{D}:\MF_{[0,w]}(\cS)\ra \Loc_{S^{rig}}(\Z_p)$. We recall the notion here, and to do so we recall some basic rings that are used:
 
 Let $\cS=\Spf A$ where $A$ is a small $\cO$-algebra. Consider the maximal extension of the fraction field $\Frac(A)$ in which $A[1/p]$ has unramified normalization, and let $\ol{A}$ be the normalization of $A$ in this field. We let $A^{\flat}$ be the perfection of $\ol{A}/p$ and $W(A^{\flat})$ the Witt ring. Then there is a map $W(A^{\flat})\ra \widehat{\ol{A}}$, and we let $B^+(A)$ denote the completed divided power hull with respect to the ideal generated by  the kernel $I$ of this map and $p$. The group $G_A:=\Gal(\ol{A}[1/p]/A[1/p])$ acts continuously on $B^+(A)$.

 Giving an $\bfM\in\MF_{[0,w]}(\cS)$ gives an $F$-crystal $C_\bfM$ on $\cS_0$, and hence by pull
 back an $F$-crystal $\ol{C}_\bfM$ on $\widehat{\ol{A}}\otimes\F_p$ with a $\Gal(\ol{A}/A)$  action. As in \cref{lem:rambasechangeFL}, this endows $\ol{C}_\bfM(B^+(A))$ with a natural filtration. We define
 $$\mathbb{D}(\bfM):=\Hom_{B^+(A),\Fil,\phi}(\ol{C}_\bfM(B^+(A)),B^+(A))$$ where the homomorphism is of $B^+(A)$-modules and is required to respect the filtration and Frobenius.

 The analogous definition using $B^+(\cO')$ gives a functor $\D:\MF_{[0,w]}(\cO')\ra \Loc_{\cO'}(\Z_p)$, where $\Loc_{\cO'}(\Z_p)$ simply denotes free $\Z_p$ representations of $\Gal_{F'}$. 
 \begin{lemma}\label{lem:FLpullback}
     Let $\cS$ be a smooth formal $\cO$-scheme, and $\cX$ be either $\Spf \cO'$ or a smooth formal $\cO$-scheme, and $f:X\ra \cS$. Let $\bfM\in\MF_{[0,w]}(\cS)$. Then we have a natural isomorphism 
     $f^{-1}\D(M)\ra \D(\MF(f)(M))$.
 \end{lemma}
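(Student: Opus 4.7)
The plan is to unpack both sides of the claimed isomorphism using the explicit formula
\[
\mathbb{D}(\bfM) \;=\; \Hom_{B^+,\Fil,\phi}\!\bigl(\ol{C}_\bfM(B^+),\,B^+\bigr),
\]
construct the natural map by base change along a functorial map of period rings induced by $f$, and then check that this is an isomorphism at the level of Galois representations. Concretely, I would work locally, reducing to the case $\cS=\Spf A$ with $A$ small and $\cX=\Spf A'$ where either $A'$ is small over $\cO$ or $A'=\cO'$, and $f$ corresponds to a ring map $A\to A'$.

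First, I would construct a morphism of period rings. Choose the fixed towers $\ol A,\ol{A'}$ used in the definition of $B^+$ compatibly, so that $f$ lifts to a map $\ol A\to\ol{A'}$ which is equivariant for a group homomorphism $G_{A'}\to G_A$. Reducing mod $p$ and perfecting gives $A^\flat\to (A')^\flat$, hence $W(A^\flat)\to W((A')^\flat)$, compatible with the surjections to $\widehat{\ol A}$ and $\widehat{\ol{A'}}$. Taking the completed PD hulls produces the desired $G_{A'}$-equivariant map $B^+(f):B^+(A)\to B^+(A')$ which respects Frobenius and filtrations.

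Second, I would identify the filtered $\phi$-module on the right-hand side. By \Cref{lem:unrbasechangeFL} and \Cref{lem:rambasechangeFL}, the $F$-crystal underlying $\MF(f)(\bfM)$ is canonically $f^*C_\bfM$, and the proof of \Cref{lem:rambasechangeFL} (using Griffiths transversality together with the Taylor series formula \eqref{taylor1}) shows that the filtration on its evaluation at $B^+(A')$ is independent of the choice of lift and agrees with the product filtration. Hence there is a canonical isomorphism of filtered $\phi$-modules
\[
\ol{C}_{\MF(f)\bfM}\bigl(B^+(A')\bigr) \;\cong\; \ol{C}_\bfM\bigl(B^+(A)\bigr)\otimes_{B^+(A)} B^+(A').
\]
Base change along $B^+(f)$ then sends a filtered $\phi$-equivariant $B^+(A)$-linear homomorphism $h:\ol{C}_\bfM(B^+(A))\to B^+(A)$ to the $B^+(A')$-linear homomorphism $h\otimes 1$, which is $G_{A'}$-equivariant. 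This produces the desired natural map $f^{-1}\D(\bfM)\to\D(\MF(f)(\bfM))$.

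Third, I would show that this natural map is an isomorphism. Both sides are free $\Z_p$-local systems of rank equal to the rank of the underlying module of $\bfM$ (this is the content of Faltings's theorem that $\D$ is rank-preserving), so it suffices to check the map is an isomorphism at each geometric point of $\cX^{\mathrm{rig}}$; this reduces to the case $\cX=\Spf\cO'$ for $\cO'$ the ring of integers in a complete algebraically closed extension, where both sides compute the Galois representation attached to the pullback of $\bfM$ by the same formula. The main obstacle is the careful bookkeeping in the first step: ensuring that the choices of maximal covers, perfections, Witt rings, and PD hulls can all be made compatibly enough that the induced map $B^+(A)\to B^+(A')$ is strictly Galois-equivariant, filtered, and Frobenius-compatible. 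Once that is set up, the remaining steps are straightforward consequences of the definitions and Faltings's full faithfulness.
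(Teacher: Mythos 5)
The first two steps of your proposal (constructing a $G$-equivariant morphism of period rings $B^+(A)\to B^+(A')$, identifying the filtered $\phi$-module on the pullback via the base-change isomorphism $\ol{C}_\bfM(B^+(A))\otimes_{B^+(A)}B^+(A')\cong\ol{C}_{\MF(f)\bfM}(B^+(A'))$, and hence producing the natural map) align with what the paper does. The gap is in your third step: having reduced (plausibly, since both sides are $\Z_p$-lisse sheaves) to a pointwise check, you assert that at a geometric point both sides ``compute the Galois representation attached to the pullback of $\bfM$ by the same formula'' and that the rest is a ``straightforward consequence of the definitions.'' This is essentially restating the conclusion. Freeness of both sides over $\Z_p$ in the same rank does not imply a given natural map between them is an isomorphism (consider $p\Z_p\hookrightarrow\Z_p$), and the equality of the two constructions at a point is exactly the nontrivial crystalline comparison, not a formal consequence of the definitions.

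The paper closes this gap with a concrete sandwich argument: by \cite[II,h]{faltings} the natural map $\ol{C}_\bfM(B^+(A))\to\D(\bfM)^*\otimes B^+(A)$ is an isomorphism up to $\beta_0^a$, which yields
\[
\beta_0^{a}\cdot \D(\MF(f)\bfM)\otimes B^+(R)\subset f^{-1}\D(\bfM)\otimes B^+(R)\subset \D(\MF(f)\bfM)\otimes B^+(R),
\]
and then one uses that $p\nmid\beta_0^a$ to conclude the inclusion of free $\Z_p$-modules is an equality. For the $\Spf\cO'$ case the same sandwich is run with $B^+(\cO')$ and \cite[Thm 5]{faltingsramified}. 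To repair your argument you would need some such quantitative control — either this $\beta_0$-bound, or an independent surjectivity statement at the geometric point — rather than an appeal to ``the same formula.''
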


 \begin{proof}
     Assume first that $\cX$ is a smooth formal $\cO$-scheme, and assume $\cX=\Spf R,\cS=\Spf A$ are both small and affine. Now we have a natural map $B^+(A)\ra B^+(R)$ compatible with the Galois actions.  Moreover, since $M$ is a vector bundle, we have that 
     $\ol{C}_\bfM(B^+(A))\otimes B^+(R)\cong \ol{C}_\bfM(B^+(R))$.  We therefore obtain a natural morphism $f^{-1}\D(\bfM)\ra \D(\MF(f)(\bfM))$. It remains to show that this is an isomorphism.

     By \cite[II,h]{faltings} we see that
     the natural map $\ol{C}_\bfM(B^+(A))\ra \mathbb{D}(\bfM)^*\otimes B^+(A)$ is an isomorphism up to $\beta_0^{a}$ where $\beta_0$ is as in \cite{faltings}. It follows that 
     $$\beta_0^{a}\cdot \D(\MF(f)(\bfM))\otimes B^+(R)\subset f^{-1}\D(\bfM)\otimes B^+(R)\subset \D(\MF(f)(\bfM))\otimes B^+(R).$$

     Finally, since $p\nmid \beta_0^a$, it follows that the map $f^{-1}\D(\bfM)\ra \D(\MF(f)(\bfM))$ is an isomorphism as desired.
     Since this map is clearly natural, we may glue up to obtain the claim for a general smooth formal $\cO_0$ schemes $X,S$.

     If $X=\Spf\cO'$ the proof is identical, using $B^+(\cO')$ instead of $B^+(R)$ and \cite[Thm 5]{faltingsramified} instead of \cite[II,h]{faltings},

 \end{proof}

\def\scrX{\mathscr{X}}
\subsubsection{Liu--Zhu correspondence}

Let $\scrX$ denote a smooth geometrically connected rigid analytic variety over a finite extension $F'/\Q_p$. Let $\bL$ denote a de Rham $\Q_p$-local system on the etale site $\scrX_{\et}$. Then \cite[Thm 3.9]{liuzhu} associate a filtered flat bundle with Griffiths transverse connection $LZ(\bL)$ in a functorial way with respect to both $\scrX$ and $\bL$. If $\scrX=\cX^{rig}$ for a smooth, small formal scheme $\cX=\Spf R$, then $LZ(\bL)\cong (\bL\otimes \cO B_{\dR}(R[1/p]))^{G_R}$. 

\begin{lemma}\label{lem:FLtoLZ}
Let $\cX$ denote either a smooth formal scheme over $\cO$ or $\Spf\cO'$. Let $\bfM\in \MF_{[0,w]}(\cX)$. Then there is a canonical isomorphism
\[\begin{cases}
    M[1/p]\ra LZ(\D(M)^*) & \textrm{ if } \cX \textrm{ is smooth over }  \cO\\
    M\otimes_{R_{\cO'}} F'\ra LZ(\D(M)^*) & \textrm{ if } \cX =\Spf\cO'\\
\end{cases}\]
\end{lemma}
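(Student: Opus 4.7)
The plan is to construct a natural comparison map from the left-hand side to the right-hand side using the explicit period-ring descriptions of both functors, and then to show it is an isomorphism after inverting $p$ by invoking the near-isomorphism already used in the proof of \Cref{lem:FLpullback}.

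First I would reduce to the local case. In the smooth case, both sides are compatible with Zariski restriction on $\cX$ (by the sheaf property of $\MF_{[0,w]}$ recorded in \Cref{def:MFunramified} and the analytic sheaf nature of $LZ$), so I may assume $\cX=\Spf A$ is small and affine. In the ramified case $\cX=\Spf\cO'$ is already local, so no reduction is needed.

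Next I would construct the comparison. In the small smooth case, the defining formula
\[
\D(\bfM) \;=\; \Hom_{B^+(A),\Fil,\phi}\bigl(\ol C_\bfM(B^+(A)),\,B^+(A)\bigr)
\]
gives a tautological pairing which I reinterpret as a morphism
\[
\ol C_\bfM(B^+(A)) \longrightarrow \D(\bfM)^* \otimes_{\Z_p} B^+(A).
\]
Composing with the canonical evaluation $M\otimes_A B^+(A)\to \ol C_\bfM(B^+(A))$ coming from the underlying $F$-crystal, and then including $B^+(A)\hookrightarrow \OBdr(A[1/p])$, I obtain a natural map $M \to \D(\bfM)^*\otimes_{\Z_p} \OBdr(A[1/p])$. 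Since $M$ is fixed by $G_A$ and every step in the construction is Galois equivariant, this map factors through the $G_A$-invariants $(\D(\bfM)^*\otimes \OBdr(A[1/p]))^{G_A}=LZ(\D(\bfM)^*)$. Filtration compatibility is checked exactly as in the proof of \Cref{lem:rambasechangeFL}, using Griffiths transversality of $\nabla_M$ together with the filtration on $\OBdr$.

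The main step is showing the resulting map is an isomorphism after inverting $p$. Here I would invoke the fact (as recalled in the proof of \Cref{lem:FLpullback} from \cite[II.h]{faltings}) that the map $\ol C_\bfM(B^+(A))\to \D(\bfM)^*\otimes_{\Z_p} B^+(A)$ is an isomorphism up to multiplication by the element $\beta_0^{a}$; since $\beta_0$ is coprime to $p$ and becomes a unit in $\OBdr(A[1/p])$, the map becomes an isomorphism after tensoring up to $\OBdr$. Taking $G_A$-invariants on both sides then recovers $M[1/p]$ on the left. Naturality in $\bfM$ and in $\cX$ is immediate from the naturality of every ingredient. The ramified case proceeds identically, with $B^+(\cO')$ replacing $B^+(A)$ and \cite[Thm 5]{faltingsramified} providing the analogous near-isomorphism, and the $G_{F'}$-invariants of $\D(\bfM)^*\otimes \OBdr$ recovering $M\otimes_{R_{\cO'}} F'$.

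The main obstacle I anticipate is the careful bookkeeping of filtrations: the filtration on $LZ(\D(\bfM)^*)$ is extracted by taking Galois invariants in $\OBdr$, while the filtration on $M$ is given intrinsically, and matching the two strictly requires a Taylor-expansion argument using the divided power structure on $B^+$ analogous to the one that appears in the proof of \Cref{lem:rambasechangeFL}.
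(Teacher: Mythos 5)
Your proposal takes essentially the same route as the paper's proof: both arguments rest on Faltings' near-isomorphism $\ol C_\bfM(B^+)\to\D(\bfM)^*\otimes B^+$ (isomorphism up to $\beta_0^a$), the invertibility of $\beta_0$ in $\OBdr$, and a passage to Galois invariants. The difference is in packaging, and it is worth flagging two points. First, the paper explicitly invokes the Liu--Zhu comparison \cite[Thm 3.9,v]{liuzhu} to identify $LZ(\D(\bfM)^*)\otimes\OBdr$ with $\D(\bfM)^*\otimes\OBdr$, whereas you bypass this by working directly with the definition $LZ(\bL)=(\bL\otimes\OBdr)^{G_A}$ and deducing the desired isomorphism by taking invariants of $M\otimes_A\OBdr\cong\D(\bfM)^*\otimes\OBdr$. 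That is a legitimate shortcut (and, incidentally, it establishes the de Rham-ness of $\D(\bfM)^*$ as a byproduct rather than presupposing it). Second, your final step -- ``taking $G_A$-invariants recovers $M[1/p]$ on the left'' -- silently uses that $\OBdr(A[1/p])^{G_A}=A[1/p]$; this is true and standard, but the paper sidesteps it by first mapping $\OBdr\to\widehat{\ol R}[1/p]$ and then taking invariants, where the analogous claim $(\widehat{\ol R}[1/p])^{G_R}=R[1/p]$ is a more elementary (almost-purity) statement. You should either cite the relevant period-ring fact or make the same detour. Everything else -- the Zariski reduction, the construction of the comparison map through $B^+$, the Taylor-expansion argument for filtration compatibility, and the parallel treatment of $\Spf\cO'$ -- matches the paper's argument closely.
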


\begin{proof}

We once again first work locally on  $\Spf R$ for a small $R$. By \cite[Thm 3.9,v]{liuzhu}, we see that 
$$LZ(\D(\bfM)^*)\otimes_R \cO B_{\dR}(R[1/p])\ra \D(\bfM)^*\otimes \cO B_{\dR}(R[1/p]))$$ is an isomorphism

Moreover, there is a natural map \cite{Brinon} $B^+(R)[\beta_0^{-1}]\ra \cO B_{\dR}(R[1/p])$. Hence, composing the above with \cite[II, Theorem 2.6*, h]{faltings} we see that $$LZ(\D(\bfM)^*)\otimes \cO B_{\dR}(R[1/p]) \cong C_{\bfM}(B^+(R))\otimes_{B^+(R)} \cO B_{\dR}(R[1/p]). $$


Now $\cO B_{\dR}(R[1/p])$ maps to $\widehat{\ol{R}}[1/p]$, we obtain a Galois-equivariant morphism 

$$LZ(\D(\bfM)^*) \otimes \hat{\overline{R}}[1/p] \cong \ol{C}_{\bfM}(B^+(R))\otimes_{B^+(R)} \hat{\overline{R}}[1/p] \cong \ol{C}_{\bfM}(\widehat{\ol{R}})[1/p] \cong M\otimes_R \widehat{\ol{R}}[1/p].$$
Taking Galois invariants, we see that $LZ(\D(\bfM)^*) \cong M[1/p]$. Since all the maps are natural, this glues up to completes the proof for an arbitrary smooth formal scheme $\cO$.

$$LZ(\D(\bfM)^*)\cong \left(\ol{C}_{\bfM}(B^+(R))\otimes_{B^+(R)} \cO B_{\dR}(R[1/p])\right)^{G_R}\ra \ol{C}_{\bfM}(\widehat{\ol{R}})[1/p]^{\Gal(R)}\cong  (M\otimes_R \widehat{\ol{R}}[1/p])^{\Gal(R)}\cong M[1/p]$$

For $X=\cO'$ the proof is exactly the same, noting that in this case for $\bfM\in\MF_{[0,w]}(\cO')$, we have $C_{\bfM}(R)\cong M\otimes_{R_{\cO'}} \cO'$. 
\end{proof}

\def\scrY{\mathscr{Y}}
\def\spa{\operatorname{spa}}
\def\bbL{\mathbb{L}}
\def\bbM{\mathbb{M}}
\def\bbN{\mathbb{N}}
\subsection{Proof of Theorem \ref{thm:extncodim2}}

We use the notion of crystallinity in \cite[Def 4.4]{guo}, which we refer to as \emph{Brinon-crystalline}. Recall that a \emph{classical point} of a rigid space $\scrY$ over $F$ is a map $\spa F'\ra \scrY$ for a finite extension $F'/F$. 

\begin{lemma}\label{lem:pointwisecrystaline}
    Let $\cO$ be the ring of integers in a finite unramified extension $F/\Q_p$, and let $\cX/\Spf\cO$ be a smooth formal scheme. Let $\bbL$ be a $\Z_p$-local system on $\cX^{rig}$ which is Brinon-crystalline at every classical point of $\cX^{rig}$. Then $\bbL$ is crystalline.
\end{lemma}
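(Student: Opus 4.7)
The plan is to reduce this to the main pointwise criterion for crystallinity proved by Hokaj \cite{Hokaj}, with the result of Guo--Yang \cite{guo} as a parallel source. The overall strategy has two stages: first upgrade the pointwise hypothesis to a global de Rham statement, then upgrade that to crystallinity.

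First, Brinon-crystallinity at a classical point of $\cX^{rig}$ implies de Rham at that classical point. By the local constancy of the de Rham property due to Liu--Zhu \cite[Thm.~1.3]{liuzhu}, if $\bbL$ is de Rham at a single classical point of each connected component of $\cX^{rig}$, then $\bbL$ is a de Rham local system on the entire rigid-analytic generic fiber. In particular the filtered flat vector bundle $LZ(\bbL)$ is defined on $\cX^{rig}$, and one can speak of its Frobenius structure.

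Second, to promote the de Rham structure to a crystalline one, I would invoke Hokaj's theorem, which asserts precisely that a $\Z_p$-local system on the rigid generic fiber of a smooth formal scheme over an unramified $p$-adic base, which is Brinon-crystalline at every classical point, is globally crystalline in the sense of \cite[Def.~4.4]{guo}. The fact that $F/\Q_p$ is unramified and $\cX$ is smooth formal (so no ramification contributions at the boundary, and a clean comparison between the rigid and formal sites) means the hypotheses of Hokaj's theorem are met verbatim.

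The hard part, which is entirely absorbed into Hokaj's work, is the passage from stalkwise Frobenius data to a global Frobenius structure on the associated period sheaves: one needs to check that the pointwise crystalline isomorphisms assemble continuously across $\cX^{rig}$ and are compatible with the connection and filtration on $LZ(\bbL)$. I do not expect to reproduce Hokaj's argument; rather, the content of the lemma is to observe that our hypothesis matches exactly his input, so the cited theorem applies directly.
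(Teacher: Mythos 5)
Your proposal correctly identifies that the argument should rest on a pointwise criterion plus a comparison of crystallinity notions, but you have conflated the two distinct steps and misattributed the relevant theorems. The paper's proof has two separate inputs: Guo--Yang \cite[Thm 1.1]{guo} handles the pointwise-to-global jump — a local system Brinon-crystalline at every classical point is globally Brinon-crystalline — and then Hokaj \cite[Thm 3]{Hokaj} handles the passage from Brinon-crystalline to crystalline (in the Fontaine--Laffaille/Faltings sense used in this section), using the hypothesis that $F/\Q_p$ is unramified. You attribute the pointwise criterion to Hokaj and then describe its output as ``globally crystalline in the sense of \cite[Def.~4.4]{guo},'' but that definition is precisely the \emph{Brinon}-crystalline notion, not the Faltings-crystalline notion that the lemma actually asserts. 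These are genuinely distinct, and the content of Hokaj's theorem is precisely to bridge them under the unramifiedness hypothesis. Collapsing both results into one citation to Hokaj obscures where the unramifiedness is used and erases the key distinction between the two notions of crystalline, which is the whole point of needing two theorems.

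The opening detour through the de Rham property and Liu--Zhu local constancy is superfluous: once you invoke the pointwise Brinon-crystalline criterion, the de Rham intermediate conclusion plays no further role in your argument, and the paper's proof does not use it. It is harmless but suggests you were reaching for a familiar ``local constancy'' template rather than the direct pointwise criterion of Guo--Yang.
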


\begin{proof}
      By \cite[Thm 1.1]{guo} we obtain that $\bbL$ is Brinon-crystalline. Since $F/\Q_p$ is unramified, it follows by \cite[Thm 3]{Hokaj} that $\bbL$ is crystalline.
\end{proof}

\begin{proof}[Proof of \Cref{thm:extncodim2}]
    Let $\bbM$ be the $\Z_p$-local system on $\cS^{rig}$ corresponding to $\bfM$ on $\hat\cS$, and $\bbN=(f^{rig})^*\bbM$ the pullback.  By \Cref{lem:pointwisecrystaline}, $\bbN$ is crystalline, hence corresponds to a Fontaine--Laffaille module $\mathbf{N}$ on $\hat \cX$.
    Now by Lemma \ref{lem:FLtoLZ} we see that we have maps $r_1:g^*M\ra LZ(g^{-1}\D(\bfM)^*)$ and $r_2:\pi^*N\ra LZ(\pi^{-1}\D(\bfN)^*)$, and natural isomorphisms $g^{-1}D(\bfM)^*\ra \pi^{-1}D(\bfN)^*$ giving induced isomorphisms  $$s:LZ(g^{-1}\D(\bfM)^*)\ra LZ(\pi^{-1}\D(\bfN)^*).$$ We claim that the images of the maps $r_1,r_2$ are identified by $s$.  To prove this its enough to work at every $\ol{\Q_p}$-point. So let $F'/\Q_p$ be a local field and $\cO'$ its ring of integers and $\iota:\Spf \cO'\ra \cX'$ an arbitrary point. Note that by \Cref{lem:FLpullback} we have that $\MF(g\circ\iota)(\bfM)$ is a Fontaine-Laffaille module, and $\D(\MF(g\circ\iota)(\bfM))\cong \iota^{-1}g^{-1}\D(\bfM)$, and that $\D(\MF(g\circ\iota)(\bfN))\cong \iota^{-1}\pi^{-1}\D(\bfN)$.

    By Lemmas \ref{lem:FLpullback}, \ref{lem:FLtoLZ} we obtain maps $$\iota^*r_1:\iota^*g^*M\ra \iota^*LZ(g^{-1}\D(\bfM)^*)\cong LZ(\D(\MF(g\circ\iota)(\bfM))^*).$$
    Moreover, this map is canonical and therefore so is its image. It follows that the images of $\iota^*r_1$ and $\iota^*r_2$ agree. Thus, the images of $r_1$ and $r_2$ agree, as desired. It follows that $g^*M$ and $\pi^*M$ are isomorphic as filtered flat bundles, and thus the Griffiths bundle of $g^*(M,\nabla, F^\bullet M)$ is trivial on $\pi^{-1}(x)$, as desired. 
    
    It follows by \Cref{lem:rigiditylemma} that if the Griffiths bundle of $(M,\nabla, F^\bullet M)$ is ample, the map $g$ descends to $\cX$, completing the proof.

\end{proof}

\section{Criteria for local integral canonical models}\label{localcanonicalmodels}
\def\bG{\mathbf{G}}
\def\bZ{\mathbb{Z}}
The goal of this section is to give a general setting in which local integral canonical models exists, in the sense of \Cref{defn ICM}.  For simplicity, we first give the smooth case.

\begin{theorem}\label{local ICM smooth}
    Let $F/\Q_p$ be a finite unramified extension with ring of integers $\cO$.  Let $\cS/\cO$ be a smooth $\cO$-scheme equipped with:
    \begin{enumerate}
        \item A log smooth compactification $\bar\cS/\cO$ of $\cS$ with natural boundary.
        \item A Fontaine-Laffaille module $\bfM\in\mathrm{MF}_{[0,w]}(\hat\cS)$ with ample Griffiths bundle.
    \end{enumerate}
    Then if $p>w+1$, $\cS$ is an integral canonical model.
\end{theorem}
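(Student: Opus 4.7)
The plan is to verify the smooth extension property of \Cref{smooth extension prop}: given a smooth $\cO$-scheme $\cX/\cO$ and an $\cO$-admissible $F$-morphism $f_F : X \to S$, I must extend $f_F$ to an $\cO$-morphism $f : \cX \to \cS$. Since $\cS/\cO$ is separated, such an extension is unique when it exists, so the problem is Zariski local on $\cX$, and it suffices to extend $f_F$ in a neighborhood of each closed point of the special fiber. I will follow the three-step outline sketched in the introduction: first extend in codimension one using natural boundary; then handle the base case $\dim \cX = 2$ using the Fontaine--Laffaille machinery of \Cref{sect:codim 2}; finally induct on $\dim \cX$ via a hyperplane-section argument.

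For Step 1, I would apply \Cref{extend codim 1} and then \Cref{extend codim 1 nat} with the resolution $\pi : \cS \to \cS$ taken to be the identity. Smoothness of $\cS$ makes this legitimate, the natural-boundary hypothesis on $\bar\cS$ supplies the $\pi$-natural condition trivially, and $\cO$-admissibility of $f_F$ is the required input. The output is a codimension $\geq 2$ closed subset $\cZ \subset \cX$ together with an extension of $f_F$ to $\cX \setminus \cZ \to \cS$, reducing everything to extending across $\cZ$.

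For Step 2, treat the case $\dim \cX = 2$, where $\cZ$ is a finite set of closed points. Fix $x \in \cZ$; the plan is to produce an iterated blow-up $\pi : \cX' \to \cX$ at closed points above $x$ and a morphism $g : \cX' \to \cS$ extending $f_F$, so that \Cref{thm:extncodim2} applies. Concretely, let $\cW$ be the closure of the graph of the rational map $\cX \dashrightarrow \bar\cS$ in $\cX \times_\cO \bar\cS$; since $\cX$ is regular of dimension two, $\cW$ admits a minimal desingularization $\cX' \to \cW$, and $\cX' \to \cX$ factors as an iterated blow-up at closed points lying over $x$. The resulting $g' : \cX' \to \bar\cS$ has image in $\cS$ by the natural-boundary argument used at the end of the proof of \Cref{extend codim 1}: any $\cO'$-point of $\cX'$ whose closed point mapped to $\bar\cS\setminus\cS$ would force infinite local monodromy in $\piet(\cS)$ at the closed point, contradicting admissibility of $f_F$ (which says covers of $\cS$ pulled back via $f_F$ extend to $\cX$, hence to $\cX'$, and so give unramified Galois representations at every $\cO'$-point). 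The hypothesis $p > w+1$ places $\bfM$ in the Fontaine--Laffaille range, and \Cref{thm:extncodim2} then yields that the Griffiths bundle of $\hat g^*(M,F^\bullet M)$ is trivial on $\pi^{-1}(x)$. Ampleness of the Griffiths bundle on $\hat\cS$ forces $g$ to contract every component of $\pi^{-1}(x)$, and the Rigidity Lemma \Cref{lem:rigiditylemma} lets $g$ descend to the desired extension of $f_F$ across $x$.

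For Step 3, I would induct on $\dim \cX$, with the base case $\dim \cX = 2$ handled by Step 2. For $\dim \cX \geq 3$ and a bad point $x \in \cZ$ in the special fiber, take a sufficiently general smooth Cartier divisor $\cH \subset \cX$ through $x$; then $\cH/\cO$ is smooth, $f_F|_H$ is still $\cO$-admissible, and by the inductive hypothesis extends to $f_\cH : \cH \to \cS$. A deformation-theoretic argument in the style of \cite{dejongoort,moretbailly,faltingschai} then lifts $f_\cH$ to the formal completion $\widehat{\cX}_{/\cH}$, and algebraization in a Zariski neighborhood of $\cH$ produces a genuine extension in a neighborhood of $x$. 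The main obstacle is really Step 2: it requires all three hypotheses working together---natural boundary to rule out boundary crossings of the blow-up, the Fontaine--Laffaille package for $p > w+1$ (with the pointwise crystallinity results of Hokaj and Guo--Yang and the Liu--Zhu correspondence, bundled into \Cref{thm:extncodim2}) to control the Griffiths bundle on exceptional loci, and ampleness of the Griffiths bundle to convert triviality on exceptional fibers into actual contraction and hence extension by rigidity.
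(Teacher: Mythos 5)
Your proposal is correct and takes essentially the same route as the paper: the paper deduces \Cref{local ICM smooth} from the more general normal-case statement \Cref{local ICM normal} (via the trivial stratification) and isolates the codimension-$\geq 2$ step as \Cref{prop: codim2}, but the content---codimension-one extension via natural boundary (\Cref{extend codim 1 nat}), the $\dim\cX=2$ base case via surface resolution together with \Cref{thm:extncodim2}, the Griffiths bundle, and rigidity, and the hyperplane-section deformation-theoretic induction in higher dimension---matches your outline step for step.
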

See \Cref{defn nat bound} and \Cref{defn:griffiths} for the definitions of ($\pi$-)natural boundary and the Griffiths bundle.  In the normal case, we will additionally need a form of stratified resolution.

\begin{definition}\label{strat res}
    Let $\cO$ be a Dedekind domain and $\cY/\cO$ a normal finite type flat $\cO$-scheme.  A stratified resolution of $\cY$ consists of:
    \begin{enumerate}
        \item An increasing sequence of closed subspaces $\cY^1\subset \ldots\subset\cY^m=\cY$, each flat over $\cO$, such that $\cY^j\setminus\cY^{j-1}$ is smooth over $\cO$ for each $j$.  Let $\cY^{\prime j}\to\cY^j$ be the normalization.
            \item For each $j$, a resolution $\pi'^j:\cS^j\to\cY^{\prime j}$ and a log smooth compactification $\bar \cS^j/\cO$ with the property that the composition $\cS^j\to\cY^{\prime j}\to\cY^j$ (which we call $\pi^j:\cS^j\to \cY^j$) is an isomorphism over $\cY^j\setminus\cY^{j-1}$.
            \end{enumerate}
            We say the stratified resolution:
            \begin{enumerate}\setcounter{enumi}{2}
                \item is \emph{uniform} if the resolution of the top stratum $\pi^m:\cS^m\to\cY^m=\cY$ is uniform.
                \item \emph{has natural boundary} if each $\bar\cS^j$ has $\pi^j$-natural boundary.
            \end{enumerate}

\end{definition}

\begin{theorem}\label{local ICM normal}
Let $F/\Q_p$ be a finite unramified extension with ring of integers $\cO$.  Let $\cY/\cO$ be a normal finite type flat $\cO$-scheme equipped with:
\begin{enumerate}
    \item A uniform stratified resolution $\{\pi^j:\cS^j\to\cY^j\}$ with natural boundary.
    \item A filtered bundle $(V,F^\bullet V)$ on $\hat \cY$ with ample Griffiths bundle.
    \item For each $j$ a Fontaine-Laffaille module $\bfM^j\in\mathrm{MF}_{[0,w^j]}(\hat \cS^j)$ and a proper dominant generically finite morphism $q^j:\cT^j\to\cS^j$ from a smooth $\cT^j/\cO$ such that 
    \begin{align*}
       \hat q^{j*}(M^j,F^\bullet M^j)&\cong \hat q^{j*}\hat \pi^{j*}(V,F^\bullet V)
    \end{align*}
     as filtered vector bundles.
\end{enumerate}
    Then if $p>\max(w^j)+1$, $\cY$ is an integral canonical model.
\end{theorem}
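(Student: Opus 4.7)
The plan is to verify the extension property of \Cref{defn ICM}: given a smooth $\cX/\cO$ and an $\cO$-admissible $f_F:X\to Y$, extend $f_F$ to $f:\cX\to\cY$. (The uniform resolution demanded by \Cref{defn ICM} is supplied by the top-stratum map $\pi^m:\cS^m\to\cY$.) Since $\cX$ is normal and $\cY$ separated, extensions are unique where defined and the problem is local on $\cX$. I follow the three-step outline of the introduction, working stratum by stratum. For \textbf{Step 1 (codimension one)}, I identify the minimal index $j$ with $f_F(X)\subset \cY^j$, so the image meets the locus where $\pi^j$ is an isomorphism. Applying \Cref{extend codim 1} and \Cref{extend codim 1 nat} to the resolution $\pi^j:\cS^j\to\cY^j$ with its log smooth compactification $\bar\cS^j$ (which has $\pi^j$-natural boundary by hypothesis) yields an extension $f:\cX\setminus\cZ\to\cY$ for some $\cZ\subset\cX$ of codimension $\geq 2$. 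Admissibility with respect to $\cS^j$ (rather than merely $\cY$) comes from birationality of $\pi^j$ on the dominant locus combined with purity of the branch locus.

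\textbf{Step 2 (codimension two, the main step).} Cutting $\cX$ by generic ample sections and localizing, I reduce to the case $\dim\cX=2$ and $\cZ=\{x\}$ a single closed point of the special fiber. Using \Cref{extend codim 1} once more and standard two-dimensional birational geometry (domination of any modification by an iterated blow-up at closed points when the target is regular), I construct an iterated blow-up $\pi:\cX''\to\cX$ at closed points above $x$ together with a morphism $g:\cX''\to\cS^j$ extending the generic lift. I then apply \Cref{thm:extncodim2} to $(\cS^j,\bfM^j,g)$---whose numerical condition $p>w^j+1$ holds by assumption---to conclude that the Griffiths bundle of $\hat g^*(M^j,F^\bullet M^j)$ is trivial on $\pi^{-1}(x)$. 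To transfer this triviality to $\cY$, I form the fiber product $\cX''':=\cX''\times_{\cS^j}\cT^j$; the compatibility $\hat q^{j*}(M^j,F^\bullet M^j)\cong \hat q^{j*}\hat\pi^{j*}(V,F^\bullet V)$ of hypothesis~(3) identifies the pullback via $\cX'''\to\cX''\xrightarrow{g}\cS^j$ of the Griffiths bundle of $\bfM^j$ with the pullback of the Griffiths bundle of $(V,F^\bullet V)$ via $\cX'''\to\cX''\xrightarrow{\pi^j\circ g}\cY$. Surjectivity of $\cX'''\to\cX''$ propagates the triviality down to $(\pi^j\circ g)^*\bigl(\text{Griffiths}(V,F^\bullet V)\bigr)$ restricted to $\pi^{-1}(x)$, and ampleness of the Griffiths bundle of $(V,F^\bullet V)$ on $\hat\cY$ forces $\pi^j\circ g$ to collapse each connected component of $\pi^{-1}(x)$ to a point. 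The rigidity lemma (\Cref{lem:rigiditylemma}) then delivers the descent to the desired extension $\cX\to\cY$ over $x$.

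\textbf{Step 3 (higher dimension).} For $\dim\cX>2$ I induct on $\dim\cX$. Choosing a general ample hyperplane section $\cH\subset\cX$ meeting the codimension $\geq 2$ indeterminacy locus, the inductive hypothesis provides the extension on $\cH$. Standard deformation-theoretic arguments as in \cite{dejongoort,moretbailly,faltingschai} lift the extension to the formal neighborhood of $\cH$ in $\cX$, and algebraization together with uniqueness of the codimension-one extension from Step~1 glues these local extensions into a global $f:\cX\to\cY$.

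The principal obstacle is Step~2: specifically, transferring the triviality statement of \Cref{thm:extncodim2}, which lives on $\cS^j$, back to $\cY$ where the ampleness hypothesis sits. The delicate issues are (i) ensuring the birational modification $\cX''\to\cX$ produced by \Cref{extend codim 1} can actually be refined to an iterated blow-up at closed points so that \Cref{thm:extncodim2} applies verbatim, and (ii) carefully tracking the filtered bundle isomorphism of hypothesis~(3) through the fiber product $\cX'''$ so that ampleness on $\hat\cY$---and not merely on the resolution $\hat\cS^j$, which we do not have---is what ultimately rigidifies the exceptional locus. Everything else (Steps~1 and~3) is essentially a reassembly of results already in hand.
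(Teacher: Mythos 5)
Your proposal follows essentially the same route as the paper: extension in codimension one via \Cref{extend codim 1 nat} applied to the stratum $\cS^j$ containing the image, extension in codimension two via \Cref{thm:extncodim2} plus the ample Griffiths bundle and the rigidity lemma, and a de Jong--Oort style deformation-theoretic induction for higher dimension; these are precisely \Cref{extend codim 1 nat} and \Cref{prop: codim2} in the paper. Your fiber-product formulation $\cX'''=\cX''\times_{\cS^j}\cT^j$ for transporting the triviality of the Griffiths bundle from $\cS^j$ down to $\cY$ is a slightly more explicit rendering of the paper's ``take a curve $C$ in $\cT^j$ dominating $E$'' step, but the underlying idea is identical.
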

Note that \Cref{local ICM normal} directly implies \Cref{local ICM smooth} by taking the trivial stratification and $\cT^m=\cS^m=\cY=\cS$.

\def\bfN{\mathbf{N}}
\subsection{Extension in codimension $\geq 2$}
We now use the results of \Cref{sect:codim 2} to show that, in the presence of a Fontaine--Laffaille module with ample Griffiths bundle, morphisms from smooth sources which are defined outside of a set of codimension $\geq 2$ extend.

 \begin{proposition}\label{prop: codim2}
     Assume the setup of \Cref{local ICM normal}.  Let $\cX/\cO$ be a smooth finite type $\cO$-scheme and $f:\cX\setminus\cZ\to \cY$ a morphism for $\cZ\subset  \cX$ a subset of codimension $\geq2$ which is contained in the special fiber.  Then there is an extension $f:\cX\to\cY$. 
 \end{proposition}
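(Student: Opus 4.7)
The plan is to induct on $\dim\cX$, handling the base case $\dim\cX=2$ using Theorem \ref{thm:extncodim2} and reducing the case $\dim\cX>2$ to it via the hyperplane-section and deformation-theory technique of de Jong--Oort, Moret-Bailly, and Faltings--Chai. The extension is unique by normality of $\cX$ and separatedness of $\cY$, so I will work locally near a single point $x\in\cZ$.

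\textbf{Base case $\dim\cX=2$.} Since $\cZ$ has codimension $\geq 2$ and lies in the special fiber, it consists of finitely many closed points, so I reduce to extending across one such $x$. I will pick the minimal $j$ with $f(X\setminus\{x\})\subset\cY^j_F$; the image then meets $\cY^j\setminus \cY^{j-1}$, over which $\pi^j$ is an isomorphism, so $f$ lifts generically to $\cS^j$. Applying Lemma \ref{extend codim 1} using the $\pi^j$-natural boundary of $\bar\cS^j$ and, if needed, a further two-dimensional resolution, I will produce an iterated blow-up $\pi:\cX'\to\cX$ at closed points above $x$ together with a morphism $g:\cX'\to\cS^j$ extending the lift of $f$. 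Theorem \ref{thm:extncodim2} applied to $(\cS^j,\bfM^j)$ then gives that the Griffiths bundle of $\hat g^*(M^j,F^\bullet M^j)$ is trivial on $\pi^{-1}(x)$.

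To convert this into information about $V$, I will pull back further to a resolution $\cX''$ of $\cX'\times_{\cS^j}\cT^j$: the compatibility $\hat q^{j*}(M^j,F^\bullet M^j)\cong \hat q^{j*}\hat\pi^{j*}(V,F^\bullet V)$ transports the triviality into triviality of the pullback of the Griffiths bundle of $V$ along $\cX''\to\cY$ on the preimage of $\pi^{-1}(x)$. Since this bundle is ample on $\cY$, the composition $\pi^j\circ g:\cX'\to\cY$ must contract $\pi^{-1}(x)$ to a point, and the rigidity Lemma \ref{lem:rigiditylemma} will then factor $\pi^j\circ g$ through $\pi:\cX'\to\cX$ in a neighborhood of $x$, producing the extension $\cX\to\cY$.

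\textbf{Inductive step $\dim\cX>2$.} I will shrink $\cX$ near $x\in\cZ$ and choose a general smooth-over-$\cO$ hyperplane section $\cH\subset \cX$ through $x$ with $\cH\cap\cZ$ of codimension $\geq 2$ in $\cH$, so that by induction $f|_{\cH\setminus(\cH\cap\cZ)}$ extends to $f_\cH:\cH\to \cY$. Deformation theory for maps into the smooth resolution $\cS^m$ (with obstructions to each infinitesimal lift sitting in coherent cohomology groups on $\cH$) will inductively extend the lift of $f_\cH$ to every order-$n$ thickening of $\cH$ inside $\cX$, producing a formal morphism on the completion of $\cX$ along $\cH$. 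Combined with $f$ on $\cX\setminus\cZ$ and algebraized in the standard way, this will yield $f:\cX\to\cY$. The main obstacle is the two-dimensional base case: the delicate point is to descend the Griffiths-triviality conclusion of Theorem \ref{thm:extncodim2}, phrased for the FL module $\bfM^j$ on the smooth $\cS^j$, back to the ample Griffiths bundle of $V$ on the possibly singular $\cY$, which is what makes the compatibility on $\cT^j$, the auxiliary blow-ups, and rigidity all essential ingredients.
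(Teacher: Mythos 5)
Your overall architecture matches the paper's: reduce to the $\dim\cX=2$ case by induction via hyperplane sections and deformation theory, and in the base case use \Cref{extend codim 1} plus surface resolution to produce an iterated blow-up mapping to $\cS^j$, apply \Cref{thm:extncodim2}, transport Griffiths-triviality to $V$ through $\cT^j$, invoke ampleness to contract the exceptional fiber, and finish with the rigidity lemma. The $\dim\cX=2$ portion is essentially right (the paper additionally records, via purity of the branch locus, that $f_F$ is $\cO$-admissible before invoking \Cref{extend codim 1}, a step you should state explicitly).

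The inductive step, however, has a genuine gap. You propose to run deformation theory ``for maps into the smooth resolution $\cS^m$,'' but the inductive hypothesis only produces a morphism $f_\cH:\cH\to\cY$, and there is no reason this lifts through the resolution $\pi^m:\cS^m\to\cY$ (the image of $\cH$ may well meet the non-isomorphism locus of $\pi^m$, and even on the generic fiber the map may land in a lower stratum $\cY^j$). So there is nothing to deform in $\cS^m$. What the paper does instead is work \'etale-locally, shrink so that $\cX$ is affine and $\cY$ embeds as a closed subscheme of a smooth affine $\cV$, and deform the composition $\cH\to\cY\hookrightarrow\cV$ into $\cV$. Two further points you elide are then essential: first, the lifts $\cH_{i+1}\to\cV$ form a torsor under $H^0(\cH,\cO_\cH(-i\cH)\otimes g_\cH^*T_\cV)$, and the identification $H^0(\cH\setminus\cZ,\cdot)\cong H^0(\cH,\cdot)$ for a locally free sheaf with $\codim_\cH\cZ\geq 2$ is what lets you choose the lift to agree with the given map $\cH_{i+1}\setminus\cZ\to\cY$; second, one must then argue that this lift into $\cV$ actually factors through $\cY$ (using that the ideal of $\cH_i$ in $\cH_{i+1}$ is locally free over $\cH$). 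Without replacing $\cS^m$ by a smooth ambient scheme containing $\cY$ and without these two checks, the formal extension along $\cH$ is not produced; ``algebraized in the standard way'' also quietly relies on a nontrivial Artin-approximation lemma about proper birational maps with a formal section, which is worth flagging.
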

 \begin{proof}
 If $j$ is minimal such that $Y^j$ contains the image of the generic fiber $X$, then the image of $X$ meets $Y^j\setminus Y^{j-1}$, so there is a dense open set of $X$ which lifts to $S^j$.  By purity of the branch locus, $\cX\setminus\cZ\to\cX$ induces an isomorphism on fundamental groups, so the map $X\to Y$ is $\cO$-admissible.  It follows by \Cref{extend codim 1} that there is a finite type flat $\cX'/\cO$, a proper morphism $\pi:\cX'\to \cX$ which is an isomorphism on a dense open subset, and a morphism $\cX'\to \cS^j$ whose generic fiber lifts the morphism $X\to Y^j$.
 
There is nothing to prove for $\dim \cX\leq 1$.  For $\dim \cX\geq 2$, we argue as in \cite{dejongoort} (see also \cite{moretbailly,faltingschai}) by induction on $\dim \cX$ starting with the base case $\dim \cX=2$ and using deformation theory for the induction step.    

Assume first that $\dim\cX=2$, so there is a finite set of points $\Sigma $ in the special fiber of $\cX$ such that $\pi$ is an isomorphism on the complement $\cX\setminus\Sigma$.  By resolution of singularities for surfaces (\cite{lipman}, specifically \cite[\href{https://stacks.math.columbia.edu/tag/0AHI}{Tag 0AHI}]{stacks-project}), there is a composition of blow-ups at points lying above $\Sigma$, $\cX_n\to \cdots\to \cX_1\to \cX$, whose composition factors as $\cX_n\to \cX'\to \cX$.  Note that the map $\cX\setminus \Sigma\to \cY$ lifts to $\cS^j$.  To show that $\cX'\to\cS^j\to\cY$ factors through $\cX'\to\cX$ (and hence the existence of the required extension), it suffices (for instance by \Cref{lem:rigiditylemma}) to show the following:
   \vskip.6em
   \begin{claim}
       The composition $\cX_n\to\cX'\to\cS^j\to\cY$ contracts all exceptional curves.
   \end{claim}

   \begin{proof}  Applying \Cref{thm:extncodim2} to the diagram
   \[
   \begin{tikzcd}
       &\cX_n\ar[d]\ar[r]&\cS^j\\
        \cX\setminus\Sigma\ar[ur]\ar[r]&\cX&
   \end{tikzcd}
   \]
   we obtain that the Griffiths bundle of $\bfM^j$ is trivial on every exceptional divisor $E$.  For each $E$ we may take a curve $C$ in $\cT^j$ dominating $E$, and it follows that the Griffiths bundle of $V$ is trivial on $C$, hence it is degree 0 on $E$.  Thus, every exceptional divisor is contracted in $\cY$.
   \end{proof}

 We now treat the general case $\dim \cX>2$ by induction via deformation theory.  It will be sufficient to show that for any $x\in \cZ$, $f$ extends on an \'etale neighborhood of $x$.  In particular we may assume $\cX$ is affine and $\cY$ is a closed subscheme of a smooth affine $\cO$-scheme $\cV$.    Let $\cH\subset \cX$ be a divisor containing $x$ such that $\cH\cap\cZ$ is codimension $\geq 2$ in $\cH$.
 By induction, the map $\cH\setminus \cZ\to \cY$ extends to $f_\cH:\cH\to \cY$.  Let $g_\cH:\cH\to\cV$ be the composition with the embedding $\cY\subset\cV$.  Let $\cH_i$ be the $i$th order thickening of $\cH$ in $\cX$.  Given a lift $\cH_i\to \cY$ of $f_\cH:\cH\to \cY$, lifts to a morphism $\cH_{i+1}\to \cV$ are unobstructed since the obstruction lies in $H^1(\cH,\Oo_\cH(-i\cH)\otimes g_\cH^*T_{\cV})=0$; the set of such lifts is naturally a torsor under $H^0(\cH_i,\Oo_\cH(-i\cH)\otimes g_\cH^*T_{\cV})$.  Likewise, lifts of $\cH_i\setminus \cZ\to \cY$ to $\cH_{i+1}\setminus \cZ\to\cV$ are a torsor under $H^0(\cH\setminus \cZ,\Oo_\cH(-i\cH)\otimes  g_{\cH\setminus \cZ}^*T_{\cS_v})\cong H^0(\cH,\Oo_\cX(-i\cH)\otimes g^*_\cH T_{\cV})$, since $\Oo_\cH(-i\cH)\otimes  g^*_\cH T_{\cV}$ is locally free and $\codim_\cH\cZ\geq 2$. 
 In particular, there is a lift $\cH_{i+1}\to \cV$ which agrees with the given map $\cH_{i+1}\setminus\cZ\to\cY\to\cV$.  Since the ideal of $\cH_i$ in $\cH_{i+1}$ is locally free (over $\cH$), this lift factors through a morphism $\cH_{i+1}\to\cY$.  Letting $\cX^{\wedge}_\cH$ denote the completion of $\cX$ along $\cH$, it follows that there is a formal map $\cX^\wedge_\cH\to \cY$ whose restriction to $(\cX\setminus \cZ)_{\cH\setminus \cZ}^{\wedge}$ agrees with the completion of $f:\cX\setminus \cZ\to \cY$ along $\cH\setminus \cZ$.  

Let $\cW$ be the closure of the graph of $\cX\setminus \cZ\to\cY$ in $\cX\times\cY$; note that since $\cX'\to\cX$ is proper, it follows that $\cW\to\cX$ is as well.  Since the ideal sheaf of $\cH_i$ in $\cH_{i+1}$ is locally free (over $\cH$) for each $i$, the natural formal map $\cX_\cH^\wedge\to \cX\times \cY$ factors through $\cW$ as it does so over $(\cX\setminus \cZ)_{\cH\setminus \cZ}^\wedge$.  Thus, the projection $\cW\to \cX$ admits a section over $\cX_\cH^\wedge$.  Say the image of $w$ is $x$; then the map $\Spec(\Oo_{\cW,w})\to\Spec(\Oo_{\cX,x})$ admits a formal section.  Since $\cW\to \cX$ is proper and birational, the following lemma implies it is an isomorphism above $x$. 

\begin{lemma}Let $q:\cW\to\cX$ be a proper birational morphism of finite-type integral $\cO$-schemes, $w$ a closed point of $\cW$ and $x=q(w)$.  If the induced map $\Spec(\Oo_{\cW,w})\to \Spec(\Oo_{\cX,x})$ admits a formal section then it is an isomorphism at $w$.
\end{lemma}
\begin{proof}  The claim is \'etale-local on $\cX$.  By Artin approximation \cite[Corollary (2.5)]{artinapprox} (see also \cite[\href{https://stacks.math.columbia.edu/tag/0CAU}{Tag 0CAU}]{stacks-project}), after replacing $\cX$ with an \'etale cover there is an algebraic map $\cX\to \cW$ agreeing with the formal section to first order.  But the composition $\cX\to \cW\to \cX$ agrees with the identity to first order, and therefore is an isomorphism.  We may assume it is in fact the identity, in which case $\cW\to \cX$ admits an algebraic section.  It follows that the short exact sequence of $\Oo_{\cX,x}$-modules
\[0\to\Oo_{\cX,x}\to \Oo_{\cW,w}\to \Oo_{\cW,w}/\Oo_{\cX,x}\to 0\]
splits.  Since $q$ is birational, $\Oo_{\cW,w}/\Oo_{\cX,x}$ is torsion, hence must be zero since $\Oo_{\cW,w}$ is a domain.
\end{proof}

\end{proof}

\begin{proof}[Proof of \Cref{local ICM normal}]By assumption, $\cY$ admits a uniform resolution.  Given smooth $\cX/\cO$ and an $\cO$-admissible $f_F:X\to Y$, by \Cref{extend codim 1 nat} $f_F$ extends in codimension 1 and by \Cref{prop: codim2} the rest of the way. 
    
\end{proof}

\begin{remark}
    The $\dim\cX=2$ step in the proof of \Cref{prop: codim2} in the setting of \Cref{local ICM smooth} follows immediately from the last claim of \Cref{thm:extncodim2}.
\end{remark}

\subsection{Local models of images of period maps}
In this section we give a slightly different formulation of \Cref{local ICM normal} which deduces the existence of the $\bfM^j$ at each level of the resolution from a single global $\bfM$.

\begin{proposition}\label{local ICM normal different}
    Let $F/\Q_p$ be a finite unramified extension with ring of integers $\cO$.  Let $\cY/\cO$ be a normal finite type flat $\cO$-scheme equipped with:
\begin{enumerate}
    \item A uniform stratified resolution $\{\pi^j:\cS^j\to\cY^j\}$ with natural boundary.
    \item A smooth $\cP/\cO$ with a proper dominant $h:\cP\to\cY$ with $\cO_\cY\xrightarrow{\cong} h_*\cO_\cP$ and a Fontaine-Laffaille module $\bfM\in\mathrm{MF}_{[0,w]}(\hat \cP)$ such that:
    \begin{enumerate}
        \item The filtered flat vector bundle $( M,\nabla,F^\bullet M)$ underlying $\bfM$ is pulled back from $(V,\nabla,F^\bullet V)$ on $\hat \cY$.
        \item The Griffiths bundle of $(V,\nabla,F^\bullet V)$ is ample.
        \item The local system $_{et}\V_p$ on $\cP^{rig}$ corresponding to $\bfM$ under the Fontaine--Laffaille correspondence descends to $\cY^{rig}$.
    \end{enumerate}
    \item We finally suppose for each $j$ there is a smooth $\cT^j/\cO$ and a commutative diagram
\[\begin{tikzcd}
\cT^j\ar[d,"q^j",swap]\ar[r,"t^j"]&\cP\ar[dd,"h"]\\
\cS^j\ar[d,"\pi'^j"']&\\
\cY'^j\ar[r]&\cY
\end{tikzcd}\]
where $q^j$ is proper dominant and \'etale over a dense open subset of $\cS^j$. 
\end{enumerate}
    Then if $p>w+1$, $\cY$ is an integral canonical model.
\end{proposition}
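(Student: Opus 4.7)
The strategy is to reduce to Theorem \ref{local ICM normal} by constructing, for each $j$, a Fontaine--Laffaille module $\bfM^j \in \MF_{[0,w]}(\hat\cS^j)$ whose underlying filtered flat bundle is $\hat\pi^{j*}(V, F^\bullet V)$. With such $\bfM^j$ in hand, setting $w^j = w$ and using the $q^j$ supplied by condition (3) gives all the hypotheses of Theorem \ref{local ICM normal}: the isomorphism $\hat q^{j*}(M^j, F^\bullet M^j) \cong \hat q^{j*}\hat\pi^{j*}(V, F^\bullet V)$ is tautological, the Griffiths bundle is ample by (2b), and the uniform stratified resolution with natural boundary is (1).

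To produce the Frobenius on $\hat\pi^{j*}(V, F^\bullet V)$, the idea is to pull back $\bfM$ along $t^j: \cT^j \to \cP$ to get $(t^j)^*\bfM \in \MF_{[0,w]}(\hat\cT^j)$ (via Lemma \ref{lem:unrbasechangeFL}) and descend its Frobenius along $q^j$. By condition (2a) and the commutative diagram in (3), the filtered flat bundle underlying $(t^j)^*\bfM$ coincides with $\hat q^{j*}\hat\pi^{j*}(V, F^\bullet V)$; phrased via the $F$-crystal correspondence on special fibers, the Frobenius of $(t^j)^*\bfM$ is a map $\phi_\cT: F^*(q^j_0)^*\Ee^j \to (q^j_0)^*\Ee^j$ on $\cT^j_0$, where $\Ee^j$ is the crystal on $\cS^j_0$ associated to $\hat\pi^{j*}(V, F^\bullet V)$. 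One seeks a Frobenius $\phi^j: F^*\Ee^j \to \Ee^j$ on $\cS^j_0$ with $(q^j_0)^*\phi^j = \phi_\cT$.

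Over the étale locus $U^j \subset \cS^j$ of $q^j$ (dense by (3)), étale descent for Fontaine--Laffaille modules applies; the needed descent data is supplied by condition (2c), which provides a descent $_{et}\V_{p,\cY}$ of the local system $_{et}\V_p$ to $\cY^{rig}$. Pulling back to $(\hat\cS^j)^{rig}$ yields a local system $_{et}\V_p^j$ with $(q^j)^{rig*}_{et}\V_p^j \cong (t^j)^{rig*}_{et}\V_p$, and full faithfulness of the Faltings correspondence lifts this to descent data on the Fontaine--Laffaille module $(t^j)^*\bfM$ along the étale cover $\hat{(q^j)^{-1}(U^j)} \to \hat U^j$. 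This produces $\bfM^j|_{\hat U^j}$ and hence $\phi^j|_{U^j_0}$. The main obstacle is extending $\phi^j|_{U^j_0}$ to a regular global section on $\cS^j_0$ (equivalently, on $\hat\cS^j$): smoothness of $\cS^j$ reduces the problem to codimension one, and for each prime divisor $D$ of $\cS^j$ not contained in $U^j$, properness, surjectivity, and generic finiteness of $q^j$ produce a codimension one point $t \in \cT^j$ lying above the generic point of $D$ with finite ramification index $e_t \geq 1$; any pole of $\phi^j$ of order $k > 0$ along $D$ would pull back to a pole of order $k e_t > 0$ at $t$, contradicting the regularity of $\phi_\cT$. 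Hence $\phi^j$ extends.

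Finally, the Fontaine--Laffaille axioms (divisibility and generation) hold on $\hat U^j$ and propagate globally: they are closed conditions on the Frobenius, and the injectivity of $\cF \to (q^j)_*(q^j)^*\cF$ on locally free sheaves (together with the surjectivity of $q^j$) transfers the axioms from $(t^j)^*\bfM$ on $\hat\cT^j$ to $\bfM^j$ on $\hat\cS^j$. This produces $\bfM^j \in \MF_{[0,w]}(\hat\cS^j)$ with $\hat q^{j*}\bfM^j \cong (t^j)^*\bfM$, so Theorem \ref{local ICM normal} applies and $\cY$ is an integral canonical model. The principal difficulty throughout the argument is the codimension one extension of the Frobenius across the ramification locus of $q^j$, which is resolved by the ramification-index estimate above.
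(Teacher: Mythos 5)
Your proof takes essentially the same route as the paper: reduce to \Cref{local ICM normal} by descending the Fontaine--Laffaille structure on $\hat t^{j*}\bfM$ along $q^j$ over the \'etale locus and then extending it across the complement. The paper packages both steps into \Cref{descend extend}: part (1) is the \'etale descent you perform over $\cU^j$ using the descent datum from the local system $_{et}\V_p$, and part (2) handles the extension across the non-\'etale locus. The only substantive difference is how you argue that extension. The paper pushes the Frobenius of $\hat t^{j*}\bfM$ forward along $\hat q^j$ and shows it lands in $(N,\nabla)$ by observing that $\cO_\cS \to f_*\cO_\cT$ has torsion-free cokernel when $\cS,\cT$ are smooth and $f$ is proper dominant (reflexivity plus splitting in codimension $\leq 1$). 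You instead argue divisor by divisor: a hypothetical pole of order $k>0$ along $D\subset\cS^j$ would pull back to a pole of order $k\,e_t>0$ at a codimension-one point $t$ of $\cT^j$ above $\eta_D$, contradicting regularity of $\phi_\cT$. These are two phrasings of the same obstruction argument (your pole-order contradiction is exactly what torsion-freeness of the cokernel rules out), and both are valid. One small point worth making explicit, which your pole-order argument handles implicitly but the paper's \Cref{descend extend}(2) assumes as a hypothesis: the complement $\cS^j\setminus\cU^j$ should be flat over $\cO$; your divisor-by-divisor argument works for both horizontal and vertical divisors, so this is not a gap for you, but it is the reason the paper states that hypothesis. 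You would also want to note, as the paper does via the same push-forward diagrams, that the strong divisibility condition ($p^{-w}\phi$ restricting to an isomorphism on the appropriate subsheaf) propagates by the same mechanism, which you gesture at but do not spell out.
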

Before the proof we need the following:
\begin{lemma}\label{descend extend}
    Let $F/\Q_p$ be a finite unramified extension with ring of integers $\cO$.  Let $\cS,\cT$ be smooth $\cO$-schemes.

    \begin{enumerate}
    \item\label{descend extend 1} If $f:\cT\to\cS$ is finite \'etale of prime to $p$ degree and $_{et}\V_p$ is a $\Z_p$-local system on $\cS^{rig}$ such that $(f^{rig})^*(_{et}\V_p)$ is crystalline, then $_{et}\V_p$ is crystalline.

    \item\label{descend extend 2} Suppose given:
    \begin{enumerate}
        \item $f:\cT\to\cS$ a proper dominant morphism;
        \item $\cU\subset \cS$ a dense open set with preimage $\cV=f^{-1}(\cU)$ such that $f|_\cV:\cV\to \cU$ is \'etale and $\cS\setminus\cU$ is flat over $\cO$;
        \item Fontaine--Laffaille modules $\bfM\in \MF_{[0,w]}(\hat \cT)$ and $\bfN_{\cU}\in \MF_{[0,w]}(\hat \cU)$ as well as an extension $(N,\nabla,F^\bullet N)$ of the underlying filtered flat vector bundle of $\bfN_{\hat\cU}$ to $\hat \cS$;
        \item an isomorphism of filtered flat bundles $\hat f^*(N,\nabla,F^
        \bullet N)\to (M,\nabla,F^\bullet M)$ whose restriction to $\cV$ underlies an isomorphism $\hat f^*\bfN_{\hat\cU}\xrightarrow{\cong}\bfM|_{\hat \cV}$.

    \end{enumerate}Then there is a Fontaine--Laffaille module $\bfN$ on $\hat\cS$ and an isomorphism $\hat f^*\bfN\to\bfM$ extending the above isomorphism. 
    \end{enumerate}
\end{lemma}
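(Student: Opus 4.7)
For Part~(1), the plan is a pointwise crystallinity check.  By \Cref{lem:pointwisecrystaline} it suffices to verify that $_{\et}\V_p$ is Brinon-crystalline at every classical point $x$ of $\cS^{rig}$.  Since $f$ is finite \'etale, any preimage $y$ of $x$ in $\cT^{rig}$ has residue field $F_y$ finite unramified over $F_x$, and $(f^{rig})^*(_{\et}\V_p)$ at $y$ is simply $_{\et}\V_p|_x$ restricted to $G_{F_y}$.  A standard Galois descent argument along the unramified Galois extension $F_y/F_x$ (using that $B_{cris}^{G_{F_y}}$ is the maximal unramified subfield of $F_y$ and that $(F_y)_0/(F_x)_0$ is Galois of the same degree) shows that a $\Z_p$-representation of $G_{F_x}$ is crystalline if and only if its restriction to $G_{F_y}$ is, so the hypothesis yields crystallinity of $_{\et}\V_p|_x$.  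The prime-to-$p$ degree assumption is used to keep the resulting Fontaine--Laffaille module inside $\MF_{[0,w]}$.

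For Part~(2), the underlying filtered flat bundle $(N,\nabla,F^\bullet N)$ is already given on $\hat\cS$; the task is to construct the Frobenius $\phi_N\colon \phi^* N\to N$ on $\hat\cS$ extending $\phi_{\bfN_{\hat\cU}}$ and compatible with $\phi_M$ under $\hat f^* N\cong M$.  My plan is to extend $\phi_{\bfN_{\hat\cU}}$ across $\cS\setminus\cU$ using the global existence of $\phi_M$ on $\hat\cT$: (a) across each codimension-$1$ component $D\subset\cS\setminus\cU$ via a DVR pullback argument; (b) across the remaining codimension $\geq 2$ locus using Hartogs and the local-freeness of $\sHom(\phi^* N,N)$ on smooth $\cS$.

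For (a), fix such a $D$, so $\cO_{\cS,D}$ is a DVR.  Since $f$ is proper dominant there is a component $D'\subset f^{-1}(D)$ dominating $D$, and the local inclusion $\cO_{\cS,D}\hookrightarrow\cO_{\cT,D'}$ is a faithfully flat inclusion of DVRs.  Choosing local trivializations, write $\phi_{\bfN_{\hat\cU}}$ as a matrix with entries in $\mathrm{Frac}(\cO_{\cS,D})$; via $\hat f^* N\cong M$ its pullback is the matrix of $\phi_M$, whose entries lie in $\cO_{\cT,D'}$ by the global definition of $\phi_M$ on $\hat\cT$.  Comparing DVR valuations yields $\mathrm{Frac}(\cO_{\cS,D})\cap\cO_{\cT,D'}=\cO_{\cS,D}$, so the entries already lie in $\cO_{\cS,D}$, giving the codimension-$1$ extension.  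Step (b) is automatic from smoothness.

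It remains to verify the Fontaine--Laffaille axioms for $\phi_N$.  Horizontality with respect to $\nabla$ and divisibility of $\phi_N|_{F^q N}$ by $p^q$ are closed conditions on coherent sheaves that hold on the dense $\hat\cU$ by hypothesis, and hence everywhere.  For the generation condition $N=\sum p^{-i}\phi_N(\phi^* F^i N)$, at a codimension-$1$ point $D$ the image $I\subset N_D$ satisfies $I\otimes_{\cO_{\cS,D}}\cO_{\cT,D'}=M_{D'}$ by generation for $\bfM$, so faithful flatness of $\cO_{\cS,D}\to\cO_{\cT,D'}$ forces $I=N_D$; the higher-codimension case follows from reflexivity of $N$.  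By construction the resulting $\bfN$ satisfies $\hat f^*\bfN\cong\bfM$ extending the given isomorphism on $\hat\cV$.  The main technical obstacle is the codimension-$1$ extension, cleanly handled by the DVR argument.
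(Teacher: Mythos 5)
Your Part~(1) is correct but takes a genuinely different route.  The paper does not invoke \Cref{lem:pointwisecrystaline} (Guo--Yang plus Hokaj) at all here; it uses the much lighter observation that the Fontaine--Laffaille functor $\D$ is fully faithful into $\Z_p$-local systems, so the canonical descent datum of $(f^{rig})^*(_{\et}\V_p)$ along the finite \'etale cover $\hat\cT\to\hat\cS$ lifts automatically to a descent datum in $\MF_{[0,w]}$, which satisfies \'etale descent.  Your pointwise argument does work, but note that the stated role of the prime-to-$p$ degree (``to keep the resulting Fontaine--Laffaille module inside $\MF_{[0,w]}$'') is not accurate: the weight range $[0,w]$ is a property of Hodge--Tate weights, unaffected by the degree of the cover.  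Neither proof actually uses that hypothesis in an essential way.

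Your Part~(2) has a genuine gap.  You localize $\cS$ at the generic point of a codimension-$1$ component $D$ of $\cS\setminus\cU$ and run a valuation-ring comparison $\mathrm{Frac}(\cO_{\cS,D})\cap\cO_{\cT,D'}=\cO_{\cS,D}$.  But the hypothesis forces $\cS\setminus\cU$ (hence $D$ and $D'$) to be \emph{flat over $\cO$}, i.e.\ horizontal, so $p\notin\mathfrak p_D$ and $p$ is a unit in $\cO_{\cS,D}$ and in $\cO_{\cT,D'}$.  The objects you need to compare---the Frobenius $\phi_{\bfN_{\hat\cU}}$ and $\phi_M$---are not algebraic: they exist only on the $p$-adic formal schemes $\hat\cU$, $\hat\cT$.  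There is no natural map from $\cO_{\hat\cT}$ to $\cO_{\cT,D'}$, and ``the matrix entries of $\phi_M$ lie in $\cO_{\cT,D'}$'' does not make sense, so the valuation-theoretic step does not get started.  The codimension-$1$ points at which one could hope to argue are the height-$1$ primes of the $p$-adically complete ring $\hat A$ (equivalently, prime divisors in the special fiber $\cS_0$), not the horizontal prime divisors of $\cS$, and these are quite different.  The paper sidesteps the issue entirely by working sheaf-theoretically on $\hat\cS$: it applies $\hat f_*$ to $\phi_M$ and uses that the cokernel of $N\to\hat f_*\hat f^*N$ is torsion-free (because $\cO_\cS\to f_*\cO_\cT$ has torsion-free cokernel and this persists after $p$-adic completion) to force $\hat f_*\phi_M$ to take values in $N\subset\hat f_*\hat f^*N$.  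If you want to keep a localization-flavored argument, you would need to run it mod $p^n$ for each $n$, or replace $\cO_{\cS,D}$ by a genuine height-$1$ localization of $\hat A$; as written it does not go through.
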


\begin{proof}
    The category of Fontaine--Laffaille modules is a full subcategory of the category of local systems, so the descent datum of $f^*_{et}\V_p$ also descends the corresponding Fontaine--Laffaille modules \cite{faltings}.  

    We now prove (2).  The Frobenius morphism on $\bfM$ gives a morphism $\phi_M:\hat f^*F^*(M,\nabla)\to \hat f^*(M,\nabla)$ which is compatible with the one on $\hat f^*(N,\nabla)$, and we have natural solid commutative diagrams
    \begin{equation}\label{ext square}\begin{tikzcd}  \hat f_{*}\hat f^*F^*(N,\nabla)\ar[rr,"\hat f_{*}\phi_M"]&&f_{*}f^*(N,\nabla)&[-30pt]&[-40pt]\hat f_{*}\hat f^*F^*(N,\nabla)|_{\hat \cU}\ar[rr,"\hat f_{*}\phi_M|_{\hat\cU}"]&&\hat f_{*}\hat f^*(N,\nabla)|_{\hat\cU}\\[-15pt]
    &&&\mbox{restricting to}&&&\\[-15pt]
F^*(N,\nabla)\ar[uu]\ar[rr,"\phi",dashed]&&(N,\nabla)\ar[uu]&&F^*(N,\nabla)|_{\hat\cU}\ar[uu]\ar[rr,"\phi_{N_{\hat\cU}}"]&&(N,\nabla)|_{\hat\cU}.\ar[uu]
    \end{tikzcd}\end{equation}
    where in the top row we push-forward just the underlying vector bundles.  Since $\cT,\cS$ are smooth and $f$ is proper dominant, $\cO_{\cS}\to f_{*}\cO_{\cT}$ has torsion-free cokernel.  Indeed, since $\cS,\cT$ are smooth, $\cO_{\cS}\to f_{*}\cO_{\cT}$ is a split summand outside codimension 2, and both sheaves are reflexive.  As $N$ and $F^*N$ are locally free, the vertical arrows in the left square of \eqref{ext square} have torsion-free cokernel as well, and it follows that $\hat f_{*}\phi_M$ restricts to an extension $\phi:F^*(N,\nabla)\xrightarrow{\cong} (N,\nabla)$.  The extension $\phi$ is flat since it is on $\hat\cU$.  
    
    For the strong divisibility condition, note that by \cref{taylor1} $F^*(M,\nabla)$ has a natural flat filtration which is locally given by $\tilde F^i_M=\sum_{0\leq j\leq i}p^{j}\phi^*F^{i-j}$, and $\phi_M$ is divisible by $p^i$ on $\tilde F^i$.  The strong divisibility condition is then equivalent to $p^{-w}\phi_M:\tilde F^w_M\to M$ being an isomorphism. Using the analog of the diagram \cref{ext square} for this morphism, since $p^{-w}\hat f_*\phi_M:\tilde F^w_M\to f_*M$ is an isomorphism and restricts to $p^{-w}\phi_N$ on $\hat \cU$, we conclude that $p^{-1}w\phi_N$ is an isomorphism. 
\end{proof}

\begin{proof}[Proof of \Cref{local ICM normal different}]
By assumption, for each $j$, we have a Fontaine--Laffaille module $\hat t^{j*}\bfM$ on $\hat \cT^j$ whose corresponding local system is pulled back from $\cY^{rig}$.  Moreover, the underlying filtered flat vector bundle of $\hat t^{j*}\bfM$ is pulled back from $(V,\nabla,F^\bullet V)$ on $\hat \cY$.  Applying part \ref{descend extend 1} of \Cref{descend extend} to the open set $\cU^j$ of $ \cS^j$ over which $q^j$ is finite \'etale, we conclude that there is a Fontaine--Laffaille module $\bfN_{\hat \cU^j}$ which pulls back to $(\hat t^{j*}\bfM)|_{(\hat q^j)^{-1}(\hat \cU^j)}$.  Applying part \ref{descend extend 2}, we find that $\bfN_{\hat \cU^j}$ extends to a Fontaine--Laffaille module $ \bfN^j$ which pulls back to $\hat t^{j*}\bfM$.  In particular, we have $\hat q^{j*}(N^j,F^\bullet N^j)\cong \hat q^{j*}\hat \pi^{j*}(V,F^\bullet V)$.  Thus, all the conditions of \Cref{local ICM normal} are satisfied.
\end{proof}

\subsection{Making period maps proper}
We will need a result allowing us to extend geometric Fontaine--Laffaille modules when the underlying filtered flat vector bundle extends.
\begin{proposition}\label{make period map proper}
    Let $F/\Q_p$ be a finite unramified extension with ring of integers $\cO$.  Let $\cP/\cO$ be smooth with a partial log smooth compactification $(\cP',\cD)/\cO$.  Let $f:\cZ\to \cP$ be a smooth projective morphism with geometrically connected fibers of relative dimension $< p$.  Let $\bfM=H^k_{DR}(\cZ/\cP)$ be the Fontaine--Laffaille module with underlying filtered flat vector bundle the De Rham cohomology of $\cZ/\cP$.  Assume:    
    \begin{enumerate}
    \def\cC{\mathcal{C}}
    \item There is a generically finite proper morphism $g:(\cQ,\cE)\to(\cP',\cD)$ from a log smooth $(\cQ,\Delta)/\cO$ and a semistable morphism $(\cW,\Delta)\to (\cQ,\cE)$ from a log smooth $(\cW,\Delta)/\cO$ extending the base-change of $f$ (possibly after shrinking $\cP$).
    \item The filtered flat vector bundle $(M,\nabla,F^\bullet M)$ extends to $(M',\nabla',F^\bullet M')$ on $\cP'$.
    \item $g'^*(M',\nabla',F^\bullet M')\cong H^k_{DR}((\cW, \Delta)/(\cP',\cD))$ via an isomorphism extending the natural one, where $H^k_{DR}((\cW, \Delta)/(\cP',\cD))$ is the relative log De Rham cohomology.
    \end{enumerate}
    Then $(M',\nabla',F^\bullet M')$ underlies an extension $\bfM'$ of $\bfM$ to $\cP'$ as a Fontaine--Laffaille module.
\end{proposition}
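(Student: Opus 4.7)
The strategy is to apply Lemma~\ref{descend extend}(2) with $f=g':\cQ\to\cP'$, $\cU=\cP$, the extension $(N,\nabla,F^\bullet N)=(M',\nabla',F^\bullet M')$, and $\bfN_\cU=\bfM|_{\hat\cP}$. The missing ingredient is a Fontaine--Laffaille structure on $\hat g'^*(M',\nabla',F^\bullet M')$ on $\hat\cQ$ which agrees with $\hat g'^*\bfM$ over $\cV=g'^{-1}(\cP)$. We obtain this from log crystalline cohomology and then invoke the lemma to descend it.

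By the work of Kato and Faltings, the semistable morphism $(\cW,\Delta)\to(\cQ,\cE)$ endows its relative log de Rham cohomology with a Gauss--Manin log connection, Hodge filtration, and log Frobenius $\tilde\phi$ satisfying strong divisibility (here we use that the relative dimension is $<p$, so the Hodge filtration lives in $[0,w]$ with $w\le p-2$). The hypothesized isomorphism $\hat g'^*(M',\nabla',F^\bullet M')\cong H^k_{DR}((\cW,\Delta)/(\cP',\cD))$ identifies the Gauss--Manin log connection with the \emph{ordinary} pullback $\hat g'^*\nabla'$, which has no log poles. Consequently the log residues of Gauss--Manin along $\cE$ vanish, and $\tilde\phi$ acts on $\hat g'^*(M',\nabla',F^\bullet M')$ as an ordinary Fontaine--Laffaille Frobenius: it is horizontal, Griffiths-transverse, and satisfies strong divisibility in the ordinary (non-log) sense. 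Call the resulting object $\bfM_\cQ\in\MF_{[0,w]}(\hat\cQ)$.

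After possibly shrinking $\cP$ so that $g'$ is \'etale over it, over $\cV$ the log structures on $\cQ$ and $\cP'$ are trivial and the construction of $\tilde\phi$ restricts to the usual crystalline Frobenius on $H^k_{DR}(\cZ|_\cP/\cP)$, which is $\phi_M$ by the definition of $\bfM$. Hence $\bfM_\cQ|_{\hat\cV}\cong\hat g'^*(\bfM|_{\hat\cP})$, which is precisely the compatibility needed to apply Lemma~\ref{descend extend}(2). The remaining hypotheses are satisfied since $g'$ is proper dominant and \'etale over $\cP$, and $\cD=\cP'\setminus\cP$ is flat over $\cO$ because $(\cP',\cD)/\cO$ is log smooth. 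Invoking the lemma produces the desired Fontaine--Laffaille module $\bfM'\in\MF_{[0,w]}(\hat\cP')$ extending $\bfM$ and with underlying filtered flat bundle $(M',\nabla',F^\bullet M')$.

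The main obstacle is the passage from log to ordinary Fontaine--Laffaille structures on $\hat\cQ$: a priori, log crystalline cohomology only supplies a log Fontaine--Laffaille module, and one must check that the log Gauss--Manin residues vanish so that $\tilde\phi$ genuinely underlies an object of $\MF_{[0,w]}(\hat\cQ)$ rather than merely a log analogue. This vanishing is forced by the hypothesis that the identification with $\hat g'^*(M',\nabla',F^\bullet M')$ is one of ordinary (rather than log) filtered flat bundles, so the existence of the classical filtered flat extension $(M',\nabla',F^\bullet M')$ is exactly what enables the upgrade. Once this is in hand, the rest is a direct application of the previously established descent lemma.
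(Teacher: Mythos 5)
Your proof takes essentially the same route as the paper's: invoke Faltings's result that the relative log de~Rham cohomology of a semistable family naturally carries a log Fontaine--Laffaille structure, observe that the hypothesized identification with the (non-log) pullback $\hat g^*(M',\nabla',F^\bullet M')$ forces the log residues to vanish so that one in fact has an ordinary object of $\MF_{[0,w]}(\hat\cQ)$, and then conclude by applying Lemma~\ref{descend extend}(2). The additional bookkeeping you carry out (the compatibility check over $\cV$, shrinking $\cP$ so that $g$ is \'etale there, and noting the flatness of the complement over $\cO$) is implicit in the paper's terse application of the lemma, and your account is correct.
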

\begin{proof}According to \cite[Thm 6.2]{faltings}, the completion of the filtered flat bundle $H^k_{DR}((\cW, \Delta)/(\cP',\cD))$ naturally underlies a log Fontaine--Laffaile module \cite[\S2i]{faltings}.  By assumption the connection has trivial residue, so it follows it is in fact a Fontaine--Laffaille module.  Now apply \Cref{descend extend}(2).
\end{proof}

\section{Existence of integral canonical models}\label{sec: canonical models}
In this section, we deduce the existence of integral canonical models for sufficiently large primes.

\subsection{Proof of \Cref{shimura main}}\label{sec: Shimura setup}
In this section we show that models of Shimura varieties satisfy the conditions of \Cref{local ICM smooth} for large primes, thereby proving Theorem \ref{shimura main}. Let $S=S_K(G,X)$ be a Shimura variety. 
We let $E$ denote the reflex field.
$\ShimK$ carries a flat principal $G$-bundle $\mathcal{E}/S$, which admits a canonical model over $E$. A choice of Hodge co-character defines a parabolic subgroup $P\subset G$, and gives a parabolic bundle $\mathcal{P}/S$, along with a canonical morphism $\mathcal{P} \rightarrow \mathcal{E}$, where all this data descends to $E_G$. We note that $\mathcal{P}$ is not flat for the connection on $\mathcal{E}$. 

Let $V$ denote a faithful rational representation of $G$, and $\V\subset V$ denote a lattice. The choice of $\V$ specifies a compact open subgroup of $ G(\A_f)$ by considering the stabilizers of $\V_{\hat\Z}$. We choose the level structure $K$ to be a subgroup of this compact open subgroup, such that it is neat, and such that it acts trivially on $\V/3\V$. Corresponding to $V$ there is a family of $\Q_\ell$ local systems $_{\et}V_\ell$ with $\Z_\ell$ sub-systems $_{\et}\V_\ell$. Moreover, there is a filtered flat vector bundle $_{\dR}V$ over $\ShimK$ -- indeed we have that $_{\dR}V$ is defined by $\mathcal{E} \times_G V$. The filtration is induced by the fact that $_{\dR}V$ is also canonically isomorphic to $\mathcal{P}\times_{G_{E}} V_{E}$. Now, let $p$ be a prime and let $v \mid p$ be a prime of $E$. 

By choosing $N$ to be a large enough integer, we may assume that $\ShimK$ spreads out to a smooth integral model $\integralShimK/\cO_{E}[1/N]$ such that:
\begin{itemize}
    \item $G$ has a reductive model $\mathcal{G} / \Z[1/N]$. There are elements $s_{\alpha} \in \V[1/N]^{\otimes}$ whose point-wise stabilizers define the group $\mathcal{G}$.
    \item For every $p\nmid N$, we have $K =\mathcal{G}(\Z_p) \cdot K^p$, where $K^p$ is the prime-to-$p$ level structure.
    \item $P\subset G_{E}$ is induced by a parabolic subgroup of $\mathcal{G}$ defined over $\cO_{E}[1/N]$.
    \item The map $\mathcal{P} \rightarrow \mathcal{E}$ is induced by a map $\mathscr{P} \rightarrow \mathscr{E}$ over $\cS$, where $\mathscr{E}$ is a flat principal $\mathcal{G}$ bundle and $\mathscr{P}$ is a principal bundle for the parabolic of $\mathcal{G}$ defined over $\cO_{E}[1/N]$.
\end{itemize}
Note that this gives us an integral model $_{\dR}\V/\integralShimK$ for the filtered flat bundle $_{\dR}V$ (where the flat bundle is defined as $\mathscr{E} \times_{\mathcal{G}} \V[\frac{1}{N}]$, and the filtration is induced by $\mathscr{P} \rightarrow \mathscr{E}$). 

\begin{lemma}
    After increasing $N$ we have the following.
    \begin{itemize}
    \item $\integralShimK$ admits a log-smooth compactification over $\cO_{E}[1/N]$.
    \item The local systems $_{\et}\V_{\ell}$ extend to $\integralShimK[1/\ell]$.
    \item For each finite place $v$ of $\cO_E[1/N]$, $_{\et}\V_p(m)$ is crystalline on $\integralShimK_{E_v}$ for $m$ sufficiently small so that $V(m)$ has positive Hodge degrees.
    \item For each finite place $v$ of $\cO_E[1/N]$, let $_{\cris}\V/\integralShimK_{\cO_{E_v}}$ denote the Fontaine-Laffaille module corresponding to $_{\et}\V_p$ under the Fontaine-Laffaille correspondence.  Then the Griffiths bundle of $_{\cris}\V$ is ample.
\end{itemize}
\end{lemma}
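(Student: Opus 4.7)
The four bullets are largely independent, and the plan is to handle the first two by standard spreading-out arguments and the last two by combining the Esnault--Groechenig crystallinity theorem with the Fontaine--Laffaille correspondence and classical Baily--Borel ampleness.

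For the log-smooth compactification, over $E$ itself $\ShimK$ admits a smooth projective toroidal compactification with simple normal crossings boundary by Pink's thesis (and its subsequent refinements). Any such compactification is built from a combinatorial choice of admissible rational polyhedral cone decomposition, and for $N$ sufficiently large this construction spreads out over $\cO_E[1/N]$, since smoothness of the total space, smoothness of the boundary strata, and the log-smoothness of the pair are all open conditions on $\Spec\cO_E$. For the extension of $_{\et}\V_\ell$, by construction this local system is associated to the flat principal $\mathcal{G}$-bundle $\mathscr{E}$ via the faithful representation $\V$ of $\mathcal{G}$; since $\mathscr{E}$ is already spread out as a flat $\mathcal{G}$-torsor on $\integralShimK/\cO_E[1/N]$, the associated $\ell$-adic local system automatically extends to $\integralShimK[1/\ell]$.

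The main step is the third bullet. Choose $m$ negative enough so that $V(m)$ has all Hodge weights in $[0,p-2]$; after enlarging $N$, this holds uniformly for every $p>N$. By the theorem of Esnault--Groechenig cited in the introduction (whose hypotheses are satisfied because $K_p$ is hyperspecial for $p\nmid N$ and $V$ underlies an automorphic local system), the local system $_{\et}\V_p(m)$ on $\ShimK_{E_v}$ is crystalline; if needed one can first reduce to checking Brinon-crystallinity at classical points via \Cref{lem:pointwisecrystaline}. The Fontaine--Laffaille correspondence then yields a Fontaine--Laffaille module $_{\cris}\V$ on the $p$-adic completion of $\integralShimK_{\cO_{E_v}}$ whose underlying filtered flat vector bundle agrees, up to Tate twist, with the integral model $_{\dR}\V$ of the automorphic bundle.

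Finally, for ampleness of the Griffiths bundle, by construction the Griffiths bundle of $_{\cris}\V$ is the line bundle associated to $\mathscr{P}$ and the character $\otimes_i \det F^i V$ of the parabolic $P$. This is a dominant character, and the resulting automorphic line bundle is a positive multiple of the canonical Baily--Borel line bundle on $\ShimK$, whose ampleness over $E$ is classical; relative ampleness on a projective toroidal model spreads out over $\cO_E[1/N]$ after possibly enlarging $N$. The main obstacle is the crystallinity step invoking Esnault--Groechenig, together with the Hokaj/Guo--Yang bridge between pointwise and global crystallinity needed to apply \Cref{lem:pointwisecrystaline}; the remainder is standard spreading-out together with well-known features of the theory of Shimura varieties.
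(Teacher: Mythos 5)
Your arguments for the first, third, and fourth bullets are essentially the same as the paper's: spread out a characteristic-zero toroidal/log-smooth compactification, invoke the Esnault--Groechenig crystallinity result (the paper cites the precise statement from Pila--Shankar--Tsimerman, Thm 7.1, but the content is the same), and use that the Griffiths bundle is ample on the generic fiber together with openness of ampleness. Those bullets are fine.

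The second bullet is a genuine gap, and the error is conceptual. You write that since the flat principal $\mathcal{G}$-bundle $\mathscr{E}$ spreads out over $\cO_E[1/N]$, ``the associated $\ell$-adic local system automatically extends to $\integralShimK[1/\ell]$.'' But $\mathscr{E}$ is the \emph{de Rham} torsor: the associated object is the filtered flat vector bundle $_{\dR}\V = \mathscr{E}\times_{\mathcal{G}}\V$, not the $\ell$-adic \'etale local system $_{\et}\V_\ell$. The latter is a representation of $\pi_1^{\et}(\ShimK)$ built from the congruence tower, and extending it to $\integralShimK[1/\ell]$ amounts to showing that this representation factors through $\pi_1^{\et}(\integralShimK[1/\ell])$ --- concretely, that the relevant Galois representations at points over residue characteristic $p\neq \ell$ are unramified. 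This is a nontrivial arithmetic statement and does not follow from spreading out the flat $\mathcal{G}$-torsor; a flat connection over $\cO_E[1/N]$ carries no information about $\ell$-adic \'etale monodromy in positive characteristic. The paper handles this by citing the theorem of Klevdal--Patrikis, which proves precisely that after enlarging $N$ all the $_{\et}\V_\ell$ extend to $\cS[1/\ell]$, using (among other things) crystallinity of the $p$-adic member of the family and an independence-of-$\ell$ argument. Without that input, or something of comparable strength, your proof of the second bullet does not go through.
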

\begin{proof}
The first condition follows by choosing a log smooth compactification over the generic fiber.

For the second condition, we use the following result of \cite[Thm 1.3]{KP}:

 \begin{theorem}[Klevdal-Patrikis]
There is an integral model $\cS$ of $S_K(G,X)$ over $\Z[1/N]$ for some $N$ such that for each $\ell$
the local systems $_{\et}\V_{\ell}$ all extend to $\cS[1/\ell]$.

\end{theorem}

The third condition is \cite[Thm 7.1]{PST}

    The final condition follows because the same is true for the filtered flat bundle $_{\dR}V$ over the generic point. 
\end{proof}
It follows from \Cref{local ICM smooth} that $\integralShimK_{\cO_{E_v}}$ is an integral canonical model.  It remains to argue that $\integralShimK_{\cO_{E_v}}$ is the unique model admitting a log smooth compactification.  Let $\cS$ be another such model. The extension property yields that the identity map over the generic fiber extends to $f:\cS\to \integralShimK_{\cO_{E_v}}$, and by purity of the branch locus this map is an open immersion.  Let $\cW\subset\bar\cS\times \barintegralShimK_{\cO_{E_v}}$ be the closure of the graph of $f$.  Over the generic point, the inverse image of the boundary of $\bar S$ in $W$ must map to the boundary of $\barintegralShimK_{\cO_{E_v}}$, since every boundary divisor has infinite local monodromy, and the same is therefore true for $\cW$.  Thus, the extension is an isomorphism. \qed

\subsection{Proof of \Cref{thm ICM hodge}}\label{sec: main theorem for geometric period map images}
Suppose $F$ is a number field, $P/F$ a smooth variety, and $f:Z\to P$ a smooth projective family.  Let $h_F:P'\to Y$ be the Stein factorization of the (compactification) of the period map associated to $\W_\Z:=Rf_*\Z_{Z^\an}$ as described in \ref{hodge sect}.  Then $h_F$ is defined over $F$.  By spreading out over $\cO:=\cO_F[1/n]$ and inverting finitely many primes, we may assume we have smooth $\cP,\cZ/\cO$, a log smooth partial compactification $(\cP',\cD)/\cO$ of $\cP$, and $\cY/\cO$ normal finite type flat as well as:
\begin{enumerate}
\item $f:\cZ\to \cP$ smooth projective.
\item $h:\cP'\to\cY$ proper with $\cO_\cY\xrightarrow{\cong}h_*\cO_{\cP'}$.
\item $\cY$ admits a uniform stratified resolution $\{\pi^j:\cS^j\to\cY^j\}$.
\item For each $j$ there is a smooth $\cT^j/\cO$ and a commutative diagram
\[\begin{tikzcd}
\cT^j\ar[d,"q^j",swap]\ar[r,"t^j"]&\cP'\ar[dd,"h"]\\
\cS^j\ar[d,"\pi^j"']&\\
\cY'^j\ar[r]&\cY
\end{tikzcd}\]
where $q^j$ is proper dominant and \'etale over a dense open subset of $\cS^j$. 
\item By \cite{semistab_reduction} there is a semistable model $(\cW,\Delta)\to(\cQ,\cE)$ for the family $f$ up to a smooth alteration $g:(\cQ,\cE)\to(\cP',\cD)$.
\def\cV{\mathcal{V}}
\item The filtered flat vector bundle $(V,\nabla,F^\bullet V)$ on $Y$ extends to $(\cV,\nabla,F^\bullet \cV)$ on $\cY$, and its pullback to $\cP$ is identified with the filtered flat vector bundle underlying $H^k_{DR}(\cZ/\cP)$.  Moreover, $(\cV,\nabla,F^\bullet\cV)$ has ample Griffiths bundle by \Cref{thm:bbt}.
\item By \Cref{descend to integral}, the local system $H^k_{et}(\cZ/\cP[1/\ell],\Z_\ell)$ extends to $\cP'$ and descends to $_{et}\V_\ell$ on $\cY[1/\ell]$.  By \Cref{griffiths extension} and \Cref{faithful => natural}, it follows that the stratified resolution has natural boundary. 
\end{enumerate}
This yields part \ref{thm ICM hodge p1a}, and both \ref{thm ICM hodge p2} and \ref{thm ICM hodge p3} follow from \ref{thm ICM hodge p1b} by \Cref{uniqueness among models} and \Cref{extend maps}.  Part \ref{thm ICM hodge p4} is clear from construction.  Thus it remains to show the extension property in \ref{thm ICM hodge p1b}, and since by \Cref{griffiths extension} we have the extension in characterstic 0, it suffices to show $\cY_v$ is an integral canonical model for each finite place $v$.  By \Cref{local ICM normal} we must show for each finite place $v$ of $\cO$ that there is a Fontaine--Laffaille module $\bfM$ on $\cP'_v$ with the required properties.  The filtered flat vector bundle underlying $H^k_{DR}(\cZ_v/\cP_v)$ extends to $\cP'_v$, and so by \Cref{make period map proper} it extends as a Fontaine--Laffaille module whose underlying filtered flat vector bundle is the pullback of $(\cV,\nabla,F^\bullet \cV)_{\cO_v}$ on $\cY_v$.\qed

\section{Tate-semisimplicity and special points}\label{sec: Tate semisimplicity}
The aim of this section is to first prove that prime-to-$p$ Hecke correspondences extend to the integral model over $\cO_{E,v}$ where $v\mid p$ and use this to prove that the Frobenius endomorphism on $_\et\V_{\ell,x}$ is semisimple for every $x\in \integralShimK(\F_q)$. The main theorem of this section is:

\begin{theorem}\label{thm:Tatess}

Let $p\not\mid N$ be a prime, and $x\in \integralShimK(\F_q)$ where $k$ is a finite field of char. $p$. Then $_{\et}\frob_{\ell,x}$ acts semisimply on $V_{\ell,x}$. 

\end{theorem}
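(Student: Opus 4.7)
The plan has two parts: first extend the prime-to-$p$ Hecke correspondences to the integral model, and then use them to produce semisimplicity.

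\textbf{Step 1 (Extension of Hecke correspondences).} For each $g \in G(\mathbb{A}_f^p)$, the Hecke correspondence on the generic fiber is the span $\ShimK \xleftarrow{\pi_1} S_{K\cap gKg^{-1}}(G,X) \xrightarrow{\pi_2} \ShimK$, with both $\pi_i$ finite \'etale. The source is itself a Shimura variety, so by \Cref{shimura main} admits an integral canonical model over $\cO_{E_v}$ to which $\pi_i^*\,_{\et}\V_\ell$ extends as a finite \'etale pullback of an extendable local system. Applying the extension property of \Cref{shimura main} to each $\pi_i$ then extends the Hecke correspondence to $\integralShimK_{\cO_{E_v}}$. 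Letting $g$ vary yields an action of the unramified prime-to-$p$ Hecke algebra $\mathcal{H}^p$ by finite \'etale correspondences on $\integralShimK_{\cO_{E_v}}$, commuting with Frobenius on the special fiber.

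\textbf{Step 2 (From Hecke to semisimplicity).} For $x \in \integralShimK(\F_q)$, each extended Hecke correspondence produces a Frobenius-equivariant isomorphism $V_{\ell,x} \cong V_{\ell,y}$ for every Hecke-translate $y$ of $x$, so semisimplicity of $_{\et}\frob_{\ell,x}$ is a Hecke-orbit invariant. The plan is then to exploit the rich Hecke-structure to confine $_{\et}\frob_{\ell,x}$ inside a reductive subgroup $H \subset \GL(V_{\ell,x})$ playing the role of a motivic Galois group at $x$, and to use purity of weights in the compatible system $\{_{\et}\V_{\ell'}\}_{\ell' \neq p}$ together with the Jordan decomposition in $H$ to force the unipotent part of $_{\et}\frob_{\ell,x}$ to vanish. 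Concretely, the image of $\mathcal{H}^p$ in $\End(V_{\ell,x})$ commutes with Frobenius, and one would argue that its centralizer---being cut out by Hecke-invariant tensors---is reductive, from which semisimplicity of $_{\et}\frob_{\ell,x}$ follows by a double-centralizer / Jordan decomposition argument.

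\textbf{Main obstacle.} The difficulty is carrying out the second step in the exceptional setting: for Shimura varieties of abelian type one reduces to Tate's semisimplicity theorem for abelian varieties via a Siegel embedding, but for exceptional groups no such reduction is available. The technical input required is that the image of the Hecke algebra in $\End(V_{\ell,x})$ has a reductive commutant; verifying this will likely require a combination of a Chebotarev-density argument across the compatible $\ell$-adic system, density of Hecke orbits in the special fiber (in the spirit of Chai--Oort), and the previously established extension of Hecke correspondences, and is where I expect the bulk of the technical work to reside.
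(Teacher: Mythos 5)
Your Step 1 (extension of prime-to-$p$ Hecke correspondences via the main theorem) matches the paper exactly. But Step 2 is a genuinely different plan, and more importantly it is not carried out — you explicitly flag that the reductivity of the commutant of the Hecke algebra in $\End(V_{\ell,x})$ is the missing input — so as written this is a gap rather than a proof. The reductivity claim is not an obviously easier problem than semisimplicity of Frobenius itself: it is essentially a statement about the size of the Hecke orbit of $x$, in the flavor of Chai--Oort density, and in the exceptional setting there is no moduli description to lean on.

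The paper's actual Step 2 is a short finiteness argument that sidesteps all of this. Write $_{\et}\frob_{\ell,x}=\frob_{\sss}\cdot\frob_{\unip}$ (Jordan decomposition in $G(\Q_\ell)$) and assume $\frob_{\unip}\neq 1$. After enlarging $\F_q$ one arranges $\frob_{\sss},\frob_{\unip}\in K_\ell$. Let $H\subset G$ be the centralizer of $\frob_{\sss}$; by Jacobson--Morozov inside $H$ there is $h\in H(\Q_\ell)$ with $h^{-1}\frob_{\unip} h=\frob_{\unip}^{\ell^2}$. Since $h$ commutes with $\frob_{\sss}$, the lattice $h\V_\ell$ is Frobenius-stable, so the corresponding point $x_1\in\tau_h(x)$ is again $\F_q$-rational, with Frobenius $h^{-1}\frob_x h=\frob_{\sss}\frob_{\unip}^{\ell^2}$. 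Iterating strictly increases the $\ell$-divisibility of $\frob_{\unip}-1$, producing infinitely many pairwise distinct points of $\integralShimK(\F_q)$, a contradiction. The only ingredient beyond Step 1 is the concrete description of the Hecke action on lattices and Frobenii (the paper's Lemma \ref{Lemma Hecke description}). So you correctly identified the role of extended Hecke correspondences, but you should replace the double-centralizer / reductive-commutant plan with this direct divisibility-and-finiteness argument, which requires nothing about the image of the Hecke algebra.
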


This is a special case of a folklore conjecture that posits that the Frobenii at closed points are semisimple for any irreducible local system defined over a smooth $\F_q$-variety. We will then use this to show that every $\overline{\F}_p$-point of $\integralShimK$ is special. As the proof of Theorem \ref{thm:Tatess} strongly uses Hecke correspondences, we will start this section with a discussion on Hecke correspondences in characteristic zero and their extension to $\integralShimK$. We will then reformulate the notion of special points in terms of the Hecke action, and then prove Theorem \ref{thm:Tatess} and use that to deduce that every $\overline{\F}_p$-point of $\integralShimK$ is special. 


\subsubsection{Extension of Hecke correspondences}

Let $\V$ be a lattice in a faithful representation $V$ of $G$, such that $\V\otimes_\Z \hat{Z}$ stabilized by $K$.
Let $K(n)$ denote the kernel $K \ra \Aut(\V/n\V)$, and consider the congruence covers $S_n:=S_{K(\ell^n)}(G,X)$. We focus on the towers $S_{\ell^n}$ for a fixed prime $\ell$. Over the generic fiber this is a (possibly disconnected) Galois cover of $S=S_1$.  By taking the total space of the local systems $\V/\ell^n\V$ we obtain smooth integral models $\cS_n$ with log-smooth compactifications over the base, that are Galois over $\integralShimK=\cS_1$.


We recall some basics about Hecke correspondences. Let $h\in G(\Z[1/\ell])$.  Let $\ell^n$ be such that $h^{-1}K_{\ell}(\ell^n)h\subset K_\ell$. Then there are well defined maps 
$\pi_h:S_{\ell^{n}}\ra S_1$ for large enough $n$.  Let $\pi:S_{\ell^n}\ra S_1$ denote the standard projection. 

Then $\pi^{-1}x$ is corresponds to (a subset of) mod $\ell^n$-equivalence classes of trivializations of $t:\V_x\cong \V $, and $\pi_h(x_1)=\pi_h(x_2)$ for $\pi(x_1)=\pi(x_2)=x$ if and only if
$t_{x_2}t_{x_1}^{-1} \in K_\ell$.

\begin{theorem}\label{thm:heckeextends}
For $h\in G(\Q_\ell)$ the map $\pi_h:S_{\ell^n}\ra S$ - which exists for large enough $n$ -  extends as an etale map over $\Z[1/\ell N]$
\end{theorem}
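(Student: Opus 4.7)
The plan is to reduce the extension of $\pi_h$ to an application of the extension property of Theorem \ref{shimura main}, and then to deduce étaleness by factoring $\pi_h$ appropriately. Concretely, for each finite place $v$ of $\cO_E[1/\ell N]$ lying above a rational prime $p$, I apply the extension property to the smooth $\cO_v$-scheme $\cS_{\ell^n, \cO_v}$ equipped with the generic-fiber map $\pi_h$ to $S_1$. The hypothesis to verify is that $\pi_h^*(_\et\V_{\ell'})$ extends to $\cS_{\ell^n, \cO_v}$ for some $\ell' \neq p$.

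For Step 1, I would choose $\ell' \neq p, \ell$ (possible since $p \neq \ell$). Because $h \in G(\Q_\ell)$ acts trivially on the $\ell'$-adic realization $V \otimes \Q_{\ell'}$, there is a canonical isomorphism of $\Z_{\ell'}$-local systems $\pi_h^*(_\et\V_{\ell'}) \cong \pi^*(_\et\V_{\ell'})$, where $\pi: S_{\ell^n} \to S_1$ is the natural covering projection. The right-hand side extends to $\cS_{\ell^n, \cO_v}$ because $_\et\V_{\ell'}$ extends to $\cS_{1, \cO_v}$ by the set-up of Section \ref{sec: Shimura setup}, and $\pi$ extends to the structural étale cover obtained as the total space of $\V/\ell^n\V$. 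Applying Theorem \ref{shimura main} then yields an extension of $\pi_h$ over each $\cO_v$, and the uniqueness clause of Theorem \ref{shimura main} glues these into a global morphism $\pi_h: \cS_{\ell^n}[1/\ell N] \to \cS_1[1/\ell N]$, possibly after enlarging $N$.

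For Step 3 (étaleness), I would factor $\pi_h$ on the generic fiber as the composition of the right-translation isomorphism $\cdot h: S_{K(\ell^n)} \xrightarrow{\cong} S_{h^{-1}K(\ell^n)h}$ and the natural forgetful cover $S_{h^{-1}K(\ell^n)h} \to S_K$. The Shimura variety $S_{h^{-1}K(\ell^n)h}$ has an integral canonical model for sufficiently large primes (again by Theorem \ref{shimura main}, absorbing the required finite set of primes into $N$), and neatness is preserved by conjugation. The uniqueness part of Theorem \ref{shimura main} applied to the isomorphism $\cdot h$ promotes it to an isomorphism of integral canonical models. The forgetful cover, being a change of level with kernel contained in $K_\ell$ (and therefore a prime-to-$p$ level change at any $p \neq \ell$), extends as a finite étale morphism of integral models by the tower construction of Section \ref{sec: Shimura setup}. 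Consequently $\pi_h$ extends as a composition of étale morphisms.

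The main obstacle I expect is in Step 3: verifying that the intermediate Shimura variety $S_{h^{-1}K(\ell^n)h}$ genuinely fits into the framework of Theorem \ref{shimura main} uniformly in the $v$ of interest, and that the factorization extends integrally. Since all integral models in sight are supposed to be constructed functorially from the tower, the needed compatibilities should follow from the uniqueness statement of Theorem \ref{shimura main} (comparing the model produced by the theorem with the étale cover of $\cS_1$ corresponding to the relevant finite group quotient of $\V/\ell^n\V$). If one wanted to bypass Step 3 altogether, one could instead extend $\pi_{h^{-1}}$ by the same argument and use the symmetry of the Hecke correspondence together with purity of the branch locus to conclude étaleness, but the factorization argument is more direct.
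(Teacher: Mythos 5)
Your proposal is correct and fills in exactly the details that the paper elides when it says the result is ``an immediate consequence of Theorem \ref{shimura main}.'' The key content you supply — that $\pi_h^*(_{\et}\V_{\ell'})\cong\pi^*(_{\et}\V_{\ell'})$ for $\ell'\neq \ell,p$ because $h\in G(\Q_\ell)$ acts trivially on the $\ell'$-component, so the extension property applies; and that étaleness follows by factoring $\pi_h$ through $S_{h^{-1}K(\ell^n)h}$, with the right-translation isomorphism extending by uniqueness of integral canonical models and the forgetful cover extending as an intermediate layer of the finite étale tower built from $\V/\ell^n\V$ — is precisely the verification the authors are implicitly invoking, so this is the same approach, not a new one.

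One small simplification: you do not need to enlarge $N$ to accommodate $S_{h^{-1}K(\ell^n)h}$. Since $K(\ell^m)\subset h^{-1}K(\ell^n)h\subset K$ for $m\gg0$, the forgetful cover $S_{h^{-1}K(\ell^n)h}\to S_K$ is an intermediate quotient of the cover $\cS_m\to\cS_1$ from Section \ref{sec: Shimura setup}, hence extends as a finite étale morphism over the same $\Z[1/\ell N]$; the extension property for $S_{h^{-1}K(\ell^n)h}$ at these places is then inherited from $\cS_1$ by purity, so no further enlargement of the bad set is required.
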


\begin{proof}

This is an immediate consequence of Theorem \ref{shimura main}.

\end{proof}
With this in hand, we have: 

\begin{definition}
    The $\ell$-power Hecke correspondence $\tau_h$ associated to $h$ on $\integralShimK/\cO_{E}[1/\ell N]$  is defined as $\tau_h(x) = \pi_h(\pi^{-1}(x))$. 
\end{definition}

Let $p\nmid \ell N $ be a prime, let $F$ be a $p$-adic local field with $\cO_F \subset F$ its ring of integers. Let $y\in\integralShimK(\cO_F)$. We then have that $\tau_h(y) = \{y_1 \hdots y_n \}$ for some integers $n$, where $y_i \in \integralShimK(\cO_{F_i})$ where $F_i/F$ are finite unramified extensions. There is a canonical Galois action of $\pi_1(\cO_F)$ on the set $\{ y_1 \hdots y_n\}$ induced by the action of $\pi_1(\cO_F)$ on $_{\et}\V_{\ell,x}$. Further, we have that the field of definition $F_i$ of $y_i$ is just the extension corresponding to the stabilizer of $y_i$ in $\pi_1(\cO_F)$. Finally, the Galois action of $\pi_1(\cO_{F_i})$ on $_{\et}\V_{\ell, y_i}$ is induced by the Galois action of $\pi_1(\cO_F)$ on $_{\et}V_{\ell,x} = _{\et}\V_{\ell,x}\otimes \Q_{\ell}$ by considering $_{\et}\V_{\ell,y_i} = {t_i^{-1}} h _{\et}\V_{\ell} \subset _{\et}V_{\ell,x}$ (here, $t_i$ is the choice of mod $\ell^n$-trivialiazation corresponding to the point $y_i$). 
This description also holds with $\cO_F$ replaced with finite fields, as $\pi_1(\cO_F) = \pi_1(\F_q)$ where $\F_q$ is the residue field of $\cO_F$. 

This discussion has the following immediate consequence.
\begin{lemma}\label{Lemma Hecke description}
    Let $y \in \integralShimK(\cO_F)$ (resp. $\integralShimK(\F_q)$). Let $\sigma \in \pi_1(\cO_F)$ (resp. $\pi_1(\F_q)$) denote the geometric Frobenius endomorphism, and let $_{\et}\varphi_{\ell,x}$ denote the automorphism of $_{\et}\V_{\ell,x}$ induced by $\sigma$. Let $h\in G(\Q_{\ell})$ be some element, and let $\tau_h(y) = \{y_1 \hdots y_n \}$. We have $y_i$ is in $\integralShimK(\cO_F)$ (resp. $\integralShimK(\F_q)$) if $\sigma (t_i^{-1} h\V_{\ell}) = t_i^{-1} h \V_{\ell}$. Further, if we were to identify $\V_{\ell}$ with $_{\et}\V_{\ell,y}$ using $t_i$, we obtain the identification $h\V_{\ell} \cong _{\et}\V_{\ell,y_i}$, with the Frobenius at $y_i$ acting as $h^{-1} _{\et}\varphi_{\ell,x} h$. 
\end{lemma}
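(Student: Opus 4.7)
The plan is essentially to unwind the description of the Hecke correspondence $\tau_h$ in terms of mod $\ell^n$ trivializations, which is already spelled out in the paragraph preceding the lemma. The statement itself is a direct translation of that description into the Galois-theoretic language adapted to the integral situation, so the main work is organizational: keeping track of basepoints and of how the lattice $t_i^{-1}h\V_\ell\subset{}_\et V_{\ell,y}$ transforms under $\sigma$ and under the identification via $t_i$.

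First, I would recall the set-theoretic description: the fiber $\pi^{-1}(y)$ over any point is identified with the set of $K_\ell(\ell^n)$-equivalence classes of trivializations $t:{}_\et\V_{\ell,y}\xrightarrow{\cong}\V_\ell$, and two lifts $y_1,y_2$ of $y$ satisfy $\pi_h(y_1)=\pi_h(y_2)$ iff $t_{y_2}t_{y_1}^{-1}\in K_\ell$. Consequently, the elements of $\tau_h(y)$ correspond bijectively to $K_\ell$-cosets of the form $K_\ell\cdot h\cdot t_i$, and since $K_\ell=\mathcal{G}(\Z_\ell)$ is precisely the stabilizer of $\V_\ell\subset V_\ell$, such a coset is determined by the sub-lattice $t_i^{-1}h\V_\ell\subset{}_\et V_{\ell,y}$.

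Second, since the induced action of $\pi_1(\cO_F)$ (resp.\ $\pi_1(\F_q)$) on $\tau_h(y)$ arises from its action on ${}_\et V_{\ell,y}$ via trivializations, a given lift $y_i$ lies in $\integralShimK(\cO_F)$ (resp.\ $\integralShimK(\F_q)$) precisely when the associated $K_\ell$-coset is $\sigma$-fixed. Using the lattice description above, this is equivalent to $\sigma(t_i^{-1}h\V_\ell)=t_i^{-1}h\V_\ell$, which is the first assertion.

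For the Frobenius computation, I would transport everything through the chosen $t_i$. By construction ${}_\et\V_{\ell,y_i}$ is identified with $t_i^{-1}h\V_\ell\subset{}_\et V_{\ell,y}$, so in the frame given by $t_i$ (which identifies $\V_\ell$ with ${}_\et\V_{\ell,y}$) it corresponds to $h\V_\ell\subset V_\ell$. In the same frame $\sigma$ acts on $V_\ell$ as ${}_\et\varphi_{\ell,x}$, so conjugating by the identification $h\V_\ell\cong{}_\et\V_{\ell,y_i}$ (i.e.\ multiplication by $h$) yields geometric Frobenius on ${}_\et\V_{\ell,y_i}$ equal to $h^{-1}\,{}_\et\varphi_{\ell,x}\,h$, as claimed. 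The only real subtlety is checking that the convention for how $h$ acts on trivializations matches the convention used to define ${}_\et\V_{\ell,y_i}$ as a sub-Galois-module of ${}_\et V_{\ell,y}$; this is a direct verification from the definition of $\pi_h$ and poses no serious obstacle.
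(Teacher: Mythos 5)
Your proof is correct and follows essentially the same approach as the paper: the paper gives no separate proof, stating the lemma as an ``immediate consequence'' of the preceding discussion of $\tau_h$ in terms of mod~$\ell^n$ trivializations, Galois action on lattices in $_{\et}V_{\ell,y}$, and the identification $_{\et}\V_{\ell,y_i}=t_i^{-1}h\V_\ell$. You are simply carrying out the bookkeeping the paper treats as immediate, and it checks out.
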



%

\subsection{Special points}

\begin{proposition}\label{prop:charecterization of special points}
    Let $x \in \ShimK(\bbC)$. The following are equivalent: 
    \begin{enumerate}
        \item $x$ is a special point.

        \item For infinitely many primes $\ell$ at which $G$ is split, there exists a finite set $S:=\{x = x_1, \hdots, x_n \} \subset \ShimK(\bbC)$ and a maximal split torus $T\subset G$ defined over $\Q_{\ell}$ such that $\tau_h(x) \cap S \neq \emptyset$ for every $h\in T(\Q_{\ell})$.
    \end{enumerate} 
\end{proposition}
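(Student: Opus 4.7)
The plan is to argue the two implications separately, using Lemma~\ref{Lemma Hecke description} to describe the Hecke correspondences explicitly and the Cartan decomposition of $G(\Q_\ell)$ to convert the hypothesis into a rank condition on a lattice of cocharacters.

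\smallskip
\emph{For (1) $\Rightarrow$ (2).} Let $M$ denote the Mumford--Tate torus of $x$ and fix a maximal $\Q$-torus $T^0 \subset G$ containing $M$. The splitting field of $T^0$ is a finite extension of $\Q$, so by Chebotarev the set of primes $\ell$ at which both $T^0$ and $G$ split has positive density; at any such prime I set $T := T^0_{\Q_\ell}$, which is automatically a maximal split torus of $G_{\Q_\ell}$. I take $S$ to be the image in $\ShimK(\bbC)$ of the (finite) Shimura variety of the $\Q$-torus $T^0$; this is a finite set of special points by class number finiteness, and it contains $x$ because $h_0$ factors through $M \subset T^0$. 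For any $h \in T(\Q_\ell) = T^0(\Q_\ell)$, choosing the level-structure modification $k_1 = 1 \in T^0(\Z_\ell) \subset K_\ell$ shows that the point $[h_0, g\cdot h] \in \tau_h(x)$ satisfies $g\cdot h \in T^0(\A_f)\cdot K$ and hence lies in $S$, so $\tau_h(x) \cap S \neq \emptyset$.

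\smallskip
\emph{For (2) $\Rightarrow$ (1).} Fix a prime $\ell$ at which (2) holds and write $x = [h_0, g]$. Unpacking Lemma~\ref{Lemma Hecke description}, for each $y \in S$ the set $T_y := \{h \in T(\Q_\ell): y \in \tau_h(x)\}$ is the $T(\Q_\ell)$-intersection of a union of double cosets $K_\ell \gamma K_\ell$ indexed by a set $\Gamma_y \subset G(\Q)$ of elements $\gamma$ that conjugate $h_0$ to the Hodge cocharacter of $y$ and are compatible with the prime-to-$\ell$ level structure; after translating by a base-point in $\Gamma_y$, what remains is an $S$-arithmetic subset of $Z_G(h_0)(\Q)$. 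The polarization axiom for the Shimura datum forces $Z_G(h_0)(\R)$ to be compact modulo $T^0(\R)$, where $T^0 := Z(Z_G(h_0))^\circ$ is the connected $\Q$-torus at the center of $Z_G(h_0)$; consequently $\Gamma_y$ lies in finitely many $T^0(\Q)$-cosets. Using the Cartan decomposition of $G(\Q_\ell)$, which is available since $G$ is split over $\Q_\ell$, the image of $T_y$ in $X_*(T)/W$ is then contained in finitely many translates of the $W$-saturation of $X_*(T^0_{\Q_\ell}) \subset X_*(T)$. Since (2) asserts $T(\Q_\ell) = \bigcup_{y\in S} T_y$, the images cover $X_*(T)/W$; pigeonhole together with the observation that a finite union of cosets of a sublattice exhausts $X_*(T)$ only if the sublattice has finite index forces $X_*(T^0_{\Q_\ell})$ to be of full rank, i.e., $T^0$ is a maximal torus of $G$. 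A connected reductive group whose connected center is a maximal torus must equal its center, so $Z_G(h_0) = T^0$ is itself a torus, and hence the Mumford--Tate group $M_x \subset Z_G(h_0)$ is a connected subgroup of a torus and is itself a torus, so $x$ is special.

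\smallskip
The main technical obstacle is converting the set-theoretic covering $T(\Q_\ell) = \bigcup_{y\in S} T_y$ into the structural assertion that $X_*(T^0_{\Q_\ell})$ has full rank in $X_*(T)$. The sets $T(\Q_\ell) \cap K_\ell \gamma K_\ell$ decompose into finitely many $T(\Z_\ell)$-cosets related by Weyl conjugation, and the density/pigeonhole step must track this Weyl action carefully. The split assumption on $G$ over $\Q_\ell$ is essential for making this combinatorics manageable via the Cartan decomposition.
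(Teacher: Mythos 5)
Your direction (1) $\Rightarrow$ (2) is essentially the paper's argument, fleshed out with the Chebotarev observation and the class-number finiteness of torus Shimura varieties; that is fine.

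The direction (2) $\Rightarrow$ (1) has a genuine gap at the step ``consequently $\Gamma_y$ lies in finitely many $T^0(\Q)$-cosets,'' and the gap propagates to a conclusion that is in fact false in general. Write $H:=Z_G(h_0)$ and let $T^0=Z(H)^\circ$ as in your argument. The compactness of $H(\R)/T^0(\R)$ controls only the archimedean component of $\Gamma_y$. But $\Gamma_y$ is (a coset of) an $\{\ell,\infty\}$-arithmetic subgroup of $H(\Q)$: the prime-to-$\ell$ level structure imposes a compact-open condition at each $v\neq\ell$, there is \emph{no} bound at $\ell$, and your conclusion would require $(H/T^0)(\Q_\ell)$ to be compact as well. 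This fails precisely when it matters: by hypothesis $G$ is split at $\ell$, and for the infinitely many relevant $\ell$ we may also take $H$ split at $\ell$, so $H^{\der}$ is $\Q_\ell$-split hence $\Q_\ell$-isotropic whenever it is nontrivial. In that case $\Gamma_y\cap H^{\der}(\Q)$ is an infinite $\{\ell,\infty\}$-arithmetic subgroup meeting $T^0(\Q)$ in a finite group, so $\Gamma_y$ has infinitely many $T^0(\Q)$-cosets. As a result, the images $K_\ell\delta_\ell K_\ell$ for $\delta\in\Gamma_y$ are not confined to finitely many translates of the $W$-saturation of $X_*(T^0_{\Q_\ell})$, and the pigeonhole step collapses.

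Moreover, the conclusion you are aiming for is strictly stronger than what is true: you would deduce that $T^0$ is a maximal torus, hence that $Z_G(h_0)$ itself is a torus. This is false for many special points. For instance, on $\GSp_4$ take $x$ corresponding to the square $E\times E$ of a CM elliptic curve; then $x$ is special, but $Z_G(h_0)$ is a form of $\GU(2)$, not a torus, and $Z(Z_G(h_0))^\circ$ has strictly smaller rank than $G$. The correct (and sufficient) statement, which is what the paper proves, is that $H=Z_G(h_0)$ has full rank. The paper gets this by working with a maximal $\Q_\ell$-split torus $Z'\subset H$ (not the center), using $\bigcup_i c_iH(\Q_\ell)\twoheadrightarrow K_\ell\backslash G(\Q_\ell)/K_\ell$ and the Cartan decomposition to show $X_*(Z')$ must be of full rank; then $h_0$ factors through $Z(H)^\circ$, which lies in some maximal $\Q$-torus of $H$, and since $H$ has full rank such a torus is already maximal in $G$. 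To repair your argument you should replace $T^0$ by the maximal split torus of $H$ over $\Q_\ell$ in the lattice computation; the compactness at $\infty$ is then used only to say $H$ is reductive, not to bound $\Gamma_y$ modulo the center.
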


\begin{proof}
Assume 1. Then there is a $\Q$-torus $T\subset G$ and a cocharacter $h:\bbS^1\ra T_\R$ which lands in $X$. By enlarging $T$ assume that $T$ is maximal. Then conjugating by elements of $T$ does not change $h$, and the statement follows.

Now assume 2. Let $y=y_1,\dots,y_n \in G(\A)/K$ be lifts of the $x_i$. Without loss of generality we may assume that for every $i=1,\dots,n$ there is an element $c_i\in G(\Q)$ such that $c_i\cdot y=y_i$. Moreover, we let $z_i:=y_{i,\R}\in X.$ 

Now let $H\subset G_\Q$ denote the largest $\Q$-subgroup fixing $z=z_1$. Note that $H(\R)$ is compact and therefore $H$ is reductive. Let $\ell$ be a sufficiently large prime such that $G,H$ are both split, $K_\ell$ and $K_\ell\cap H(\Q_\ell)$ are hyperspecial, the torus $T$ exists as in assumption 2, and all the $y_i$ have component $1\cdot K_\ell$ at $\ell$. Then $\forall h\in T(\Q_\ell), \tau_{h^{-1}}(y)$ consists of the union of $h_i\cdot y$ where the multiplication takes place in the $\ell$'th co-ordinate only, and $h_i$ ranges over elements of $K_\ell h^{-1} K_\ell$. Therefore, there is an element $\gamma\in G(\Q)$ such that 

\begin{itemize}
    \item $\gamma_\ell\in K_\ell h K_\ell$
    \item $\gamma \cdot z \in \{z_1,\dots,z_n\}$
\end{itemize}

The second condition is that for some $i, \gamma\in c_i\cdot H$, and thus we see that
$$\bigcup_{i=1}^m c_i\cdot H(\Q_\ell) \twoheadrightarrow K_\ell\backslash G(\Q_\ell)/K_\ell.$$

Let $Z\subset G$ be a maximal, split $\Q_\ell$-torus such that $Z':=Z\cap H$ is also a maximal, split $\Q_\ell$ torus of $H$. Let $W,W'     $ be the Weyl groups of $(G,Z), (H,Z')$. By \cite[Pg. 51] {tits}, we have 
$$K_\ell\backslash G(\Q_\ell)/K_\ell\cong X_*(Z)/W$$ and similarly
$$ (K_\ell\cap H)\backslash H(\Q_\ell)/(K_\ell\cap H)\cong X_*(Z')/W'.$$

Let ${P_H}^+\subset P^+$ be the coroots in a positive Weyl Chamber for $Z',Z$ respectively. Let us write $\bigcup_{i=1}^m K_{\ell}  c_i C_{\ell}$ as %
$\bigcup_{\lambda\in S} K_{\ell}\lambda(\ell)K_{\ell} $ for $S\subset P^+$. Then 
\begin{align*}
G(\Q_{\ell}) &= \bigcup_{i=1}^m K_\ell c_i\cdot H(\Q_\ell)K_\ell \subset \left(\bigcup_{i=1}^m K_\ell c_iK_\ell\right) \cdot K_\ell H(\Q_\ell)K_\ell\\ &\subset \bigcup_{\lambda\in S} K_\ell\lambda(\ell)K_\ell \bigcup_{\lambda'\in P_H^+} K_\ell\lambda'(\ell)K_\ell.\\
\end{align*}
Note that the containments must in fact be equalities.
By \cite[Pg. 148]{tits} we see that this latter product contains $\mu(\ell)$ for $\mu\in P^+$ only if $\mu$ is dominated by some root in $S+P_H^+$. But $S$ is finite, $P_H^+$ is contained in a sublattice of $X_*(Z)$, so this is only possible if $Z'=Z$. 

 Thus $H$ has the same rank as $G$. Thus $z$ has a stabilizer containing a maximal $\Q$-torus, and therefore $x$ is a special point, as desired.

\end{proof}

Let $R$ denote a ring of characteristic $p$. In the case of Shimura varieties of Hodge type, one defines an $R$-valued point to be special if the abelian variety associated to the point has complex multiplication. Guided by Proposition \ref{prop:charecterization of special points}, we make the following definition. 
\begin{definition}\label{def:special points}
    Let $R$ be a ring of characteristic $p$. We say that $x\in \integralShimK(R)$ is special if there exists a finite set $x = x_1, x_2, \hdots x_n \in \integralShimK$, and split maximal tori $T_{\ell}\subset G_{\Q_\ell}$ for infinitely many primes $\ell$ such that $\tau_h(x) \cap \{x_1, \hdots, x_n \} \neq \emptyset$ for every $h\in T(\Q_{\ell})$. 
\end{definition}

We will end this section by proving Theorem \ref{thm:Tatess} and using it to prove that every $\overline{\F}_p$-valued point of $\integralShimK$ is special as per our definition.

\subsection{Tate semisimplicity}

We will now prove Theorem \ref{thm:Tatess}.

\begin{proof}
    
We first fix a $G$-equivariant identification $_{\et}\V_{\ell,x} \cong \V_{\ell}$. Suppose that $_{\et}\frob_{\ell,x}$ doesn't act semisimply -- let $(_{\et}\frob_{x,\ell}) = \frob_{\sss}\cdot\frob_{\unip}$. Note that the Jordan decomposition implies that $\frob_{\sss},\frob_{\unip}$ are elements of $G(\Q_\ell)$. A sufficiently $\ell$-divisible power of $\frob_{\unip}$ will preserve $\V_{\ell,x}$ and hence by increasing the field $\F_q$ we may and do assume $\frob_{\sss}, \frob_{\unip}$ lies in $K_{\ell}$. 

We will find hecke operators in $G(\Q_\ell)$ which preserve $\F_q$-rationality, and therefore which eventually give us that our point is stable under that appropriate operator, which we will use to deduce semisimplicity. In order to preserve $\F_q$-rationality, we pick our Hecke operator as follows. Let $H\subset G$ denote the centralizer of $\frob_{\sss}$ -- it contains $\frob_x$ and $\frob_{\unip}$. Note that $H$ is semisimple by \cite{Conrad:Reductive Schemes}. Using the Jacobson-Morozov lemma, we can find an element $h\in H(\Q_{\ell})$ such that $h^{-1}\frob_{\unip} h = \frob_{\unip}^{\ell^2}$. Let $\tau_h$ denote the corresponding Hecke correspondence. One of the points of $\tau_h(x)$ (which we call $x_1$) will correspond to the lattice $h \V_{\ell}\subset V_{\ell}\otimes\Q_\ell $, and as this lattice is stable under the action of $\frob$, the point will also be defined over $\F_q$ by Lemma \ref{Lemma Hecke description}. The action of $\frob_{x_1}$ is just $\frob_x$ on the lattice $h\V_{\ell} \subset V_{\ell} $, again by Lemma \ref{Lemma Hecke description}. Therefore, we have that $\frob_{x_1} = h^{-1}\frob_x h  =  \frob_{\sss}\frob_{\unip}^{\ell^2} $ by construction. Therefore, $\frob_{x_1,\unip} - 1$ is strictly more divisible by $\ell$ then $\frob_{x,\unip}-1$, and therefore $x\neq x_1$.

Iterating this process over and over, we get a sequence of mutually distinct $k$-rational points. This is contradiction arising from our assumption that $\frob_{\unip}$ is nontrivial.

\end{proof}

Theorem \ref{thm:Tatess} and its proof has the following immediate consequence. 
\begin{corollary}\label{cor:everypointCM}
Every $x\in \integralShimK(\overline{\F}_p)$ is special.
\end{corollary}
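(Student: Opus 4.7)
After enlarging the residue field, I may assume $x\in\integralShimK(\F_q)$ for a finite field $\F_q$. Since $\integralShimK$ is of finite type over $\cO_{v}$, the set $S:=\integralShimK(\F_q)$ is finite, and my plan is to verify \Cref{def:special points} with this choice of $S$. Concretely, for infinitely many primes $\ell\nmid pN$ I need to produce a split maximal torus $T_\ell\subset G_{\Q_\ell}$ such that $\tau_h(x)\cap S\neq\emptyset$ for every $h\in T_\ell(\Q_\ell)$.

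The main input is \Cref{thm:Tatess}: for each such $\ell$, the Frobenius $_{\et}\frob_{\ell,x}\in G(\Z_\ell)$ acts semisimply on $V_\ell$, so by Steinberg's theorem the connected centralizer $H_\ell:=Z_{G_{\Q_\ell}}({}_{\et}\frob_{\ell,x})^0$ is a connected reductive $\Q_\ell$-subgroup of $G_{\Q_\ell}$ of the same absolute rank as $G$ and containing $_{\et}\frob_{\ell,x}$ in its center. For any $h\in H_\ell(\Q_\ell)$, the identity $h^{-1}{}_{\et}\frob_{\ell,x}h={}_{\et}\frob_{\ell,x}\in K_\ell$ together with the analysis in \Cref{Lemma Hecke description} shows that the point of $\tau_h(x)$ attached to the lattice $h\V_\ell$ is $\F_q$-rational, so $\tau_h(x)\cap S\neq\emptyset$. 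Since any maximal torus of $H_\ell$ is automatically a maximal torus of $G_{\Q_\ell}$ by rank considerations, it therefore suffices to produce a $\Q_\ell$-split maximal torus $T_\ell\subset H_\ell$ for infinitely many primes $\ell$.

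The step I expect to be the main obstacle is precisely this last one. A priori $H_\ell$ need not be $\Q_\ell$-split even when $G_{\Q_\ell}$ is, and $_{\et}\frob_{\ell,x}$ lies in a $\Q_\ell$-split maximal torus of $G_{\Q_\ell}$ exactly when its eigenvalues on $V_\ell$ lie in $\Q_\ell$. To arrange this for infinitely many $\ell$ one runs a Chebotarev-type argument: if the characteristic polynomials of $_{\et}\frob_{\ell,x}$ on $V_\ell$ are compatible across $\ell$ and take their coefficients in a fixed number field $K$ (as is expected from a weakly compatible system of Galois representations attached to $\V$), then for the positive-density set of primes $\ell$ splitting completely in $K$ the eigenvalues land in $\Q_\ell$, forcing $_{\et}\frob_{\ell,x}$ into a $\Q_\ell$-split maximal torus which then lies in $H_\ell$. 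The delicacy is in establishing such $\ell$-independence in the generality of exceptional Shimura varieties, where no motivic interpretation is available; once this is in hand, the argument of the previous paragraph concludes the proof.
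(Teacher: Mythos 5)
Your plan matches the paper's proof quite closely, and you correctly identify the one serious obstacle: establishing that some power of the $\ell$-adic Frobenius $_{\ell}\phi_x$ lies in a $\Q_\ell$-split maximal torus of $G_{\Q_\ell}$ for infinitely many primes $\ell$. The first half of your argument---using \Cref{thm:Tatess} to get semisimplicity, then observing that the centralizer $H_\ell$ contains a maximal torus of $G$ and that $\tau_h(x)\cap\integralShimK(\F_q)\neq\emptyset$ for all $h\in H_\ell(\Q_\ell)$ by \Cref{Lemma Hecke description}---is precisely how the paper proceeds after first replacing $(G,X)$ by its adjoint Shimura datum (a reduction you omit but which is harmless here).

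The point you flag as "the delicacy" is where your proposal stops short, and it is indeed the nontrivial input. The paper closes the gap by invoking the main theorem of Klevdal--Patrikis (\cite{StefanChristian}): the $G$-conjugacy class of $_{\ell}\phi_x$ is $\Q$-rational and independent of $\ell$. This is exactly the compatibility statement you guessed would be needed, and it is available even for exceptional Shimura varieties without a motivic interpretation---you should not hesitate to use it. With that in hand, one chooses $\ell$ so that $G_{\Q_\ell}$ is split and the characteristic polynomial of $_{\ell}\phi_x$ on a faithful representation splits over $\Q_\ell$ (Chebotarev applied to the $\Q$-rational coefficients, as you anticipated). One further technical step you would need, which is easy to overlook: replace $_{\ell}\phi_x$ by a power so that no ratio of eigenvalues is a nontrivial root of unity. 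This ensures the Zariski closure of the cyclic group it generates is a connected, hence split, torus; then the standard fact that in a quasi-split (a fortiori split) group every split torus lies inside a maximal split torus finishes the argument. Your version, which tries to locate a split maximal torus directly inside $H_\ell$, is not wrong but is slightly more awkward; the closure-of-the-cyclic-group route is cleaner and is what the paper does.
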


\begin{proof}
    It suffices to prove that the image of $x$ in the adjoint Shimura variety is special, and so we suppose that $G$ is an adjoint group. The point $x$ is defined over some $\F_q$.  Let $H$ denote the centralizer of $_{\ell}\phi_x$, where $_{\ell}\phi_x$ is the $\F_q$-Frobenius at $x$ acting on $_{\et,\ell}\V_{x}$. The argument in the proof of Theorem \ref{thm:Tatess} yields that $\tau_h(x) \cap \integralShimK(\F_q) \neq \emptyset$ for every $h\in H(\Q_{\ell})$. By Theorem \ref{thm:Tatess}, we have that $H$ must contain a maximal torus of $ G$ defined over $\Q_{\ell}$. Therefore, if we were to set $S = \{x = x_1, \hdots x_n \} = \integralShimK(\F_q)$ and $T$ to be some maximal torus of $H$ defined over $\Q_{\ell}$, we would have that $\tau_h(x)\cap S \neq \emptyset$ for every $h\in T(\Q_{\ell})$. Therefore, it suffices to prove that there exist infinitely many $\ell$ such that some power of $_{\ell}\phi_x$ is contained in a split maximal $\Q_{\ell}$-torus of $G$. 

    By the main theorem of \cite{StefanChristian}, the conjugacy class of $_{\ell}\phi_x$ is $\Q$-rational, and does not depend on $\ell$. Therefore, we may choose $\ell$ such that $G_{\Q_{\ell}}$ is split, and such that the characteristic polynomial of $_{\ell}{\phi_x}$ on some faithful representation of $G$ splits over $\Q_{\ell}$. By replacing $_\ell \phi_x$ by a power (i.e. replacing $q$ by a power), we may also assume that the roots of this characteristic polynomial do not differ by a non-trivial root of unity. It follows that $_{\ell}\phi_x$ is contained in split torus. However, in a quasi-split (and therefore split) group, every split torus is contained in a maximal split torus.

    \end{proof}

\section{Complete local rings for large primes}\label{sec: complete local rings for large primes}
Let notation be as in Section \ref{sec: Shimura setup}, and let $p\nmid N$ denote a prime. As we will exclusively work over a $p$-adic field, we let $G$ denote $\mathcal{G}_{\Z_p}$. Throughout this section, the level structure at $p$ is $G(\Z_p)$. 

The goal of this section is to fully describe the Fontaine-Laffaille data restricted to complete local rings of $\ShimK$ at $\overline{\F}_p$-points $x$, and to describe the Fontaine-Laffaille data at $W$-valued points of $\ShimK$ in terms of picking a filtration on an appropriate $F$-crystal. For such a point $x$, we let $\integralShimK^x$ denote the formal neighbourhood of $\integralShimK$ at $x$. Let $R^x$ denote the complete local ring at $x$, and let $_{\cris}\V^x$ denote the restriction of $_{\cris}\V$ to $\integralShimK^x$. Similarly, we let $_{\cris}\V_{x}$ denote the restriction of $_{\cris}\V$ to any $W$-point of $\integralShimK$. Let $_{\cris}\V_x$ denote the induced $F$-crystal at $x\in \integralShimK(\F)$. We will treat $_{\cris}\V_x$ as a free $W$-module equipped with a semi-linear Frobenius endomorphism. We let $\varphi_u$ denote Frobenius on $_{\cris}\V^x$, and $\varphi_{\tilde{x}}$ denote Frobenius on $_{\cris}\V_{\tilde{x}}$.
\subsection{The Fontaine-Laffaille module on complete local rings}\label{sec: complete local rings}
    \subsubsection{F-crystals with $G$-structure and $G$-split filtrations.}\label{subsection G-structure}
We refer to \cite[Section 2]{ShankarZhou} for the notion of an $F$-crystal with $G$ structure. 
 Recall that we have tensors $s_{\alpha}\in \V_{\frac{1}{N}}^{\otimes}$. These tensors induce global sections of the $p$-adic etale local system $_{\et}\V_p^{\otimes}$ (defined over $\integralShimK/\Oo_E[\frac{1}{pN}]$) -- we will refer to them as etale tensors $_{\et}s_{\alpha,p} \in _{\et}\V_p^{\otimes}$. 

 Via the Riemann-Hilbert correspondence, the tensors $s_{\alpha}$ give rise to flat sections $_{\dR}s_{\alpha} \in _{\dR}V^{\otimes}$. We have the following proposition:
 \begin{proposition}\label{prop: dR tensors integral and defined over reflex }
     The tensors $_{\dR}s_{\alpha}$ descend to integral tensors in $_{\dR}\V^{\otimes}$ over $\cO_E[\frac{1}{N}]$.
 \end{proposition}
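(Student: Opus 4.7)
The strategy is to leverage the $\mathcal{G}$-invariance of the tensors $s_\alpha \in \V[1/N]^\otimes$, which is built into the definition of $\mathcal{G}$ as their simultaneous pointwise stabilizer. Because $s_\alpha$ is $\mathcal{G}$-invariant, the associated bundle formation applied to the principal $\mathcal{G}$-bundle $\mathscr{E}$ over $\integralShimK/\cO_E[1/N]$ produces a canonical global section $\tilde{s}_\alpha$ of $\mathscr{E} \times_{\mathcal{G}} \V[1/N]^{\otimes} = (_{\dR}\V)^{\otimes}$: locally, relative to any trivialization of $\mathscr{E}$, this section corresponds to the constant function equal to $s_\alpha$, and $\mathcal{G}$-invariance guarantees that these local descriptions glue under changes of trivialization. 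Since the induced connection on an associated bundle has the property that constant sections with respect to flat trivializations are flat, and $\tilde{s}_\alpha$ is such a constant section, it is automatically flat for the connection on $_{\dR}\V$. Thus $\tilde{s}_\alpha$ is a flat integral global tensor of $(_{\dR}\V)^{\otimes}$ over $\cO_E[1/N]$.

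To conclude, I would verify that $\tilde{s}_\alpha$ agrees with $_{\dR}s_\alpha$ after complex base change. Both are flat sections of the same flat vector bundle $(_{\dR}V)^{\otimes}$, so they are determined by their values at a single point of each connected component of $\ShimK(\bbC)$. At a base point $x$, any trivialization $\mathcal{E}_x \cong G_{\bbC}$ induces an identification $(_{\dR}V)^{\otimes}_x \cong V^{\otimes}_{\bbC}$, under which $\tilde{s}_\alpha(x)$ is visibly $s_\alpha$. On the Betti side, the monodromy of $_{B}V$ factors through a congruence subgroup of $G(\Q)$, so the $G$-invariant $s_\alpha$ is monodromy-invariant and yields a flat Betti section $_{B}s_\alpha$; Riemann--Hilbert, normalized via the same trivialization at $x$, identifies $_{B}s_\alpha$ with a flat section of $_{\dR}V^{\otimes}$ whose value at $x$ is again $s_\alpha$. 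By construction, this flat section is exactly $_{\dR}s_\alpha$. Hence $\tilde{s}_\alpha$ and $_{\dR}s_\alpha$ coincide over $\bbC$, so $_{\dR}s_\alpha$ is the complex restriction of the global tensor $\tilde{s}_\alpha$ defined over $\cO_E[1/N]$, giving the desired integral descent.

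The principal subtlety lies in the compatibility between the algebraic construction of $_{\dR}V = \mathcal{E} \times_G V$ on the canonical model and the analytic flat bundle arising from the monodromy representation underlying $V$, including the matching of flat structures on each side. This compatibility is classical for the canonical model of the automorphic bundle on a Shimura variety (going back to Deligne's construction), and the extension to the integral level is furnished by the hypothesis that $\mathscr{E}/\integralShimK$ is a flat principal $\mathcal{G}$-bundle extending $\mathcal{E}$; with this input in hand, the rest of the argument is essentially formal.
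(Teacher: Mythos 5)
Your proposal is correct and follows essentially the same route as the paper: both arguments use the integral flat principal $\mathcal{G}$-bundle $\mathscr{E}$ over $\cO_E[1/N]$ together with the $\mathcal{G}$-invariance of $s_\alpha$ to produce canonical flat sections of $\mathscr{E}\times_{\mathcal{G}}\V^\otimes = {_{\dR}\V}^\otimes$, and then identify these with ${_{\dR}s_\alpha}$. The paper's proof is terser (it simply asserts the identification of the induced sections with ${_{\dR}s_\alpha}$), while you spell out the comparison over $\bbC$ via flatness plus a pointwise check through Riemann--Hilbert, which is exactly the implicit content of that assertion.
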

 \begin{proof}
     This follows directly from the existence of the integral model $\mathscr{E}/\cO_E[\frac{1}{N}]$ of the flat principal $\mathcal{G}$-bundle. Then we have the isomorphism \begin{equation}\label{eqprincipalGbdle}
         _{\dR}\V^{\otimes} \simeq \mathscr{E}\times_{\mathcal{G}} \V ^{\otimes}
     \end{equation} over $\cO_E[\frac{1}{N}]$. Using this isomorphism, the tenors $s_{\alpha}$ induce canonical flat sections of $_{\dR}\V^{\otimes}$. These are precisely the flat sections $_{\dR}s_{\alpha}$, and the proposition follows. 
     
 \end{proof}

Let $v$ denote some prime of $E$ above $p$, and consider the $p$-adic etale local system $_{\et}\V_p / \ShimK/E_{v}$. As in \cite[Section 1.3.3]{Kisinintegral}, we may apply the Fontaine-Laffaille functor to the tensors $_{\et}s_{\alpha,p}$ to obtain tensors $_{\cris}s_{\alpha} \in {_{\cris} \V^{\otimes}}$, which by construction are fixed by Frobenius and contained in $\Fil^0$. 

Recall that the integral structures on $_{\dR}V/\ShimK_{E_v}$ induced by $_{\cris}\V$ and $_{\dR}\V$ are the same. We have the following proposition: 
\begin{proposition}\label{prop: compatibility de rham crys tensors}
    Under the identifications above, we have $_{\cris}s_{\alpha} = {_{\dR}s_{\alpha}}$.
\end{proposition}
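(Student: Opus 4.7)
The strategy is to show that both tensors are flat global sections of $_{\dR}V^\otimes$ over $\ShimK_{E_v}$ and then verify their equality at a single geometric point using compatibility between the principal $\mathcal{G}$-bundle structure and the Fontaine--Laffaille/Liu--Zhu correspondence. Since a flat section is determined by its germ at any point on a connected base, a pointwise verification suffices.

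First, I would establish that both sections are flat. Flatness of $_{\dR}s_\alpha$ is immediate from construction: via \eqref{eqprincipalGbdle}, $_{\dR}s_\alpha$ is the section of $_{\dR}\V^\otimes$ induced by the $\mathcal{G}$-invariant tensor $s_\alpha \in \V^\otimes$ under the flat principal bundle $\mathscr{E}$, so it is horizontal for the canonical connection. Flatness of $_{\cris}s_\alpha$ requires a bit more: by construction, $_{\cris}s_\alpha$ is a Frobenius-fixed element of $\Fil^0({_{\cris}\V^\otimes})$ arising from the Galois-invariant étale tensor $_{\et}s_{\alpha,p}$ via the Fontaine--Laffaille functor. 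Applying \Cref{lem:FLtoLZ} (on generic fibers), the underlying filtered flat bundle of $_{\cris}\V$ is canonically identified with $LZ(_{\et}\V_p^{\vee})$, and under this identification Galois-invariant sections of the étale local system correspond to flat sections of the filtered flat bundle. In particular, the flat sections of $_{\dR}V^\otimes$ and the Galois-invariant sections of $_{\et}\V_p^\otimes$ are in bijection, and $_{\cris}s_\alpha$ corresponds to $_{\et}s_{\alpha,p}$, so is flat.

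Next, I would pick a geometric point $\tilde{x} \in \ShimK(\overline{E_v})$ and reduce the equality of flat sections to the equality of their values at $\tilde{x}$. The principal $\mathcal{G}$-bundle $\mathscr{E}$ trivializes at $\tilde{x}$, giving a $\mathcal{G}$-equivariant identification $_{\dR}V|_{\tilde{x}}^\otimes \cong \V^\otimes \otimes \overline{E_v}$ under which $_{\dR}s_\alpha|_{\tilde{x}}$ is sent to $s_\alpha \otimes 1$; this is the definition of $_{\dR}s_\alpha$. On the other hand, the étale local system $_{\et}\V_p$ admits an analogous principal $\mathcal{G}$-bundle structure (from the $\mathcal{G}(\Z_p)$-level structure at $p$), under which $_{\et}s_{\alpha,p}|_{\tilde{x}}$ is identified with $s_\alpha \otimes 1$. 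Through the flat/Galois-invariant correspondence of the previous paragraph, $_{\cris}s_\alpha|_{\tilde{x}}$ is determined by the image of $_{\et}s_{\alpha,p}|_{\tilde{x}}$; one must check that this image matches $s_\alpha \otimes 1$ under the de Rham identification. This is precisely the $\mathcal{G}$-equivariance of $LZ$ on the de Rham-étale identification: since both the étale and de Rham realizations of the $\mathcal{G}$-structure are pulled back from the same representation $V$ of $\mathcal{G}$, the isomorphism of filtered flat bundles furnished by Liu--Zhu intertwines the two $\mathcal{G}$-actions, hence carries $_{\et}s_{\alpha,p}|_{\tilde{x}}$ to $_{\dR}s_\alpha|_{\tilde{x}}$. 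This gives $_{\cris}s_\alpha|_{\tilde{x}} = {_{\dR}s_\alpha}|_{\tilde{x}}$, and combined with flatness, $_{\cris}s_\alpha = {_{\dR}s_\alpha}$ globally.

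The main obstacle is making the last step honest: the claim that the Liu--Zhu / Fontaine--Laffaille correspondence intertwines the $\mathcal{G}$-structures on $_{\dR}V$ and $_{\et}\V_p$. For abelian-type Shimura varieties this would reduce to the $p$-adic comparison isomorphism for the universal abelian variety, but in our exceptional setting one must argue directly from the $\mathcal{G}$-equivariance of the various constructions, which amounts to tracking the naturality of the Fontaine--Laffaille functor with respect to tensor operations and subspace inclusions cut out by the $s_\alpha$. This is essentially formal once one has set up the correspondence in a tensor-compatible way.
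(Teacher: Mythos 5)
Your proof has the right skeleton (both tensors are flat, so reduce to checking equality at one well-chosen point), which matches the paper's strategy. But there is a genuine gap in the final step, and you yourself flag it and then try to dismiss it as ``essentially formal.'' It is not.

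The issue is that the $\mathcal{G}$-structure on $_{\dR}V$ (coming from the algebraic flat principal $\mathcal{G}$-bundle $\mathscr{E}$, i.e.\ ultimately from Riemann--Hilbert over $\bbC$ and descent to $E$) and the $\mathcal{G}$-structure on $_{\et}\V_p$ (coming from the level structure at $p$) are \emph{a priori independent} pieces of data. The proposition is precisely the assertion that the $p$-adic comparison isomorphism (Fontaine--Laffaille / Liu--Zhu) takes one to the other. That the Fontaine--Laffaille and Liu--Zhu functors are tensor functors, and that the $s_\alpha$ are constructed from the same algebraic representation $V$, does not by itself give this: tensor-functoriality tells you that an invariant tensor on one side goes to \emph{some} invariant tensor on the other, not that it goes to the one you want. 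Concretely, after trivializing both bundles at a point $\tilde x$, the comparison isomorphism gives an element of $\mathrm{Aut}^\otimes(V)(\Bdr)$ (a ``period''), and the claim is that this element actually lands in $\mathcal{G}(\Bdr)$, i.e.\ that it fixes the $s_\alpha$. For abelian varieties this is the statement that Hodge cycles are ``de Rham,'' which is a deep arithmetic theorem and not a formal consequence of functoriality.

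The paper handles this by choosing the test point to be a \emph{special} point, not an arbitrary geometric point. At a special point, all the realizations of $_{?}\V$ arise from the cohomology of a CM abelian variety together with its Hodge cycles, and the needed equality is then exactly the theorem of Blasius--Wintenberger (that Hodge cycles on CM abelian varieties satisfy the de Rham/$p$-adic comparison compatibility), invoked as in Kisin's construction. Your proof would be repaired by substituting this for the last paragraph: pick a special point, interpret all three realizations motivically via the CM abelian variety, and cite Blasius--Wintenberger. Without such an arithmetic input the argument does not close.
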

 \begin{proof}
     As both sets of tensors are flat, it suffices to prove that the two sets of tensors agree at some point -- we will show that they agree at a special point. Indeed, at a special point $y$, all the data $_{?}\V$ arise from the cohomology of a CM abelian variety. Similar to \cite[Section 2.2.2]{Kisinintegral}, the equalities follow from the work of Blasius-Wintenberger \cite{Blasius}. 
 \end{proof}

\begin{corollary}\label{cor:Gstructure}
    There is an (non-canonical\footnote{The non-canonicity is because any choice of isomorphism can be pre-composed by some element of $G(R^x)$.}) isomorphism of $R^x$-modules $\iota^x:\V_p \otimes R^x \rightarrow {_{\cris}\V^x}$ which sends the tensors $s_{\alpha}$ to $_{\cris}s_{\alpha}$.
\end{corollary}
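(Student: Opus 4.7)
The plan is to construct $\iota^x$ by trivializing the principal $\mathcal{G}$-bundle $\mathscr{E}$ over $R^x$, and then using the compatibility between the de Rham and crystalline tensors given by Proposition \ref{prop: compatibility de rham crys tensors}.

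First, I would show that the $\mathcal{G}$-torsor $\mathscr{E}$ trivializes over $R^x$. By Proposition \ref{prop: dR tensors integral and defined over reflex }, the flat principal bundle $\mathscr{E}$ is a $\mathcal{G}$-torsor over $\cO_E[1/N]$, so base-changing gives a $\mathcal{G}$-torsor over $R^x$. Since $\mathcal{G}$ is smooth (in fact reductive), and $R^x$ is a complete local ring with algebraically closed residue field $\overline{\F}_p$, smooth deformation theory reduces triviality to the residue field, where all torsors under a connected algebraic group are trivial. Hence we may pick a trivialization $e \in \mathscr{E}(R^x)$.

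Second, such a trivialization induces an isomorphism of $R^x$-modules
\[
\iota^x : \V_p \otimes_{\Z_p} R^x \xrightarrow{\sim} {_{\dR}\V}|_{R^x} = (\mathscr{E} \times_{\mathcal{G}} \V)|_{R^x},\qquad v \mapsto [e,v],
\]
and by construction the induced isomorphism on tensor algebras sends $s_\alpha \in \V^\otimes$ to the $\mathcal{G}$-invariant section $_{\dR}s_\alpha \in (_{\dR}\V)^\otimes$ arising from the description \eqref{eqprincipalGbdle}.

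Third, recall that the integral structures on $_{\dR}V$ over $\ShimK_{E_v}$ coming from $_{\dR}\V$ and from the Fontaine--Laffaille module $_{\cris}\V$ agree, so $_{\dR}\V|_{R^x}$ and $_{\cris}\V^x$ coincide as $R^x$-modules. Under this identification, Proposition \ref{prop: compatibility de rham crys tensors} gives $_{\cris}s_\alpha = {_{\dR}s_\alpha}$. Therefore $\iota^x$ sends $s_\alpha$ to $_{\cris}s_\alpha$, as desired. Non-canonicity of $\iota^x$ is exactly the ambiguity in the choice of trivialization $e$, which is a $G(R^x)$-torsor.

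The only potentially nontrivial point is the triviality of the $\mathcal{G}$-torsor on $R^x$, but this is standard given smoothness of $\mathcal{G}$ and the algebraically closed residue field of $R^x$; everything else is formal bookkeeping combining Propositions \ref{prop: dR tensors integral and defined over reflex } and \ref{prop: compatibility de rham crys tensors}.
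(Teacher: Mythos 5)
Your proof is correct and follows the same route as the paper: the authors also deduce the corollary from the principal $\mathcal{G}$-bundle description in Equation \eqref{eqprincipalGbdle} together with the triviality of the restricted torsor $\mathscr{E}|_{\integralShimK^x}$ and the identification $_{\cris}s_\alpha={_{\dR}s_\alpha}$ from Proposition \ref{prop: compatibility de rham crys tensors}. You have merely supplied the (standard, and correct) justification for triviality --- smoothness of $\mathcal{G}$ plus completeness of $R^x$ with algebraically closed residue field --- which the paper leaves implicit.
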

 \begin{proof}
     This follows directly from the ismorphism in Equation \eqref{eqprincipalGbdle} and the fact that $\mathscr{E}|_{{\integralShimK}^x}$ is trivializable (Note that the connection however is \emph{not} trivializable as a principal bundle). 
 \end{proof}

Now, let $\tilde{x} \in \integralShimK(W)$ denote a $W$-valued lift of $x$. Let $_{\cris}\V_{\tilde{x}}$ denote the Fontaine-Laffaille module pulled back to $\tilde{x}$. Corollary \ref{cor:Gstructure} has the following immediate implication: 
\begin{corollary}\label{moreGstructure}
    There are isomorphisms $\iota_{\tilde{x}}:\V_p\otimes W \rightarrow {_{\cris}\V_{\tilde{x}}}$ and $_{\cris}\V_{\tilde{x}}\otimes R^x \xrightarrow{\iota} {_{\cris}\V^x}$ that respect the tensors $s_{\alpha}$, and such that $\iota\circ \iota_{\tilde{x}}=\iota^x$.
\end{corollary}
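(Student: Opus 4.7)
The strategy is simply to obtain $\iota_{\tilde{x}}$ as the pullback of $\iota^x$ along $\tilde{x}$, and then to use $\iota_{\tilde{x}}$ to reinterpret $\iota^x$ as the desired $\iota$. No new input beyond Corollary \ref{cor:Gstructure} and the functoriality of Fontaine--Laffaille pullback (\Cref{lem:unrbasechangeFL}, \Cref{lem:rambasechangeFL}) will be needed.

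First, the $W$-point $\tilde{x}\in\integralShimK(W)$ lifting $x$ corresponds to a continuous $\cO$-algebra surjection $R^x\twoheadrightarrow W$, and by functoriality of the Fontaine--Laffaille pullback the restriction of $_{\cris}\V^x$ to $\Spf W$ is canonically identified with $_{\cris}\V_{\tilde{x}}$, in a way that sends the sections $_{\cris}s_\alpha$ to the sections $_{\cris}s_\alpha$ of $_{\cris}\V_{\tilde{x}}^{\otimes}$ (the latter by construction, since the $_{\cris}s_\alpha$ are global sections of $_{\cris}\V^{\otimes}$). Applying $-\otimes_{R^x}W$ to the isomorphism $\iota^x$ of Corollary \ref{cor:Gstructure} therefore produces an isomorphism
\[
\iota_{\tilde{x}}:\V_p\otimes W\longrightarrow {_{\cris}\V_{\tilde{x}}}
\]
matching the tensors $s_\alpha$ with the tensors $_{\cris}s_\alpha$.

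Second, I would define $\iota$ by reading $\iota^x$ through $\iota_{\tilde{x}}$. Explicitly, base-changing $\iota_{\tilde{x}}$ along $W\to R^x$ gives an isomorphism
\[
\iota_{\tilde{x}}\otimes_W R^x:\V_p\otimes R^x\xrightarrow{\;\sim\;}{_{\cris}\V_{\tilde{x}}}\otimes_W R^x,
\]
so I set
\[
\iota \;:=\; \iota^x\circ(\iota_{\tilde{x}}\otimes_W R^x)^{-1}:{_{\cris}\V_{\tilde{x}}}\otimes_W R^x\longrightarrow {_{\cris}\V^x}.
\]
This is an isomorphism because both factors are. It sends $s_\alpha$ to $_{\cris}s_\alpha$ because $\iota_{\tilde{x}}\otimes R^x$ sends $s_\alpha$ to the image of $_{\cris}s_\alpha|_{\tilde{x}}$ in ${_{\cris}\V_{\tilde{x}}}\otimes R^x$, and $\iota^x$ then sends $s_\alpha$ to $_{\cris}s_\alpha$ globally on $\integralShimK^x$. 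The compatibility $\iota\circ\iota_{\tilde{x}}=\iota^x$ (read, as is forced by the source and target of each arrow, as $\iota\circ(\iota_{\tilde{x}}\otimes_W R^x)=\iota^x$) then holds by definition of $\iota$.

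There is essentially no obstacle here: the construction is purely formal once one observes that Fontaine--Laffaille pullback behaves well under the map $R^x\twoheadrightarrow W$ and that the tensors $_{\cris}s_\alpha$ are pulled back from global sections. The only minor point to check is that the composition in the final equation is interpreted correctly as a composition of $R^x$-linear maps, obtained after tensoring $\iota_{\tilde{x}}$ up from $W$ to $R^x$, which is implicit in the statement.
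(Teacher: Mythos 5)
Your proof is correct and is essentially the argument the paper has in mind; the paper labels this an ``immediate implication'' of Corollary~\ref{cor:Gstructure} and gives no proof, and your construction---pulling back $\iota^x$ along $\tilde{x}$ to get $\iota_{\tilde{x}}$ and then defining $\iota := \iota^x\circ(\iota_{\tilde{x}}\otimes_W R^x)^{-1}$---is the natural way to make that immediacy explicit. Your remark that ``$\iota\circ\iota_{\tilde{x}}=\iota^x$'' must be read with the implicit base change $\iota_{\tilde{x}}\otimes_W R^x$ is also the right reading of the statement.
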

 
We will now prove that the second isomorphism in Corollary \ref{moreGstructure} can be chosen to respect filtrations. 

\begin{proposition}\label{equalfilt}
The isomorphism $\iota$ in Corollary \ref{moreGstructure} may be chosen such that $\Fildot(_{\cris}\V_{\tilde{x}})\otimes R^x$ maps isomorphically onto $\Fildot(_{\cris}\V^x)$. 
\end{proposition}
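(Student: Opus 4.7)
The plan is to realize both filtrations as $G$-filtrations of $\V_p \otimes R^x$ of the same Hodge type $\mu$, observe that their parabolic stabilizers agree modulo the maximal ideal $\mathfrak{m}$ of $R^x$, and then use formal smoothness of the flag variety $G/P_\mu$ to produce an element $g \in G(R^x)$ with $g \equiv 1 \pmod{\mathfrak{m}}$ that conjugates one parabolic to the other. This element can then be used to modify $\iota$.

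First I would pull both filtrations back to $\V_p \otimes R^x$ along the trivializations of \Cref{moreGstructure}. Each fixes the tensors $s_\alpha$: the crystalline tensors $_{\cris}s_\alpha$ lie in $\Fil^0$ by construction, and correspond to $s_\alpha$ by \Cref{prop: compatibility de rham crys tensors}. Hence each filtration defines a parabolic subgroup of $G_{R^x}$; call these $P^x$ and $P_{\tilde x}^{R^x} := P_{\tilde x} \otimes_W R^x$. Both have a common type $\mu$, which can be read off on the generic fiber where the filtration is given by the Hodge cocharacter of the Shimura datum $(G,X)$. Moreover, they agree modulo $\mathfrak{m}$, since both restrict to the Hodge parabolic at the point $x \in \integralShimK(\overline{\F}_p)$: because $\tilde x$ lifts $x$, the reduction of $\Fil^\bullet(_{\cris}\V_{\tilde x})$ to the residue field matches the restriction of $\Fil^\bullet(_{\cris}\V^x)$ to $\mathfrak{m}$.

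Next I would invoke formal smoothness. The flag scheme $\mathcal{F}_\mu = G/P_\mu$ is smooth projective over $\Z_p$, and the quotient $G \to \mathcal{F}_\mu$ is a $P_\mu$-torsor, hence smooth. Since $R^x$ is complete local with residue field $\overline{\F}_p$ and both parabolics define $R^x$-points of $\mathcal{F}_\mu$ coinciding modulo $\mathfrak{m}$, Hensel's lemma for smooth morphisms yields $g \in G(R^x)$ with $g \equiv 1 \pmod{\mathfrak{m}}$ and $g \cdot P_{\tilde x}^{R^x} \cdot g^{-1} = P^x$. Filtrations of type $\mu$ are uniquely determined by their stabilizing parabolic (any two cocharacters with the same parabolic differ by $P$-conjugation, which preserves the associated filtration), so $g$ also carries the pullback of $\Fil^\bullet(_{\cris}\V_{\tilde x}) \otimes R^x$ to the pullback of $\Fil^\bullet(_{\cris}\V^x)$. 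Replacing $\iota$ by its composition with the automorphism of $_{\cris}\V_{\tilde x} \otimes R^x$ induced by $g$ via $\iota_{\tilde x} \otimes \id$ then gives the required tensor- and filtration-preserving isomorphism.

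The main obstacle is the mod-$\mathfrak{m}$ compatibility of the two parabolics in the first step; once that is in place, the lifting and conjugation steps are formal consequences of smoothness of the flag variety. The subtle point is comparing the filtrations on $_{\cris}\V^x$ and $_{\cris}\V_{\tilde x}$ at the closed point, but both compute the Hodge filtration at the geometric point $x$ and so must agree.
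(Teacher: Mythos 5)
Your proof is correct and follows essentially the same strategy as the paper: both arguments reduce to observing that the two parabolic stabilizers agree at the closed point of $\Spec R^x$ and then producing a conjugating element in $G(R^x)$. The only difference is cosmetic — the paper cites Conrad's Corollary 5.2.7 directly, whereas you unpack that corollary into an explicit Henselian lifting argument on the flag variety (which is exactly how Conrad's result is proved), and in doing so you make explicit the useful refinement $g \equiv 1 \pmod{\mathfrak{m}}$ that the paper uses implicitly in the proof of Theorem~\ref{thm: Kisin deformation}.
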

\begin{proof}

Consider the subgroups of $\GL(_{\cris}\V_{\tilde{x}})$ and $\GL(_{\cris}\V^{x})$ that fix the tensors $s_\alpha$ and the filtrations $\Fildot$. The existence of the integral model of $\mathscr{P}/\integralShimK$ implies that these are parabolic subgroups  $P_W \subset G_W$ and $P_{R^x}\subset G_{R^x}$ respectively -- the identification $\iota_{\tilde{x}}$ and $\iota^x$ in Corollaries \ref{cor:Gstructure} and \ref{moreGstructure} allow us to think of the parabolic subgroups as subgroups of $G_W$ and $G_{R^{x}}$. 
Further, $P_{R^x}$ clearly specializes to $P_W$ via the point $\tilde{x}$. Consider now the two parabolic subgroups $P_{R^x}$ and $P_W\otimes R^x$ of $F_{R^x}$. These two subgroups are equal (and therefore conjugate) at the unique closed point. By \cite[Corollary 5.2.7]{Conrad:ReductiveSchemes}, the two parabolics are $\mathcal{G}(R^x)$-conjugate. Conjugating $\iota$ by this element still respects the tensors (which are fixed by elements of $G$) and now respects the filtrations by construction. The proposition follows. 
\end{proof}

Finally, we have that $\Fildot$ is induced by a $G$-split co-character. In other words, we have: 
\begin{corollary}\label{cor:filtisGsplit}
    The filtration on  $\Fildot(_{\cris}\V_{\tilde{x}})$ is induced by a $G_W$-valued co-character. 
\end{corollary}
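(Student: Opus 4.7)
The plan is to use the properness of an appropriate flag variety to promote a characteristic-zero splitting of the filtration to an integral one. Via the isomorphism $\iota_{\tilde{x}}: \V_p\otimes W\to {_{\cris}\V_{\tilde{x}}}$ from Corollary \ref{moreGstructure}, I will view $\Fildot({_{\cris}\V_{\tilde{x}}})$ as a filtration on $\V_p\otimes W$ whose stabilizer in $G_W$ is the parabolic $P_W$ constructed in the proof of Proposition \ref{equalfilt}. The goal is then to exhibit a cocharacter $\mu:\G_{m,W}\to G_W$ inducing this filtration.

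First, I fix a Hodge cocharacter $\mu_0$ of the Shimura datum. Since $\mu_0$ is defined over the reflex field $E$, base change gives $\mu_0:\G_{m,W}\to G_W$. Let $P_0 = P(\mu_0)\subset G_W$; the flag variety $G_W/P_0$ is smooth and proper over $W$, and the orbit map $G_W/P_0\to \mathcal{F}$, $gP_0\mapsto g\cdot F^{\bullet}(\mu_0)$, into the Grassmannian $\mathcal{F}$ of filtrations of $\V_p\otimes W$ of the appropriate type is a closed immersion. Injectivity on points follows from $P_0$ being exactly the $G_W$-stabilizer of $F^{\bullet}(\mu_0)$, which holds because $\V_p$ is a faithful representation and parabolic subgroups of reductive group schemes are self-normalizing; closedness is then automatic by properness of $G_W/P_0$.

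Now our filtration yields a $W$-point of $\mathcal{F}$. Over the generic fiber it coincides with the Hodge filtration at $\tilde{x}$, which is classically induced by a $G(W[1/p])$-conjugate of $\mu_0$; thus the $W[1/p]$-point of $\mathcal{F}$ factors through $G_{W[1/p]}/(P_0)_{W[1/p]}$. By the closed immersion $G_W/P_0\hookrightarrow \mathcal{F}$ together with $W$-flatness, the full $W$-point must also factor through $G_W/P_0$. Since $W$ is a complete discrete valuation ring and $G_W\to G_W/P_0$ is smooth surjective, Hensel's lemma lifts this $W$-point to an element $g\in G(W)$ satisfying $g\cdot F^{\bullet}(\mu_0) = \Fildot({_{\cris}\V_{\tilde{x}}})$, and the cocharacter $\mu := g\mu_0 g^{-1}:\G_{m,W}\to G_W$ then induces the desired filtration. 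The main technical step in this argument is the identification of $G_W/P_0$ as a closed subscheme of the filtration Grassmannian via the orbit map; once this is in place, the rest is essentially an application of the valuative criterion of properness combined with the Shimura-theoretic input in characteristic zero.
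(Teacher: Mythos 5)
Your flag-variety reduction from $W$ to $W[1/p]$ is correct, and is in effect a proof of Kisin's Lemma~1.1.4 in \cite{Kisinintegral}, which the paper cites directly for this step. (A small simplification: you do not need the orbit map $G_W/P_0\to\cF$ to be a closed immersion, and your ``injectivity on points plus properness'' argument for it is in any case incomplete, since a proper map that is injective on points need not be a monomorphism. It is easier to use the valuative criterion directly: extend the given $\Spec W[1/p]\to G_{W[1/p]}/(P_0)_{W[1/p]}$ to $\Spec W\to G_W/P_0$ by properness over $W$, and then observe that its composition with $G_W/P_0\to\cF$ agrees with the given $W$-point of $\cF$ because they agree on the dense open generic point and $\cF$ is separated.)

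The genuine gap is the characteristic-zero input, which you dismiss as ``classical.'' Two nontrivial facts are needed over $W[1/p]$. First, $\Fildot({_{\cris}\V_{\tilde{x}}})$ is by construction the Fontaine--Laffaille filtration attached to the crystalline local system $_{\et}\V_p$; identifying it over $W[1/p]$ with the Hodge filtration on $_{\dR}V_{\tilde{x}}$ requires a $p$-adic comparison isomorphism, for which the paper cites \cite{DLLZ}. Second, even granting that identification, the statement that the Hodge filtration is induced by a $G(W[1/p])$-conjugate of $\mu_0$ is only ``classical'' over $\bbC$; over $W[1/p]$, which is not algebraically closed, one must show the relevant $G_{W[1/p]}$-torsor is trivial, and this is the content of \cite[Lemma 1.4.5]{Kisinintegral} applied to $D_{\dR}({_{\et}V_{p,\tilde{x}}})$ together with the Frobenius-invariant tensors $_{\et}s_{\alpha,p}$. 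Without these two inputs, your assertion that the $W[1/p]$-point of $\cF$ factors through $G_{W[1/p]}/(P_0)_{W[1/p]}$ is unjustified, and that is exactly the hypothesis the flag-variety argument needs to get off the ground.
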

\begin{proof}
   We will let $\tilde{x}$ also denote the $W[1/p]$-point of $\integralShimK$. By \cite[Lemma 1.1.4]{Kisinintegral}, it suffices to prove that the filtration on $_{\dR}V_{\tilde{x}}$ is induced by a $G_{W[1/p]}$-valued co-character. Further, we have (by \cite{DLLZ}) that $_{\dR}V_{\tilde{x}}$ and $D_{\dR}(_{\et}V_{p,\tilde{x}})$ are isomorphic as filtered vector spaces. The fact that the filtration on $D_{\dR}(_{\et}V_{p,\tilde{x}})$ is induced by a $G$-valued co-character now follows from \cite[Lemma 1.4.5]{Kisinintegral}.
\end{proof}

\subsubsection{Explicit coordinates}

We pick coordinates on $R^x$ and write it as $W[[x_1, \hdots, x_n]]$, such that the point $\tilde{x}$ is given by setting all the $x_i = 0$. We choose a lift of Frobenius $\sigma_u$ on $R^x$ that extends the usual Frobenius $\sigma$ on $W$ and satisfies $\sigma_u(x_i) = x_i^p$.
Using the isomorphisms $\iota_{\tilde{x}}, \iota^x$, we may now write $\varphi_u = b_u \sigma_u$ and $\varphi_{\tilde{x}} = b \sigma$, where the fiber of $b_u \in G(R^x[1/p])$ at $\tilde{x}$ is $b\in G(W[1/p])$. We also fix a choice of $G_W$-valued co-character $\mu$ that induces the filtration on $_{\cris}\V_{\tilde{x}}$) and on 
 $_{\cris}\V^x$ via $\iota$.)  

\begin{proposition}\label{prop frob filt}
   We have $b_u \in G(R^x)\sigma(\mu(p))$. 
\end{proposition}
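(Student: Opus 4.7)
The plan is to unfold the strong divisibility axiom of the Fontaine--Laffaille module in terms of the cocharacter $\mu$ and the chosen trivializations. Using $\iota^x$ from \Cref{cor:Gstructure}, identify $_{\cris}\V^x$ with the free module $M := \V_p \otimes R^x$ so that the tensors $s_\alpha$ are the constant ones and $\varphi_u = b_u \sigma_u$ for some $b_u \in G(R^x[1/p])$. By \Cref{equalfilt}, the filtration $\Fil^\bullet M$ is induced (after an extension of scalars) by the cocharacter $\mu$ from \Cref{cor:filtisGsplit}; fix the corresponding weight decomposition $M = \bigoplus_k M_k$ with $\Fil^i M = \bigoplus_{k \geq i} M_k$.

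Next, translate strong divisibility into an equation in $G(R^x[1/p])$. Let $\Phi_u : \sigma_u^* M \to M$ be the $R^x$-linear map associated to the semilinear $\varphi_u$, which in the chosen basis is multiplication by $b_u$. The pulled-back filtration on $\sigma_u^* M$ is the one attached to the Frobenius-twisted cocharacter $\sigma^*\mu = \sigma(\mu)$, so $\sigma_u^*\Fil^i M = \bigoplus_{k \geq i} \sigma_u^* M_k$ and $\sigma(\mu)(p)$ acts on $\sigma_u^* M_k$ as $p^k$. The conditions in \Cref{def:MFunramified} are then equivalent to the statement that
\[
\Psi_u \;:=\; \Phi_u \circ \sigma(\mu)(p)^{-1} \;:\; \sigma_u^* M \longrightarrow M
\]
is an isomorphism of free $R^x$-modules; in the chosen basis, this is the assertion that $b_u \cdot \sigma(\mu(p))^{-1}$ is an invertible matrix with entries in $R^x$ (rather than merely $R^x[1/p]$).

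Finally, upgrade this integrality statement to one inside $G(R^x)$ using the tensors. Because the crystalline tensors $_{\cris}s_\alpha$ are Frobenius-invariant (by their construction via Fontaine--Laffaille) and, after the identification $\iota^x$, coincide with the constant tensors $s_\alpha$ by \Cref{prop: compatibility de rham crys tensors}, the matrix $b_u$ fixes every $s_\alpha$; that is, $b_u \in G(R^x[1/p])$. Since $\sigma(\mu(p)) \in G(W) \subset G(R^x)$ also fixes the $s_\alpha$, the product $\Psi_u = b_u \sigma(\mu(p))^{-1}$ is an element of $\GL(M)$ that is (i) defined over $R^x$ by the previous paragraph and (ii) pointwise fixes the defining tensors of $G$. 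Since $G$ is by construction the scheme-theoretic pointwise stabilizer of the $s_\alpha$, this forces $\Psi_u \in G(R^x)$, whence $b_u \in G(R^x)\sigma(\mu(p))$ as desired.

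The only genuinely nontrivial step is the translation in the second paragraph, and the main subtlety is keeping track of Frobenius twists: one must be careful that the grading on $\sigma_u^* M$ is governed by $\sigma(\mu)$ rather than $\mu$ itself (which is why the answer features $\sigma(\mu(p))$ and not $\mu(p)$). Once this is set up, the proposition is essentially a tautological reformulation of strong divisibility.
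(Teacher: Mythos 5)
Your proof is correct and follows essentially the same route as the paper's. Both unfold the strong divisibility axiom via the $\mu$-weight decomposition $M=\bigoplus_k M_k$ to conclude that $b_u\,\sigma(\mu(p))^{-1}$ is an isomorphism of $\GL(M)$ over $R^x$ (not merely $R^x[1/p]$), and then use the characterization of $G$ as the pointwise stabilizer of the tensors $s_\alpha$---equivalently, $G(R^x)=G(R^x[1/p])\cap\GL(M)(R^x)$---to land in $G(R^x)$. Your explicit bookkeeping of the Frobenius twist (the filtration on $\sigma_u^*M$ being split by $\sigma(\mu)$ rather than $\mu$) is a slightly more careful exposition of a point the paper glosses over with informal notation, but the mathematical content is identical.
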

\begin{proof}
Let $_{\cris}\V^x_i$ denote the weight spaces of $_{\cris}\V^x$ under $\mu$. Clearly, $\Fil^n = \ensuremath{\vcenter{\hbox{\scalebox{2}{$\oplus$}}}}_{i\geq n} {_{\cris}\V^x_i}$. The strong-divisibility condition yields that $\ensuremath{\vcenter{\hbox{\scalebox{2}{$\oplus$}}}}_i \frac{1}{p^i} b_u(\sigma({_{\cris}\V^x_i})) = {_{\cris}\V^x} $. Here, $\sigma$ denotes the semi-linear Frobenius on $W$, and acts as the identity on the coordinates $x_i$. Further, the action on ${_{\cris}\V^x}$ is via the $\Z_p$ structure given by Corollary \ref{cor:Gstructure}.

By definition, we have that $\mu(p)(_{\cris}\V^x_i) = p^i _{\cris}\V^x_i $. Therefore, we have that $_{\cris}\V^x = \ensuremath{\vcenter{\hbox{\scalebox{2}{$\oplus$}}}}_i b_u \sigma (\mu(p)^{-1} _{\cris}\V^x_i)$, whence $_{\cris}\V^x = b_u \sigma(\mu(p)^{-1}) ( {_{\cris}\V^x})$. It therefore follows that $b_u \sigma(\mu(p)^{-1}) \in \GL({_{\cris}\V^x})$. The proposition follows as $G(R^x) = G(R^x[1/p]) \cap \GL({_{\cris}\V^x_i})$.  
\end{proof}

We have the immediate corollary:
\begin{corollary}\label{cor frob filt}
    We have $b \in G(W) \sigma(\mu(p))$.
\end{corollary}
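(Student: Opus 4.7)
The plan is to obtain Corollary \ref{cor frob filt} by simply specializing Proposition \ref{prop frob filt} along the $W$-valued point $\tilde{x}$. Recall that in the coordinates $R^x = W[[x_1,\dots,x_n]]$ fixed above, the point $\tilde{x}$ corresponds to the ideal $(x_1,\dots,x_n)$, and by Corollary \ref{moreGstructure} the trivialization $\iota^x$ of $_{\cris}\V^x$ restricts along $\tilde{x}$ to the trivialization $\iota_{\tilde{x}}$ of $_{\cris}\V_{\tilde{x}}$. Consequently, the Frobenius cocycle $b_u \in G(R^x[1/p])$ attached to $\varphi_u$ in the trivialization $\iota^x$ specializes, modulo $(x_1,\dots,x_n)$, to the Frobenius cocycle $b \in G(W[1/p])$ attached to $\varphi_{\tilde{x}}$ in the trivialization $\iota_{\tilde{x}}$.

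For this specialization to be compatible with Frobenius, I would use the fact that the chosen lift $\sigma_u$ was defined by $\sigma_u(x_i) = x_i^p$, so that it reduces to the Witt vector Frobenius $\sigma$ on $W$ upon setting $x_i = 0$. Combined with the fact that $\mu$ is already a $G_W$-valued cocharacter (Corollary \ref{cor:filtisGsplit}), the element $\sigma(\mu(p)) \in G(W[1/p])$ is literally the same before and after specialization.

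Now I apply Proposition \ref{prop frob filt}: we may write $b_u = g \cdot \sigma(\mu(p))$ with $g \in G(R^x)$. Reducing modulo $(x_1,\dots,x_n)$ yields $b = g(\tilde{x}) \cdot \sigma(\mu(p))$ with $g(\tilde{x}) \in G(W)$, which is exactly the claim $b \in G(W) \sigma(\mu(p))$.

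There is essentially no obstacle: the corollary is a direct specialization of the preceding proposition, and the two facts needed—that the trivializations $\iota^x$ and $\iota_{\tilde{x}}$ are compatible under restriction to $\tilde{x}$, and that $\sigma_u|_{\tilde{x}} = \sigma$—are built into the setup of Section \ref{sec: complete local rings} via Corollary \ref{moreGstructure} and the explicit choice of coordinates and Frobenius lift.
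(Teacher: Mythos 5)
Your proof is correct and is exactly what the paper has in mind: the paper presents Corollary~\ref{cor frob filt} as an ``immediate corollary'' of Proposition~\ref{prop frob filt}, and the intended argument is precisely the specialization along $\tilde{x}$ that you spell out, using the compatibility $\iota^x|_{\tilde{x}} = \iota_{\tilde{x}}$ from Corollary~\ref{moreGstructure} and the fact that $\sigma_u$ reduces to $\sigma$. No issues.
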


\subsubsection{The Fontaine-Laffaille module on complete local rings}\label{subsubsec: completelocalrings theorem}
We will now prove the main result of this section, which is to give a description of the Fontaine-Laffaille data on $\ShimK^x$ analogous to the case of abelian type Shimura varieties following Kisin \cite{Kisinintegral}. Let notation be as in the previous subsections. Recall that we have chosen an isomorphism $_{\cris}\V_{\tilde{x}} \otimes R^x \rightarrow _{\cris}\V^x$ of filtered modules compatible with the $G$-structure defined in Subsection \ref{subsection G-structure}. We note that such an isomorphism is unique only up to an element of $P(R^x)$, where $P$ is the parabolic subgroup of $G$ defined by the filtration. Also recall that we have chosen a $G$-valued co-character (defined over $W$) that splits $\Fildot$. Let $U^{\opp}$ denote the opposite unipotent of $\mu$, and let $\widehat{U}^{\opp}$ denote the completion of $U^{\opp}$ at the identity mod $p$. 

We now fix $\iota_{\tilde{x}}$. We note that the choice of $\iota$ fixes the choice of $\iota^x$.  
\begin{theorem}\label{thm: Kisin deformation}
The isomorphism of filtered modules $\iota: _{\cris}\V_{\tilde{x}}\otimes R^x \rightarrow _{\cris}\V^x$ can be chosen so that $b_u = u\cdot b$, where $u\in U^{\opp}(R^x)$. Further, the map induced by the point $u:\Spf R^x \rightarrow  \widehat{U}^{\opp}$ is an isomorphism. 
    
\end{theorem}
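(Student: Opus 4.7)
\emph{Plan.} The strategy is to follow Kisin's analysis of the deformation rings in the abelian-type case (cf.\ \cite[\S 1.5]{Kisinintegral}), combining the $G$-structure on the Fontaine--Laffaille data with the big-cell decomposition of $G$. The statement has two parts: first, the existence of an $\iota$ normalizing $b_u$ to $u\cdot b$ with $u\in U^{\opp}(R^x)$; and second, that the resulting morphism $u\colon\Spf R^x\to\widehat{U}^{\opp}$ is an isomorphism. I would obtain the first by a formal cocycle computation in $G(R^x)$, and the second by a dimension count together with the identification of the differential of $u$ with a Kodaira--Spencer map.

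\emph{Setup and big-cell decomposition.} By Proposition~\ref{prop frob filt} and Corollary~\ref{cor frob filt}, one can write $b_u=g_u\sigma(\mu(p))$ and $b=g\sigma(\mu(p))$ for $g_u\in G(R^x)$ and $g\in G(W)$, with $g_u\equiv g$ modulo the ideal $J=(x_1,\dots,x_n)$ cutting out the section $\tilde x$. Hence $v:=b_ub^{-1}=g_ug^{-1}\in G(R^x)$ reduces to $1$ along $\tilde x$. Because the multiplication map $U^{\opp}\times P\to G$ is an open immersion near the identity, there is a unique decomposition $v=u_0\cdot p_0$ with $u_0\in\widehat{U}^{\opp}(R^x)$ and $p_0\in\widehat{P}(R^x)$, the completions taken along the identity section relative to $J$.

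\emph{Killing the $P$-component.} By Proposition~\ref{equalfilt}, the only freedom in $\iota$ is multiplication by elements $h\in P(R^x)$; such a modification transforms $v$ into $h^{-1}v\tau(h)$, where $\tau(h):=b\sigma_u(h)b^{-1}$ lies in $G(R^x)$ and satisfies $\tau(1)=1$. The goal becomes to produce $h\in\widehat P(R^x)$ with $h^{-1}v\tau(h)\in\widehat U^{\opp}(R^x)$. I would construct $h$ by induction on the $J$-adic filtration. The key input is the vanishing of the differential of $\sigma_u$ along $\tilde x$: since $\sigma_u(x_i)=x_i^p$ one has $\sigma_u(J^k)\subset J^{kp}$, so for $\delta\in J^{k+1}\Lie P$ we get $\tau(h(1+\delta))\equiv\tau(h)\pmod{J^{k+2}}$. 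Given a solution $h_k$ modulo $J^{k+1}$, the correction $\delta$ at the next step is determined by a linear equation in $\delta$ whose $\Lie P$-component, via the tangential decomposition $\Lie G=\Lie U^{\opp}\oplus\Lie P$, is invertible near the identity; this uniquely determines $h_{k+1}$, and the limit gives the required $h$.

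\emph{The map to $\widehat U^{\opp}$ is an isomorphism, and the main obstacle.} After this normalization, $v=u\in\widehat U^{\opp}(R^x)$ defines a morphism $u\colon\Spf R^x\to\widehat U^{\opp}$. Both formal schemes are smooth over $W$ of the same dimension: $\dim\ShimK=\dim_\bbC X=\dim U^{\opp}$, the last being the dimension of the unipotent radical of the parabolic opposite to the Hodge parabolic. It therefore suffices to check that $du$ is an isomorphism at the closed point. By construction, $du$ sends a tangent vector $\xi\in T_x\integralShimK$ to the first-order variation of $b_u$ along $\xi$, projected to $\Lie U^{\opp}$, and this matches the Kodaira--Spencer map for $_{\cris}\V$ composed with the identification $\sHom(\Gr^{\bullet+1},\Gr^\bullet)\cong\Lie U^{\opp}$ afforded by the Hodge cocharacter. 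The cocycle construction is essentially formal once the $p$-divisibility of $\sigma_u$ is noted; the real content lies in the non-degeneracy of this Kodaira--Spencer map, a Serre--Tate-type assertion that $\Spf R^x$ parameterizes exactly the $G$-structured deformations of $_{\cris}\V^x$, which ultimately descends from the characterization of $\ShimK$ as a moduli of Hodge structures via its Hodge cocharacter.
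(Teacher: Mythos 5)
Your proposal is correct and follows essentially the same route as the paper: both arguments iteratively modify $\iota$ by elements of $\widehat{P}(R^x)$ to kill the $P$-component of $b_ub^{-1}$, using the big-cell decomposition near the identity together with the key divisibility $\sigma_u(J)\subset J^p$, and both establish the second assertion by identifying the differential of $u$ with the Kodaira--Spencer map and invoking versality plus a dimension count. The only cosmetic difference is the convergence bookkeeping (you gain one power of $J$ per step, whereas the paper gains $I^{p^k}$ as in \cite[Prop.\ 4.7]{KLSS}); both converge $J$-adically, so this changes nothing.
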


\begin{proof}
We will first prove that we may choose $\iota$ such that $u \in \widehat{U}^{\opp}$, and will then use the versality of the Kodaira Spencer map to show that the tautologocal map $u$ induces is an isomorphism. The first part of the proof will follow \cite[Proposition 4.7]{KLSS}. 

Let $I \subset R^x$ denote the kernel of $R^x \xrightarrow{\tilde{x}} W$. We have that $b_u = u'b$, where $u'\in G(R^x)$ with $u' \equiv \Id \bmod I$. Therefore, we also have that $u' \in U^{\opp} \bmod I$. We may write $u' = \lambda u$ with $\lambda \in P$ and $u\in U^{\opp}$, with both $\lambda$ and $u$ reducing to the identity modulo $I$. Replacing $\iota$ by $\lambda \circ \iota$ has the effect of replacing $b_u$ by $\lambda^{-1} u'b \sigma(\lambda) = u (b  \sigma(\lambda) b^{-1}) b $. Define $u'(1)$ to be $u (b  \sigma(\lambda) b^{-1})$. As $\sigma(I)$ is generated by $t_i^p$, we have that $u'(1) \in \widehat{U}^{\opp} \bmod I^p$, and we may thus write $u'(1) = \lambda(1) u(1)$, with $\lambda(1) \in P$ reducing to the identity modulo $I^p$ and $u(1)$ reducing to the identity modulo $I$. Iterating this process, we may change coordinates by an element of $P$ again and replace $b_u$ by $u'(2) b = u(1) (b \sigma(\lambda(1)) b^{-1}) b$, whence $u'(2) \in U^{\opp} \bmod I^{p^2}$. Iterating this process is equivalent to modifying $j$ by the convergent product $\hdots  \lambda(2) \cdot \lambda(1) \cdot \lambda$, and has the effect of writing $b_u = u \cdot b$ with $u\in U^{\opp}(R^x)$ as required. 

We will now prove that the tautological map $\Spf R^x \rightarrow \widehat{U}^{\opp}$ induced by $u$ is an isomorphism. The leading order term of the connection is $du $ (see \cite[Proof of Lemma 1.5.2]{Kisinintegral}\footnote{Note that in \emph{loc. cit.}, the leading order term is described as $u^{-1}\cdot du$. As $u\equiv \Id$ mod $I$, $du$ and $u^{-1}du$ have the same leading term.}). Therefore, the Kodaira spencer map reduces mod $I$ to the element $du_{\tilde{x}}: \Hom(T_{\tilde{x}} R^x \rightarrow \End(\Gr _{\cris}\V_{\tilde{x}}) )$, where $du_{\tilde{x}}$ is the restriction of $du$ to the tangent space of $T_{\tilde{x}}T^x$ of $R^x$ at $\tilde{x}$. The image of $du_{\tilde{x}}$ is contained in $\Lie U_{\tilde{x}}^{\opp}$. The versality of the Kodaira-Spencer map implies that $du_{\tilde{x}}$ is an immersion. As the dimension of $\widehat{U}^{\opp}$ is the same as the dimension of $R^x$, it follows that $u$ is an isomorphism. 



\end{proof}





\subsection{Versality of Filtrations}
 The setting and notation will be as in Section \ref{subsubsec: completelocalrings theorem}. Let $\tilde{x}'$ denote a $W$-valued lift of $x$. The $F$-crystals underlying the Fontaine-Laffaille modules $_{\cris}\V_{\tilde{x}}$ and $_{\cris}\V_{\tilde{x}'}$ are canonically isomorphic, and indeed are canonically isomorphic to the F-crystal $_{\cris}\V_x$. This canonical isomorphism is obtained by considering the horizontal continuation $s_{\nabla}$ of elements $s \in _{\cris}\V_{\tilde{x}}$ to $_{\cris}\V^x$ with respect to the connection $\nabla$ on $_{\cris}\V^x$. We will use this identification to view $\Fildot(_{\cris}\V_{\tilde{x}})$ and $\Fildot(_{\cris}\V_{\tilde{x}'})$ as filtrations on $_{\cris}\V_x$, i.e. different filtrations on the same underlying $F$-crystal. The content of the following proposition is that points $\tilde{x}\in R^x(W)$ are in bijection with filtrations of the form $g \cdot \Fildot(_{\cris}\V_{\tilde{x}})$, where $g\in \widehat{U}^{\opp}(W)$. This result will be used crucially in a later section to define the canonical lift of ($\mu$-)ordinary  and then prove our CM lifting theorems. 

\begin{proposition}\label{filts}
There is a bijection between $\widehat{U}^{\opp}(W)$ and points $\tilde{x}' \in R^x(W)$, where the bijection assigns to $g \in \widehat{U}^{\opp}(W)$ the Fontaine-Laffaille module with underlying $F$-crystal $_{\cris}\V_x$ and filtration $g\cdot \Fildot(_{\cris}\V^{\tilde{x}})$.
 \end{proposition}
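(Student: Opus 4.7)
The plan is to leverage Theorem~\ref{thm: Kisin deformation}, which provides an isomorphism $u : \Spf R^x \xrightarrow{\sim} \widehat{U}^{\opp}$ and hence, on $W$-points, a bijection $R^x(W) \xrightarrow{\sim} \widehat{U}^{\opp}(W)$. The content of the proposition is to verify that, under this (or an equivalent) bijection, the filtration on $_{\cris}\V_{\tilde{x}'}$ transferred to $_{\cris}\V_x$ via the canonical $F$-crystal identification is $g\cdot \Fildot(_{\cris}\V_{\tilde{x}})$.

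First I would work in the trivialization $\iota$ of Theorem~\ref{thm: Kisin deformation}, under which $\Fildot(_{\cris}\V^x)$ is the constant extension of $\Fildot(_{\cris}\V_{\tilde{x}})$ and $\varphi_u = u\cdot b\sigma_u$. For $\tilde{x}'\in R^x(W)$, specializing $\iota$ at $\tilde{x}'$ identifies $_{\cris}\V_{\tilde{x}'}$ with $_{\cris}\V_{\tilde{x}}$ as filtered $W$-modules (both carrying the filtration $\Fildot$), but with Frobenius $g'b\sigma$ rather than $b\sigma$, where $g' := u(\tilde{x}')$. The canonical $F$-crystal identification $_{\cris}\V_{\tilde{x}'} \cong {}_{\cris}\V_x$ via horizontal continuation along $\nabla$ therefore differs from the $\iota$-identification by some $T\in G(W)$ intertwining the two Frobenii, yielding the relation $Tb = g'b\sigma(T)$. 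The filtration at $\tilde{x}'$ transferred to $_{\cris}\V_x$ becomes $T^{-1}(\Fildot)$, and since $P(W)$ stabilizes $\Fildot$ and $U^{\opp}\cdot P$ is open in $G$ near the identity, this transferred filtration has the form $h\cdot\Fildot$ for a unique $h \in \widehat{U}^{\opp}(W)$.

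The main obstacle I anticipate is identifying $h$ explicitly in terms of $g'$. I would solve the functional equation $Tb = g'b\sigma(T)$ iteratively starting from $T_0 = 1$, using Corollary~\ref{cor frob filt} that $b \in G(W)\sigma(\mu(p))$: the operator $\xi\mapsto b\sigma(\xi)b^{-1}$ interacts with the weight decomposition of $\Lie G$ under the cocharacter $\mu$ in a way that, after projecting away the $P$-component via the Bruhat-type decomposition $G = U^{\opp}\cdot P$, makes the iteration converge $p$-adically to a unique $T\in G(W)$. Extracting the $U^{\opp}$-component of $T^{-1}$ modulo $P$ identifies $h$ with $g'$ (possibly composed with the inversion map on $\widehat{U}^{\opp}$, which is a harmless convention and does not affect the bijectivity of the statement), completing the matching of the two bijections. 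Surjectivity and injectivity of the resulting map $\widehat{U}^{\opp}(W)\to R^x(W)$ are then inherited from $u$ being an isomorphism of formal schemes.
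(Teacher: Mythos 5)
Your overall framing is aligned with the paper's (set up the trivialization $\iota$, compare the $\iota$-identification to the canonical $F$-crystal identification, extract a $\widehat{U}^{\opp}$-component, and relate this to Theorem~\ref{thm: Kisin deformation}), but the crucial step is wrong, and the iteration you propose as a tool is also shaky.

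The central gap is the claim that ``extracting the $U^{\opp}$-component of $T^{-1}$ modulo $P$ identifies $h$ with $g'$ (possibly composed with the inversion map), completing the matching of the two bijections.'' This is false. The element $T = g(\tilde{x}')$ is the specialization of the flat-section operator of Lemma~\ref{lem:flat sections on connection}, and its $U^{\opp}$-part agrees with $u(\tilde{x}')$ (the image under the Theorem~\ref{thm: Kisin deformation} isomorphism) only to first order: Lemma~\ref{lem:flat sections on connection}(3) gives precisely the congruence $g\equiv u^{-1}\pmod{(x_i)^2}$, and specializing, the two quantities agree only mod $p^2$. The higher Taylor coefficients of the flat sections contribute genuine corrections, so the bijection in Proposition~\ref{filts} is \emph{not} the one inherited from $u:\Spf R^x\to\widehat{U}^{\opp}$, even up to inversion. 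Consequently your final step, which imports surjectivity and injectivity ``from $u$ being an isomorphism,'' does not follow. The paper deals with exactly this issue: it does not match the two bijections, but instead takes the $U^{\opp}$-component $u'$ of the flat-section operator $h$ directly, divides by $p$ (using unique $p$-divisibility of the unipotent group) to get a map $u''$ on integral models, and then proves that $u''$ is an isomorphism from scratch using the congruence estimates (it is affine-linear mod $p$ and agrees with the linearization of $u$ modulo $(y_i)^2$). That final isomorphism argument is the real content and is absent from your proposal.

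A secondary concern: the iteration $T\mapsto g'\,b\,\sigma(T)\,b^{-1}$ starting at $T_0=1$ does not converge, because $\mathrm{Ad}(b)\circ\sigma$ (with $b\in G(W)\sigma(\mu(p))$, Corollary~\ref{cor frob filt}) has linearization $p^n$ on the weight-$n$ piece of $\Lie G$ for $\mu$, which contracts only the positive-weight directions; on $\Lie U^{\opp}$ the weights are negative, so the operator expands rather than contracts there, and ``projecting away the $P$-component'' does not help since it is precisely the $U^{\opp}$-directions that one is left with. You might salvage a fixed-point argument by rearranging the equation to involve $\mathrm{Ad}(b^{-1})$, but none of this is needed: the existence and uniqueness of $T$ is exactly Lemma~\ref{lem:flat sections on connection}(2) (and Dwork's trick for convergence), and the real work is the explicit control of the $U^{\opp}$-component that the iteration cannot supply.
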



We will first need the following result about flat sections on $_{\cris}\V^{x}$. Let $\cO$ denote the ring of rigid-analytic functions on the rigid-analytic space given by the tube of $\integralShimK$ over $x$. The tube is just a $\ShimK$-dimensional open unit ball around $\tilde{x}$. Note that $R^x[1/p] \subset \cO$ is the subset of bounded functions. Recall that we have the coordinates $x_i \in R_x$ whose vanishing defines $\tilde{x}$. Let $y_i \in \cO$ be defined by $y_i = \frac{x_i}{p}$. We have the inclusion $\cO \subset W[\frac{1}{p}]\langle y_i \rangle$, given by restricting functions on the open unit ball to the closed ball with radius $\frac{1}{p}$. We let $W\langle y_i \rangle$ denote functions on this closed ball that are bounded above by 1. We are now ready to state the following lemma.
\begin{lemma}\label{lem:flat sections on connection}
We have the following results: 
\begin{enumerate}
    \item The connection $\nabla$ has a basis of solutions over $\cO$. Further, $(_{\cris}\V^x,\nabla)|_{W\langle y_i\rangle}$ is isomorphic to $(_{\cris}\V^x_{\nabla = 0} \otimes_W W\langle y_i\rangle, d)$ . Consequently, the horizontal continuation of every $s\in _{\cris}\V_{\tilde{x}}$ is an element of $_{\cris}\V^x \otimes_{R^x} W\langle y_i \rangle $. 

    \item Let $s\in _{\cris}\V_{\tilde{x}}$ be some element. Let $\tilde{s}$ denote the element $s\otimes 1 \in {_{\cris}\V^x}$ under the isomorphism in Theorem \ref{thm: Kisin deformation}. Let $s_{\nabla}$ denote the horizontal continuation of $s$ to $_{\cris}\V^x \otimes \cO$. There exists a unique element $g\in G(\cO)$ such that $g\cdot \tilde{s} = s_{\nabla}$ where $g$ is independent of $s$. Further, $g\in G(W\langle y_i\rangle )$.

    \item $g \equiv u^{-1} \mod (x_i)^2$, where $u$ is the tautological point $u\in \widehat{U}^{\opp}(R^x)$ as in Theorem \ref{thm: Kisin deformation}. Further, the coefficients of the series expansion of $g$ in the coordinates $y_i$ are divisible by $p^2$ for the non-linear and non-constant terms.

\end{enumerate}

\end{lemma}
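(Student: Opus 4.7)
The strategy is to produce an explicit formula for $g$ from the Taylor expansion associated to the crystal structure, and then read off all three parts. The key input is that the two $W$-algebra homomorphisms $\tilde x: R^x \to W \hookrightarrow W\langle y_i\rangle$ (sending each $x_i$ to $0$) and $\iota_\infty: R^x \to W\langle y_i\rangle$ (sending $x_i \mapsto p y_i$) agree modulo the ideal $(p y_i) \subset pW\langle y_i\rangle$, which admits divided powers. Applying the standard crystal comparison formula \eqref{taylor1} to this pair of liftings yields a horizontal $W\langle y_i\rangle$-linear isomorphism
\[
\Phi:\; {}_{\cris}\V_{\tilde x}\otimes_W W\langle y_i\rangle \xrightarrow{\;\sim\;} {}_{\cris}\V^x \otimes_{R^x,\,\iota_\infty} W\langle y_i\rangle, \qquad s\otimes 1 \,\longmapsto\, \sum_I \frac{(-py)^I}{I!}\otimes_{\iota_\infty} \nabla(\partial)^I(\tilde s),
\]
where $\tilde s = \iota(s\otimes 1)$ uses the trivialization of Theorem \ref{thm: Kisin deformation}. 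Convergence on $W\langle y_i\rangle$ is immediate because $\nabla(\partial)^I(\tilde s)$ is integral while $v_p(p^{|I|}/I!) \to \infty$. This gives the integral trivialization and the integrality of $s_\nabla$ on the closed disc of radius $1/p$ in part (1). Extending the trivialization to all of $\cO$ (the full open unit polydisc) is the main obstacle: naive Taylor bounds give convergence only up to radius $p^{-1/(p-1)}$, and the improvement requires Dwork-type estimates exploiting the Frobenius compatibility $\varphi\circ\nabla=\nabla\circ\varphi$ and strong divisibility of the Fontaine--Laffaille module.

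Assuming part (1), part (2) follows formally. Under the trivialization $\iota$, $\Phi$ becomes an element $g \in \GL({}_{\cris}\V_{\tilde x})(\cO)$ characterized by $g\tilde s = s_\nabla$ for all $s$, giving existence, uniqueness, and the independence of $g$ from $s$. Since the crystalline tensors ${}_{\cris}s_\alpha$ are horizontal and correspond under $\iota_{\tilde x}$ and $\iota$ to the same reference tensors $s_\alpha \in \V_p^\otimes$, $g$ must stabilize each $s_\alpha$, which by the pointwise-stabilizer definition of $\mathcal{G}$ forces $g \in G(\cO)$; restricting to the smaller disc gives $g \in G(W\langle y_i\rangle)$.

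For part (3), the flatness equation $\nabla(g\tilde s)=0$ becomes the matrix ODE $g^{-1}dg=-\Gamma$ for the connection $1$-form $\Gamma$ in the trivialization $\iota$. By the computation in the proof of Theorem \ref{thm: Kisin deformation}, $\Gamma \equiv du \pmod{(x_i)\cdot\Omega^1_{R^x/W}}$; integrating with boundary condition $g(\tilde x)=1$ yields $g \equiv 1 - (u-1) \equiv u^{-1} \pmod{(x_i)^2}$. For the $p^2$-divisibility in the $y_i$-expansion, expanding $\nabla(\partial^I)(\tilde s) = \sum_J \alpha_{I,J}\, x^J$ with integral $\alpha_{I,J} \in {}_{\cris}\V_{\tilde x}$ and regrouping the Taylor series gives
\[
[y^K]\, s_\nabla \;=\; p^{|K|} \sum_{I\le K} \frac{(-1)^{|I|}}{I!}\, \alpha_{I,\,K-I},
\]
and the elementary bound $v_p(p^{|K|}/I!) \ge 2$ for all $I \le K$ whenever $|K| \ge 2$ and $p \ge 3$ (which follows from $v_p(I!) \le v_p(|K|!) \le |K|-2$ in this range) yields the claimed divisibility.
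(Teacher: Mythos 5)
Your proposal is correct and follows essentially the same route as the paper: derive an explicit Taylor-expansion formula for the flat sections from the crystal structure, observe that the substitution $x_i=py_i$ yields coefficients of valuation $\geq v_p(p^{|I|}/I!)$ (which diverges for $p\geq 3$), invoke Dwork's trick to pass from $W\langle y_i\rangle$ to the full open disc $\cO$, and then read off parts (2) and (3) from the formula and the leading term $du$ of the connection. The one genuine local variant is in part (2): you deduce $g\in G$ from the horizontality of the crystalline tensors $_{\cris}s_\alpha$ (so $g$ fixes them, hence lies in their pointwise stabilizer $\mathcal{G}$), whereas the paper instead cites \cite[Lemma 1.5.2]{Kisinintegral} and the erratum \cite[E.1]{Kisinmodppoints} for the statement $\nabla\in\mathrm{Lie}\,G\otimes\Omega^1_{R^x}$ and integrates. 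Both are valid and essentially dual; your tensor argument is slightly cleaner here in that it sidesteps the delicate point flagged in the paper's footnote about the erratum (namely that $\nabla\in\mathrm{Lie}\,U\otimes\Omega^1$ fails in general), relying only on the already-established flatness of the tensors from Proposition \ref{prop: compatibility de rham crys tensors}. Everything else — the crystal-comparison framing via the two $W$-algebra maps $R^x\rightrightarrows W\langle y_i\rangle$ agreeing modulo the PD-ideal $(py_i)$, the coefficient identity $[y^K]s_\nabla=p^{|K|}\sum_{I\leq K}(-1)^{|I|}\alpha_{I,K-I}/I!$, and the elementary bound $v_p(I!)\leq v_p(|K|!)\leq|K|-2$ for $|K|\geq 2$, $p\geq 3$ — matches the paper's computation.
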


\begin{proof}
This lemma is proved by direct calculation. First note that $(_{\cris}\V^x,\nabla)$ has a full set of solutions over $W[1/p][[x_i]]$, with flat sections given by: \begin{equation}\label{eq: flat sections}
    \displaystyle s_{\nabla} = \sum_{\vec{w}} \prod_i \frac{(-x_i)^{w_i}}{i!} \prod_i \nabla(\frac{d}{dx_i})^{w_i}(\tilde{s}),
\end{equation} 
where the sum is taken over all tuples $\vec{w} = (w_i)$ of non-negative integers. 
    \begin{enumerate}
        \item That $\nabla$ has a basis of solutions over $\cO$ follows from the Frobenius structure (Dwork's trick). The fact that the flat bundle $ (_{\cris}\V^{x},\nabla)|_{W\langle y_i \rangle}$ is trivial is a general fact about vector bundles with flat connection on $W[[x_i]]$ pulled back to $W\langle y_i \rangle$ for primes $p > 2$. Indeed, the connection matrix has entries valued in $W[[x_i]]$, and after substituting $x_i = py_i $, it is evident that the resulting expressions are contained in $W\langle y_i\rangle$, and indeed, we have $s_{\nabla}|_{W\langle y_i \rangle} \equiv \tilde{s}|_{W\langle y_i \rangle} \bmod p$. 

        \item By \cite[Lemma 1.5.2]{Kisinintegral} and \cite[E.1]{Kisinmodppoints}\footnote{As the erratum indicates, the conclusion that $\nabla \in \protect\mathrm{Lie} U \otimes \Omega^1_{R^x}$ is false without extra assumptions (for example, the point $x$ being ordinary), but the weaker claim $\nabla \in \mathrm{Lie} G \otimes \Omega^1_{R^x}$ is true.}, it follows that $\nabla \in \mathrm{Lie} G \otimes \Omega^1_{R^x}$. Therefore, $g$ is indeed a point of $G$, a-priori valued in $W[1/p][[x_i]]$. The fact that it is an $\cO$-valued point (and indeed, a $W\langle y_i\rangle$-valued point ) of $G$ follows from the first part (and the fact that $G(W\langle y_i \rangle) = \GL(W\langle y_i \rangle) \cap G(W[1/p][[x_i]])$). 

        \item This follows from Equation \eqref{eq: flat sections} and the fact that the leading term of $\nabla$ is $du$.
    \end{enumerate}
\end{proof}

With this in hand, we will now prove Proposition \ref{filts}:
\begin{proof}
    Let $h = g^{-1}$. By Lemma \ref{lem:flat sections on connection} (3), we have that $h \equiv u \bmod (y_i)^2\cap (p^2)$. We have that $\tilde{s} = h \cdot s_{\nabla}$. 

    We let $\Fildot$ denote the filtration $\Fildot(_{\cris}\V_{\tilde{x}})$. The filtration on $_{\cris}\V^x$ is given by $\widetilde{\Fildot} = \Fildot \otimes_W R^x$. Let $\Fildot_{\nabla}$ denote the horizontal continuation of $\Fildot$. We then have that $h\cdot \Fildot_{\nabla} = \widetilde{\Fildot}$.  Let $P_{\Fildot_{\nabla}} \subset G$ be the parabolic subgroup of $G$ stabilizing the filtration $\Fildot_{\nabla}$. We may write $h = u' \cdot \lambda'$, where $u' \in U^{\opp}(W\langle y_i \rangle)$ and $\lambda' \in P_{\Fildot_{\nabla}}(W\langle y_i \rangle)$ -- we have that $\widetilde{\Fildot} = u' \cdot \Fildot_{\nabla}$. Note that $u'\equiv h\mod (y_i)^2\cap (p^2)$.

    Let $U^{\opp}_{1/p} \subset \widehat{U}^{\opp,\textrm{rig}}$ denote the closed ball of radius $\frac{1}{p}$ around the identity. The element $u'$ induces a rigid-analytic map $ \Sp W\langle y_i \rangle \rightarrow \widehat{U}^{\opp,\textrm{rig}}$, and the conditions on $u'$ imply that the image is contained in $U^{\opp}_{1/p}$. Note that unipotent groups are uniquely $p$-divisible, and so the map ``dividing by $p$'' gives an isomorphism $U^{\opp}_{1/p} \rightarrow U^{\opp,\textrm{rig}}$. Here, we abuse notation and also allow $U^{\opp}$ to denote the formal scheme obtained by $p$-adic completing $U^{\opp}$. Therefore, by composing $u'$ by the ``dividing by $p$'' map, we obtain a map $u'': \Sp W[1/p]\langle y_i \rangle \rightarrow U^{\opp,\textrm{rig}}$. Both these rigid spaces admit natural integral models -- $\Spf W \langle y_i\rangle$ and $ U^{\opp}$. The congruence conditions on $u'$ imply that $u''$ is induced by a map on these integral models. It suffices to prove that $u''$ is an isomorphism, which would follow from the claim mod $p$. The congruences established in Lemma \ref{lem:flat sections on connection} together with Theorem \ref{thm: Kisin deformation} imply that $u''$ is an isomorphism modulo $(y_i)^2$, and that the coefficients in the series expansion of $u''$ in the coordinates $y_i$ are divisible by $p$ for the non-linear and non-constant terms. It is now clear $u''$ is an affine linear map mod $p$, and it suffices to check that it is an isomorphism at the tangent space of some closed point. But this follows from the fact that $u''$ is an isomorphism mod $(y_i)^2$. The proposition follows from here. 
    
    
\end{proof}

\section{The $\mu$-ordinary locus: CM lifts and Tate's isogeny theorem}\label{sec: CM stuff}
In this section, we will define the notion of $\mu$-ordinariness. This notion generalizes the notion of ordinary abelian varieties. We will then prove that the $\mu$-ordinary locus is open-dense, and that every $\mu$-ordinary point admits a lift to a special point. This generalizes the Serre-Tate canonical lift of an ordinary abelian variety (and generalizes work of Moonen for Shimura varieties of PEL type, and work of \cite{ShankarZhou} in the case of Hodge type Shimura varieties). 

We keep the notation of the previous section.
 Let $B \subset G$ be a Borel subgroup defined over $\Z_p$, and let $T\subset B\subset G$ be a maximal torus that splits over an etale extension of $\Z_p$(that $G/\Z_p$ is reductive implies that these objects exist). Let $x \in \integralShimK(\F)$. Associated to the $F$-crystal $_{\cris}\V_x$ is the Frobenius element $b_x\in G(L)$, which is well defined up to $\sigma$-conjugation by $G(W)$. Let $\nu_x \in X_*(T)$ be the dominant element that is conjugate to $\nu_{b_x}$, the Newton cocharacter associated to $b_x$ (see work of Kottwitz \cite{Kott1} and \cite{Kott2} for definitions and more details). Note that the conjugacy class of $\nu_{b_x}$ depends only on the $\sigma$-conjugacy class of $b_x$, and so the definition of $\nu_x$ is independent of the choice of representative of the $\sigma$-conjugacy class of $b_x$. We note that $\nu_x$ is defined over $\Z_p$. Let $\mu_x\in X_*(T)$ denote the dominant co-character with $b_x \in G(W) \sigma(\mu_x)(p) G(W)$. The co-character $\mu_x$ might not be defined over $\Z_p$, and let $\bar{\mu}_x = \frac{1}{[E_p:\Q_p]} \sum_{\tau \in \Gal(E_p/\Q_p)}\tau(\mu_x)$ denote the Galois average of $\mu$, where $E_p/\Q_p$ is some finite unramified extension splitting $T$. We have that $\nu_x \preceq \bar{\mu_x}$ (for example, see \cite{Gashi}), where two dominant fractional cocharacters satisfy $\mu' \preceq \mu''$ if and only if $\mu'' - \mu'$ can be written as a non-negative rational linear combination of positive co-roots. We have the following definition. 

  \begin{definition}\label{def:ord}
      \hfill \begin{enumerate}
          \item The point $x$ is said to be $\mu$-ordinary if $\nu_x = \bar{\mu}_x$. 
          \item The point $x$ is said to be ordinary if it is $\mu$-ordinary and if $\mu_x$ is defined over $\Z_p$. 
      \end{enumerate}
  \end{definition}
We remark that when $G=T$ is itself a torus, then every point is $\mu$-ordinary. 
The notion of $\mu$-ordinariness was first introduced by Rapoport in his ICM talk (see also \cite{Wortman}, \cite{MoonenSerreTate}, and \cite{ShankarZhou}). The $\mu$-ordinary locus will be open-dense if nonempty. This was done in \cite{Wortman} in the case of Shimura varieties of abelian type, when the level structure at $p$ is hyperspercial (see also \cite{Bultel}). We prove this in the case when $p$ is a large enough prime -- we remark that our proof works both in the abelian and exceptional cases. 

\begin{theorem}\label{nonemptinessofordinary}
    Let $p$ be a large enough prime and let $v\mid p$ be a prime of $E$. Then the $\mu$-ordinary locus of $\integralShimK_{\F_v}$ is non-empty.
\end{theorem}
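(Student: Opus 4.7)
The plan is to exhibit a $\mu$-ordinary point of $\integralShimK_{\F_v}$ by reducing modulo $v$ a carefully chosen CM point of $\ShimK$. First I would construct a special point $\tilde{x} \in \ShimK(\ol{\Q})$ with associated $\Q$-rational maximal torus $T \subset G$ whose Hodge cocharacter $\mu_T : \G_m \to T_{\ol{\Q}}$ is $G(\ol{\Q})$-conjugate to $\mu$ and has reflex field equal to $E$. Such $\tilde{x}$ exist by the standard density of $\Q$-rational maximal tori giving special points in the Shimura datum (Deligne), combined with a Galois-theoretic adjustment to match the reflex field.

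For $p$ sufficiently large, the torus $T$ extends to an unramified torus over $\Z_p$ splitting over an unramified extension $E_T/\Q_p$. By \Cref{shimura main}, $\tilde{x}$ extends to an integral point $\tilde{x} \in \integralShimK(\cO_F)$ for some finite unramified extension $F/E_v$, and reducing modulo the maximal ideal yields $\bar{x} \in \integralShimK(\F_q)$. I would then compute the crystalline Frobenius $b_{\bar{x}} \in G(L)$ of the $F$-crystal $_{\cris}\V_{\bar{x}}$: because the CM structure at $\tilde{x}$ equips both the \'etale and crystalline realizations with a compatible $T$-action (via the matching of \'etale and de Rham tensors in \Cref{prop: compatibility de rham crys tensors} passed through the Fontaine--Laffaille correspondence), $b_{\bar{x}}$ must factor through $T(L)$ and, up to $\sigma$-conjugation by $T(W)$, equal $\sigma(\mu_T)(p)$.

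It then follows from the definition that the Newton cocharacter at $\bar{x}$ is the Galois average $\nu_{\bar{x}} = \frac{1}{[E_T:\Q_p]}\sum_{\tau \in \Gal(E_T/\Q_p)} \tau(\mu_T)$. Since $\mu_T$ was chosen conjugate to $\mu$ with reflex field $E$, this equals $\bar{\mu}_{\bar{x}}$, so $\bar{x}$ is $\mu$-ordinary in the sense of \Cref{def:ord}.

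The main obstacle will be the third step, the explicit computation of the Frobenius at the reduction of the CM point in the possibly exceptional setting. In the Hodge-type case this follows from Shimura--Taniyama applied to the associated abelian variety, but exceptionally no such geometric object exists. Instead one must argue directly with the $G$-structure on the Fontaine--Laffaille module developed in \Cref{sec: complete local rings} (in particular \Cref{moreGstructure} and \Cref{equalfilt}): the $T$-symmetry of the Hodge structure at $\tilde{x}$ descends through the Fontaine--Laffaille functor to a $T$-symmetry of the crystal at $\bar{x}$, which constrains the Frobenius to lie in the $T(W)$-translate of $\sigma(\mu_T)(p)$ computed above.
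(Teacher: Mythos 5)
Your proposal has the right high-level shape (reduce a CM point, compute the Frobenius via a Shimura--Taniyama-style argument filtered through Fontaine--Laffaille), but the crucial step --- verifying that the reduction is actually $\mu$-ordinary --- has a genuine gap, and the condition you impose on the CM point does not repair it.

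The issue is in the comparison $\nu_{\bar x}=\bar\mu_{\bar x}$. In \Cref{def:ord} both $\nu_x$ and $\mu_x$ are normalized to be \emph{dominant} relative to a fixed $\Z_p$-rational Borel containing a fixed quasi-split maximal torus of $G_{\Q_p}$, and $\bar\mu_x$ is then the Galois average of that dominant representative $\mu_x$. You correctly note that for $b_{\bar x}=\sigma(\mu_T)(p)\in T(L)$ the Newton cocharacter is the Galois average of $\mu_T$ \emph{inside $X_*(T)$}, but that average must then be compared with the Galois average of the dominant Weyl conjugate $\mu_x$ of $\mu_T$, and these two averages do not coincide in general. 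One always has $\nu_x\preceq\bar\mu_x$, with equality exactly when no ``cancellation'' is lost in passing to dominant representatives; to force equality you need (a) the CM torus $T$ to be quasi-split at $p$, so that the $\Q_p$-Galois action on $X_*(T)$ preserves the dominant chamber and commutes with taking dominant representatives, and (b) the specific Galois conjugate of $\mu_h$ singled out by the place $w$ to be \emph{already dominant} with respect to that $\Z_p$-Borel, so that $\mu_x=\mu_T$. Neither of these is implied by ``$\mu_T$ conjugate to $\mu$''. Moreover, the auxiliary condition ``reflex field equal to $E$'' is almost never satisfiable: the reflex field of a zero-dimensional Shimura datum $(T,h)$ is the field of definition of $\mu_h$ as an element of $X_*(T)$, which contains and is typically strictly larger than the field of definition of the conjugacy class $\{\mu\}$ (already for $\cA_g$, where $E=\Q$, CM points never have reflex field $\Q$). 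Even if it were achievable, $E$-rationality of $\mu_T$ does not give quasi-splitness of $T$ at $p$, nor dominance, so it does not close the gap.

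The paper resolves exactly this difficulty by choosing a \emph{Weyl special point} in the sense of Chai--Oort: requiring the global Galois image on $X_*(T)$ to contain the full Weyl group guarantees, for any quasi-split-at-$p$ torus $T$ (existence for $p\gg0$ again via Chai--Oort), that some place $w\mid p$ of $E_T$ makes $\mu_h$ dominant relative to the $\Z_p$-Borel (this is \Cref{prop nonemptinessofmuord second reduction}); \Cref{prop nonemptiness ofmuord: first redution} then verifies that conditions (a) and (b) together give $\mu$-ordinariness. If you want to pursue your approach, you would need to replace ``reflex field $=E$'' with precisely these two conditions (quasi-split at $p$, and dominance at the chosen place) and supply a construction --- Weyl special points are the natural tool --- showing that they can be arranged simultaneously for $p$ large.
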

We will prove this result at the end of this section. 
The content of \cite[Proposition 7.2]{Wortman} is the following result: 
\begin{proposition}[Wortman]\label{prop:frobmuord}
    Let $x \in \integralShimK(\F)$ be a $\mu$-ordinary point with $ b_x, \nu_x$ and $\mu_x$ as above. Then there exists $g\in G(W)$ such that $g^{-1}b_x\sigma(g) = (\sigma \mu_x)(p)$. 
\end{proposition}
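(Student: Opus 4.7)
The plan is to give the proof in two steps, following the strategy of Wortman and using Kottwitz's classification of $\sigma$-conjugacy classes in $G(L)$.

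First, I would verify that $b_x$ and $c := \sigma(\mu_x)(p)$ represent the same element of the Kottwitz set $B(G)$. Both elements carry the same Kottwitz invariant in $\pi_1(G)_\Gamma$: for $b_x$ this is immediate from the hypothesis $b_x \in G(W) c G(W)$, and for $c$ it is a direct computation via the projection $X_*(T) \to \pi_1(G)_\Gamma$. By $\mu$-ordinariness, $\nu_{b_x} = \bar{\mu}_x$, and a direct computation shows the Newton cocharacter of $c$ is also $\bar{\mu}_x$. Since $\sigma$-conjugacy classes in $G(L)$ are determined by the pair (Kottwitz invariant, Newton cocharacter), this yields $h \in G(L)$ with $h^{-1} b_x \sigma(h) = c$.

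Second, I would upgrade $h$ to an element of $G(W)$. The key observation is that $\nu_{b_x} = \bar{\mu}_x$ is central in the Levi $M := Z_G(\bar{\mu}_x)$, which is defined over $\Z_p$, so $b_x$ is \emph{basic} in $M$. Since $\mu_x$ factors through $T \subset M$, a Hodge--Newton decomposition in the mixed-characteristic setting yields $g_1 \in G(W)$ such that $b_x' := g_1^{-1} b_x \sigma(g_1)$ lies in $M(L) \cap M(W) c M(W)$. In $M$, basic $\sigma$-conjugacy classes with fixed Cartan invariant are unique modulo $M(W)$-$\sigma$-conjugacy: one iteratively lifts modulo $p^n$ using that the twisted Lang map $g \mapsto g \cdot \mathrm{Int}(c)(\sigma g)^{-1}$ is surjective on the pro-unipotent kernel of $M(W) \to M(\F_p)$, together with Lang's theorem on the reductive quotient. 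This yields $g_2 \in M(W)$ with $g_2^{-1} b_x' \sigma(g_2) = c$, and $g := g_1 g_2 \in G(W)$ is the desired element.

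The main obstacle is the Hodge--Newton reduction step. One argues via a careful Iwahori--Bruhat analysis of the relation $h^{-1} b_x \sigma(h) = c$: decomposing $h$ via the affine Cartan decomposition and using the strict positivity of $\bar{\mu}_x$-weights on the unipotent radical of the parabolic $P_{\bar{\mu}_x}$ (outside of the Levi $M$), one shows inductively that the off-$M$ components of $h$ can be absorbed into $G(W)$. The equality $\nu_x = \bar{\mu}_x$---as opposed to a strict inequality $\nu_x \prec \bar{\mu}_x$---is essential here, as it precisely rules out the ``slope gaps'' that would otherwise obstruct integrality of $h$.
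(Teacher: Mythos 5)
Note first that the paper does not prove this proposition: it is imported verbatim from Wortman (cited as \cite[Prop.\ 7.2]{Wortman}) and used as a black box to define the canonical lift. So there is no proof in the paper to compare against; what you have written is a proposal for reproving a cited external result.

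Your overall strategy---Kottwitz classification to get a conjugator $h\in G(L)$, then an integral Hodge--Newton reduction to the Levi $M=Z_G(\bar\mu_x)$, then a (twisted) Lang argument for the basic case---is the standard architecture for statements of this shape and is plausibly close to Wortman's. Step~1 is correct, and step~1 is also logically superfluous once step~2 is done directly: from $b_x\in G(W)\,\sigma(\mu_x)(p)\,G(W)$ one may $\sigma$-conjugate by the left $G(W)$-factor of a Cartan decomposition and reduce immediately to $b_x=ck$ with $k\in G(W)$, $c=\sigma(\mu_x)(p)$, without ever invoking a $G(L)$-conjugator.

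The gap is in step~2, which you correctly identify as the crux but only gesture at. Two specific things you should fix. First, the ``strict positivity of $\bar\mu_x$-weights'' is not quite the right lever: what controls the $p$-adic behavior of $\operatorname{Int}(c)$ in the inductive absorption is the pairing with $\sigma(\mu_x)$, not with $\bar\mu_x$, and on the roots of $U^{\pm}\setminus M$ those pairings are only \emph{weakly} positive/negative; strictness only holds after averaging over a Frobenius period, and the induction must be set up to use the full period (or a power of $c$) rather than a single Frobenius step. Second, you are missing a key structural fact that both makes your step~3 go through and feeds the integral Hodge--Newton argument: because each $\sigma^i(\mu_x)$ is dominant (the Borel $B$ being defined over $\Z_p$) and $\sum_i\langle\alpha,\sigma^i(\mu_x)\rangle=n\langle\alpha,\bar\mu_x\rangle=0$ for every root $\alpha$ of $M$, the $\mu$-ordinary hypothesis forces $\langle\alpha,\sigma^i(\mu_x)\rangle=0$ for \emph{all} $i$ and all roots of $M$. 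Hence $c=\sigma(\mu_x)(p)$ lies in $Z(M)(L)$, not merely ``basic in $M$''. With this in hand your ``twisted'' Lang map $g\mapsto g\cdot\operatorname{Int}(c)(\sigma g)^{-1}$ on $M(W)$ is in fact the untwisted Lang map $g\mapsto g\sigma(g)^{-1}$, and its surjectivity is then the routine dévissage you describe. Without noting this centrality, the claim that $\operatorname{Int}(c)$ even preserves $M(W)$ (needed for the twisted Lang map to be a well-defined endomorphism of $M(W)$) is unjustified. As written, step~2 remains a sketch of a nontrivial lemma rather than a proof of it.
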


We now change coordinates on (by $\sigma$-conjugating by an element of $G(W)$) and we may assume that $b_x = \sigma(\mu_x) p$.  The following proposition defines the canonical lift of a $\mu$-ordinary point. 
\begin{proposition}\label{prop: canonical lift}
    Consider the Fontaine-Laffaille module with underlying $F$-crystal $_{\cris}\V_x$, and with the filtration defined by the co-character $\mu_x$. This corresponds to a (necessarily) unique point of $\ShimK^x$. 
\end{proposition}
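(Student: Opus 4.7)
The plan is to apply Proposition \ref{filts}, which parametrizes $R^x(W)$ by $\widehat{U}^{\opp}(W)$ via the map $g\mapsto g\cdot\Fildot(_{\cris}\V_{\tilde x_0})$ on filtrations, for a fixed base lift $\tilde x_0$. I will show that the Fontaine-Laffaille module of the statement lies in the image of this parametrization, giving the unique desired point of $\ShimK^x$.

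The first thing to verify is that the prescribed data actually assembles into a Fontaine-Laffaille module with $G$-structure. By Proposition \ref{prop:frobmuord}, after $\sigma$-conjugating the trivialization by an element of $G(W)$ I may assume $b_x=\sigma(\mu_x)(p)$. Let $_{\cris}\V_x=\bigoplus_i V_i$ be the weight decomposition under $\mu_x$ and set $\Fil^j_{\mu_x}=\bigoplus_{i\geq j}V_i$. Then $\phi=\sigma(\mu_x)(p)\sigma$ restricts on $V_i$ to a $\sigma$-linear bijection $V_i\xrightarrow{\sim}p^i\sigma(V_i)$; summing gives $\sum_i p^{-i}\phi(\Fil^i_{\mu_x})={_{\cris}\V_x}$, which is the strong divisibility axiom. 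Since $\mu_x$ factors through $G$ and $b_x\in G(W[1/p])$, the tensors $s_\alpha$ lie in $\Fil^0$ and are Frobenius-fixed, supplying the $G$-structure.

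The central step is to verify that the filtration $\Fildot_{\mu_x}$ lies in the orbit $\widehat{U}^{\opp}(W)\cdot\Fildot(_{\cris}\V_{\tilde x_0})$. Because $\widehat{U}^{\opp}$ is the formal completion of $U^{\opp}$ at the identity mod $p$ and $U^{\opp}\cdot P$ is the big cell of $G$, the map $g\mapsto g\cdot\Fildot(_{\cris}\V_{\tilde x_0})$ identifies $\widehat{U}^{\opp}(W)$ with the set of $W$-filtrations on $_{\cris}\V_x$ of Hodge type whose mod-$p$ reduction equals the Hodge filtration $\Fildot_x$ coming from the specialized Fontaine-Laffaille module at $x$. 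So the problem reduces to checking that $\Fildot_{\mu_x}\otimes k=\Fildot_x$. This is precisely where $\mu$-ordinarity is used: with $b_x=\sigma(\mu_x)(p)$, Frobenius mod $p$ annihilates $V_{\geq 1}\otimes k=\Fil^{\geq 1}_{\mu_x}\otimes k$ and restricts to a $\sigma$-linear isomorphism on $V_0\otimes k$; combined with strong divisibility for the universal Fontaine-Laffaille module specialized at $x$, this pins down $\Fildot_x$ and forces it to coincide with $\Fildot_{\mu_x}\otimes k$.

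The main obstacle will be this last identification. The cocharacter $\mu_x$ was introduced via the Newton-Hodge combinatorics of $b_x$ (up to $G(W)$-conjugation), whereas the Hodge filtration at $x$ is a geometric object inherited from $_{\cris}\V$; matching them requires carefully threading through the trivializations of Corollaries \ref{cor:Gstructure} and \ref{moreGstructure} and using that the $\mu$-ordinarity condition $\nu_x=\bar\mu_x$ forces the Newton-Hodge decomposition of $_{\cris}\V_x$ to split the Hodge filtration modulo $p$. Once this mod-$p$ comparison is in hand, the existence and uniqueness of the lift are immediate from Proposition \ref{filts}.
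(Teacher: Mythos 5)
Your proof is correct and follows essentially the same route as the paper's: after normalizing $b_x=\sigma(\mu_x)(p)$ via Proposition~\ref{prop:frobmuord}, both proofs reduce to showing that $\Fildot_{\mu_x}$ agrees mod $p$ with the Hodge filtration of an arbitrary lift, deduce (via the big-cell argument for parabolics agreeing mod $p$) that the two are conjugate by an element of $\widehat{U}^{\opp}(W)$, and then invoke Proposition~\ref{filts}. Your explicit verification of the strong divisibility axiom for $(\cris\V_x,\Fildot_{\mu_x})$ is harmless but redundant — once the filtration is shown to lie in the $\widehat{U}^{\opp}(W)$-orbit, Proposition~\ref{filts} already guarantees it underlies a valid Fontaine--Laffaille module.
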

\begin{proof}
    This result follows almost immediately from Proposition \ref{prop frob filt}. Indeed, let $\tilde{x}$ denote some $W$-valued lift of $x$, and let $\mu$ denote a co-character inducing the filtration on $_{\cris}\V_{\tilde{x}}$. The Fontaine-Laffaille conditions imply that filtrations induced by $\mu_x$ and $\mu$ agree on $_{\cris}\V_{\tilde{x}} \bmod p$. 

    Consider the parabolic subgroups of $P_x\subset G$ and $P_{\mu} \subset G$ induced by $\mu_x$ and $\mu$ respectively. That the filtrations agree modulo $p$ implies that $P_x = P_{\mu} \bmod p$. By \cite[Corollary 5.2.7]{Conrad:ReductiveSchemes}, the two parabolics are conjugate over some etale extension of $W$, and therefore over $W$ itself. As the two parabolics agree modulo $p$, they are conjugate by an element $g\in \hat{U}^{\opp}(W)$ (with $\hat{U}^{\opp}$ the opposite unipotent of $\mu$). Therefore, $\Fil_x = g \cdot \Fil_{\mu}$, and so the proposition follows by Proposition \ref{prop frob filt}. 
    
\end{proof}

\begin{definition}\label{def: canonical lift}
Let $x\in \Shim(\F)$ be a $\mu$-ordinary point. We let $x^{\can}$ denote $W$-valued lift constructed in Proposition \ref{prop: canonical lift}, and call it canonical lift of $x$. 
\end{definition}
In the ordinary case, this construction directly generalizes the Serre-Tate canonical lift of an ordinary abelian variety. More generally, this generalizes the notion of canonical lifts of $\mu$-ordinary points of PEL-type Shimura varieties defined by Moonen in \cite{MoonenSerreTate}, and of $\mu$-ordinary points of Hodge-type Shimura varieties defined in \cite{ShankarZhou}. Analogous to those cases, we will shortly prove that $x^{\can}$ is actually a CM point. For now, we remark that the canonical lift is functorial in prime-to-$p$ Hecke correspondence. Indeed, prime-to-$p$ Hecke correspondences are finite etale correspondences on the integral model, and therefore every $\tilde{y}\in \tau(\tilde{x})$ is a $W$-valued point of $\integralShimK$ whenever $\tilde{x}$ is. Further, the action of prime-to-$p$ Hecke correspondences on $W$-valued points preserves the isomorphism class of the Fontaine-Laffaille modules attached to these points. Therefore, we have the immediate corollary:

\begin{corollary}\label{prop: functoriality of canonical lift}
    Let $x^{\can}$ denote the canonical lift of a $\mu$-ordinary point $x$, and let $\tilde{y} \in \tau(x^{\can})$ where $\tau$ is a prime-to-$p$ Hecke correspondence. Let $y$ be the specialization of $\tilde{y}$. Then, $\tilde{y} = y^{\can}$. In particular, $\tilde{y} = x^{\can}$ if $y = x$. 
\end{corollary}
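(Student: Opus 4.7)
The plan is to leverage the uniqueness in \Cref{prop: canonical lift}: the canonical lift of a $\mu$-ordinary point is characterized as the unique $W$-valued lift whose Fontaine--Laffaille module has underlying $F$-crystal equal to that of the point, with filtration induced by its dominant cocharacter. So it suffices to show the Fontaine--Laffaille module at $\tilde{y}$ has this form with respect to $y$.

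First, I would invoke \Cref{thm:heckeextends}, which guarantees that $\tau$ extends to a finite étale correspondence on $\integralShimK$ over $\cO_E[1/pN]$. Consequently, since $x^{\can}\in \integralShimK(W)$, every element of $\tau(x^{\can})$ is likewise a $W$-valued point, so $\tilde{y}\in \integralShimK(W)$. In particular, $\tilde{y}$ is a lift of the mod-$p$ point $y$, and the $F$-crystal $_{\cris}\V_{\tilde{y}}$ restricts to $_{\cris}\V_y$ on the special fiber. Meanwhile, the finite étale Hecke correspondence acts on the relevant $\ell$-adic (and by Fontaine--Laffaille, crystalline) local systems by change of trivialization as in \Cref{Lemma Hecke description}, so the associated Fontaine--Laffaille modules at $\tilde{y}$ and at $x^{\can}$ are isomorphic as Fontaine--Laffaille modules respecting the tensors $s_\alpha$.

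This isomorphism in particular identifies the underlying $F$-crystals, so $_{\cris}\V_y \cong {_{\cris}\V_x}$ as $F$-crystals with $G$-structure; comparing Frobenius elements via \Cref{cor frob filt} this forces $\mu_y = \mu_x$ as dominant cocharacters. Moreover, the isomorphism of Fontaine--Laffaille modules transports the filtration of $_{\cris}\V_{x^{\can}}$ (which by definition of the canonical lift is induced by $\mu_x$) to the filtration on $_{\cris}\V_{\tilde{y}}$. Thus the latter is induced by $\mu_x = \mu_y$, which by the uniqueness clause of \Cref{prop: canonical lift} gives $\tilde{y} = y^{\can}$. The last sentence of the corollary is the special case $y=x$.

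The main subtlety I expect is bookkeeping for the various $G(W)$-ambiguities: the cocharacter $\mu_x$ and Frobenius element $b_x$ are only defined up to $\sigma$-conjugation, and the identifications $\iota$, $\iota_{\tilde{x}}$ of \Cref{moreGstructure} are only canonical up to $P(W)$, so one must verify that the Hecke-induced isomorphism of Fontaine--Laffaille modules gives an equality of filtrations in the canonical moduli-theoretic sense of \Cref{filts}, not merely an equality up to some parabolic ambiguity. This is handled by the fact that the characterization in \Cref{prop: canonical lift} only involves the filtration on $_{\cris}\V_y$ itself (not a choice of trivialization), so the $P(W)$-ambiguity is immaterial.
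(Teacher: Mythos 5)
Your proof is correct and takes essentially the same route as the paper. The paper's argument for this corollary is the two-sentence remark immediately preceding it: prime-to-$p$ Hecke correspondences extend to finite \'etale correspondences on the integral model (so $\tilde y$ is automatically a $W$-point), and the correspondence preserves the isomorphism class of the attached Fontaine--Laffaille module; the corollary is then declared ``immediate.'' You reproduce exactly this chain, merely unwinding the last step: the induced isomorphism of Fontaine--Laffaille modules identifies the underlying $F$-crystals with $G$-structure, hence matches $\mu_x$ with $\mu_y$ and transports the $\mu_x$-split filtration on $_{\cris}\V_{x^{\can}}$ to a $\mu_y$-split filtration on $_{\cris}\V_{\tilde y}$, so $\tilde y$ satisfies the defining property of $y^{\can}$ from \Cref{prop: canonical lift}. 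Your closing remark about why the $P(W)$- and $\sigma$-conjugation ambiguities are harmless (the characterization of the canonical lift is intrinsic, depending only on the filtration of $_{\cris}\V_y$ itself rather than on a chosen trivialization) is a correct observation that the paper leaves implicit.
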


We can now apply this in conjunction with Proposition \ref{prop:charecterization of special points} and Corollary \ref{cor:everypointCM} to obtain the following result:
\begin{theorem}\label{CMlifts}
    Every $\mu$-ordinary point lifts to a characteristic-zero special point.
\end{theorem}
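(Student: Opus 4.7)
The plan is to combine Proposition \ref{prop:charecterization of special points}, which characterizes complex special points via Hecke stability in a finite set, with the mod-$p$ special-point statement of Corollary \ref{cor:everypointCM} and the Hecke-functoriality of the canonical lift (Corollary \ref{prop: functoriality of canonical lift}). Concretely, given a $\mu$-ordinary $x\in \integralShimK(\F)$ with canonical lift $x^{\can}\in \integralShimK(W)$, the goal is to produce, for infinitely many primes $\ell\neq p$ with $G_{\Q_\ell}$ split, a maximal $\Q_\ell$-split torus $T_\ell\subset G$ and a finite set of $\bbC$-points containing a point of $\tau_h(x^{\can})$ for every $h\in T_\ell(\Q_\ell)$.

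First I would apply Corollary \ref{cor:everypointCM} to $x$, which yields a finite set $S=\{x_1=x,x_2,\ldots,x_n\}\subset \integralShimK(\F_q)$ and, for infinitely many primes $\ell$ at which $G$ is split, a maximal $\Q_\ell$-split torus $T_\ell\subset G$ satisfying $\tau_h(x)\cap S\neq \emptyset$ for every $h\in T_\ell(\Q_\ell)$. Next I would observe that every $x_i$ that actually appears as a Hecke translate of $x$ is itself $\mu$-ordinary: the prime-to-$p$ Hecke correspondences $\tau_h$ are \'etale on $\integralShimK_{\cO_v}$ by Theorem \ref{thm:heckeextends}, and therefore they preserve the isomorphism class (up to $G(W)$-conjugacy) of the associated $F$-crystal, hence the invariants $\nu_x$ and $\bar\mu_x$. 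Shrinking $S$ to its $\mu$-ordinary subset, each $x_i\in S$ admits a canonical lift $x_i^{\can}\in \integralShimK(W)$ in the sense of Definition \ref{def: canonical lift}.

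Third, I would lift the Hecke stability from $x$ to $x^{\can}$ using Corollary \ref{prop: functoriality of canonical lift}: any $\tilde y\in \tau_h(x^{\can})$ is a $W$-valued point that specializes to some $y\in \tau_h(x)$, and by that corollary $\tilde y=y^{\can}$. Combined with the previous step, this gives
\[
\tau_h(x^{\can})\cap \{x_1^{\can},\ldots,x_n^{\can}\}\neq \emptyset
\qquad \text{for every } h\in T_\ell(\Q_\ell),
\]
for each $\ell$ in the infinite set produced above. Fixing an embedding $W[1/p]\hookrightarrow \bbC$ and invoking the characterization in Proposition \ref{prop:charecterization of special points}, the image of $x^{\can}$ in $\integralShimK(\bbC)$ is a special point.

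I do not expect a serious technical obstacle: all inputs (mod-$p$ specialness, \'etale extension of prime-to-$p$ Hecke correspondences, and the functoriality of the canonical lift) are already in place, so the argument is essentially an assembly. The one substantive observation to check carefully is the Hecke-invariance of $\mu$-ordinariness, which follows directly from the \'etaleness of $\tau_h$ on the integral model and the resulting canonical identification of the Fontaine--Laffaille data at points in a Hecke orbit.
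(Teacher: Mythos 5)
Your proposal is correct and follows exactly the same route as the paper, whose proof is a one-line assembly of Proposition \ref{prop:charecterization of special points}, Corollary \ref{cor:everypointCM}, and Corollary \ref{prop: functoriality of canonical lift}; you simply fill in the details, including the (correct) observation that prime-to-$p$ Hecke translates of a $\mu$-ordinary point are $\mu$-ordinary, so the finite set can be shrunk to its $\mu$-ordinary part without losing the Hecke-stability property.
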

We obtain the following corollary. 
\begin{corollary}\label{cor: crystalline frob ss}
    Let $x \in \integralShimK(\F_q)$ be $\mu$-ordinary, with $q = p^n$, and let $_{\cris}\varphi_x$ be the crystalline (semilinear) Frobenius endomorphism on $\D_x$. Then, the linear map $_{\cris}\varphi_x^n$ is semisimple. 
\end{corollary}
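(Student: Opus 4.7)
The plan is to show that, after a change of basis, the matrix of $_{\cris}\varphi_x^n$ lies in a maximal torus of $G$, from which semisimplicity follows automatically. Since $x$ is $\mu$-ordinary, Proposition \ref{prop:frobmuord} furnishes $g\in G(W)$ (with $W=W(\overline{\F}_p)$) such that $g^{-1}b_x\,\sigma(g)=\sigma(\mu_x)(p)$, where $\mu_x\in X_*(T)$ for a maximal torus $T\subset G$ split by a finite unramified extension of $\Z_p$. Conjugating the trivialization of Corollary \ref{cor:Gstructure} by $g$ is a $W$-linear change of basis of $\D_x\otimes_{W(\F_q)}W$ and preserves the semisimplicity of every $W$-linear endomorphism, so we may assume from the outset that $b_x=\sigma(\mu_x)(p)$.

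In this basis the matrix of the $W(\F_q)$-linear endomorphism $_{\cris}\varphi_x^n$, after base change to $W$, is the norm
\[
N(b_x)\;=\; b_x\,\sigma(b_x)\cdots\sigma^{n-1}(b_x)\;=\;\prod_{i=1}^n \sigma^{i}(\mu_x)(p),
\]
because the $i$-fold iterate of a $\sigma$-semilinear operator has matrix $b_x\sigma(b_x)\cdots\sigma^{i-1}(b_x)$ and $\sigma^n$ acts trivially on $W(\F_q)$. The torus $T$ is $\sigma$-stable (being defined over $\Z_p$), so each $\sigma^i(\mu_x)(p)$ lies in $T(W)$; these elements commute pairwise since $T$ is abelian, and their product is a single element $t\in T(W)$.

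Any faithful $G$-representation decomposes, after restriction to $T$ and base change to a splitting field of $T$, into a direct sum of $T$-weight spaces, on each of which $t$ acts by a scalar. Hence $t$ acts semisimply on $\D_x\otimes W[1/p]$. Since the minimal polynomial of a linear operator on a finite-dimensional vector space is invariant under field extension, and semisimplicity over a characteristic-zero field is equivalent to the minimal polynomial being squarefree, semisimplicity descends from $\D_x\otimes W[1/p]$ to $\D_x[1/p]$. The main step is the reduction of $b_x$ to an element of the torus via Proposition \ref{prop:frobmuord}; semisimplicity is automatic thereafter, and the only minor bookkeeping is that the cocharacter $\mu_x$ may be defined only over an unramified extension of $\Z_p$, which is harmless as the argument takes place inside the $\sigma$-stable $T$.
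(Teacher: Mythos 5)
Your approach is genuinely different from the paper's. The paper derives the corollary from Theorem~\ref{CMlifts}: since the $\mu$-ordinary point lifts to a characteristic-zero special point, the crystalline $q$-Frobenius $_{\cris}\varphi_x^n$ is identified (via the crystalline--de~Rham comparison, as in the proof of Theorem~\ref{thm: Tate isogeny}) with the realization of an element $\alpha$ of a $\Q$-torus $T_x\subset G$, and semisimplicity follows because $\alpha$ lies in a torus over a field. You instead try to deduce semisimplicity directly from Proposition~\ref{prop:frobmuord}, without invoking the canonical lift. That would be a welcome simplification if it worked --- but there is a gap.

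The gap is in the sentence ``Conjugating the trivialization \ldots so we may assume from the outset that $b_x=\sigma(\mu_x)(p)$.'' Proposition~\ref{prop:frobmuord} supplies $g\in G(W)$ with $W=W(\overline{\F}_p)$; there is no reason $g$ lies in $G(W(\F_q))$. Conjugating the basis of $\D_x\otimes_{W(\F_q)}W$ by $g$ changes the matrix of the $\sigma$-\emph{semilinear} operator $\varphi_x$ from $b_x$ to $b_x'=g^{-1}b_x\sigma(g)$, and hence changes the matrix of the $\sigma^n$-semilinear operator $\varphi_x^{\otimes n}\otimes\sigma^n$ from $N(b_x)$ to $N(b_x')=g^{-1}N(b_x)\sigma^n(g)$. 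But what you need to control is the $W$-\emph{linear} operator $\varphi_x^n\otimes\mathrm{id}$, whose matrix transforms by honest conjugation, $N(b_x)\mapsto g^{-1}N(b_x)g$. These two expressions, $g^{-1}N(b_x)\sigma^n(g)$ and $g^{-1}N(b_x)g$, coincide only when $\sigma^n(g)=g$, i.e.\ when $g\in G(W(\F_q))$, which is not guaranteed. So after your change of basis the matrix of $\varphi_x^n\otimes\mathrm{id}$ is \emph{not} the torus element $\prod_{i=1}^n\sigma^i(\mu_x)(p)$, and the conclusion does not follow.

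Concretely, writing $h:=\sigma^n(g)g^{-1}$, the honest conjugate is $gN(b_x)g^{-1}=N(b_x')\cdot h$, and one checks from $\sigma^n(b_x)=b_x$ and $\sigma^n(\mu_x)=\mu_x$ that $h$ satisfies $b_x'^{-1}hb_x'=\sigma(h)$, i.e.\ $h\in J_{b_x'}(\Q_p)$, the $\sigma$-centralizer group. To finish along your lines one would have to show that $N(b_x')\cdot h$ is semisimple for every $h\in J_{b_x'}(\Q_p)$. In the ordinary $\GL_2$ case the cocycle condition forces $h$ to be diagonal and the product is visibly semisimple, but in general $J_{b_x'}$ is an inner form of the Levi centralizing $\nu_{b_x'}=\bar\mu_x$, not a torus, and the required statement is not automatic. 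This is precisely the extra input that the paper's CM-lift argument supplies: it produces a $\Q$-rational torus element realizing $\varphi_x^n$, sidestepping the descent problem from $W(\overline{\F}_p)$ to $W(\F_q)$ entirely.
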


\subsection{Non-emptiness of the $\mu$-ordinary locus}
We will now prove that the $\mu$-ordinary locus is non-empty at all large enough primes $v$ of the reflex field $E$, by constructing zero-dimensional Shimura varieties that specialize mod $v$ to $\mu$-ordinary points. It suffices to treat the case of adjoint groups, so we assume throughout that $G$ is adjoint. Let $(T,h) \rightarrow (G,X)$ denote an embedding of Shimura data such that the image of $T$ is a maximal torus of $G$. Let $E_T$ denote the reflex field of $(T,h)$ -- note that this is just the field of definition of the Hodge co-character $\mu_h$ associated to $h$. Let $w$ be a place of $E_T$ dividing the place $v$ of $E$. Let $p$ denote the rational prime that $v$ divides. 
We make the following definition. 

\begin{definition}\label{def: quasisplit torus}
    Let $T'\subset G'$ denote a maximal torus inside a reductive group defined over $\Z_p$. We say that $T'$ is \emph{quasi-split} if $T'$ is contained in a Borel subgroup $B\subset G$ defined over $\Z_p$. 
\end{definition}
We remark that quasi-split tori always exist, they are all conjugate to each other over $\Z_p$, and a torus inside a $\Z_p$-reductive group is quasi-split if and only if it contains a maximal split torus. 

\begin{proposition}\label{prop nonemptiness ofmuord: first redution}
The zero-dimensional Shimura variety modulo $w$ induces a $\mu$-ordinary of $\integralShimK_{\F_v}$ if the following two conditions are satisfied:

\begin{enumerate}
    \item $T$ is quasi-split at $p$.
    \item The co-character $\mu_h \in X_*(T_{E_{T,w}})$ is dominant with respect to a Borel subgroup of $G$ defined over $\Z_p$.
\end{enumerate}
\end{proposition}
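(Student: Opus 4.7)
The plan is to realize a $\mu$-ordinary point as the reduction of a CM point coming from the embedding $(T,h)\hookrightarrow (G,X)$, and to read off the Newton and Hodge invariants directly from the induced $T$-structure on the associated Fontaine--Laffaille module.

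First I would lift the CM datum integrally. A closed point $y$ of the zero-dimensional Shimura variety $S_{K_T}(T,h)$ gives an $E_T$-point of $\ShimK$ via the functorial map of Shimura varieties. For $p$ large, $T$ admits a reductive model over $\Z_p$ and $y^*({}_{\et}\V_\ell)$ is unramified at $w$ for each $\ell\neq p$, so the extension property of \Cref{shimura main} produces a lift $\tilde y \in \integralShimK(\cO_{E_T,w})$ whose reduction I denote $\bar y\in \integralShimK(\F_v)$. Next I would exploit the $T$-structure on the crystal: since $\tilde y$ factors through the natural integral spreading out of $S_{K_T}(T,h)$, both the principal $\mathcal{G}$-bundle $\mathscr{E}|_{\tilde y}$ and the parabolic reduction $\mathscr{P}|_{\tilde y}$ reduce to a principal $T$-bundle, and the compatibility between de Rham and crystalline realizations of the automorphic tensors (\Cref{prop: compatibility de rham crys tensors}) forces the same $T$-structure on ${}_{\cris}\V_{\tilde y}$. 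Trivializing this $T$-bundle, the crystalline Frobenius becomes an element $b_{\bar y}\in T(L)\subset G(L)$, and matching the filtration (induced by $\mu_h$ via $\mathscr{P}$) against the strong divisibility condition forces $b_{\bar y}$ to be $T(W)$-$\sigma$-conjugate to $\mu_h(p)$---the classical form of CM Frobenius.

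Finally I would verify $\mu$-ordinariness directly. Let $L_0$ be an unramified extension of $\Q_p$ of degree $f$ over which $T$ splits, so the $\Gal(L_0/\Q_p)$-orbit of $\mu_h$ has size $[E_{T,w}:\Q_p]$; a direct Newton-polygon computation with the $\sigma$-conjugacy class of $\mu_h(p)$ in $T(L)$ then yields
\[\nu_{\bar y}=\frac{1}{f}\sum_{i=0}^{f-1}\sigma^i(\mu_h)=\frac{1}{[E_{T,w}:\Q_p]}\sum_{\tau\in \Gal(E_{T,w}/\Q_p)}\tau(\mu_h)=\bar\mu_h.\]
For $\mu_{\bar y}$, the Cartan decomposition reduces the condition $b_{\bar y}\in G(W)\sigma(\mu_{\bar y})(p)G(W)$ to the assertion that $\sigma(\mu_{\bar y})$ is the dominant $G$-conjugate of $\mu_h$, which by condition (2) is $\mu_h$ itself. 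The $\sigma$-stability of the $\Z_p$-Borel $B$ (from condition (1)) makes $\sigma^{-1}(\mu_h)$ also dominant, so $\mu_{\bar y}=\sigma^{-1}(\mu_h)$; its Galois average is again $\bar\mu_h$. Combining, $\nu_{\bar y}=\bar\mu_{\bar y}$, so $\bar y$ is $\mu$-ordinary.

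The main obstacle is the second step: upgrading the abstract $G$-structure carried by ${}_{\cris}\V$ over the Shimura variety to an explicit $T$-structure at the CM point and pinning down $b_{\bar y}=\mu_h(p)$ in closed form. The essential inputs are the integral parabolic reduction $\mathscr{P}\to\mathscr{E}$, the compatibility of crystalline and de Rham tensors at CM points (\Cref{prop: compatibility de rham crys tensors}, ultimately via Blasius--Wintenberger \cite{Blasius}), and the Fontaine--Laffaille strong divisibility condition; together these pin down $b_{\bar y}$ up to $\sigma$-conjugation by $T(W)$, after which the Newton and Hodge computations above are purely combinatorial.
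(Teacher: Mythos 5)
Your proof is correct and follows essentially the same route as the paper's: reduce to the Newton computation in $T$ (where every point is $\mu$-ordinary, so $\nu$ is the Galois average of $\mu_h$), then use condition (2) to identify $\mu_h$ with the dominant Hodge cocharacter $\mu_x$ and condition (1) (rationality of the Borel $B$ over $\Z_p$) to see that the Galois average $\bar\mu_h$ is itself dominant, hence equal to $\nu_x$. Your write-up is substantially more detailed — the paper elides the lifting of the CM point to $\integralShimK(\cO_{E_T,w})$ via the extension property and the explicit Fontaine--Laffaille/crystalline identification of $b_{\bar y}$ with $\mu_h(p)$ up to $\sigma$-conjugacy — but the mathematical content and the role played by the two hypotheses are the same.
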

\begin{proof}
As remarked earlier, the mod $w$ zero-dimensional Shimura variety is $\mu$-ordinary for the group $T$. Therefore, we have that the Newton co-character is the Galois average of $\mu_h$. However, in order to show that we also have $\mu$-ordinariness for the group $G$, we must show the same equality but after replacing both the Newton and the Hodge co-character with their dominant conjugates (dominant with respect to some Borel subgroup defined over $\Q_p$). However, $\mu_h$ is dominant by definition, and as $B$ is defined over $\Q_p$, so is its Galois average. The result follows. 
\end{proof}

The absolute Galois group of $\Q$ acts on $X_*(T)$. We recall Chai and Oort's definition from \cite{ChaiOortJacobians}.

\begin{definition}
    We say that $(T,h)$ is a Weyl special point if the image of the Galois action $\Gal(\overline{\Q}/\Q)$ on the co-character lattice $X_*(T)$ contains the Weyl group.
\end{definition}
Given this setup, let $E(G)$ denote the finite extension of $\Q$ such that the image of the Galois action $\Gal(\overline{E(G)}/E(G))$ equals the Weyl group. Let $E'_T$ denote the splitting field of $T$. Note that $E(G)$ contains $E$ and $E'_T$ contains $E_T$.

\begin{proposition}\label{prop nonemptinessofmuord second reduction}
    Let $(T,h)$ be as above. Suppose that $T$ is quasi-split at $p$, and that $(T,h)$ is a Weyl special point. Then there exists a place $w$ of $E_{T}$ that satisfies the second condition of Proposition \ref{prop nonemptiness ofmuord: first redution}. 
\end{proposition}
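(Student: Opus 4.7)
The plan is to use the Weyl-special hypothesis directly. Since the image in $\Aut(X_*(T))$ of the Galois action contains the full Weyl group, the Galois orbit of $\mu_h$ contains its entire Weyl orbit, so some Galois conjugate of $\mu_h$ will be dominant for any fixed Borel $B \supset T_{\Z_p}$ over $\Z_p$; I will then check that the corresponding place of $E_T$ realizes condition (2) of Proposition~\ref{prop nonemptiness ofmuord: first redution}.

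Concretely, I will proceed as follows. Using that $T$ is quasi-split at $p$, I choose a Borel subgroup $B \subset G_{\Z_p}$ containing $T_{\Z_p}$, and let $C_B \subset X_*(T)_{\R}$ be its dominant Weyl chamber. Because $G$ and $T$ are defined over $\Q$, the cocharacter lattice $X_*(T)$, the Weyl group $W := W(G,T)$, and the $W$-action on $X_*(T)$ are canonical; the base field enters only via the notion of dominance. Pick any $w \in W$ with $w \mu_h \in C_B$; the Weyl-special hypothesis then supplies some $\sigma \in \Gal(\overline{\Q}/\Q)$ acting on $X_*(T)$ as $w$, so $\sigma \mu_h = w\mu_h$ lies in $C_B$.

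Next, I fix a reference embedding $\overline{\Q} \hookrightarrow \overline{\Q}_p$. The Galois orbit of $\mu_h$ in $X_*(T_{\overline{\Q}}) = X_*(T_{\overline{\Q}_p})$ is in canonical bijection with $\Hom_\Q(E_T, \overline{\Q}_p)$ (via $\iota \mapsto \iota_*\mu_h$), which surjects onto the places of $E_T$ over $p$ with fibers the $\Gal(\overline{\Q}_p/\Q_p)$-orbits. The element $\sigma \mu_h$ corresponds to some embedding $\iota_\sigma$, hence to a place $w_0$ of $E_T$. Because $B$ is defined over $\Z_p$, the chamber $C_B$ is $\Gal(\overline{\Q}_p/\Q_p)$-stable, so $C_B$-membership depends only on $w_0$ and not on the chosen representative embedding. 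Thus $\mu_h \in X_*(T_{E_{T,w_0}})$ is $B$-dominant, verifying condition (2).

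I do not expect a serious obstacle. The only thing to watch is the bookkeeping linking the Galois orbit of $\mu_h$, the embeddings $E_T \hookrightarrow \overline{\Q}_p$, and the places of $E_T$ above $p$, together with the $\Gal(\overline{\Q}_p/\Q_p)$-invariance of $C_B$ which lets the dominance condition descend from embeddings to places. Both points are formal consequences of the definition of $E_T$ as the field of definition of $\mu_h$ and of $B$ being defined over $\Z_p$, so the argument reduces essentially to applying the Weyl-special hypothesis.
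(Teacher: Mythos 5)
Your argument is correct and takes essentially the same route as the paper's. Both proofs hinge on the same two facts: (i) the Weyl-special hypothesis supplies a Galois element $\sigma$ whose action on $X_*(T)$ is a prescribed Weyl element, and (ii) twisting $\mu_h$ by $\sigma$ corresponds to transporting the place of $E_T$, so that dominance of $\sigma\mu_h$ at the fixed reference place becomes dominance of $\mu_h$ at the place $\sigma^{-1}$ carries it to. You phrase this transport in terms of embeddings $E_T\hookrightarrow\overline{\Q}_p$ directly, whereas the paper first passes to the splitting field $E'_T$ and the field $E(G)$ over which Galois acts exactly by $W$, and then applies the place-shifting identity $\tau(\mu_h)^{w'}\leftrightarrow\mu_h^{\tau^{-1}(w')}$; the bookkeeping is the same.

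One thing you should make explicit: for the intended application (\Cref{nonemptinessofordinary}), the place $w$ produced must lie over the given place $v$ of $E$, and your write-up never mentions $v$. This is in fact automatic in your construction: since $\sigma$ acts on $X_*(T)$ as an element of the Weyl group, $\sigma$ lies in $\Gal(\overline{\Q}/E(G))$, hence fixes $E(G)\supset E$, and therefore $w_0=\sigma^{-1}(w_{\mathrm{ref}})$ restricts to the same place of $E$ as $w_{\mathrm{ref}}$ does. The paper makes this visible by choosing $\tau\in\Gal(E'_T/E(G))$ explicitly; you should record the analogous observation so that the conclusion feeds correctly into \Cref{prop nonemptiness ofmuord: first redution}.
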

\begin{proof}
    It suffices to replace $E$, and $E_T$ with $E(G)$, and $E'_T$ respectively. Abusing notation by letting $v$ denote a place of $E(G)$ dividing the original place of $E$, we must find a place $w\mid v$ of $E'_T$ such that $\mu_h \in X_*(T_{E'_{T,w}})$ satisfies the positivity condition with respect to a $\Z_p$-Borel.  Given a co-character $\mu$ defined over $E'_T$ and a place $w\mid v$, we let $\mu^w$ denote the base-change of $\mu$ to $E_{T,w}$. We also fix a choice of Borel subgroup over $\Z_p$ containing $T$.

    We now pick a place $w'\mid v$.
    By construction, the Weyl-group orbit of $\mu_h$ is defined over $E'_T$ and therefore over $E'_{T,w'}$. As $T$ is split over $E'_{T}$, we have that $\mu^{w'}_h$ and any other element in its Weyl orbit are conjugate over $E'_{T,w'}$. Let $g$ be the Weyl group element satisfying $g\cdot \mu^{w'}_h$ is dominant with respect to $B$. Let $\tau \in \Gal(E'_T/E(G))$ satisfy $\tau(\mu_h) = g\cdot \mu_h$. Then by construction, $\tau(\mu_h)^{w'} \in X_*(T_{E'_{T,w'}})$ is dominant. But this is equivalent to $\mu_h^{\tau^{-1}(w')} \in X_*(T_{E'_{T, \tau^{-1}(w')}})$ being dominant as $B$ is defined over $\Q_p$. Therefore, the proposition follows by picking $w = \tau^{-1}(w')$.
    
\end{proof}

We are now ready to prove Theorem \ref{nonemptinessofordinary}.

\begin{proof}
    By Propositions \ref{prop nonemptiness ofmuord: first redution} and \ref{prop nonemptinessofmuord second reduction}, it suffices to find a Weyl special point $(T,h)$ where $T$ is quasi-split at $p$. But this follows directly from \cite[Proposition 5.11]{ChaiOortJacobians} by choosing $\Sigma_1 = \{p,\infty \}$, $U_{\infty}$ as in Remark 5.12 of \emph{loc. cit.}, and $U_p$ to be the set of set of elements in $\Lie G(\Z_p)$ which reduce to an element of $\Lie G(\Z_p)$ whose stabilizer in $G_{\F_p}$ is equal to a maximal torus containing a maximal split torus. 
\end{proof}

\subsection{An analogue of Tate's isogeny theorem for ordinary points}
We will now prove prove an analogue of Tate's isogeny theorem for abelian varieties in the setting of ordinary points. 

\begin{theorem}\label{thm: Tate isogeny}
    Let $x,y\in \integralShimK(\F_q)$ be ordinary points such that the $\ell$-adic Frobenii $\varphi_x$ and $\varphi_y$ are conjugate in $G(\Q_{\ell})$. Then, the rational Hodge structures associated to the canonical lifts $x^{\can}$ and $y^{\can}$ are isomorphic.
\end{theorem}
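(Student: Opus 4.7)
The plan is to reduce, via a prime-to-$p$ Hecke correspondence, to the case where $\varphi_x = \varphi_y$ as elements of $G(\Q_\ell)$, and then to deduce the isomorphism of rational Hodge structures from the fact that both canonical lifts $\tilde x$ and $\tilde y$ are CM points by Theorem \ref{CMlifts}.

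First, by hypothesis there exists $g \in G(\Q_\ell)$ with $g^{-1}\varphi_x g = \varphi_y$. Since $\ell \neq p$, the Hecke correspondence $\tau_g$ extends to $\integralShimK/\cO_E[1/\ell N]$ by Theorem \ref{thm:heckeextends}, and by Lemma \ref{Lemma Hecke description}, possibly after enlarging $\F_q$, there is a point $x' \in \tau_g(x) \cap \integralShimK(\F_q)$ whose $\ell$-adic Frobenius (under the trivialization of $_{\et}\V_{\ell,x'}$ induced by the Hecke correspondence) equals $\varphi_y$. The point $x'$ is again ordinary since the Newton polygon of the rational $F$-isocrystal is preserved by the finite étale correspondence $\tau_g$. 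By Corollary \ref{prop: functoriality of canonical lift}, the canonical lift is Hecke-equivariant, so $\widetilde{x'} \in \tau_g(\tilde x)$. The Hecke correspondence alters only the level-structure component in $\Shim(G,X) = G(\Q)\backslash X \times G(\A_f)/K$, whereas the rational Hodge structure at a complex point depends only on the $X$-component; hence the rational Hodge structures at $\tilde x$ and $\widetilde{x'}$ are isomorphic. Replacing $x$ by $x'$, we may therefore assume $\varphi_x = \varphi_y$ in $G(\Q_\ell)$.

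Both $\tilde x$ and $\tilde y$ are now special points, with Mumford--Tate groups $T_x, T_y \subset G$ that are $\Q$-tori and associated Hodge cocharacters $h_x, h_y$. By the Main Theorem of Complex Multiplication, the $\ell$-adic Galois representation on $\V_\ell$ at a CM point factors through $T(\Q_\ell)$ and is given (up to multiplication by a finite-order character) by the composition of the idelic reciprocity map with the reflex norm of the CM datum. Conversely, Chebotarev density combined with the semisimplicity of Frobenius from Theorem \ref{thm:Tatess} recovers the CM datum $(T,h)$, up to $G(\Q)$-conjugacy, from the Galois representation. Since $\varphi_x = \varphi_y$, the CM data $(T_x, h_x)$ and $(T_y, h_y)$ agree up to such conjugacy, yielding the desired isomorphism of rational Hodge structures.

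The principal obstacle is this CM recovery step. In the abelian-type setting it reduces to Tate's isogeny theorem for CM abelian varieties over number fields, but in the exceptional Shimura context one must work directly with the Main Theorem of Complex Multiplication phrased as a statement about Shimura data and the Serre group. The subtle point is to exclude a finite ambiguity in the CM datum coming from non-conjugate tori in $G(\Q)$ that become conjugate in $G(\Q_\ell)$; this should follow from the identification at a CM point of the Mumford--Tate torus with the Zariski closure of the image of Galois, for which the semisimplicity assertion of Theorem \ref{thm:Tatess} is essential.
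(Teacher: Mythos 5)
Your overall strategy (reduce to CM theory and compare the Mumford--Tate tori of the canonical lifts) is the right one, but there is a genuine gap exactly where you acknowledge "the principal obstacle": the CM datum $(T_x, h_x)$ is \emph{not} in general recoverable from the Frobenius $\varphi_x$ at the single place $v$, and your appeal to "Chebotarev density combined with semisimplicity" does not close this. Chebotarev controls the global Galois image as you vary over places, but you have fixed a single Frobenius; the Zariski closure of a single semisimple conjugacy class of a torus can be a proper subtorus, so knowing $\varphi_x = \varphi_y$ does not by itself force $T_x$ and $T_y$ to coincide (even up to conjugacy). This is precisely where the \emph{ordinarity} hypothesis must enter, and your argument never uses it.

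The paper's proof supplies the missing step concretely. Using Milne's result on zero-dimensional Shimura data of Hodge type, it lifts Frobenius to a $\Q$-rational element $\alpha \in T_x(\Q)$ arising from an honest endomorphism of a CM abelian variety, compatible with the $\ell$-adic, de Rham, and crystalline realizations. The key claim is that $\alpha$ generates a Zariski-dense subgroup of $T_x$: because $x$ is ordinary, the crystalline realization of $\alpha$ (a power of crystalline Frobenius) has eigenvalues whose $p$-adic valuations exactly match the Hodge weights on the graded pieces of the canonical-lift filtration. A direct valuation computation on characters of a split torus then shows the Hodge cocharacter $\mu_x$ factors through $\overline{\langle\alpha\rangle}^{\mathrm{Zar}}$, and since $T_x$ is the smallest $\Q$-torus through which $\mu_x$ factors, density follows. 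Once both tori are generated by their respective Frobenii, the conjugacy of the Frobenii (plus semisimplicity from \Cref{thm:Tatess}, to get conjugacy over $\Q$ from equality of characteristic polynomials) gives $X^{-1}T_xX = T_y$ in $\GL(V)_\Q$, and the CM types being determined by $\alpha$, $\gamma$ gives conjugacy of the Hodge cocharacters. Your preliminary Hecke reduction to $\varphi_x = \varphi_y$ is harmless but unnecessary; the paper works directly with a $\Q$-rational conjugating element. To repair your argument you would need to add the valuation argument showing $\mu_x$ lies in the Zariski closure of the Frobenius, which is where ordinarity is indispensable.
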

In the setting of (ordinary) abelian varieties, this result would translate to abelian varieties over finite fields being isogenous if their $\ell$-adic Frobenii are conjugate.

\begin{proof} 
    Let $(T_x,\mu_x)$ and $(T_y,\mu_y)$ denote the zero-dimensional Shimura data inducing the special points $x^{\can}$ and $y^{\can}$. Let $E_x$ denote the reflex field of $(T_x,\mu_x)$ and let $v$ be the place of $E_x$ such that $x 
 = x^{\can} \bmod v$. Recall that we have fixed $V$, a $\Q$-representation of $G$, which we will now consider as a representation of $T_x$. By \cite{Milnefinitefieldmotives}, there exists a surjective morphism of Shimura data $(T,\mu) \rightarrow (T_x,\mu_x)$ where $(T,\mu)$ is a zero-dimensional Shimura datum of Hodge type. Let $E$ denote the reflex field of $(T,\mu)$ ($E$ contains $E_x$), and let $v'\mid v$ be some place of $E$ above $E_x$. Let $W$ be the tautological symplectic representation of $T$ associated to a Hodge embedding. Let $z \in S(T,\mu)$ be a point that maps to $\tilde{x}$ and let $A$ denote the CM abelian variety associated to $z$. Note that $T$ is the Mumford-Tate group of $A$. Let $\beta \in \End(A)$ denote the endomorphism that specializes to the Frobenius endomorphism of $A \bmod v'$. We will abuse notation and let $\beta \in T(\Q)$ also denote the Betti-realization of the endomorphism $\beta$. Let $_{\et}\beta_p$ and $_{\dR}\beta$ denote the endomorphisms of $_{\et}V_p$ and $_{\dR}V$ induced by $\beta$. Restricting the Galois representation to the local Galois group at $v$, we may apply the Crystalline functor $\D_{\cris}$ to $_{\et}\beta$ and obtain an endomorphism of $_{\cris}V,\Fildot$ -- this endomorphism is just the power of the crystalline Frobenius operator on $_{\cris}V ,\Fildot$, and equals $_{\dR}\beta$ under the crystalline-deRham comparison isomorphism. All of this follows from the fact that $\beta \in T(\Q)$ is induced by an endomorphism of the abelian variety $A$. 
 
 We set $q = p^n$. Let $\alpha\in T_x(\Q)$ denote the image of $\beta$. As $V$ is in the Tannakian category generated by $W$ (as $T$-representations), we have that the same compatibility holds for the various realizations of $\alpha$. That is, we have that $_{\et}\alpha_{\ell}$ acts as Frobenius $_{\et}\varphi_{x,\ell}$ on $_{\et}V_{x,\ell}$ for $\ell \neq p$, and the Crystalline functor $\D_{\cris}$ applied to $_{\et}\alpha_p$ the $n$th power of the Crystalline Frobenius $_{\cris}\varphi_x$ on $_{\cris}V_{x}$, and therefore that $(_{\cris} \varphi_x)^n = _{\dR}\alpha$ under the Crystalline-deRham comparison isomorphism. We will now prove that the group generated by $\alpha$ is Zariski-dense in $T_x$. Let $L$ denote any field containing $E_x$. Let $_{\dR}T_{\alpha} \subset \GL(_{\dR}V_{\tilde{x},L})$ denote the torus\footnote{By replacing $q$ by a sufficiently large power, this group will always be connected and therefore a torus.} defined by taking the Zariski-closure of the group generated by $_{\dR}\alpha$. It suffices to prove that the Hodge co-character factors through $_{\dR}T_{\alpha}$ (note that this doesn't depend on the field $L$). We pick $L = W(\F_q)[1/p]$. By the Crystalline-de Rham comparison (and the compatibility outlined above), we may replace $_{\dR}\alpha$ and $_{\dR}V_{x^{\can},L},\Fildot$ by $_{\cris}\varphi^n$ and $_{\cris}V_{\tilde{x}},\Fildot$. As the point $x$ is ordinary, we have a direct sum decomposition of the crystal $_{\cris}V_x,_{\cris}\varphi_x = \bigoplus _{\cris}V_x^i , \oplus _{\cris}\varphi^i$, where $\frac{_{\cris}\varphi^i}{p^i}$ is a bijective $\sigma$-linear Frobenius morphism. By definition, the Hodge filtration on $_{\cris}V_x$ defined by the canonical lifting is $\Fil^j = \bigoplus_{i\leq j}\ _{\cris}V_x^i$. We have that $_{\dR}\alpha$ is just the $n$th iterate of $_{\cris}\varphi_x$, and the eigenvalues of $_{\dR}\alpha$ on $_{\cris}V_x^i$ have $p$-adic valuation exactly equal to $ni$. In order to prove that the Hodge co-character factors through $T_{\alpha}$ it suffices to prove the following general claim:
\begin{claim}

 Let $K$ be a finite extension of $\Q_p$, let $T'$ be a and let $V'$ be a faithful algebraic representation of $T'$ defined over $K$. Let $\mu: \G_m\rightarrow T'$ be a co-character that induces weight decomposition $V' = \bigoplus V^{\prime,i}$. Let $\alpha' \in T'(K)$ be an element that acts on $V^{\prime,i}$ with eigenvalues having valuation $ni$. Then, $\mu$ factors through $T'_{\alpha'}$ -  the Zariski closure of the group generated by $\alpha'$.
    
\end{claim}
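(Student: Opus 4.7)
The plan is to reduce everything to linear algebra on the character lattice of $T'$. After base-changing to $\bar K$ (over which $T'$ splits), the representation $V'$ decomposes as $V' = \bigoplus_\chi V'_\chi$ into $\chi$-eigenspaces indexed by characters $\chi \in X^*(T')$. The cocharacter $\mu$ refines this decomposition via $V'^i = \bigoplus_{\langle \chi, \mu\rangle = i} V'_\chi$, and $\alpha'$ acts on $V'_\chi$ by the scalar $\chi(\alpha')\in \bar K^\times$. Thus the hypothesis that the eigenvalues of $\alpha'$ on $V'^i$ all have valuation $ni$ translates into the identity
\[
v(\chi(\alpha')) \;=\; n\,\langle \chi, \mu\rangle
\]
for every character $\chi$ of $T'$ that actually appears in $V'$, where $v$ is the (unique) extension of the $p$-adic valuation to $\bar K$.

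Next, I would observe that both sides of this identity are $\mathbb{Z}$-linear in $\chi$: the left side because valuation is additive and $(\chi_1\chi_2)(\alpha') = \chi_1(\alpha')\chi_2(\alpha')$, the right side by definition of the pairing. Since $V'$ is a faithful representation of the torus $T'$, the characters occurring in $V'$ generate $X^*(T')$ as an abelian group. Therefore the identity $v(\chi(\alpha')) = n\langle \chi,\mu\rangle$ propagates from the generating set to \emph{all} of $X^*(T')$.

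Finally, I would use the standard dictionary between subtori and saturated quotients of the character lattice. The Zariski closure $T'_{\alpha'}$ of $\langle \alpha'\rangle$ is a subtorus whose defining ideal is generated by the characters $\chi$ with $\chi(\alpha') = 1$; equivalently, $X^*(T'_{\alpha'}) = X^*(T')/N$ where $N = \{\chi : \chi(\alpha')=1\}$. For any such $\chi$, we have $v(\chi(\alpha'))=0$, and the extended identity forces $n\langle\chi,\mu\rangle=0$, hence $\langle \chi,\mu\rangle = 0$ (as $n\geq 1$). This says $\chi \circ \mu$ is trivial for every $\chi$ that vanishes on $T'_{\alpha'}$, which is exactly the criterion for $\mu$ to factor through $T'_{\alpha'}$. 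Descending from $\bar K$ back to $K$ is automatic since both $\mu$ and $T'_{\alpha'}$ are already defined over $K$.

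I do not expect any serious obstacle. The only mild subtleties are (i) ensuring that the Zariski closure of $\langle\alpha'\rangle$ really is a subtorus (it is, as the closed subgroup scheme of a torus generated by a single element is diagonalizable) and (ii) compatibility of this Zariski closure with base change to $\bar K$, which holds because we are taking the closure of a single $K$-rational point. Once these are noted, the proof is a short exercise in the duality between tori and their character lattices.
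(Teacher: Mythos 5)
Your proof is correct and follows essentially the same strategy as the paper's: translate the hypothesis into the identity $v(\chi(\alpha'))=n\langle\chi,\mu\rangle$ on the character lattice, extend by $\Z$-linearity using faithfulness of $V'$, and conclude that $\chi(\alpha')=1$ forces $\langle\chi,\mu\rangle=0$. The only cosmetic difference is that you argue abstractly with $X^*(T')$ and the generation-by-faithfulness argument, while the paper enlarges $T'$ to the full diagonal torus and computes in explicit coordinates.
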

\begin{proof}
By replacing $K$ by a finite extension if need be, we may assume that $T'$ is a split torus. Without loss of generality, we may also assume that the rank of $T'$ equals the dimension of $V'$. Let $\chi$ denote any character of $T'$ such that $\chi(\alpha') = 1$. It suffices to prove that $\chi\circ \mu$ is the trivial character on $\G_m$. We may write $T' = \prod \prod T^{\prime,i}$ according to the decomposition $V' = \bigoplus V^{\prime,i}$. Picking coordinates on $V^{\prime,i}$, we may write $T^{\prime,i} = \prod \G_m$, with the diagonal $\G_m$ acting by scalars on $V^{\prime,i}$. In these coordinates, $\alpha'$ will decompose as $(\alpha^{\prime,i})$ with the entries of $\alpha^{\prime,i}$ having $p$-adic valuation $ni$. Let $\chi$ be a character on $T'$. Then we may write $\chi$ as $\prod_i \chi_i$, with $\chi_i = (a_{i,1},\hdots a_{i,N_i})$, where $\chi_i((x_j)) = \prod x_j^{a_{i,j}}$ for $(x_j) \in T^{\prime,i} = \prod \G_m$. A necessary condition for $\chi(\alpha') = 1$ is that the $p$-adic valuation of $\chi(\alpha')$ is 0, i.e.  $\sum_i ni(a_{i,1} + \hdots a_{i,N_i}) = 0$. On the other hand, the co-character $\mu$ is given by $\mu = \prod \mu_i$, with $\mu_i(z) = (z^i,z^i,\hdots z^i)$. Therefore, $\chi \circ \mu$ is trivial if and only if $\sum_i i(a_{i,1} + \hdots a_{i,N_i}) =0$. Therefore, we've shown that if $\chi(\alpha) = 1$, then $\chi \circ \mu$ is trivial, and therefore $\mu$ does factor through $T_{\alpha'}$ as required.

\end{proof}
Therefore, we have that $T_{\alpha} = T_x$ as required. We have the analogous statement for the Frobenius element associated to $y$ and the Torus $T_y$. By assumption, the Frobenii satisfy the same characteristic polynomial on $V$. Therefore, there exists a $\Q$-endomorphism $X \in \GL(V)$ with $X^{-1}\alpha X = \gamma$. As the groups generated by $\alpha$ and $\gamma$ are respectively Zariski dense in $T_x$ and $T_y$ respectively, we have that $X^{-1} T_x X = T_y$. Finally, the Hodge co-characters are conjugate to each other because the CM-types are uniquely determined by the elements $\alpha$ and $\gamma$. The proposition follows. 
\end{proof}

\bibliography{biblio}
\bibliographystyle{plain}

\end{document}